\def\ds{\displaystyle}
\newcommand{\Om}{\Omega}
\newcommand{\e}{\varepsilon}
\newcommand{\wto}{\rightharpoonup}
\newcommand{\RR}{{\mathbb R}}
\newcommand{\NN}{{\mathbb N}}
\newcommand{\HH}{{\mathscr H}}
\newcommand{\LL}{{\mathscr L}}
\newcommand{\E}{{\mathcal E}}
\newcommand{\M}{{\mathscr M}}
\newcommand{\C}{{\mathscr C}}
\newcommand{\D}{{\mathscr D}}
\newcommand{\bdel}{{\boldsymbol \delta}}
\newcommand{\res}{\mathop{\hbox{\vrule height 7pt width .5pt depth 0pt
\vrule height .5pt width 6pt depth 0pt}}\nolimits}
\begin{document}

\markboth{J.-F. Babadjian \& V. Millot} {Unilateral gradient flow of the Ambrosio-Tortorelli functional by minimizing movements}

\title{UNILATERAL GRADIENT FLOW OF THE AMBROSIO-TORTORELLI FUNCTIONAL\\ BY MINIMIZING MOVEMENTS}

\author{JEAN-FRAN\c{C}OIS BABADJIAN}
\address{Universit\'e Pierre et Marie Curie - Paris 6\\
CNRS, UMR 7598 Laboratoire J.-L. Lions\\
Paris, France\\
\emph{\tt{jean-francois.babadjian@upmc.fr}}}

\author{VINCENT MILLOT}
\address{Universit\'e Paris Diderot - Paris 7\\
CNRS, UMR 7598 Laboratoire J.-L. Lions\\
Paris, France\\
\emph{\tt{millot@ljll.univ-paris-diderot.fr}}}

\maketitle

\begin{abstract}
{\bf Abstract.}  Motivated by models of fracture mechanics, this paper is devoted to the analysis of a unilateral gradient flow of the Ambrosio-Tortorelli functional, where unilaterality comes from an irreversibility constraint on the fracture density.  Solutions of such evolution are constructed by means of an implicit Euler scheme. An asymptotic analysis in the Mumford-Shah regime is then carried out. It shows the convergence towards a generalized heat equation outside a time increasing crack set.
 In the spirit of gradient flows in metric spaces, a notion of curve of maximal unilateral slope is also investigated, and analogies with the unilateral slope of the Mumford-Shah functional are also discussed. 
\end{abstract}

\keywords{Gradient flow, $\Gamma$-convergence, free discontinuity problems, functions of bounded variation, Mumford-Shah}

\section{Introduction}                       			  

Many free discontinuity problems are variational in nature and involve two unknowns, a function $u$ and a discontinuity set $\Gamma$ across which $u$ may jump. The most famousÊ example is certainly  the minimization of the {\sc Mumford-Shah} (MS) functional introduced in \cite{MS} to approach image segmentation. It is defined by
$$\frac12\int_{\Om \setminus \Gamma} |\nabla u|^2\, dx + \HH^{N-1}(\Gamma)+\frac{\beta}{2} \int_\Om (u-g)^2\, dx\,,$$
where $\Om \subset \RR^N$ is a bounded Lipschitz open set, $\HH^{N-1}$ is the $(N-1)$-dimensional Hausdorff measure, $\beta>0$ is a fidelity (constant) factor, and $g \in L^\infty(\Om)$ stands for the grey level of the original image. In the resulting minimization process, we end up with a segmented image $u:\OmÊ\setminus \Gamma \to \RR$ and a set of contours $\Gamma \subset \Om$. 
To efficiently tackle this problem, a weak formulation in the space of {\it Special functions of Bounded Variation} has been suggested and solved in \cite{DGCL}, where the set $\Gamma$ is replaced by the jump set $J_u$ of $u$. The new energy is defined for $u \in SBV^2(\Om)$ by
\begin{equation}\label{MSSBV}
\frac12\int_\Om |\nabla u|^2\, dx + \HH^{N-1}(J_u)+\frac{\beta}{2} \int_\Om (u-g)^2\, dx\,,
\end{equation}
where $\nabla u$ is now intended to be the measure theoretic gradient of $u$.

\vskip3pt

A related model based on Mumford-Shah type functionals has been introduced by {\sc Francfort \&  Marigo} in \cite{FM} (see also \cite{BFM}) to describe quasi-static crack propagation inside elastic bodies. It is a variational model relying on three fundamental principles: {\it (i)} the fractured body must stay in elastic equilibrium at each time ({\it quasi-static hypothesis}); {\it (ii)} the crack can only grow ({\it irreversibility constraint}); {\it (iii)} an energy balance holds. In the anti-plane setting, the equilibrium and irreversibility principles lead us to look for constrained critical points (or local minimizers) at each time of the Mumford-Shah functional, where $u$ stands now for the scalar displacement while $\Gamma$ is the crack. Unfortunately, there is no canonical notion of local minimality since the family of all admissible cracks is not endowed with a natural topology. Seeking local minimizers of such energies has consequently become a great challenge, and a lot of works in that direction have considered  global minimizers instead, see \cite{DMT1,DMFT,FL}. In the discrete setting,  one looks at each time step for a pair $(u_i,\Gamma_i)$ minimizing 
$$(u,\Gamma) \mapsto  \E_*(u,\Gamma):=\frac12\int_{\Om \setminus \Gamma} |\nabla u|^2\, dx + \HH^{N-1}(\Gamma)\,,$$ 
among all cracks $\Gamma \supset \Gamma_{i-1}$ and all displacements $u:\Om \setminus \Gamma \to \RR$ satisfying an updated 
boundary condition, where $\Gamma_{i-1}$ is the crack found at the previous time step. A first attempt to local minimization has been carried out in \cite{DMT2} where a variant of this model is considered. At each time step the $L^2(\Om)$-distance to the previous displacement is penalized. More precisely, denoting by $u_{i-1}$ the displacement at the previous time step, one looks for minimizers of 
\begin{equation}\label{MSlambda}
(u,\Gamma) \mapsto\E_*(u,\Gamma)+ \lambda \|u - u_{i-1}\|^2_{L^2(\Om)}\,,
\end{equation}
on the same class of competitors than before, where $\lambda>0$ is a fixed parameter. We emphasize that this formulation only involves some kind of local minimality  with respect to the displacement. A notion of stability  which implies another local minimality criterion has been introduced in \cite{L}. It focuses on what the author  calls ``accessibility between two states". In the case of global minimization, when passing from one discrete time to the next, all states are accessible. From the point of view of \cite{DMT2}, a state $u$ is accessible from $u_{i-1}$ if and only if there is a certain gradient flow beginning at $u_{i-1}$ which approaches $u$ in the long-time limit. The main idea in \cite{L} is that a state $u$ is accessible from $u_{i-1}$ if and only if both states can be connected through a continuous path for which the total energy is never increased more than a fixed amount.

\vskip3pt

While static free discontinuity problems start to be well understood, many questions remain open concerning their evolutionary version. Apart from the quasi-static case, the closest evolution problem to statics consists in finding a steepest gradient descent of the energy, and thus in solving a gradient flow type equation. A major difficulty in this setting is to define a suitable notion of gradient since the functional is neither regular nor convex, and standard theories such as maximal monotone operators  \cite{Br} do not apply. However, using a time discretization, an implicit Euler scheme can always be defined. Letting  the time step tend to zero, the possible limits of such a discrete scheme are refered to as  {\sc De Giorgi}'s {\it minimizing movements} (see~\cite{A,DG}), and can be considered as solutions of the generalized gradient flow of the underlying functional. In the Mumford-Shah setting, this approach reduces to the minimization of the energy \eqref{MSlambda} exactly as in \cite{DMT2} with $\lambda$ replaced by $(2\delta)^{-1}$, $\delta>0$ being the time step. The minimizing movements of the Mumford-Shah functional have been first considered in \cite{AB}, and further developed in \cite{CD}. Motivated by the crack growth model as in  \cite{DMT2}, the authors apply the iterative scheme with respect to the variable $u$ while minimizing the energy with respect to $\Gamma$ under the constraint of  irreversibility. Showing compactness of the resulting discrete evolution as $\delta \to 0$, they obtain existence of  ``unilateral" minimizing movements of the Mumford-Shah energy (we add here the adjective unilateral to underline the irreversibility constraint on the evolution). In any space dimension, the limiting displacement $u(t)$ satisfies some kind of heat equation (in the weak sense), and an energy inequality with respect to the initial time holds. Assuming that  admissible cracks are compact and connected, they improve the result  in two dimensions showing that $u(t)$ solves a true heat equation in a fractured space-time domain, and that the energy inequality holds between arbitrary times. 

\vskip3pt

The Mumford-Shah functional enjoys good variational approximation properties by means of regular energies. Constructing $L^2(\Om)$-gradient flows for these regularized energies and taking the limit in the approximation parameter could be another way to derive a generalized gradient flow for $MS$. It was actually the path followed in \cite{Go} where a gradient flow equation for the one-dimensional Mumford-Shah functional is obtained as a limit of  ordinary differential equations derived from a non-local approximation of $MS$. Many other approximations are available, and the most famous one is certainly  the {\sc Ambrosio-Tortorelli} functional  defined for $(u,\rho) \in [H^1(\Om)]^2$ by
$$AT_\e(u,\rho):= \frac12\int_{\Om}(\eta_\e+\rho^2)|\nabla u|^2\,dx + \frac12\int_{\Om} \left(\e|\nabla\rho|^2 +\frac{1}{\e}(1-\rho)^2\right)dx\,.$$
The idea is to replace the discontinuity set $\Gamma$ by a (diffuse) phase field variable, denoted by $\rho:\Om \to [0,1]$, which is ``smooth'' and essentially $0$ in a $\e$-neighborhood of $\Gamma$. Such energies are of great importance for numerical simulations in imaging or brittle fracture, see \cite{BFM1,BFM}. 
From the mechanical point of view, it is interpreted as a non-local damage approximation of fracture models, where $\rho$ represents a damage density. 
The approximation result of \cite{AT,AT2} (see also \cite{Foc}) states that 
$AT_\e$ $\Gamma$-converges as $\varepsilon\to0$ to $MS$ (in the form~\eqref{MSSBV}) with respect to a suitable topology. For the static problem, it implies the convergence of $AT_\e$-minimizers towards $MS$-minimizers by standard results from $\Gamma$-convergence theory. However, the convergence of general critical points is {\it a priori} not guaranteed. Positive results in this direction have been obtained in \cite{FLS, Le}  for the one-dimensional case. The Ambrosio-Tortorelli approximation of quasi-static crack evolution is considered in \cite{G}, where the irreversibility constraint translates into the decrease of the phase field $t \mapsto \rho(t)$. The main result of~\cite{G} concerns the convergence of this regularized model towards the original one in \cite{FL}. Motivated by the formulation of a model of fracture dynamics, a hyperbolic evolution related to the Ambrosio-Tortorelli functional is also studied in \cite{LOS}, but the asymptotic behavior  of solutions as $\e\to 0$ is left open. A first step in that direction is made in \cite{DLL} where the analysis of a wave equation on a domain with growing cracks is performed. Concerning parabolic type evolutions, a standard gradient flow of the Ambrosio-Tortorelli functional is numerically investigated in~\cite{FP} for image segmentation and inpainting  purposes. 

\vskip3pt

The object of the present article is to study a {\it unilateral gradient flow} for the Ambrosio-Tortorelli functional taking into account the irreversibility constraint on the phase field variable. The idea is to construct minimizing movements starting from a discrete Euler scheme which is precisely an Ambrosio-Tortorelli regularization of the one studied in \cite{AB,CD}. As in~\cite{G}, the irreversibility of the process has to be encoded into the decrease of the phase field variable, and leads at each time step  to a constrained minimization problem. More precisely, 
given an initial data $(u_0,\rho_0)$, one may recursively  define pairs $(u_i,\rho_i)$ by minimizing at each  time $t_i\sim i \delta$,  
\begin{equation}\label{incremsch}
 (u,\rho) \mapsto  AT_\e(u,\rho) + \frac{1}{2\delta} \|u-u_{i-1}\|^2_{L^2(\Om)}\,,
\end{equation}
among all $u$ and $\rho \leq \rho_{i-1}$, where $(u_{i-1},\rho_{i-1})$ is a pair found at the previous time step. The objective is then to pass to the limit as the time step $\delta$ tends to $0$. A main difficulty is to deal with the asymptotics of the obstacle problems in the $\rho$ variable. It is known that such problems are not stable with respect to weak $H^1(\Om)$-convergence, and that ``strange terms" of capacitary type may appear \cite{CM,DML}. However, having uniform convergence of obstacles would be enough to rule out this situation. For that reason, instead of $AT_\e$, we consider a modified Ambrosio-Tortorelli functional with $p$-growth in $\nabla \rho$ with $p>N$. By the Sobolev Imbedding Theorem, with such a functional in hand, uniform convergence on the $\rho$ variable is now ensured.  We define for every $(u,\rho) \in H^1(\Om) \times W^{1,p}(\Om)$, 
$$
\E_\e(u,\rho):= \frac12 \int_{\Om}(\eta_\e+\rho^2)|\nabla u|^2\,dx + \int_{\Om} \left(\frac{\e^{p-1}}{p}|\nabla\rho|^p +\frac{\alpha}{p'\e}|1-\rho|^p\right)dx
\,,\quad p>N\,,
$$
where $\alpha>0$ is a suitable normalizing factor defined in \eqref{alpha}.  Note that an immediate adaptation of \cite{Foc} shows that $\E_\e$ is still an approximation of $MS$ in the sense of $\Gamma$-convergence. 

Considering the incremental scheme \eqref{incremsch} with $\E_\e$ instead of $AT_\e$, we prove that the discrete evolutions  converge as $\delta\to 0$ to  continous evolutions $t\mapsto (u_\e(t),\rho_\e(t))$ that we call {\it unilateral minimizing movements} (see Definition \ref{defminimov}). The first main result of the paper (Theorem \ref{thmGUMM}) gathers properties of unilateral minimizing movements. The limiting differential equation satisfied by $u_\e$ is 
\begin{equation}\label{equ}
\begin{cases}
\ds \partial_t u_\e - {\rm div}\big((\eta_\e+ \rho_\e^2)\nabla u_\e\big)=0  & \text{in }\Om \times (0,+\infty),\\[5pt]
\ds \frac{\partial u_\e}{\partial\nu}= 0 & \text{on }Ê\partial \Om \times (0,+\infty),
\end{cases}
\end{equation}
while the irreversibility and minimality conditions for $\rho_\e$ are 
\begin{equation}\label{equ2}
\begin{cases}
t \mapsto \rho_\e(t) \text{ is non-increasing}\,,\\[5pt]
\E_\e(u_\e(t),\rho_\e(t))\leq \E_\e(u_\e(t),\rho) \text{ for every $t\geq 0$ and $\rho\in W^{1,p}(\Om)$ such that $\rho\leq \rho_\e(t)$ in $\Om$} \,.
\end{cases}
\end{equation}
The system \eqref{equ}-\eqref{equ2} is by construction supplemented with the initial condition 
$$(u_\e(0), \rho_\e(0))= (u_0,\rho_0) \quad  \text{in }\Om\,.$$
In addition, we prove that the bulk and diffuse surface energies, defined by
$$
t \mapsto \frac12 \int_{\Om}(\eta_\e+\rho_\e(t)^2)|\nabla u_\e(t)|^2\,dx
$$
and
$$t \mapsto  \int_{\Om} \left(\frac{\e^{p-1}}{p}|\nabla\rho_\e(t)|^p +\frac{\alpha}{p'\e}|1-\rho_\e(t)|^p\right)dx$$
are respectively non-increasing and non-decreasing, a fact which is meaningful from the mechanical point of view. 
Moreover, the total energy is non-increasing, and it satisfies the following Lyapunov inequality: for a.e. $s\in [0,+\infty)$ and every $t\geq s$,
\begin{equation}\label{lyapu}
\E_\e(u_\e(t),\rho_\e(t))+\int_s^t \|\partial_t u_\e(r)\|^2_{L^2(\Om)}\,dr \leq \E_\e(u_\e(s),\rho_\e(s))\,.
\end{equation}

Note that the above inequality  is reminiscent of gradient flow type equations, and that it usually reduces to equality whenever the flow is regular enough. In any case, an energy equality would be equivalent to the absolute continuity in time of the total energy (see Proposition \ref{prop:identmodvit}). The reverse inequality might be obtained through an abstract infinite-dimensional chain-rule formula in the spirit of \cite{RoSa}. In our case, if we formally differentiate in time the total energy, we obtain
\begin{equation}\label{chainrule}
\frac{d}{dt}\,\E_\e(u_\e(t),\rho_\e(t)) = \big\langle \partial_u \E_\e(u_\e(t),\rho_\e(t)),\partial_t u_\e(t) \big\rangle +  \big\langle \partial_\rho \E_\e(u_\e(t),\rho_\e(t)),\partial_t \rho_\e(t) \big\rangle \, .
\end{equation}
From \eqref{equ2}  we could expect that 
\begin{equation}\label{Griff}
\langle \partial_\rho \E_\e(u_\e(t),\rho_\e(t)),\partial_t \rho_\e(t) \rangle=0\,,
\end{equation}
which would lead, together with \eqref{equ}, to the energy equality. Now observe that \eqref{Griff} is precisely the regularized version  of Griffith's criterion stating that a crack evolves if and only if the release of bulk energy is compensated by the increase of surface energy (see {\it e.g.} \cite[Section 2.1]{BFM}).  
Unfortunately, such a chain-rule is not available since we do not have enough control on the time regularity of $\rho_\e$. In the quasi-static case, one observes discontinuous time evolutions for the surface energy. Since the evolution law for $\rho_\e$ is quite similar to the quasi-static case (see~\cite{G}), we also expect here time discontinuities for the diffuse surface energy.  Adding a parabolic regularization in $\rho$, {\it i.e.}, a term of the form 
$$\frac{1}{\delta^{p-1}}\|\rho - \rho_{i-1}\|^p_{W^{1,p}(\Om)}$$
in \eqref{incremsch}, is a way to improve the time regularity of $\rho_\e$, and to get an energy equality. Unfortunately, it  also breaks the monotonicity of the surface energy, an undesirable fact in the modelling of the irreversibility. Moreover, this energy monotonicity is an essential ingredient in the analysis when $\varepsilon\to 0$.
\vskip3pt

As just mentioned, the natural continuation (and motivation) to the qualitative analysis of Ambrosio-Tortorelli minimizing movements is to understand the limiting behavior as $\e \to 0$,  and to compare the result with \cite{AB,CD}.  We stress that the general theory on $\Gamma$-convergence of gradient flows as presented in  \cite{SS,S} does not apply here since it requires a well defined gradient structure for the $\Gamma$-limit. A specific analysis thus seems to be necessary. In doing so, our second main result (Theorem \ref{BabMil}) states that $(u_\e,\rho_\e)$ tends to $(u,1)$ for some mapping $t\mapsto u(t)$ taking values in $SBV^2(\Om)$, and solving in the weak sense the equation 
\begin{equation}\label{1037}
\begin{cases}
\partial_t u - {\rm div}(\nabla u)=0 & \text{ in }Ê\Om\times(0,+\infty)\,,\\
\nabla u \cdot \nu =0 & \text{ on }\partial \Om\times(0,+\infty)\, ,\\
u(0)=u_0\,. 
\end{cases}
\end{equation}
In addition, using the monotonicity of the diffuse surface energy, we are able to pass to the limit in \eqref{equ2}. It yields the existence of a non-decreasing family of rectifiable subsets $\{\Gamma(t)\}_{t \geq 0}$ of $\Om$ such that 
$J_{u(t)}   \subset  \Gamma(t)$ for every $t \geq 0$, and for which the following energy inequality holds at any time: 
$$ 
\E_*\big(u(t),\Gamma(t)\big) + \int_0^t \|\partial_t u(s)\|_{L^2(\Om)}^2\, ds
\leq \frac12 \int_\Om |\nabla u_0|^2\, dx \,.$$
Comparing our result with \cite{CD}, we find that $u$ solves the same generalized heat equation with an improvement in the energy inequality where an increasing family of cracks appears. The optimality of this inequality and the convergence of energies remain open problems. Note that the (pointwise in time) convergence of the bulk energy usually follows by taking the solution as test function in the equation. In our case it asks the question wether $SBV^2(\Om)$ functions whose jump set is contained in $\Gamma(t)$ can be used in the variational formulation of \eqref{1037}, see \cite{DLL} and Subsection~\ref{subsc}. It would yield a weak form of the relation
$$\big((u^+(t)-u^-(t)\big) \frac{\partial u(t)}{\partial \nu}=0 \quad\text{ on }\Gamma(t)\,,$$
where $u^\pm(t)$ denote the one-sided traces of $u(t)$ on $\Gamma(t)$. This is indeed the missing equation to complement~\eqref{1037}, and it is intimately related to the finiteness of the unilateral slope of the Mumford-Shah functional (evaluated at $(u(t),\Gamma(t))$) defined in  \cite{DMT}.  

\vskip3pt

As already discussed, the nonlinear and nonconvex structure of  $MS$ prevents us to define a classical notion for its gradient flow. A possible approach, actually related to minimizing movements, is to make use of the general theory of gradient flows in metric spaces introduced in \cite{DGMT}. Here the notion of gradient is replaced by the concept of slope, and the standard gradient flow equation is recast in terms of {\it curves of maximal slope}Ê (see~\cite{AGS} for a detailed description of this subject). This idea was the starting point of \cite{DMT}, where the unilateral slope of $MS$ defined by
$$|\partial \E_*|(u,\Gamma):= \limsup_{v \to u\text{ in }L^2(\Omega)}Ê\frac{(\E_* (u,\Gamma) - \E_*(v,\Gamma \cup J_v))^+}{\|v-u\|_{L^2(\Om)}}\,,
$$
is investigated. By analogy we introduce the unilateral slope of the Ambrosio-Tortorelli functional 
$$|\partial \E_\e|(u,\rho):=\limsup_{v\to u\text{ in }L^2(\Omega)} \; \sup_{\hat \rho\leq \rho } \frac{\big(\E_\e(u,\rho)-\E_\e(v,\hat \rho)\big)^+}{\|v-u\|_{L^2(\Om)}} \,.$$
Then, {\it curves of maximal unilateral slope}  are essentially defined as curves for which  inequality \eqref{lyapu} holds  and the $L^2$-norm of the velocity coincides with the unilateral slope of the functional (see Definition~\ref{cms2}). 
In other words, these generalized evolutions are $L^2(\Om)$-steepest descents of $\E_\e$ with respect to $u$ in the direction of non-increasing $\rho$'s. In our third and last main result (Theorem \ref{cms}), we establish that any unilateral minimizing movement is a curve of maximal unilateral slope. As a matter of fact, any curve satisfying \eqref{equ}-\eqref{equ2}-\eqref{lyapu} has maximal unilateral slope. If one drops the energy inequality \eqref{lyapu}, system \eqref{equ}-\eqref{equ2}  admits infinitely many solutions which are not in general curves of maximal unilateral slope. The question wether or not curves of maximal unilateral slope are solutions of \eqref{equ}-\eqref{equ2}, is actually connected with the validity of the generalized chain-rule formula \eqref{chainrule}. Finally, we obtain some estimates in the spirit of \cite{DMT} for the limit as $\varepsilon\to 0$ of $|\partial \E_\e|$ along minimizing movements. However, a complete asymptotic analysis of $|\partial \E_\e|$ remains an open problem.

\vskip5pt

To conclude this introduction, let us briefly discuss some numerical aspects of our analysis. 
First, a practical drawback of the implicit Euler scheme defined in \eqref{incremsch} (with $\E_\e$ instead of $AT_\e$) is that the pair $(u_i,\rho_i)$ obtained at each time step might not be unique since $\E_\e$ is not strictly convex (although it is separately strictly convex). This lack of uniqueness may generate some troubles from the point of view of numerical approximations. For that reason, it is of interest to consider an alternate scheme as follows: given the initial data $(u_0,\rho_0)$, one recursively defines pairs $(u_i,\rho_i)$ at each time $t_i$ by
$$
\begin{cases}
\ds u_i:=\mathop{{\rm argmin}} \Big\{ \E_\e(u,\rho_{i-1}) +\frac{1}{2\delta} \|u-u_{i-1}\|^2_{L^2(\Om)} : 
u\in H^1(\Omega)\Big\}\,,\\[8pt]
\ds \rho_i:={\rm argmin} \Big\{ \E_\e(u_i,\rho) : \rho\in W^{1,p}(\Om)\,,\; \rho\leq \rho_{i-1} \text{  in }\Om\Big\} \,.
\end{cases}
$$
It turns out that this alternate minimization scheme is precisely the algorithm used in numerical experiments for quasi-static evolution in brittle fracture (see \cite{BFM1,BFM}). 
As the time step $\delta$ tends to zero, this scheme gives rise to the same time continuous model ({\it i.e.}, limiting evolutions satisfy \eqref{equ}-\eqref{equ2}-\eqref{lyapu}, see \cite{BM}).
Another difficulty for numerics is to deal with the  asymptotics when both $\varepsilon$ and $\delta$ tend to zero. Using the arguments developped in this paper together with \cite{CD} and \cite{G}, one should 
be able to prove a simultaneous convergence result similar to Theorem \ref{BabMil}, and that the limits commute. 

\vskip5pt

The paper is organized as follows. In Section \ref{sec2}, we provide the functional setting of the problem, and define in details the Ambrosio-Tortorelli and Mumford-Shah functionals.  
In Section \ref{sec3}, we introduce the implicit Euler scheme  generating unilateral minimizing movements. In Section \ref{sec4}, we establish an existence result for unilateral minimizing movements through a compactness result of discrete evolutions when the time step tends to zero. Then we study some qualitative properties where we establish the heat type equation,  the unilateral minimality of the phase field, and the energy inequality.  Section~\ref{sec5} is devoted to the asymptotic analysis as $\e \to 0$. 
Finally, Section \ref{sec6} is concerned with curves of maximal unilateral slope for the Ambrosio-Tortorelli functional. 

\section{Preliminaries}\label{sec2}                        

\noindent {\bf {\it Notations.}} For  an open set  $U \subset \RR^N$, we denote by $\M(U;\RR^m)$  the space of all finite $\RR^m$-valued Radon measures on~$U$,  {\it i.e.}, the topological dual of the space $\C_0(U;\RR^m)$ of all $\RR^m$-valued continuous functions vanishing on $\partial U$. For $m=1$ we simply write $\M(U)$. The Lebesgue measure in $\RR^N$ is denoted by $\LL^N$, while $\HH^{N-1}$ stands for the $(N-1)$-dimensional Hausdorff measure. If $B_1$ is the open unit ball in $\RR^N$, we write $\omega_N:=\LL^N(B_1)$. We use the notations $\widetilde \subset$ and $\widetilde  =$ for inclusions or equalities between sets up to $\HH^{N-1}$-negligible sets.
For two real numbers $a$ and $b$, we denote by $a\wedge b$ and $a\vee b$ the minimum and maximum value between $a$ and $b$, respectively, and $a^+:=a \vee 0$.

\vskip10pt

\noindent{\bf {\it Absolutely continuous functions.}}
Throughout the paper, we consider the integration theory for Banach space valued functions in the sense of Bochner. All standard definitions and results we shall use can be found in \cite[Appendix]{Br} (see also \cite{DU}). We just recall here some basic facts.
If $X$ denotes a Banach space, we say that a mapping $u:[0,+\infty)\to X$ is absolutely continuous, and we write $u \in AC([0,+\infty);X)$, if there exists $m \in L^1(0,+\infty)$ such that
\begin{equation}\label{AC}
\|u(s)-u(t)\|_X \leq \int_s^tm(r)\, dr \quad \text{ for every }t\geq s \geq 0\,.
\end{equation}
If the space $X$ turns out to be reflexive, then any map $u \in AC([0,+\infty);X)$ is (strongly) differentiable almost everywhere. More precisely, for a.e. $t \in (0,+\infty)$, there exists $u'(t) \in X$ such that
$$\frac{u(t)-u(s)}{t-s} \to u'(t) \quad \text{ strongly in }X \text{ as }s \to t\,.$$
Moreover  
$u' \in L^1(0,+\infty;X)$, $u'$ coincides with distributional derivative of $u$, and the Fundamental Theorem of Calculus holds, {\it i.e.},
$$u(t)-u(s)=\int_s^t u'(r)\, dr \quad\text{ for every } t\geq s\geq 0\,.$$
If further the function $m$ in \eqref{AC}  belongs to $L^2(0,+\infty)$, then we write $u \in AC^2([0,+\infty);X)$, and in that case we have $u' \in L^2(0,+\infty;X)$.
\vskip10pt

\noindent {\bf {\it Special functions of bounded variation.}}
For  an open set  $U \subset \RR^N$, we denote by $BV(U)$ the space of functions of bounded variation, {\it i.e.}, 
the space of all functions $u \in L^1(U)$ whose distributional gradient $Du$ belongs to $\M(U;\RR^N)$. 
We shall also consider the subspace $SBV(U)$ of special functions of bounded variation made of functions $u \in BV(U)$ whose derivative $Du$  can be decomposed as
$$Du= \nabla u\, \LL^N + (u^+ - u^-) \nu_u\, \HH^{N-1} \res \, J_u\,.$$
In the previous expression, $\nabla u$ is the Radon-Nikod\'ym derivative of $Du$ with respect to $\LL^N$, and it is called 
approximate gradient of $u$. The Borel set $J_u$ is the (approximate) jump set of $u$. It is a countably $\HH^{N-1}$-rectifiable subset of $U$ oriented by the (normal) direction of jump $\nu_u:J_u\to \mathbb{S}^{N-1}$, and $u^\pm$ are the one-sided approximate limits of $u$ on $J_u$ according to $\nu_u$, see \cite{AFP}. We say that a measurable set $E$ has finite perimeter in~$U$ if $\chi_E \in BV(U)$, and we denote by $\partial^*E$ its reduced boundary. We also denote by $GSBV(U)$ the space of all measurable functions $u : U \to \RR$ such that $(-M \vee u) \wedge M \in SBV(U)$ for all $M>0$. Again, we refer to \cite{AFP} for an exhaustive treatment on the subject.  Finally we define the spaces
$$SBV^2(U):=\big\{ u \in SBV(U) \cap L^2(U) : \nabla u \in L^2(U;\RR^N) \text{ and } \HH^{N-1}(J_u)<\infty\big\} \,,$$
and 
$$GSBV^2(U):=\big\{ u \in GSBV(U) \cap L^2(U) : \nabla u \in L^2(U;\RR^N) \text{ and } \HH^{N-1}(J_u)<\infty\big\}\,.$$
Note that, according to the  chain rule formula for real valued $BV$-functions, we have the inclusion $SBV^2(U)\cap L^\infty(U)\subset GSBV^2(U)$ (see {\it e.g.} \cite[Theorem 3.99]{AFP}).
\vskip3pt

The following proposition will be very useful to derive a lower estimate for the Ambrosio-Tortorelli functional. It is a direct consequence of  the proof of \cite[Theorem 10.6]{B} (see \cite[Theorem 16]{BCS} for the original proof).

\begin{proposition}\label{lower-bound-AT}
Let $\Om \subset \RR^N$ be a bounded open set, let $\{u_{ n}\}_{n\in\NN} \subset H^1(\Om) \cap L^\infty(\Om)$ be such that $\sup_{n\in\NN}\|u_n\|_{L^\infty(\Om)} <\infty$,  and let $\{E_n\}_{n\in\NN}$ be a sequence of subsets of $\Om$ of finite perimeter in $\Omega$ such that $\sup_{n\in\NN} \mathscr H^{N-1}(\partial^* E_n\cap\Om) <\infty$. Assume that $u_n \to u$ strongly in $L^2(\Om)$, and  that $\mathscr L^N(E_n) \to 0$. 
Setting $\tilde u_n:=(1-\chi_{E_n}) u_n \in SBV^2(\Om)\cap L^\infty(\Omega)$,  and assuming in addition that 
$\sup_{n\in\NN} \| \nabla \tilde u_n\|_{L^2(\Om;\RR^N)}<\infty$,  
then $u \in SBV^2(\Om)\cap L^\infty(\Om)$ and
$$
\begin{cases}
\tilde u_n \to u \text{ strongly in }ÊL^2(\Om)\,,\\
\tilde u_n \wto u \text{ weakly* in }ÊL^\infty(\Om)\,,\\
\nabla \tilde u_n \wto \nabla u \text{ weakly in }ÊL^2(\Om;\RR^N)\,,\\
 \ds 2\, \mathscr H^{N-1}(J_u) \leq \liminf_{Ên \to \infty}Ê\mathscr H^{N-1}(\partial^* E_n\cap\Om)\,.
\end{cases}
$$
\end{proposition}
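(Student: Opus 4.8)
The plan is to exploit the decomposition $u_n = \tilde u_n + \chi_{E_n} u_n$ and treat the two terms separately. Since $\LL^N(E_n)\to 0$ and the $u_n$ are uniformly bounded in $L^\infty$, the term $\chi_{E_n} u_n$ tends to $0$ strongly in every $L^q(\Om)$ with $q<\infty$; combined with $u_n\to u$ strongly in $L^2(\Om)$ this immediately gives $\tilde u_n\to u$ strongly in $L^2(\Om)$, which is the first assertion. The uniform $L^\infty$ bound on the $u_n$ transfers to a uniform $L^\infty$ bound on the $\tilde u_n$ (they are bounded by $\sup_n\|u_n\|_{L^\infty}$ pointwise), so along a subsequence $\tilde u_n\wto u$ weakly* in $L^\infty(\Om)$, the limit being forced to be $u$ by the $L^2$ convergence already established; since the limit is the same, the full sequence converges.

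Next I would identify the structure of $\tilde u_n$ as an $SBV$ function and estimate its total variation. Writing $D\tilde u_n = \nabla\tilde u_n\,\LL^N + (\tilde u_n^+-\tilde u_n^-)\nu_{\tilde u_n}\,\HH^{N-1}\res J_{\tilde u_n}$, the jump set $J_{\tilde u_n}$ is contained (up to $\HH^{N-1}$-null sets) in $\partial^* E_n\cap\Om$, and the jump height is at most $\|u_n\|_{L^\infty}$. Hence $|D\tilde u_n|(\Om)\le \|\nabla\tilde u_n\|_{L^1} + \sup_n\|u_n\|_{L^\infty}\,\HH^{N-1}(\partial^* E_n\cap\Om)$, which is uniformly bounded by hypothesis (the gradient part via Cauchy–Schwarz on the bounded domain, using the uniform $L^2$ bound on $\nabla\tilde u_n$). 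Therefore $\{\tilde u_n\}$ is bounded in $BV(\Om)\cap L^\infty(\Om)$, and by the standard $SBV$ closure theorem of Ambrosio (using the uniform $L^2$ bound on the approximate gradients and the uniform $\HH^{N-1}$ bound on the jump sets) the strong $L^2$ limit $u$ belongs to $SBV(\Om)\cap L^\infty(\Om)$ with $\nabla\tilde u_n\wto\nabla u$ weakly in $L^2(\Om;\RR^N)$ and $\HH^{N-1}(J_u)<\infty$; together with $u\in L^2$ and $\nabla u\in L^2$ this gives $u\in SBV^2(\Om)\cap L^\infty(\Om)$, and establishes the second and third convergence statements.

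For the last inequality, the factor $2$ is the crux. The point is that each point of $\partial^* E_n$ is ``seen twice'' by $\tilde u_n$: approaching a reduced-boundary point of $E_n$ from the two sides, on one side $\tilde u_n = u_n$ and on the other $\tilde u_n = 0$, so the jump of $\tilde u_n$ there is exactly $u_n$. To promote this to the factor $2$ in the limit one argues as in \cite[Theorem 10.6]{B} / \cite[Theorem 16]{BCS}: one considers, for a truncation level, the function $v_n := \tilde u_n - \chi_{\Om\setminus E_n}\,c$ for a suitable constant (or more cleanly works with $w_n:=2\tilde u_n-\chi_{\Om\setminus E_n}u_n$-type combinations) so that the jump set of the combined object counts $\partial^*E_n$ with multiplicity close to $2|u_n|/|u_n|$, then applies lower semicontinuity of $\HH^{N-1}(J_\cdot)$ under $SBV$ convergence. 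Concretely, I would reduce to showing that for the specific sequence $\tilde u_n$ the distributional jump part of $D\tilde u_n$ ``contains'' two copies of the perimeter measure of $E_n$ minus a vanishing error, and pass to the liminf. The hard part will be making this doubling rigorous: it is exactly here that one must invoke the fine structure of functions of the form $(1-\chi_{E_n})u_n$ and the coarea/slicing arguments behind \cite[Theorem 10.6]{B}, rather than anything that follows from $SBV$ closure alone. Since the statement explicitly says it is ``a direct consequence of the proof of \cite[Theorem 10.6]{B},'' I would at this point simply cite that proof for the doubling inequality, having reduced the first three convergences to elementary arguments as above.
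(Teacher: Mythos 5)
Your proposal is correct and takes essentially the same route as the paper, which itself gives no argument beyond the citation: the first three convergences and $u\in SBV^2(\Om)\cap L^\infty(\Om)$ follow exactly as you say from the uniform $L^\infty$ bound, $\LL^N(E_n)\to 0$, the inclusion $J_{\tilde u_n}\;\widetilde\subset\;\partial^*E_n\cap\Om$ and Ambrosio's $SBV$ closure/compactness theorem, while the factor $2$ is precisely the content of the cited proof of \cite[Theorem 10.6]{B} / \cite[Theorem 16]{BCS} (a slicing argument: near each jump point of a one-dimensional section of $u$ every component of the corresponding section of $E_n$ must eventually be contained in a small interval around that point and thus contributes both of its endpoints to the section of $\partial^*E_n$). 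The only blemish is your sketched combination $w_n:=2\tilde u_n-\chi_{\Om\setminus E_n}u_n$, which collapses to $\tilde u_n$ itself and so cannot by itself produce the doubling, but since you ultimately defer to the cited proof for that step — exactly as the paper does — this does not affect the correctness of the argument.
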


\vskip10pt

\noindent {\bf {\it The Ambrosio-Tortorelli \& Mumford-Shah functionals.}}
Throughout the paper, we assume that $\Om$ is a bounded open subset  of $\RR^N$ with at least Lipschitz boundary.  
We consider  $p>N$, $\beta>0$, and $g\in L^\infty(\Om)$ given.  
For $\e>0$ and $\eta_\e \in (0,1)$, we define the Ambrosio-Tortorelli functional $\E_\e:L^2(\Om)\times L^{p}(\Om)\to [0,+\infty]$ by
$$\E_\e(u,\rho):=
\begin{cases}
\displaystyle \begin{multlined}[9cm]
 \frac{1}{2}\int_{\Om}(\eta_\e+\rho^2)|\nabla u|^2\,dx + \int_{\Om} \left(\frac{\e^{p-1}}{p}|\nabla\rho|^p +\frac{\alpha}{p'\e}|1-\rho|^p\right)\,dx\\[-10pt]
+\frac{\beta}{2}\int_\Om (u-g)^2\,dx
\end{multlined}
& \text{if $(u,\rho)\in H^1(\Om)\times W^{1,p}(\Om)$}\,,\\[20pt]
+\infty & \text{otherwise}\,,
\end{cases}
$$
where $p':=p/(p-1)$ and $\alpha$ is the normalizing factor given by
\begin{equation}\label{alpha}
\alpha:= \left(\frac{p}{2}\right)^{p'}\,.
\end{equation}
The Mumford-Shah functional $\E : L^2(\Om) \to [0,+\infty]$ is in turn defined  by 
\begin{equation}\label{defMSfunct}
\E(u):=
\begin{cases}
\ds \frac{1}{2}\int_{\Om}|\nabla u|^2\,dx + \HH^{N-1}(J_u)+\frac{\beta}{2}\int_\Om (u-g)^2\,dx & \text{if }u \in GSBV^2(\Om)\,,\\[5pt]
+\infty & \text{otherwise}\,.
\end{cases}
\end{equation}
It is well known by now that the Ambrosio-Tortorelli functional approximates as $\varepsilon\to 0$ the Mumford-Shah functional in the sense of $\Gamma$-convergence, as stated in the following result, see \cite{AT,Foc}. Let us mention that Theorem~\ref{GconvAT} is not precisely a direct consequence of \cite{AT,Foc}. In \cite{AT}, the case $p=2$ is adressed, while \cite{Foc} deals with energies having the same $p$-growth in $\nabla u$ and $\nabla \rho$ (recall that $p>N\geq 2$). However, a careful inspection of the proof of \cite[Theorem 3.1]{Foc} shows that the  $\Gamma$-convergence result still holds for $\E_\e$.

\begin{theorem}\label{GconvAT}
Assume that $\eta_\e=o(\e)$. Then $\E_\e$ $\Gamma$-converges as $\e\to0$ (with respect to the strong $L^2(\Om) \times L^p(\Om)$-topology) to the functional $\E_0$ defined by
$$\E_0(u,\rho):=
\begin{cases}
\ds \E(u)  & \text{if }u \in GSBV^2(\Om) \text{ and }Ê\rho=1 \text{ in }Ê\Om\,,\\
+\infty & \text{otherwise}\,.
\end{cases}$$
\end{theorem}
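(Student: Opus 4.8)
The plan is to prove Theorem~\ref{GconvAT} by adapting the classical Ambrosio--Tortorelli $\Gamma$-convergence argument, taking advantage of the fact that $p>N\geq 2$ actually makes several steps \emph{easier} than in the borderline case $p=2$. I would treat the three required ingredients separately: compactness of sequences with bounded energy, the $\liminf$ (lower bound) inequality, and the $\limsup$ (recovery sequence) inequality, all with respect to the strong $L^2(\Om)\times L^p(\Om)$ topology.

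\textbf{Compactness and the structure of limits.} First I would show that if $\sup_n\E_\e(u_n,\rho_n)<\infty$ (with $\e=\e_n\to0$), then along a subsequence $u_n\to u$ in $L^2(\Om)$ with $u\in GSBV^2(\Om)$ and $\rho_n\to1$ in $L^p(\Om)$. The convergence $\rho_n\to1$ is immediate: the term $\frac{\alpha}{p'\e}\int_\Om|1-\rho_n|^p\,dx$ being bounded forces $\|1-\rho_n\|_{L^p}^p=O(\e)\to0$. For the $GSBV^2$ structure of $u$, I would truncate, setting $u_n^M=(-M\vee u_n)\wedge M$, introduce the ``bad sets'' $E_n=\{\rho_n\le t_n\}$ for a suitable threshold $t_n\to1$ (chosen via a Chebyshev/coarea argument so that both $\LL^N(E_n)\to0$ and $\HH^{N-1}(\partial^*E_n\cap\Om)$ stays bounded, the latter using the $\e|\nabla\rho|^p$ term and a De Giorgi--type slicing on level sets of $\rho_n$), and then invoke Proposition~\ref{lower-bound-AT} with $\tilde u_n^M=(1-\chi_{E_n})u_n^M$; note $\nabla\tilde u_n^M$ is bounded in $L^2$ because $(\eta_\e+\rho_n^2)|\nabla u_n|^2$ is bounded and $\rho_n>t_n$ off $E_n$. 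Passing to the limit $M\to\infty$ gives $u\in GSBV^2(\Om)$.

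\textbf{Lower bound.} Given $(u_n,\rho_n)\to(u,\rho)$ in $L^2\times L^p$ with bounded energy, I must show $\E_0(u,\rho)\le\liminf_n\E_\e(u_n,\rho_n)$. The previous step already gives $\rho=1$; the fidelity term $\frac\beta2\int_\Om(u_n-g)^2$ passes to the limit by strong $L^2$ convergence, and $\frac12\int_\Om|\nabla u|^2\le\liminf\frac12\int_\Om\rho_n^2|\nabla u_n|^2$ follows from the weak $L^2$ convergence $\nabla\tilde u_n^M\wto\nabla u^M$ of Proposition~\ref{lower-bound-AT} together with lower semicontinuity and $M\to\infty$. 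The surface term $\HH^{N-1}(J_u)\le\liminf\int_\Om\big(\frac{\e^{p-1}}{p}|\nabla\rho_n|^p+\frac{\alpha}{p'\e}|1-\rho_n|^p\big)dx$ is the heart of the matter: here I would use the final inequality of Proposition~\ref{lower-bound-AT}, $2\HH^{N-1}(J_u)\le\liminf\HH^{N-1}(\partial^*E_n\cap\Om)$, combined with the coarea-and-Young-inequality estimate that the diffuse surface energy localized near $\partial^*E_n$ dominates $\tfrac12\HH^{N-1}(\partial^*E_n\cap\Om)$ — this is exactly where the normalization $\alpha=(p/2)^{p'}$ in~\eqref{alpha} is calibrated, so that Young's inequality $\frac{\e^{p-1}}{p}a^p+\frac{\alpha}{p'\e}b^p\ge(ab)\cdot\alpha^{1/p'}=\frac p2\,ab$ produces exactly the factor needed to match the coarea formula. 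One has to be a little careful to avoid double counting (a standard localization/blow-up argument restricting to disjoint patches of $J_u$, or the approach via \cite[Theorem 3.1]{Foc}).

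\textbf{Upper bound.} Finally, for the recovery sequence: given $u\in GSBV^2(\Om)$ (and $\rho=1$; if $\rho\ne1$ there is nothing to prove), I would first reduce by a density/diagonal argument to $u$ smooth away from a finite union of $C^1$ pieces of hypersurface (using the standard density theorems in $SBV^2$ and lower semicontinuity of $\E_0$), then build $\rho_\e$ as the usual optimal-profile construction: $\rho_\e=0$ in a shrinking tubular neighborhood of $J_u$ and $\rho_\e$ interpolating to $1$ via the solution of the one-dimensional optimal profile problem $\min\int_0^\infty\big(\frac1p|f'|^p+\frac\alpha{p'}|1-f|^p\big)dt$ over $f(0)=0$, $f(\infty)=1$, whose value equals $\tfrac12$ precisely by the choice~\eqref{alpha}; the width of the tube is $O(\e)$ so that $\int_\Om\frac{\e^{p-1}}{p}|\nabla\rho_\e|^p+\frac{\alpha}{p'\e}|1-\rho_\e|^p\to\HH^{N-1}(J_u)$, while $\eta_\e=o(\e)$ kills the contribution of $\eta_\e|\nabla u|^2$ on the tube. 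I expect the main obstacle to be the lower bound for the surface term — specifically, making the localization rigorous so that the diffuse energy concentrated around $\partial^*E_n$ is not over-counted and correctly yields a clean ``$2\times$'' matching $\HH^{N-1}(J_u)$; everything else is a routine transcription of \cite{AT,Foc} once one observes that the Sobolev embedding $W^{1,p}\hookrightarrow C^0$ for $p>N$ only simplifies the obstacle-type and truncation estimates.
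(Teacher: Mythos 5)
Your outline is correct and is exactly the argument the paper intends: note that the paper itself contains no proof of Theorem~\ref{GconvAT} --- it imports the result from \cite{AT,Foc} with the remark that ``a careful inspection of the proof of \cite[Theorem 3.1]{Foc}'' adapts to the mixed growth $2$/$p$ --- and the quantitative ingredients you isolate (the calibration of $\alpha$ via Young's inequality, the coarea formula with mean-value selection of a level $s_n\in(\delta_1,\delta_2)$, and the factor-$2$ inequality of Proposition~\ref{lower-bound-AT}) are precisely the ones the authors deploy later, in Proposition~\ref{compaciteeps}, to prove the corresponding one-sided bounds \eqref{SBVconv2}. The ``double counting'' you worry about in the surface lower bound is already resolved by that machinery (one well-chosen level set, weight $\frac{\delta_2^p-\delta_1^p}{p}$ from the coarea integral, factor $2$ from Proposition~\ref{lower-bound-AT}, then $\delta_1\to 0$, $\delta_2\to 1$), so no blow-up or localization is needed; the only slip is cosmetic --- Young's inequality yields $\frac{\e^{p-1}}{p}|\nabla\rho|^p+\frac{\alpha}{p'\e}(1-\rho)^p\ge \frac p2\,(1-\rho)^{p-1}|\nabla\rho|$, i.e.\ $ab^{p-1}$ rather than $ab$, which is what feeds correctly into the coarea formula.
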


\section{Unilateral minimizing movements}\label{sec3}    

\subsection{The discrete evolution scheme}  
  
Throughout the paper, we shall say that a sequence of time steps $\boldsymbol{\delta}:=\{\delta^i\}_{i\in\mathbb{N}^*}$ is a partition of $[0,+\infty)$ if   
$$\delta^i>0\,,\quad \sup_{i\geq 1}\,\delta^i<+\infty\,,\quad\text{and}\quad \sum_{i\geq 1}\delta^i =+\infty\,. $$
To a partition  $\boldsymbol{\delta}$ we associate the sequence of discrete times $\{t^i\}_{i\in\mathbb{N}}$ given by $t^0:=0$,  $t^i:=\sum_{j=1}^i\delta^j$ for $i\geq 1$, and we define the {\it time step length} by 
$$|\boldsymbol\delta|:= \sup_{i\geq 1}\,\delta^i\,.$$

To an initial datum $u_0\in H^1(\Om)\cap L^\infty(\Om)$, we shall {\it always} associate (for simplicity) the initial state $\rho^\e_0$ determined by
\begin{equation}\label{rho0}
\rho_0^\e:=\mathop{{\rm argmin}}\limits_{\rho\in W^{1,p}(\Om)} \E_\e(u_0,\rho)\,.
\end{equation}
It is standard to check that the above minimization problem has a unique solution (by coercivity and strict convexity of the functional $\E_\e(u_0,\cdot)$), and it follows by minimality that $0\leq \rho^\e_0\leq 1$. Given a partition $\boldsymbol\delta$ of $[0,+\infty)$, we now introduce the discrete evolution Euler scheme starting from $(u_0,\rho_0^\e)$.

\vskip3pt

\noindent {\it \underline{\bf Global minimization:} \hskip5pt Set $(u^0,\rho^0):=(u_0,\rho_0^\e)$, and select recursively for all integer $i\geq 1$,  
\begin{equation}\label{minSchem1}
(u^i,\rho^i)\in {\rm argmin}\bigg\{ \E_\e(u,\rho) +\frac{1}{2\delta^i}\|u-u^{i-1}\|^2_{L^2(\Om)} :  (u,\rho)\in H^1(\Om)\times W^{1,p}(\Om)\,,\;
 \rho\leq \rho^{i-1}\text{  in }\Om\bigg\}\,.
\end{equation}}

The well-posedness of this scheme requires some care. 
Since the sublevel sets of $\E_\e$ are clearly relatively compact for the sequential weak $H^1(\Om)\times W^{1,p}(\Om)$-topology, one may apply 
the Direct Method of Calculus of Variations to solve \eqref{minSchem1}.  We only need to show that the constraint in \eqref{minSchem1} is closed, and that $\E_\e$ is lower semicontinuous with respect to weak convergence. 

\begin{lemma}\label{existsch1}
Let $\{(u_n,\rho_n)\}_{n\in\mathbb{N}}\subset H^1(\Om)\times W^{1,p}(\Om)$ be such that $(u_n,\rho_n)\rightharpoonup (u,\rho)$ weakly in $H^1(\Om)\times W^{1,p}(\Om)$. Then, 
\begin{equation}\label{liminfineqenerg}
\E_\e(u,\rho)\leq \liminf_{n\to\infty} \E_\e(u_n,\rho_n)\,.
\end{equation}
Moreover, if for each $n\in\mathbb{N}$, $\rho_n\leq \bar \rho$ in $\Om$ for some $\bar \rho\in W^{1,p}(\Om)$, then $\rho\leq \bar \rho$ in $\Om$.
 Finally, assuming that  
$ \E_\e(u_n,\rho_n)\to\E_\e(u,\rho)$ as $n\to\infty$, then $(u_n,\rho_n)\to (u,\rho)$ strongly in $H^1(\Om)\times W^{1,p}(\Om)$.
\end{lemma}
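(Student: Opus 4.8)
The plan is to verify the three assertions of Lemma~\ref{existsch1} in turn, each reducing to a standard lower semicontinuity or weak-continuity argument adapted to the particular structure of $\E_\e$.

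\emph{Lower semicontinuity \eqref{liminfineqenerg}.} I would split $\E_\e$ into its three additive pieces. The fidelity term $\tfrac\beta2\int_\Om(u-g)^2$ is continuous under strong $L^2(\Om)$-convergence, and since $u_n\rightharpoonup u$ weakly in $H^1(\Om)$ gives $u_n\to u$ strongly in $L^2(\Om)$ by Rellich--Kondrachov, this term passes to the limit. The term $\int_\Om\bigl(\tfrac{\e^{p-1}}p|\nabla\rho|^p+\tfrac{\alpha}{p'\e}|1-\rho|^p\bigr)dx$ is convex in $(\rho,\nabla\rho)$ and hence weakly lower semicontinuous on $W^{1,p}(\Om)$. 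The only genuinely nontrivial term is the degenerate bulk term $\tfrac12\int_\Om(\eta_\e+\rho^2)|\nabla u|^2\,dx$. Here I would use that $p>N$, so by the Sobolev embedding $W^{1,p}(\Om)\hookrightarrow C^0(\overline\Om)$ the sequence $\rho_n$ converges to $\rho$ \emph{uniformly}; in particular $\eta_\e+\rho_n^2\to\eta_\e+\rho^2$ uniformly, these coefficients are uniformly bounded below by $\eta_\e>0$, and $\nabla u_n\rightharpoonup\nabla u$ weakly in $L^2(\Om;\RR^N)$. Writing $(\eta_\e+\rho_n^2)|\nabla u_n|^2=|\sqrt{\eta_\e+\rho_n^2}\,\nabla u_n|^2$ and noting $\sqrt{\eta_\e+\rho_n^2}\,\nabla u_n\rightharpoonup\sqrt{\eta_\e+\rho^2}\,\nabla u$ weakly in $L^2$ (product of a uniformly convergent bounded factor and a weakly convergent factor), the conclusion follows from weak lower semicontinuity of the $L^2$-norm. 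I expect this coefficient-regularity step — where the choice $p>N$ is used in an essential way — to be the main point of the argument.

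\emph{Closedness of the constraint.} If $\rho_n\le\bar\rho$ in $\Om$ for all $n$, then by the uniform (or merely a.e., after passing to a subsequence) convergence $\rho_n\to\rho$ one gets $\rho\le\bar\rho$ in $\Om$ immediately; alternatively, $\{\rho\in W^{1,p}(\Om):\rho\le\bar\rho\}$ is convex and strongly closed, hence weakly closed.

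\emph{Strong convergence under convergence of energies.} Suppose $\E_\e(u_n,\rho_n)\to\E_\e(u,\rho)$. Since each of the three pieces is separately weakly lower semicontinuous (as established above) and their sum converges to the sum of the limits, each piece must converge to its own limit. From convergence of $\int_\Om\bigl(\tfrac{\e^{p-1}}p|\nabla\rho_n|^p+\tfrac{\alpha}{p'\e}|1-\rho_n|^p\bigr)dx$ together with weak $W^{1,p}$-convergence and uniform convergence $\rho_n\to\rho$, one deduces $\|\nabla\rho_n\|_{L^p}\to\|\nabla\rho\|_{L^p}$; combined with $\nabla\rho_n\rightharpoonup\nabla\rho$ and the uniform convexity of $L^p$ for $1<p<\infty$ this yields $\nabla\rho_n\to\nabla\rho$ strongly in $L^p(\Om;\RR^N)$, hence $\rho_n\to\rho$ strongly in $W^{1,p}(\Om)$. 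For $u$, convergence of the bulk term $\tfrac12\int_\Om(\eta_\e+\rho_n^2)|\nabla u_n|^2\,dx$ together with the uniform convergence of the coefficients and $\eta_\e+\rho_n^2\ge\eta_\e>0$ gives, via the same weighted-$L^2$ trick, that $\sqrt{\eta_\e+\rho_n^2}\,\nabla u_n$ converges to $\sqrt{\eta_\e+\rho^2}\,\nabla u$ in norm and weakly, hence strongly in $L^2$; dividing by the uniformly convergent, uniformly positive factor $\sqrt{\eta_\e+\rho_n^2}$ gives $\nabla u_n\to\nabla u$ strongly in $L^2(\Om;\RR^N)$, and with the already-known strong $L^2$-convergence of $u_n$ this gives $u_n\to u$ strongly in $H^1(\Om)$.
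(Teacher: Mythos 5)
Your proposal is correct and follows essentially the same route as the paper: uniform convergence of $\rho_n$ via the Sobolev embedding $W^{1,p}(\Om)\hookrightarrow \C^0(\overline\Om)$ ($p>N$) to handle the degenerate bulk term, weak lower semicontinuity of each piece, and then convergence of each piece separately plus the Radon--Riesz property to upgrade to strong convergence. The only (immaterial) differences are that the paper derives the convergence of the bulk term by contradiction rather than the liminf-splitting argument, and freezes the weight at the limit $\rho$ instead of using your square-root factorization.
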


\begin{proof}
{\it Step 1.} The sequence $\{(u_n,\rho_n)\}$ being weakly convergent, it is bounded in $H^1(\Om)\times W^{1,p}(\Om)$. Therefore $\rho_n\to \rho$ in $\mathscr{C}^0(\overline\Om)$ by the Sobolev Imbedding Theorem.  
Hence $\rho \leq \bar \rho$ in $\Om$ whenever $\rho_n\leq \bar \rho$ in $\Om$ for every $n\in\mathbb{N}$. 
Then $\rho_n\nabla u_n\wto \rho\nabla u$ weakly in $L^2(\Om)$, and consequently, 
$$\int_\Om (\eta_\e+\rho^2)|\nabla u|^2\,dx\leq  \liminf_{n\to\infty} \int_\Om (\eta_\e+\rho_n^2)|\nabla u_n|^2\,dx\,.$$
Since all other terms in $\E_\e$ are clearly lower semicontinuous with respect to  the weak  convergence in $H^1(\Om)\times W^{1,p}(\Om)$, we have proved \eqref{liminfineqenerg}. 

\vskip5pt

\noindent{\it Step 2.} Let us now assume that $\E_\e(u_n,\rho_n)\to\E_\e(u,\rho)$. We first claim that 
\begin{equation}\label{convbulk}
\int_\Om (\eta_\e+\rho^2)|\nabla u|^2\,dx=\lim_{n\to\infty} \int_\Om (\eta_\e+\rho_n^2)|\nabla u_n|^2\,dx\,.
\end{equation}
Indeed, assume by contradiction that for a subsequence $\{n_j\}$ we have 
$$\int_\Om (\eta_\e+\rho^2)|\nabla u|^2\,dx< \liminf_{j\to\infty} \int_\Om (\eta_\e+\rho_{n_j}^2)|\nabla u_{n_j}|^2\,dx\,. $$
Using the fact that $u_n\to u$ strongly in $L^2(\Om)$, we deduce from Step 1 that 
\begin{align*}
 \lim_{j\to\infty} &\, \E_\e(u_{n_j},\rho_{n_j})\\
& \geq \liminf_{j\to\infty} \frac{1}{2}\int_\Om (\eta_\e+\rho_{n_j}^2)|\nabla u_{n_j}|^2\,dx+ 
\liminf_{j\to\infty} \int_{\Om} \left(\frac{\e^{p-1}}{p}|\nabla\rho_{n_j}|^p +\frac{\alpha}{p'\e}|1-\rho_{n_j}|^p\right)\,dx
+\frac{\beta}{2}\int_\Om (u-g)^2\,dx\\
& > \E_\e(u,\rho)\,,
\end{align*}
which is impossible. Therefore \eqref{convbulk} holds. Then, combining the convergence of $\E_\e(u_n,\rho_n)$ with \eqref{convbulk}, we deduce that 
$\|\rho_n\|_{W^{1,p}(\Om)}\to \|\rho\|_{W^{1,p}(\Om)}$, whence the strong $W^{1,p}(\Om)$-convergence of $\rho_n$.

It now remains to show that $u_n\to u$ strongly in $H^1(\Om)$. Using the uniform convergence of $\rho_n$ established in Step 1, we first estimate
$$\int_\Om |\rho^2-\rho_n^2||\nabla u_n|^2\,dx\leq \big(\sup_{ kÊ\in \NN} \|\nabla u_k\|_{L^2(\Om;\RR^N)}\big)\|\rho^2-\rho_n^2\|_{L^\infty(\Om)} \mathop{\longrightarrow}\limits_{n\to\infty} 0\,.$$
Then we infer from  \eqref{convbulk} that
$$\int_\Om(\eta_\e+\rho^2)|\nabla u_n|^2\,dx=  \int_\Om(\eta_\e+\rho_n^2)|\nabla u_n|^2\,dx+ \int_\Om (\rho^2-\rho_n^2)|\nabla u_n|^2\,dx \mathop{\longrightarrow}\limits_{n\to\infty}  
\int_\Om(\eta_\e+\rho^2)|\nabla u|^2\,dx\,.$$
Consequently $\|u_n\|_{H^1(\Om)}\to \|u\|_{H^1(\Om)}$, whence the strong $H^1(\Om)$-convergence of $u_n$. 
 \end{proof}

We state below a maximum principle on the iterates $\{(u^{i},\rho^i)\}_{i\in\NN}$ which easily follows from minimality and standard truncation arguments. 

\begin{lemma}\label{ptwbdmonotdisc}
For every $i\in\NN$, 
\begin{equation}\label{ptwbdmonotdisceq}
\|u^i\|_{L^\infty(\Om)}\leq  \max\{\|u_0\|_{L^\infty(\Om)}, \|g\|_{L^\infty(\Om)}\}\quad\text{and}\quad 0\leq \rho^{i+1}\leq \rho^{i}\leq 1\text{ in $\Om$}\,.
\end{equation}
\end{lemma}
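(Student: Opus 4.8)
\emph{Proof proposal.} The plan is to argue by induction on $i$, establishing simultaneously the $L^\infty$-bound on $u^i$ and the chain $0\leq \rho^{i+1}\leq \rho^i\leq 1$, the only tool being the comparison of the minimizer selected in \eqref{minSchem1} with a suitably truncated competitor. Set $M:=\max\{\|u_0\|_{L^\infty(\Om)},\|g\|_{L^\infty(\Om)}\}$. The case $i=0$ is immediate: $\|u^0\|_{L^\infty(\Om)}=\|u_0\|_{L^\infty(\Om)}\leq M$, the bounds $0\leq\rho^0=\rho^\e_0\leq 1$ were already recorded right after \eqref{rho0}, and $\rho^1\leq\rho^0$ is part of the constraint in \eqref{minSchem1}.

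For the inductive step on the displacement, assuming $\|u^{i-1}\|_{L^\infty(\Om)}\leq M$, I would truncate and set $\hat u:=((-M)\vee u^i)\wedge M\in H^1(\Om)$, which makes $(\hat u,\rho^i)$ an admissible competitor in \eqref{minSchem1} since the unilateral constraint $\rho^i\leq\rho^{i-1}$ is untouched. On $\{|u^i|>M\}$ one has $\nabla\hat u=0$ a.e., so the bulk term does not increase; moreover, because $|g|\leq M$ and $|u^{i-1}|\leq M$ a.e.\ (induction hypothesis), an elementary pointwise check gives $|\hat u-g|\leq|u^i-g|$ and $|\hat u-u^{i-1}|\leq|u^i-u^{i-1}|$, with both inequalities \emph{strict} a.e.\ on $\{|u^i|>M\}$. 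Hence
$$\E_\e(\hat u,\rho^i)+\frac{1}{2\delta^i}\|\hat u-u^{i-1}\|^2_{L^2(\Om)}\leq\E_\e(u^i,\rho^i)+\frac{1}{2\delta^i}\|u^i-u^{i-1}\|^2_{L^2(\Om)}\,,$$
and equality is possible only if $\LL^N(\{|u^i|>M\})=0$; since $(u^i,\rho^i)$ is a minimizer, this forces $\|u^i\|_{L^\infty(\Om)}\leq M$.

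For the phase field, $\rho^{i+1}\leq\rho^i$ is built into \eqref{minSchem1}, and iterating this together with $\rho^0=\rho^\e_0\leq 1$ yields $\rho^i\leq 1$ for every $i$. To get $\rho^{i+1}\geq 0$, assuming $\rho^i\geq 0$, I would use the competitor $(u^{i+1},(\rho^{i+1})^+)$: it is admissible since $(\rho^{i+1})^+\leq(\rho^i)^+=\rho^i$, and passing from $\rho^{i+1}$ to its positive part does not increase the bulk term (as $((\rho^{i+1})^+)^2\leq(\rho^{i+1})^2$) nor the $\int_\Om|\nabla\rho|^p$ term (truncation), leaves the $u$-terms unchanged, and strictly decreases $\int_\Om|1-\rho|^p$ on $\{\rho^{i+1}<0\}$ since there $|1-0|^p=1<(1-\rho^{i+1})^p$. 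Minimality again forces $\LL^N(\{\rho^{i+1}<0\})=0$, which closes the induction.

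I do not expect a genuine obstacle here; the only points deserving care are the verification that each truncated pair still satisfies the unilateral constraint $\rho\leq\rho^{i-1}$, and that on the ``bad'' set at least one of the competing terms is \emph{strictly} decreased so that the minimality in \eqref{minSchem1} excludes a bad set of positive measure — and it is precisely at this spot that the induction hypothesis $\|u^{i-1}\|_{L^\infty(\Om)}\leq M$ (through the $L^2$-penalization term) is used.
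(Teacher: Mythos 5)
Your argument is correct and is precisely the ``minimality plus standard truncation'' argument that the paper invokes without writing out: truncation of $u^i$ at level $M$ (using the inductive bound on $u^{i-1}$ to control the $L^2$-penalization term) and replacement of $\rho^{i+1}$ by its positive part, with strictness on the bad set ruling out a positive-measure violation. All admissibility checks (the unilateral constraint is preserved by both truncations) are handled correctly, so there is nothing to add.
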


\subsection{Generalized unilateral minimizing movements}

To a partition $\boldsymbol\delta$ of $[0,+\infty)$ and a sequence of iterates $\{(u^i,\rho^i)\}_{i\in\NN}$ given by  \eqref{minSchem1}, we associate a {\it\bf discrete trajectory} $(u_\bdel,\rho_\bdel):[0,+\infty) \to H^1(\Om)\times W^{1,p}(\Om)$ defined as the  
{\it left continuous piecewise constant interpolation} of the $(u^i,\rho^i)$'s below. More precisely, we set 
$$u_\bdel(0)=u_0\,,\quad\rho_\bdel(0)=\rho^\e_0\,,$$ 
and for $t>0$, 
\begin{equation}\label{discrTraj}
\begin{cases}
\displaystyle u_\bdel(t):= u^{i}\\
\rho_\bdel(t):=\rho^{i} 
\end{cases}
\qquad \text{if $t\in(t^{i-1},t^{i}]$}\,.
\end{equation}

By analogy with the standard notion of minimizing movements, we now introduce the following definition. 

\begin{definition}[Unilateral Minimizing Movements]\label{defminimov}
Let $u_0\in H^1(\Om)\cap L^\infty(\Om)$. We say that a pair $(u,\rho):[0,+\infty)\to L^2(\Om)\times L^p(\Om)$ is a {\it (generalized) unilateral minimizing movement} for $\E_\e$ starting from 
$(u_0,\rho_0^\e)$ if there exist a sequence $\{\bdel_k\}_{k\in\NN}$ of partitions of $[0,+\infty)$ satisfying $|\bdel_k|\to 0$,  and  associated discrete trajectories  
$\{(u_{\bdel_k},\rho_{\bdel_k})\}_{k\in\NN}$ such that 
$$(u_{\bdel_k}(t),\rho_{\bdel_k}(t))\mathop{\longrightarrow}\limits_{k\to\infty} (u(t),\rho(t))  \quad\text{strongly in $L^2(\Om)\times L^p(\Om)$ for every $t\geq 0$}\,.$$
We denote by $GUMM(u_0,\rho_0^\e)$ the collection of all (generalized) unilateral minimizing movements for $\E_\e$ starting from $(u_0,\rho_0^\e)$. 
\end{definition}

\begin{remark}
At this stage we do not claim that the collection $GUMM(u_0,\rho_0^\e)$  is not empty. This will be proved in the next section through a compactness result on discrete trajectories (see Lemmas~\ref{convrok} \& \ref{convuk}, and Corollary \ref{coro}).  
\end{remark}

\section{Existence of generalized unilateral minimizing movements}\label{sec4}

The object of this section is to provide an accurate information on the evolution laws of generalized unilateral minimizing movements. To avoid some technicalities in the analysis, we shall restrict ourselves to generalized unilateral minimizing movements arising from a sequence of discrete trajectories whose time partitions $\{\bdel_k\}_{k\in\NN}$ satisfy
\begin{equation}\label{condGUAMM}
\sup_{k\in\mathbb{N}}\left(\sup_{i\geq 1}\,\frac{\delta_k^{i+1}}{\delta_k^i}\right)<\infty\,.
\end{equation}
This condition is not essential and can be removed (the alternative argument, based on De Giorgi interpolations, can be found in the preliminary version of this paper  \cite{BM}). 
 
\vskip3pt

The main result of this section can be summarized in the following theorem. 

\begin{theorem}\label{thmGUMM}
Let $\Om \subset \RR^N$ be a bounded open set with $\C^{1,1}$ boundary. For an initial data $u_0\in H^1(\Om)\cap L^\infty(\Om)$ and $\rho_0^\e$ given by \eqref{rho0},  let $(u_\e,\rho_\e)\in GUMM(u_0,\rho_0^\e)$ be a strong $L^2(\Om)\times L^p(\Om)$-limit of some discrete trajectories $\{(u_k,\rho_k)\}_{k\in\NN}$ obtained from a sequence of partitions $\{\bdel_k\}_{k\in\NN}$ of $[0,+\infty)$ satisfying $|\bdel_k|\to 0$ and \eqref{condGUAMM}. 
Then, the following properties hold:
\begin{eqnarray*}
&\ds u_\e\in AC^2([0,+\infty);L^2(\Om))\cap L^\infty(0,+\infty;H^1(\Om)) \cap L^2_{\rm loc}(0,+\infty;H^2(\Om))\,,\\[5pt]
& \ds \rho_\e\in L^\infty(0,+\infty;W^{1,p}(\Om))\,,\;\text{  $0\leq \rho_\e(t)\leq \rho_\e(s)\leq 1$ for every $t\geq s\geq 0$}\,,
\end{eqnarray*}
and 
\begin{equation}\label{equa1}\begin{cases}
\ds u^\prime_\e = {\rm div}\big((\eta_\e+ \rho_\e^2)\nabla u_\e\big)-\beta(u_\e-g)  & \text{in $L^2(0,+\infty; L^2(\Om))$ ,}\\[5pt]
\ds\frac{\partial u_\e}{\partial \nu}=0 & \text{in $L^2(0,+\infty; H^{1/2}(\partial\Om))$ ,}\\[5pt]
u_\e(0)= u_0\, ,
\end{cases}\end{equation}
with 
\begin{equation}\label{equa2}
\begin{cases}
\E_\e(u_\e(t),\rho_\e(t))\leq \E_\e(u_\e(t),\rho) \text{ for every $t\geq 0$ and $\rho\in W^{1,p}(\Om)$ such that $\rho\leq \rho_\e(t)$ in $\Om$} \,,\\
\rho_\e(0)=\rho_0^\e  \,.
\end{cases}
\end{equation}
Moreover, $t\mapsto \E_\e(u_\e(t),\rho_\e(t))$ has finite pointwise variation in $[0,+\infty)$, and there exists an (at most) countable set 
$\mathcal{N}_\e\subset (0,+\infty)$ such that 
\begin{itemize}
\item[(i)] $(u_\e,\rho_\e):[0,+\infty)\setminus\mathcal{N}_\e\to H^1(\Om)\times W^{1,p}(\Om)$ is strongly continuous;
\vskip3pt
\item[(ii)] for every $s\in [0,+\infty)\setminus\mathcal{N}_\e$, and every $t\geq s$,
\begin{equation}\label{equa3}
\E_\e(u_\e(t),\rho_\e(t))+\int_s^t \|u^\prime_\e(r)\|^2_{L^2(\Om)}\,dr 
\leq \E_\e(u_\e(s),\rho_\e(s))\,.
\end{equation}
\end{itemize}
\end{theorem}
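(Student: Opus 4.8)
The strategy is the classical minimizing-movements scheme: derive uniform \emph{a priori} estimates on the discrete iterates, pass to the limit $|\bdel_k|\to 0$, and identify the limiting equations. First I would record the discrete variational inequalities coming from the minimality of $(u^i,\rho^i)$ in \eqref{minSchem1}. Comparing $(u^i,\rho^i)$ with $(u^{i-1},\rho^i)$ (admissible since $\rho^i\le\rho^{i-1}$) gives the fundamental \emph{discrete energy estimate}
\begin{equation*}
\E_\e(u^i,\rho^i)+\frac{1}{2\delta^i}\|u^i-u^{i-1}\|^2_{L^2(\Om)}\le \E_\e(u^{i-1},\rho^i)\le \E_\e(u^{i-1},\rho^{i-1})\,,
\end{equation*}
where the second inequality uses that $\rho^i$ is not competing against $\rho^{i-1}$ for $\E_\e(u^{i-1},\cdot)$ but rather that $\rho^{i-1}$ is the constrained minimizer at the previous step — more carefully, one uses $\E_\e(u^{i-1},\rho^i)\le\E_\e(u^{i-1},\rho^{i-1})$ only when available, otherwise one argues directly via $(u^{i-1},\rho^{i-1})$ being admissible in the $i$-th problem with $\rho^{i-1}\le\rho^{i-1}$: this yields $\E_\e(u^i,\rho^i)+\frac{1}{2\delta^i}\|u^i-u^{i-1}\|^2\le\E_\e(u^{i-1},\rho^{i-1})$. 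Summing over $i$ telescopes to $\E_\e(u^n,\rho^n)+\sum_{i=1}^n\frac{1}{2\delta^i}\|u^i-u^{i-1}\|^2\le\E_\e(u_0,\rho_0^\e)$, which by $\E_\e\ge0$ bounds $\sup_n\E_\e(u^n,\rho^n)$ and the discrete dissipation $\sum_i\frac{\|u^i-u^{i-1}\|^2}{\delta^i}$ uniformly in $\bdel_k$. Lemma~\ref{ptwbdmonotdisc} then gives the $L^\infty$ bound on $u^i$ and the monotone chain $0\le\rho^{i+1}\le\rho^i\le1$.

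Next I would introduce the piecewise-affine interpolant $\tilde u_\bdel$ of the $u^i$'s and use the dissipation bound to show $\{\tilde u_{\bdel_k}\}$ is bounded in $H^1(0,+\infty;L^2(\Om))$ and, via the energy bound, in $L^\infty(0,+\infty;H^1(\Om))$; the difference $u_\bdel-\tilde u_\bdel\to0$ in $L^2$. An Aubin–Lions / Ascoli-type argument (using condition \eqref{condGUAMM} to control consecutive time steps) yields, up to a subsequence, $u_{\bdel_k}(t)\to u_\e(t)$ in $L^2(\Om)$ for every $t$ with $u_\e\in AC^2([0,+\infty);L^2(\Om))\cap L^\infty(0,+\infty;H^1(\Om))$ and $u_{\bdel_k}'\wto u_\e'$ in $L^2(0,+\infty;L^2(\Om))$. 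For $\rho$, monotonicity in the iterates together with the uniform $W^{1,p}$ bound gives, pointwise in $t$, a limit $\rho_\e(t)\in W^{1,p}(\Om)$ with $\rho_\e(t)\le\rho_\e(s)\le1$ for $t\ge s$ (Helly-type selection); the Sobolev embedding $W^{1,p}\hookrightarrow\C^0(\overline\Om)$, crucial to the whole construction, upgrades this to \emph{uniform} convergence of the obstacles, which is exactly what prevents capacitary ``strange terms'' in the obstacle problem. To identify \eqref{equa1} I would write the Euler–Lagrange equation of \eqref{minSchem1} in the $u$-variable, namely $\frac{u^i-u^{i-1}}{\delta^i}=\div((\eta_\e+(\rho^i)^2)\nabla u^i)-\beta(u^i-g)$ weakly with Neumann condition, translate it into the time-interpolated statement, and pass to the limit using the weak convergence of $u_{\bdel_k}'$, the strong $L^2$ convergence of $u_{\bdel_k}$, and the uniform convergence of $\rho_{\bdel_k}$ (so $(\eta_\e+\rho_{\bdel_k}^2)\nabla u_{\bdel_k}\wto(\eta_\e+\rho_\e^2)\nabla u_\e$ weakly in $L^2$). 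Elliptic regularity on the $\C^{1,1}$ domain then gives $u_\e(t)\in H^2(\Om)$ for a.e.\ $t$ with the $H^{1/2}(\partial\Om)$ trace of the normal derivative vanishing, hence $u_\e\in L^2_{\rm loc}(0,+\infty;H^2(\Om))$.

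For the unilateral minimality \eqref{equa2}: fix $t$ and $\rho\le\rho_\e(t)$; I would use $\rho\wedge\rho^i$ (or a suitable truncation $\rho\vee(\rho_\e(t)-\text{something})$, but cleanly $\rho\wedge\rho^{i}$ which is $\le\rho^{i}$ and hence admissible at step $i$ along the partition index $i=i_k(t)$ with $t^{i-1}<t\le t^i$) as a competitor in the $\rho$-minimality of \eqref{minSchem1}; since $\E_\e(u^i,\cdot)$ is decreased by this competitor and $\rho\wedge\rho^{i}\to\rho\wedge\rho_\e(t)=\rho$ uniformly (as $\rho\le\rho_\e(t)\le\rho^{i}$ eventually — here the uniform convergence is again essential), lower semicontinuity (Lemma~\ref{existsch1}, inequality \eqref{liminfineqenerg}) on the left and continuity of $\rho\mapsto\E_\e(u,\rho)$ along strongly convergent sequences on the right give $\E_\e(u_\e(t),\rho_\e(t))\le\E_\e(u_\e(t),\rho)$. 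Finally, the energy inequality \eqref{equa3} and the countable jump set $\mathcal N_\e$: passing to the liminf in the telescoped discrete energy estimate between discrete times approximating $s$ and $t$, using lower semicontinuity of $\E_\e$ and of the dissipation functional $v\mapsto\int_s^t\|v'\|^2$ under weak $L^2$ convergence, yields $\E_\e(u_\e(t),\rho_\e(t))+\int_s^t\|u_\e'\|^2\le\liminf_k\E_\e(u_{\bdel_k}(s),\rho_{\bdel_k}(s))$; one then needs $\limsup_k\E_\e(u_{\bdel_k}(s),\rho_{\bdel_k}(s))\le\E_\e(u_\e(s),\rho_\e(s))$ for $s$ outside a countable set — this follows because $s\mapsto\E_\e(u_\e(s),\rho_\e(s))$ inherits finite pointwise variation (the discrete energies are non-increasing in $i$, so the limit is monotone up to the $u$-part's lower order terms; more precisely one shows the total discrete energy along the interpolant is non-increasing, giving a monotone, hence a.e.-continuous, limit with at most countably many jumps $\mathcal N_\e$), and at continuity points of the limit energy the convergence $\E_\e(u_{\bdel_k}(s),\rho_{\bdel_k}(s))\to\E_\e(u_\e(s),\rho_\e(s))$ holds together with the strong $H^1\times W^{1,p}$ convergence of the trajectories there (via the last assertion of Lemma~\ref{existsch1}), giving (i). \textbf{The main obstacle} I anticipate is precisely the handling of the $\rho$-variable: establishing the pointwise-in-time $W^{1,p}$-limit with the correct monotonicity (a Helly selection that must be threaded through the $L^2$-convergence already fixed for $u$), and then pushing the \emph{constrained} $\rho$-minimality to the limit — this is where uniform convergence of $\rho_{\bdel_k}$ must be exploited to guarantee that truncated competitors remain admissible and converge appropriately, and where the potential appearance of capacitary defect measures is ruled out. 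Controlling the energy at the ``good'' times $s\notin\mathcal N_\e$ so that the limiting energy inequality is not lossy is the second delicate point.
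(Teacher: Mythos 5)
Your overall skeleton (discrete energy estimate, interpolants, Helly selection for $\rho$, Ascoli for the affine interpolant of $u$, passage to the limit in the Euler--Lagrange equation, elliptic regularity for the $H^2$ bound) matches the paper's Subsections~\ref{subape}--\ref{subbe}. However, there are three genuine gaps at exactly the places where the real work lies.

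First, your passage to the limit in the unilateral minimality \eqref{equa2} does not work as written. You compare $\E_\e(u_k(t),\rho_k(t))\le\E_\e(u_k(t),\rho\wedge\rho_k(t))$ and invoke ``continuity of $\rho\mapsto\E_\e(u,\rho)$ along strongly convergent sequences on the right''; but the right-hand side contains the bulk term $\tfrac12\int(\eta_\e+\hat\rho_k^2)|\nabla u_k(t)|^2\,dx$ with $u_k(t)$ converging only \emph{weakly} in $H^1(\Om)$ at this stage, so it is only lower semicontinuous --- the wrong direction for an upper bound. Concentration of $|\nabla u_k(t)|^2$ produces a defect measure $\mu\geq 0$ on both sides. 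The paper (Proposition~\ref{minrhoeps}) handles this by a $\Gamma$-convergence argument for the constrained problems and by observing that, since the competitor satisfies $\rho^+\le\rho_\e(t)$, the defect contribution $\tfrac12\int(\rho_\e^2(t)-(\rho^+)^2)\,d\mu$ has the favorable sign and can be discarded. Without this (or a prior proof that $u_k(t)\to u_\e(t)$ strongly in $H^1$), your argument only yields the surface-energy-only minimality of Proposition~\ref{decenergysurf}, where the bulk terms cancel.

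Second, the energy inequality \eqref{equa3} cannot be obtained by ``passing to the liminf in the telescoped discrete energy estimate'': the discrete estimate carries the dissipation with the factor $\tfrac{1}{2\delta^i}$, so telescoping and taking limits gives only $\E_\e(u_\e(t),\rho_\e(t))+\tfrac12\int_s^t\|u_\e'\|^2\,dr\le\E_\e(u_\e(s),\rho_\e(s))$, i.e.\ half the dissipation. The paper recovers the sharp constant (Proposition~\ref{energyineq}) by testing the step-$i{+}1$ minimality against $(u_k^i+\delta_k^{i+1}\phi_k^i,\rho_k^i)$ for smooth $\phi$, applying Jensen's inequality, passing to the limit, and then choosing $\phi=u_\e'$ by density.

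Third, you correctly identify that one needs $\E_\e(u_{\bdel_k}(s),\rho_{\bdel_k}(s))\to\E_\e(u_\e(s),\rho_\e(s))$ at the ``good'' times $s$, but you offer no mechanism for it: the pointwise limit of the non-increasing discrete energies is a monotone function $\lambda(s)\ge\E_\e(u_\e(s),\rho_\e(s))$, and identifying the two is precisely the problem. The paper's route is essential here: monotonicity of the \emph{limiting} surface energy $\mathfrak S_\e$ (Proposition~\ref{decenergysurf}), monotonicity of the \emph{limiting} bulk energy $\mathfrak B_\e$ via the uniqueness/semigroup property of the limiting parabolic equation (Proposition~\ref{semigroup}), time-averaged convergence of the bulk energy obtained by testing the discrete and limiting equations with $u_k$ and $u_\e$ (Lemma~\ref{strongconv}), and an averaging argument upgrading this to pointwise strong $H^1$ convergence at continuity points of $\mathfrak B_\e$ (Lemma~\ref{convforte}). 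None of these steps appears in your proposal, and without them both item (i) and the non-lossy right-hand side of \eqref{equa3} remain unproved.
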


The entire section is devoted to the proof of this result, and we now describe the main steps.  
We first obtain  suitable compactness results on  discrete trajectories which prove in particular that the collection $GUMM(u_0,\rho_0^\e)$ is not empty (Corollary \ref{coro}). Then we consider an arbitrary element $(u_\e,\rho_\e)$ in $GUMM(u_0,\rho_0^\e)$ arising from discrete trajectories  $\{(u_{k},\rho_{k})\}_{k\in\NN}$  and partitions $\{\bdel_k\}_{k\in\NN}$ satisfying~\eqref{condGUAMM} and the established compactness properties. 
Defining the (diffuse) surface energy at a time $t \geq 0$ by
\begin{equation}\label{se}
\mathfrak S_\e(t):= \int_{\Om} \left(\frac{\e^{p-1}}{p}|\nabla\rho_\e(t)|^p +\frac{\alpha}{p'\e}(1-\rho_\e(t))^p\right)\,dx\, ,
\end{equation}
and the bulk energy
\begin{equation}\label{be}
\mathfrak B_\e(t):=\frac{1}{2}\int_\Om (\eta_\e + \rho_\e^2(t)) |\nabla u_\e(t)|^2\, dx+\frac{\beta}{2}\int_\Om(u_\e(t)-g)^2\,dx\, ,
\end{equation}
we prove a preliminary minimality property of the phase field variable $\rho_\e$ leading to the increase of the surface energy $\mathfrak S_\e$. In turn, it implies the strong $W^{1,p}(\Om)$-continuity of $t \mapsto \rho_\e(t)$ outside a countable set. Then we establish  the inhomogeneous heat equation satisfied by $u_\e$. Exploiting a semi-group property for this equation, we show the decrease of the bulk energy $\mathfrak B_\e$ and, as a byproduct, 
the strong $H^1(\Om)$-continuity of $t \mapsto u_\e(t)$ outside a countable set. At this stage, we are able to derive the pointwise in time strong convergence in $H^1(\Om)\times W^{1,p}(\Om)$ of the sequence $\{(u_k,\rho_k)\}_{k\in\NN}$ away from a countable set. The announced minimality property of $\rho_\e$ as well as the Lyapunov inequality on the total energy are mainly consequences of these strong convergences.

\subsection{Compactness of discrete trajectories} \label{subape}

We fix an arbitrary $u_0\in H^1(\Om)\cap L^\infty(\Om)$, and we consider the function $\rho_0^\e$ determined by \eqref{rho0}. Let $\{\bdel_k\}_{k\in\NN}$ be an arbitrary sequence of partitions of $[0,+\infty)$ satisfying $|\bdel_k|\to 0$. We write 
$$\bdel_k=:\{\delta^i_k\}_{i\in\mathbb{N}^*}\,, \quad t_k^0:=0\,, \quad\text{and} \quad  t_k^i:=\sum_{j=1}^i \delta_k^j \;\text{ for $i\geq 1$}\,.$$ 
For each $k\in\mathbb{N}$ we consider a discrete trajectory $(u_k,\rho_k) \equiv (u_{\bdel_k},\rho_{\bdel_k}) :[0,+\infty)\to H^1(\Om)\times W^{1,p}(\Om)$  associated to $\bdel_k$ which is obtained from  \eqref{discrTraj}. We  next define for every $k\in\NN$ a further  left-continuous piecewise constant interpolation $\rho^-_k:[0,+\infty)\to W^{1,p}(\Om)$ of the iterates $\{\rho_k^i\}_{i\in\mathbb{N}}$ setting $\rho^-_k(0)=\rho^\e_0$, and for $t>0$, 
\begin{equation}\label{interp}
\rho^-_k(t):=\rho_k^{i-1}
\quad \text{if $t\in(t^{i-1}_k,t^{i}_k]$}\,.
\end{equation}
We also consider the piecewise affine interpolation $v_k:[0,+\infty)\to H^1(\Om)$ of the $u_k^i$'s defined  for each $k\in\NN$ by 
\begin{equation}\label{interpu}
v_k(t):=u_k^{i-1} +\frac{t-t^{i-1}_k}{\delta^i_k}(u_k^{i}-u_k^{i-1}) \qquad \text{if $t\in[t^{i-1}_k,t^{i}_k]$}\,. 
\end{equation}

We first state  {\it a priori} estimates based on a (non optimal) discrete energy inequality. It is obtained by taking  the solution at time $t_k^{i-1}$ as competitor in the minimization problem at time $t_k^i$. An optimal energy inequality will be proved later on (see Proposition \ref{energyineq}). The higher order estimate on the sequence $\{u_k\}$ is obtained by means of an elliptic regularity result postponed to the appendix (see Lemma \ref{ellipreg}).

\begin{lemma}\label{inegenergdiscr}
There exists a constant $C_\e>0$ (independent of $k$) such that
\begin{equation}\label{enineqdiscr}
\sup_{t \geq 0}Ê\|\nabla u_k(t)\|_{L^2(\Om)} + \sup_{t \geq 0}Ê\|\nabla \rho_k(t)\|_{L^p(\Om)} + \int_0^{+\infty} \|v_k'(t)\|^2_{L^2(\Om)}\, dt \leq C_\e\,.
\end{equation}
Moreover, $u_k(t)\in H^2(\Om)$ and $\frac{\partial u_k(t)}{\partial \nu}= 0$ in $H^{1/2}(\partial\Om)$ for every $t > 0$, and  for each $T>0$, 
\begin{equation}\label{L2H2bound}
\int_0^T\|u_k(t)\|^2_{H^2(\Om)}\,dt\leq C_{\e,T}
\end{equation}
for a constant $C_{\e,T}>0$ (independent of $k$).
\end{lemma}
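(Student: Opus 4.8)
\textbf{Proof strategy for Lemma~\ref{inegenergdiscr}.}
The plan is to first derive the basic discrete energy inequality by comparison, then upgrade it to the stated uniform bounds, and finally invoke elliptic regularity for the $L^2H^2$ estimate. For the first step, fix $k$ and drop the subscript. At each time step $i\geq 1$, the pair $(u^i,\rho^i)$ is a minimizer in \eqref{minSchem1}, and since $\rho^{i-1}\leq \rho^{i-1}$ trivially, the pair $(u^{i-1},\rho^{i-1})$ is an admissible competitor. Testing with it gives
$$\E_\e(u^i,\rho^i)+\frac{1}{2\delta^i}\|u^i-u^{i-1}\|_{L^2(\Om)}^2\leq \E_\e(u^{i-1},\rho^{i-1})\,.$$
Summing over $i=1,\dots,n$ telescopes the energy terms and yields, for every $n$,
$$\E_\e(u^n,\rho^n)+\sum_{i=1}^n\frac{1}{2\delta^i}\|u^i-u^{i-1}\|_{L^2(\Om)}^2\leq \E_\e(u_0,\rho_0^\e)\leq \E_\e(u_0,1)=\frac12\int_\Om|\nabla u_0|^2\,dx+\frac{\beta}{2}\int_\Om(u_0-g)^2\,dx=:\Lambda_0\,,$$
where the penultimate inequality uses the minimality \eqref{rho0} defining $\rho_0^\e$, taking $\rho=1$ as competitor. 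The left-hand side is a sum of nonnegative terms (here one uses $\eta_\e+\rho^2\geq 0$ and $p>1$); hence each is bounded by $\Lambda_0$, independently of $k$.

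From this, the pointwise-in-time bounds on $\|\nabla u_k(t)\|_{L^2(\Om)}$ and $\|\nabla\rho_k(t)\|_{L^p(\Om)}$ follow directly from the left-continuous piecewise constant definition \eqref{discrTraj}: at any $t\in(t^{i-1},t^i]$ we have $\frac12\eta_\e\|\nabla u^i\|_{L^2(\Om)}^2\leq \E_\e(u^i,\rho^i)\leq\Lambda_0$ and $\frac{\e^{p-1}}{p}\|\nabla\rho^i\|_{L^p(\Om)}^p\leq\Lambda_0$, giving constants depending only on $\e,\eta_\e,\Lambda_0$. For the dissipation term, the piecewise affine interpolant \eqref{interpu} satisfies $v_k'(t)=(u^i-u^{i-1})/\delta^i$ on $(t^{i-1},t^i)$, so
$$\int_0^{+\infty}\|v_k'(t)\|_{L^2(\Om)}^2\,dt=\sum_{i\geq 1}\delta^i\Big\|\frac{u^i-u^{i-1}}{\delta^i}\Big\|_{L^2(\Om)}^2=\sum_{i\geq 1}\frac{1}{\delta^i}\|u^i-u^{i-1}\|_{L^2(\Om)}^2\leq 2\Lambda_0\,,$$
by the summed inequality above. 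Taking $C_\e:=\max\{(\Lambda_0/\eta_\e)^{1/2}\cdot\sqrt2,(p\Lambda_0/\e^{p-1})^{1/p},2\Lambda_0\}$ (times a harmless absolute factor) establishes \eqref{enineqdiscr}.

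For the higher-order estimate, observe that $u^i=u_k(t)$ for $t\in(t^{i-1},t^i]$ solves the Euler--Lagrange equation of the strictly convex functional $u\mapsto \E_\e(u,\rho^i)+\frac{1}{2\delta^i}\|u-u^{i-1}\|_{L^2(\Om)}^2$, namely the weak form of
$$-\div\big((\eta_\e+(\rho^i)^2)\nabla u^i\big)+\beta(u^i-g)+\frac{1}{\delta^i}(u^i-u^{i-1})=0\quad\text{in }\Om,\qquad \frac{\partial u^i}{\partial\nu}=0\text{ on }\partial\Om.$$
Since $\rho^i\in W^{1,p}(\Om)$ with $p>N$, the coefficient $a:=\eta_\e+(\rho^i)^2$ is in $\C^{0}(\overline\Om)$, bounded below by $\eta_\e>0$, and with $\nabla a\in L^p(\Om;\RR^N)$; the right-hand side $f:=\beta(g-u^i)-\frac{1}{\delta^i}(u^i-u^{i-1})$ lies in $L^2(\Om)$. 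With $\C^{1,1}$ boundary this is exactly the setting of the elliptic regularity lemma quoted in the appendix (Lemma~\ref{ellipreg}), yielding $u^i\in H^2(\Om)$, the Neumann trace $\frac{\partial u^i}{\partial\nu}=0$ in $H^{1/2}(\partial\Om)$, and an estimate
$$\|u^i\|_{H^2(\Om)}\leq C_\e\big(\|f\|_{L^2(\Om)}+\|u^i\|_{H^1(\Om)}\big)\leq C_\e\big(\|u^i-g\|_{L^2(\Om)}+\|v_k'\|_{L^2(t^{i-1},t^i;L^2)}\delta_i^{-1/2}\cdot\delta_i^{1/2}\ \cdots\big),$$
where the constant depends on $\e$ through the ellipticity and on $\|\nabla a\|_{L^p}$, hence through $\Lambda_0$, but not on $k$. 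Squaring, using $\|u^i-g\|_{L^2}\leq C(\Lambda_0,\|g\|_{L^2})$ from the maximum principle Lemma~\ref{ptwbdmonotdisc}, multiplying by $\delta^i$ and summing over the indices $i$ with $t^{i-1}<T$, the dissipation sum $\sum_i\frac{1}{\delta^i}\|u^i-u^{i-1}\|_{L^2}^2\leq 2\Lambda_0$ controls the time integral of $\|v_k'\|_{L^2}^2$, while the remaining terms contribute at most $C_\e\cdot(T+|\bdel_k|)\leq C_\e(T+1)$. This gives \eqref{L2H2bound} with $C_{\e,T}$ independent of $k$.

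\textbf{Main obstacle.} The routine parts are the telescoping comparison and the piecewise-affine identity for the dissipation; the one point requiring genuine care is the $L^2H^2$ bound, specifically checking that the constant in the elliptic regularity estimate for $-\div(a\nabla u)+\beta u=\tilde f$ with Neumann conditions can be taken uniform in $k$. The subtlety is that $a=\eta_\e+(\rho^i_k)^2$ varies with $i$ and $k$; but its $\C^0(\overline\Om)$-norm and the ellipticity constant $\eta_\e$ are controlled uniformly (by the maximum principle $0\leq\rho_k^i\leq1$ and by \eqref{enineqdiscr}), and the Calder\'on--Zygmund-type constant for such a divergence-form operator with a continuous, uniformly elliptic coefficient whose gradient is bounded in $L^p(\Om)$, $p>N$, on a $\C^{1,1}$ domain depends only on these quantities — this is precisely the content of the appendix lemma. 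Once that uniformity is in place, the summation over time steps is immediate from the already-established dissipation bound.
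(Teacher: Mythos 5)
Your proposal is correct and follows essentially the same route as the paper: compare with $(u^{i-1},\rho^{i-1})$ in \eqref{minSchem1}, telescope, bound the initial energy by $\E_\e(u_0,1)$ via \eqref{rho0}, read off the three bounds in \eqref{enineqdiscr} from the coercivity of $\E_\e$ and the piecewise-affine identity for $v_k'$, and then apply Lemma~\ref{ellipreg} to the Euler--Lagrange equation \eqref{1101} with right-hand side $-v_k'(t)-\beta(u_k(t)-g)$, using the uniform $L^p$ bound on $\nabla\rho_k(t)$ to control the factor $(1+\|\nabla\rho_k(t)\|_{L^p})^\gamma$ before integrating in time. The only (harmless) slips are writing $\E_\e(u_0,1)$ without the $\eta_\e$ contribution to the gradient term, and stating the strong Neumann condition as part of the Euler--Lagrange system rather than obtaining it, as the paper does, only from the natural condition $(\eta_\e+\rho_k^2)\nabla u_k\cdot\nu=0$ upgraded by Lemma~\ref{ellipreg}.
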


\begin{proof}
Taking $(u^{i-1}_k,\rho^{i-1}_k)$ as a competitor in the minimization problem \eqref{minSchem1} yields
$$\E_\e(u^i_k,\rho^i_k) +\frac{1}{2\delta_k^i}\|u^i_k-u^{i-1}_k\|^2_{L^2(\Om)} \leq \E_\e(u^{i-1}_k,\rho^{i-1}_k)\,.$$
Summing up for $i=1$ to $j$ leads to
$$\E_\e(u^j_k,\rho^j_k) +\sum_{i=1}^j \frac{1}{2\delta_k^i}\|u^i_k-u^{i-1}_k\|^2_{L^2(\Om)} \leq \E_\e(u_0,\rho^\e_0)\,,$$
or still, for every $t \geq 0$,
\begin{equation}\label{1046}
\E_\e(u_k(t),\rho_k(t)) +\frac12 \int_0^t \|v'_k(s)\|^2_{L^2(\Om)}\, ds \leq \E_\e(u_0,\rho^\e_0)\,.
\end{equation}
The first upper bound \eqref{enineqdiscr} now follows from the expression of the energy $\E_\e$.

To show estimate \eqref{L2H2bound}, let $t>0$ be such that $t\in (t_k^{i-1},t_k^i)$ for some integer $i\geq 1$. By minimality $u_k(t)$ solves
\begin{equation}\label{1101}
\begin{cases}
-{\rm div}((\eta_\e+\rho_k^2(t))\nabla u_k(t)) = -v_k'(t) -\beta(u_k(t)-g) & \text{in $H^{-1}(\Om)$}\,,\\
(\eta_\e+\rho_k^2(t))\nabla u_k(t)\cdot\nu = 0 & \text{in $H^{-1/2}(\partial\Om)$}\,.
\end{cases}
\end{equation}
From Lemma \ref{ellipreg} we deduce that $u_k(t)\in H^2(\Om)$ and $\frac{\partial u_k(t)}{\partial \nu}= 0$ in $H^{1/2}(\partial\Om)$ with the estimate
$$\|u_k(t)\|_{H^2(\Om)}\leq  C_\e(1+\|\nabla\rho_k(t)\|_{L^p(\Om;\RR^N)})^{ \gamma}\big(\|v_k'(t)\|_{L^2(\Om)}+\beta \|u_k(t)-g\|_{L^2(\Om)}+\|u_k(t)\|_{H^1(\Om)}\big)\,.$$
In view of \eqref{enineqdiscr} and Lemma \ref{ptwbdmonotdisc}, we infer that \eqref{L2H2bound} holds.
\end{proof}

We are now in position to  establish a compactness result for the sequences $\{u_k\}_{k\in\NN}$ and $\{\rho_k\}_{k\in\NN}$. We start with $\{\rho_k\}_{k\in\NN}$ and $\{\rho^-_k\}_{k\in\NN}$. 

\begin{lemma}\label{convrok}
There exist a subsequence $k_n\to\infty$ and a strongly measurable map $\rho_\e:[0,+\infty)\to W^{1,p}(\Om)$ such that $\rho_{k_n}(t)\rightharpoonup \rho_\e(t)$ weakly in $W^{1,p}(\Om)$ for every $t\geq 0$. In addition, $\rho_\e\in L^\infty(0,+\infty;W^{1,p}(\Om))$, $\rho_\e(0)=\rho_0^\e$, and $0\leq \rho_\e(t)\leq \rho_\e(s)\leq 1$  in~$\Om$ for every $t \geq s\geq 0 \,$. 
\end{lemma}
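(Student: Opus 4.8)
The plan is to extract a convergent subsequence from the sequence $\{\rho_k\}$ using a diagonal argument over a countable dense set of times, then exploit the monotonicity of the iterates to promote convergence to \emph{every} $t\ge0$. First I would record from Lemma~\ref{inegenergdiscr} that $\sup_{t\ge0}\|\nabla\rho_k(t)\|_{L^p(\Om)}\le C_\e$, and from Lemma~\ref{ptwbdmonotdisc} that $0\le\rho_k(t)\le1$ pointwise; since $\Om$ is bounded this gives a uniform bound on $\|\rho_k(t)\|_{W^{1,p}(\Om)}$, so for each fixed $t$ the sequence $\{\rho_k(t)\}$ is weakly relatively compact in $W^{1,p}(\Om)$ (the space being reflexive as $1<p<\infty$). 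Fix a countable dense set $D\subset[0,+\infty)$ containing $0$. By a standard diagonal extraction I obtain a subsequence $k_n\to\infty$ such that $\rho_{k_n}(t)\rightharpoonup\tilde\rho(t)$ weakly in $W^{1,p}(\Om)$ for every $t\in D$, for some $\tilde\rho(t)\in W^{1,p}(\Om)$ with $0\le\tilde\rho(t)\le1$; at $t=0$ we have $\rho_{k_n}(0)=\rho_0^\e$ for all $n$, so $\tilde\rho(0)=\rho_0^\e$.

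The key structural fact is monotonicity in time: by Lemma~\ref{ptwbdmonotdisc}, $t\mapsto\rho_k(t)$ is non-increasing (pointwise in $\Om$) for each $k$ — indeed $\rho_k$ is the piecewise-constant interpolation of the non-increasing iterates $\{\rho_k^i\}$. Passing to the weak limit along $k_n$, and using that pointwise a.e.\ inequalities between functions are preserved under weak $W^{1,p}$ (hence $L^p$, hence a.e.\ along a further subsequence, but in fact directly since the cone of a.e.-nonnegative functions is weakly closed) convergence, I get that $\tilde\rho$ restricted to $D$ is non-increasing: $\tilde\rho(t)\le\tilde\rho(s)$ in $\Om$ for $s\le t$ in $D$. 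Now I extend $\tilde\rho$ to all of $[0,+\infty)$: for arbitrary $t\ge0$, monotonicity along $D$ lets me define $\rho_\e(t)$ as the pointwise (a.e.) decreasing limit $\rho_\e(t):=\lim_{D\ni s\downarrow t}\tilde\rho(s)$, which agrees with $\tilde\rho$ on $D$ by the dense-from-the-right choice (or, more carefully, one shows the left and right limits coincide off an at most countable set and handles the exceptional times separately — but the cleanest route is to note that $\rho_\e(t)$ so defined lies in $W^{1,p}(\Om)$ with $\|\nabla\rho_\e(t)\|_{L^p}\le C_\e$ by weak lower semicontinuity and the uniform bound, and $0\le\rho_\e(t)\le\rho_\e(s)\le1$ for $t\ge s\ge0$).

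Next I would upgrade convergence from $D$ to every $t\ge0$. Fix $t\notin D$ and pick $s,s'\in D$ with $s\le t\le s'$. For any $k_n$, monotonicity gives $\rho_{k_n}(s')\le\rho_{k_n}(t)\le\rho_{k_n}(s)$ in $\Om$; hence $\{\rho_{k_n}(t)\}_n$ is bounded in $W^{1,p}(\Om)$ and squeezed between sequences converging weakly to $\tilde\rho(s')$ and $\tilde\rho(s)$. Any weak cluster point $\zeta$ of $\{\rho_{k_n}(t)\}$ then satisfies $\tilde\rho(s')\le\zeta\le\tilde\rho(s)$ a.e.; letting $s,s'\to t$ within $D$ and using that $\tilde\rho(s),\tilde\rho(s')\to\rho_\e(t)$ a.e.\ (monotone limits) forces $\zeta=\rho_\e(t)$. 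Since the whole bounded sequence has a unique weak cluster point, $\rho_{k_n}(t)\rightharpoonup\rho_\e(t)$ weakly in $W^{1,p}(\Om)$ for every $t\ge0$. Finally, strong measurability of $t\mapsto\rho_\e(t)$: the map is weakly measurable (weak limit of the measurable $\rho_{k_n}$, or: monotone hence has at most countably many discontinuity times), takes values in the separable space $W^{1,p}(\Om)$, so by Pettis' theorem it is strongly measurable; essential boundedness $\rho_\e\in L^\infty(0,+\infty;W^{1,p}(\Om))$ follows from the uniform bound $\|\rho_\e(t)\|_{W^{1,p}}\le C$. The main obstacle — and the only place requiring genuine care — is the passage from the countable dense set $D$ to all times $t$: one must check that no ``jump'' is missed, which is exactly where the monotonicity squeeze and the uniqueness of the weak cluster point do the work; everything else is routine functional analysis.
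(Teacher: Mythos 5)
Your overall strategy is a hand re-derivation of exactly the tool the paper invokes as a black box: the paper applies a generalized Helly selection principle (\cite[Theorem 3.2]{MM}) to the monotone non-increasing maps $\rho_k:[0,+\infty)\to L^1(\Om)$ to get a subsequence converging weakly in $L^1(\Om)$ at \emph{every} $t\ge 0$, and then upgrades to weak $W^{1,p}(\Om)$ convergence using the uniform bound $\sup_t\|\rho_k(t)\|_{W^{1,p}(\Om)}\le C_\e$ (a bounded sequence in a reflexive space whose only possible weak cluster point is already identified converges weakly). Your preliminary steps --- the uniform bound, the diagonal extraction over a countable dense $D$, the weak closedness of the cone of a.e.\ nonnegative functions, the Pettis argument --- are all fine and match the paper's bootstrapping.

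The genuine gap is in the passage from $D$ to all of $[0,+\infty)$. At a time $t\notin D$ where the monotone function $\tilde\rho$ has a jump along $D$, i.e.\ where the left limit $\tilde\rho(t-):=\lim_{D\ni s\uparrow t}\tilde\rho(s)$ and the right limit $\tilde\rho(t+):=\lim_{D\ni s'\downarrow t}\tilde\rho(s')$ differ, your squeeze only yields $\tilde\rho(t+)\le\zeta\le\tilde\rho(t-)$ for a weak cluster point $\zeta$ of $\{\rho_{k_n}(t)\}$; this does \emph{not} force $\zeta=\rho_\e(t)$, and indeed the sequence $\{\rho_{k_n}(t)\}$ need not converge at all at such a $t$ without a further extraction. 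Defining $\rho_\e(t)$ as the right limit does not repair this: the actual limit at a jump time (if it exists) can be any intermediate value, and the same problem can even corrupt points of $D$ where $\tilde\rho$ is not right-continuous, contradicting your claim that the right-limit definition ``agrees with $\tilde\rho$ on $D$.'' You flag the issue parenthetically but then proceed with the route that does not resolve it. The standard fix --- and the content of the Helly-type theorem the paper cites --- is a \emph{second} diagonal extraction over the at most countable exceptional set $J$ of jump times (which is countable because, e.g., $t\mapsto\int_\Om\tilde\rho(t)\,dx$ is monotone), after which one defines $\rho_\e(t)$ for $t\in J$ as the limit produced by that extraction rather than as a one-sided limit. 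With that additional extraction inserted, your argument closes; as written, the convergence claim fails on $J\setminus D$.
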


\begin{proof}
By Lemma \ref{ptwbdmonotdisc}, $\rho_k:[0,+\infty)\to L^1(\Om)$ is monotone non-increasing, and $0\leq \rho_k(t)\leq 1$ in $\Om$ for every $t\geq 0$. 
By a generalized version of Helly's selection principle (see \cite[Theorem~3.2]{MM}), we deduce that there exists a subsequence $k_n\to\infty$ and a map $\rho_\e:[0,+\infty)\to L^1(\Omega)$ such that $\rho_{k_n}(t) \rightharpoonup \rho_\e(t)$ weakly in $L^1(\Omega)$ for every $t\geq 0$. 
On the other hand, since
$$\E_\e(u_0,\rho^\e_0) \leq \E_\e(u_0,1) \leq \|\nabla u_0\|^2_{L^2(\Om;\RR^N)}+\frac{\beta}{2}\|u_0-g\|^2_{L^2(\Om)}\,,$$
we derive from Lemma \ref{inegenergdiscr} that
$$\sup_{t\geq 0}\|\rho_k(t)\|_{W^{1,p}(\Om)}\leq C_\e\,, $$
for some constant $C_\e>0$ independent of $k$. Therefore, $\rho_{k_n}(t)\wto \rho_\e(t)$ weakly in $W^{1,p}(\Om)$, and  $\rho_{k_n}(t)\to \rho_\e(t)$ in $\mathscr{C}^0(\overline\Om)$ for every $t\geq 0$ by the Sobolev Imbedding Theorem.
In particular $\rho_\e(t) \in W^{1,p}(\Om)$ for every $t\geq 0$, and by lower semicontinuity, 
$$\sup_{t\geq 0}\|\rho_\e(t)\|_{W^{1,p}(\Om)}\leq C_\e\,.$$ 
Moreover, since $0\leq \rho_k(t)\leq \rho_k(s)\leq 1$  in $\Omega$ whenever $s\leq t$,  we deduce from the  uniform convergence that $0\leq \rho_\e(t)\leq \rho_\e(s)\leq 1$  in $\Om$ for every $t\geq s\geq 0$.   

Since $\rho_\e:[0,+\infty)\to W^{1,p}(\Om)$ is a pointwise weak limit of a sequence of measurable (locally) simple functions, we deduce that $\rho_\e:[0,+\infty)\to W^{1,p}(\Om)$ is weakly measurable, hence strongly measurable thanks to the separability of  $W^{1,p}(\Om)$ and Pettis~Theorem. 
\end{proof}

Through the same argument we obtain the convergence of the sequence $\{\rho^-_k\}_{k\in\NN}$ (defined in \eqref{interp}).

\begin{lemma}\label{convrok-}
Let $\{k_n\}_{n\in\NN}$ be the subsequence given by Lemma~\ref{convrok}.  There exist a further subsequence (not relabeled) and a strongly measurable map $\rho^-_\e:[0,+\infty)\to W^{1,p}(\Om)$ such that $\rho^-_{k_n}(t)\wto \rho^-_\e(t)$ weakly in $W^{1,p}(\Om)$ for every $t\geq 0$. In addition, $\rho^-_\e\in L^\infty(0,+\infty;W^{1,p}(\Om))$, $\rho^-_\e(0)=\rho_0^\e$, and $0\leq  \rho_\e(t) \leq \rho^-_\e(t)\leq \rho^-_\e(s)\leq 1$  in~$\Om$ for every $t \geq s\geq 0 \,$. 
\end{lemma}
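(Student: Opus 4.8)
The plan is to mimic exactly the proof of Lemma~\ref{convrok}, applied now to the interpolation $\rho^-_k$ in place of $\rho_k$. First I would observe that, by Lemma~\ref{ptwbdmonotdisc}, the iterates satisfy $0\leq \rho_k^{i}\leq \rho_k^{i-1}\leq 1$ in $\Om$, so that by construction \eqref{interp} the map $t\mapsto \rho^-_k(t)$ is non-increasing from $[0,+\infty)$ into $L^1(\Om)$ and takes values in $[0,1]$. Hence the generalized Helly selection principle \cite[Theorem~3.2]{MM} applies, and along a further subsequence of the $\{k_n\}$ produced by Lemma~\ref{convrok} (not relabeled), $\rho^-_{k_n}(t)\rightharpoonup \rho^-_\e(t)$ weakly in $L^1(\Om)$ for every $t\geq 0$, with $t\mapsto\rho^-_\e(t)$ non-increasing and $0\leq \rho^-_\e(t)\leq 1$.

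Next I would upgrade the convergence to $W^{1,p}(\Om)$: since $\rho^-_k(t)$ is, for each $t$, equal to some iterate $\rho_k^{i-1}$, the uniform bound $\sup_{t\geq0}\|\rho_k(t)\|_{W^{1,p}(\Om)}\leq C_\e$ obtained in the proof of Lemma~\ref{convrok} from Lemma~\ref{inegenergdiscr} also gives $\sup_{t\geq 0}\|\rho^-_k(t)\|_{W^{1,p}(\Om)}\leq C_\e$. Therefore $\rho^-_{k_n}(t)\rightharpoonup \rho^-_\e(t)$ weakly in $W^{1,p}(\Om)$ and, by the Sobolev Imbedding Theorem (recall $p>N$), $\rho^-_{k_n}(t)\to\rho^-_\e(t)$ in $\C^0(\overline\Om)$ for every $t\geq0$; by weak lower semicontinuity $\sup_{t\geq 0}\|\rho^-_\e(t)\|_{W^{1,p}(\Om)}\leq C_\e$, so $\rho^-_\e\in L^\infty(0,+\infty;W^{1,p}(\Om))$. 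The uniform convergence passes the pointwise inequalities $0\leq\rho^-_k(t)\leq\rho^-_k(s)\leq1$ (for $s\leq t$) to the limit, and also $\rho^-_k(0)=\rho^\e_0$ gives $\rho^-_\e(0)=\rho^\e_0$. For strong measurability I would argue exactly as before: $\rho^-_\e$ is a pointwise weak-$W^{1,p}$ limit of measurable (locally) simple functions, hence weakly measurable, hence strongly measurable by separability of $W^{1,p}(\Om)$ and the Pettis theorem.

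The only genuinely new point, as opposed to a verbatim repetition, is the comparison $\rho_\e(t)\leq\rho^-_\e(t)$ claimed in the statement. For this I would use that, on the interval $(t^{i-1}_{k_n},t^i_{k_n}]$, one has $\rho_{k_n}(t)=\rho_{k_n}^{i}\leq \rho_{k_n}^{i-1}=\rho^-_{k_n}(t)$ by Lemma~\ref{ptwbdmonotdisc}, i.e.\ $\rho_{k_n}(t)\leq\rho^-_{k_n}(t)$ in $\Om$ for every $t>0$. Passing to the uniform (or even weak) limit in this pointwise inequality yields $\rho_\e(t)\leq\rho^-_\e(t)$ in $\Om$ for every $t\geq 0$ (at $t=0$ both equal $\rho^\e_0$). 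Combined with the monotonicity of $\rho^-_\e$ just established, this gives the full chain $0\leq\rho_\e(t)\leq\rho^-_\e(t)\leq\rho^-_\e(s)\leq1$ for $t\geq s\geq0$. I do not expect any real obstacle here: the argument is structurally identical to Lemma~\ref{convrok}, the $W^{1,p}$ bound on $\rho^-_k$ is immediate since $\rho^-_k$ takes the same values as the iterates, and the inequality $\rho_k(t)\leq\rho^-_k(t)$ is just the one-step monotonicity of the scheme. If anything needs care, it is merely bookkeeping of which subsequence is being extracted, which is handled by the phrase ``not relabeled''.
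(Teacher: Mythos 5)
Your proof is correct and is exactly the argument the paper intends: the paper itself only says the lemma follows ``through the same argument'' as Lemma~\ref{convrok}, and your write-up carries that out faithfully (Helly for the monotone map $\rho_k^-$, the uniform $W^{1,p}$ bound inherited from the iterates, Sobolev imbedding to upgrade to uniform convergence, and Pettis for strong measurability). The one point the paper leaves implicit, the comparison $\rho_\e(t)\le\rho_\e^-(t)$, is handled correctly by passing to the limit in the pointwise inequality $\rho_{k_n}(t)=\rho_{k_n}^i\le\rho_{k_n}^{i-1}=\rho_{k_n}^-(t)$.
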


We continue with the compactness of the sequences $\{u_k\}_{k\in\NN}$ and $\{v_k\}_{k\in\NN}$ (defined by \eqref{interpu}). 

\begin{lemma}\label{convuk}
Let $\{k_n\}_{n\in\NN}$ be the subsequence given by Lemma~\ref{convrok-}. There exist a further subsequence (not relabeled) and a strongly measurable map $u_\e:[0,+\infty)\to H^1(\Om)$ such that $u_{k_n}(t)\wto u_\e(t)$ and $v_{k_n}(t)\wto u_\e(t)$ weakly in $H^1(\Om)$ for every $t\geq 0$. 
In addition, 
\begin{itemize}
\item[(i)] $u_\e(0)=u_0\,$;
\vskip3pt
\item[(ii)] $\|u_\e(t)\|_{L^{\infty}(\Om)}\leq  \max\{\|u_0\|_{L^\infty(\Omega)}, \|g\|_{L^\infty(\Om)}\}$ for every $t\geq 0\,$;
\vskip3pt
\item[(iii)] $u_\e \in  L^\infty(0,+\infty;H^1(\Om))\cap L^2_{\rm loc}([0,+\infty);H^2(\Om))\,$;
\vskip3pt
\item[(iv)] $\displaystyle \frac{\partial u_\e}{\partial\nu}=0$ in $L^2(0,+\infty;H^{1/2}(\partial\Om))$;
\vskip3pt
\item[(v)] $u_\e\in AC^2([0,+\infty);L^2(\Om))$ and 
$$
\int_0^{+\infty}\|u^\prime_\e(t)\|^2_{L^2(\Om)}\,dt \leq \|\nabla u_0\|^2_{L^2(\Om;\RR^N)}+\frac{\beta}{2}\|u_0-g\|^2_{L^2(\Om)}\,;
$$
\vskip3pt
\item[(vi)] $v^\prime_{k_n}\wto u_\e^\prime$ weakly in  $L^2(0,+\infty;L^2(\Om))$. 
\end{itemize}
\end{lemma}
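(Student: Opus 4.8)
The plan is to combine the uniform estimates from Lemma~\ref{inegenergdiscr} with a diagonal/selection argument parallel to the one already used for $\rho_{k_n}$ in Lemma~\ref{convrok}, but now exploiting the extra regularity coming from the $L^2$ control on $v_k'$. First I would fix the subsequence $\{k_n\}$ from Lemma~\ref{convrok-} and observe that, by \eqref{enineqdiscr}, the family $\{u_k(t)\}$ is bounded in $H^1(\Om)$ uniformly in $t$ and $k$, while by Lemma~\ref{ptwbdmonotdisc} it is bounded in $L^\infty(\Om)$ by $\max\{\|u_0\|_{L^\infty},\|g\|_{L^\infty}\}$. The affine interpolants $v_k$ satisfy $\|v_k(t)-u_k(t)\|_{L^2(\Om)}\le \delta_k^i\|v_k'(t)\|_{L^2(\Om)}$ on each subinterval, and since $\int_0^{+\infty}\|v_k'\|_{L^2}^2\,dt\le C_\e$ with $|\bdel_k|\to 0$, the two interpolants have the same pointwise limit wherever a limit exists. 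To get a limit \emph{for every} $t$, I would apply an Ascoli/Helly-type argument to $\{v_k\}$: from \eqref{enineqdiscr}, for $s\le t$, $\|v_k(t)-v_k(s)\|_{L^2(\Om)}\le \int_s^t\|v_k'\|_{L^2}\,dr\le (t-s)^{1/2}C_\e^{1/2}$, so the $v_k$ are equi-$1/2$-Hölder into $L^2(\Om)$; combined with the $H^1$-bound (hence $L^2(\Om)$-precompactness of the values via Rellich on the bounded set $\Om$), a refined Arzelà--Ascoli gives a further subsequence with $v_{k_n}\to u_\e$ in $C([0,T];L^2(\Om))$ for every $T$, hence pointwise; the $H^1$-bound upgrades this to $v_{k_n}(t)\wto u_\e(t)$ weakly in $H^1(\Om)$ for every $t$, and the same holds for $u_{k_n}(t)$.

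With the map $u_\e$ in hand I would verify (i)--(vi) in turn. Item (i) is immediate since $v_k(0)=u_0$ for all $k$. Item (ii) follows by passing to the limit in the $L^\infty$-bound of Lemma~\ref{ptwbdmonotdisc} (weak $H^1$, hence a.e.\ along a subsequence, convergence preserves the pointwise bound). For (iii), the $L^\infty(0,+\infty;H^1(\Om))$ membership comes from the uniform $H^1$-bound and lower semicontinuity; the $L^2_{\rm loc}([0,+\infty);H^2(\Om))$ membership comes from \eqref{L2H2bound} together with weak lower semicontinuity of the $L^2(0,T;H^2(\Om))$-norm under the (weak, a.e.\ in $t$) convergence $u_{k_n}(t)\wto u_\e(t)$ — here one passes to the limit in $\int_0^T\|u_{k_n}(t)\|_{H^2}^2\,dt\le C_{\e,T}$ using Fatou in $t$. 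Item (iv) follows similarly: each $u_k(t)$ satisfies $\partial u_k(t)/\partial\nu=0$ in $H^{1/2}(\partial\Om)$, the trace of the conormal derivative is a bounded linear operator on the relevant space, and the uniform $L^2(0,T;H^2)$-bound lets one pass to the limit (weakly in $L^2(0,+\infty;H^{1/2}(\partial\Om))$). For (v), strong measurability of $u_\e$ into $L^2(\Om)$ is clear from the pointwise convergence; the absolute continuity estimate $\|u_\e(t)-u_\e(s)\|_{L^2(\Om)}\le\int_s^t m_\e(r)\,dr$ with $m_\e\in L^2$ is obtained by passing to the limit in $\|v_{k_n}(t)-v_{k_n}(s)\|_{L^2}\le\int_s^t\|v_{k_n}'\|_{L^2}\,dr$ and using weak-$L^2$ compactness of $\{v_{k_n}'\}$ (bounded in $L^2(0,+\infty;L^2(\Om))$ by \eqref{enineqdiscr}); the bound on $\int_0^{+\infty}\|u_\e'\|_{L^2}^2\,dt$ then follows from \eqref{1046} applied at $t\to\infty$, the estimate $\E_\e(u_0,\rho_0^\e)\le\|\nabla u_0\|_{L^2}^2+\tfrac{\beta}{2}\|u_0-g\|_{L^2}^2$ already recorded in the proof of Lemma~\ref{convrok}, and weak lower semicontinuity of the $L^2(0,+\infty;L^2(\Om))$-norm. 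Item (vi) is the identification of the weak-$L^2$ limit of $v_{k_n}'$: since $v_{k_n}$ is bounded in $H^1(0,+\infty;L^2(\Om))$ and converges to $u_\e$ pointwise (hence in the sense of distributions on $(0,+\infty)$ with values in $L^2(\Om)$), its distributional derivative converges to $u_\e'$, so any weak-$L^2$ cluster point of $v_{k_n}'$ must equal $u_\e'$, and by uniqueness the whole (sub)sequence converges.

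The main obstacle I anticipate is the \emph{pointwise-in-$t$} upgrade of the convergence — i.e.\ producing a single subsequence along which $u_{k_n}(t)$ and $v_{k_n}(t)$ converge \emph{for every} $t\ge0$, not just for a.e.\ $t$ or in an $L^2_t$ sense. The equi-Hölder estimate into $L^2(\Om)$ plus compactness of the range handles this cleanly via Arzelà--Ascoli for the continuous interpolants $v_k$, and then the closeness $\|v_k(t)-u_k(t)\|_{L^2}\to 0$ (pointwise, since the subinterval lengths go to $0$ and $\int\|v_k'\|_{L^2}^2$ is bounded) transfers it to $u_k$; the only subtlety is that this last closeness is not automatically uniform in $t$, but it holds pointwise, which is all that Definition~\ref{defminimov} and the statement require. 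A secondary technical point is keeping the weak-$H^1$ and weak-$W^{1,p}$ selections (for $u_\e$ and for $\rho_\e$, $\rho_\e^-$) compatible along one common subsequence; this is why the lemma is phrased as successive refinements of the subsequence from Lemmas~\ref{convrok}--\ref{convrok-}, and no genuine difficulty arises beyond bookkeeping.
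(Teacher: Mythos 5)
Your proposal is correct and follows essentially the same route as the paper: equi-H\"older continuity of the affine interpolants $v_k$ into $L^2(\Om)$ plus the uniform $H^1$- and $L^\infty$-bounds give Arzel\`a--Ascoli convergence in $\C^0([0,T];L^2(\Om))$, the $O(\sqrt{|\bdel_k|})$ closeness of $u_k$ and $v_k$ transfers the pointwise limit, and items (i)--(vi) follow by lower semicontinuity, the $L^2(0,T;H^2)$ bound, and weak compactness of $\{v_{k_n}'\}$ exactly as in the paper. The only cosmetic difference is in (iv), where you invoke boundedness of the conormal trace on $H^2(\Om)$ while the paper passes to the limit in a Green's identity; both are valid.
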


\begin{proof}
We start by establishing the compactness of the sequence $\{v_k\}$. First Lemma~\ref{ptwbdmonotdisc} yields for every  $t\geq 0$, 
\begin{equation}\label{linftyvk}
\|v_{k_n}(t)\|_{L^\infty(\Om)}\leq \|u_{k_n}(t)\|_{L^\infty(\Om)}\leq  \max\{\|u_0\|_{L^\infty(\Om)}, \|g\|_{L^\infty(\Om)}\}\,.
\end{equation}
Then, combining the  bounds in \eqref{enineqdiscr} together with \eqref{interpu},  we infer that 
\begin{equation}\label{bdgradl2vk}
\sup_{t\geq 0}\|\nabla v_{k_n}(t)\|_{L^2(\Om;\RR^N)}\leq \sup_{t\geq 0}\|\nabla u_{k_n}(t)\|_{L^2(\Om;\RR^N)}\leq C_\e\,, 
\end{equation}
for some constant $C_\e>0$ independent of $k_n$. Consequently, for every $T>0$ the set $\bigcup_{n}v_{k_n}([0,T])$ is relatively compact in $L^2(\Om)$. On the other hand, \eqref{enineqdiscr} yields
\begin{equation}\label{bdtimedervk}
\int_0^{+\infty}\|v^\prime_{k_n}(r)\|^2_{L^2(\Om)}\,dr\leq  \E_\e(u_0,\rho_0^\e)\leq  \E_\e(u_0,1) \leq  \|\nabla u_0\|^2_{L^2(\Om;\RR^N)}+\frac{\beta}{2}\|u_0-g\|^2_{L^2(\Om)}\,. 
\end{equation}
Since for any  $t\geq s\geq 0$ we have 
\begin{equation}\label{abscontvk}
\|v_{k_n}(t)-v_{k_n}(s)\|_{L^2(\Om)}\leq \int_s^t\|v^\prime_{k_n}(r)\|_{L^2(\Om)} \,dr\,,
\end{equation}
we deduce from \eqref{bdtimedervk} and Cauchy-Schwarz inequality that 
\begin{equation}\label{holdcontrvk}
\|v_{k_n}(t)-v_{k_n}(s)\|^2_{L^2(\Om)}\leq (t-s)\int_s^t\|v^\prime_{k_n}(r)\|^2_{L^2(\Om)}\,dr \leq (t-s)\left( \|\nabla u_0\|^2_{L^2(\Om;\RR^N)}
+\frac{\beta}{2}\|u_0-g\|^2_{L^2(\Om)}\right)\,.
\end{equation}
By the Arzela-Ascoli Theorem we can find a subsequence of $\{k_n\}$ (not relabeled) such that 
\begin{equation}\label{convhold}
v_{k_n}\to u_\e \quad \text{in $\C^{0}([0,T];L^2(\Om))$ for every $T>0$} \,,
\end{equation}
for some $u_\e\in \C^{0,1/2}([0,+\infty);L^2(\Om))$. In particular, $v_{k_n}(t)\to u_\e(t)$ strongly in $L^2(\Om)$ for every $t\geq 0$, which yields {\it (i)} since $v_{k_n}(0)=u_0$. 

\vskip3pt

On the other hand, in view of estimates \eqref{linftyvk} and \eqref{bdgradl2vk}, we obtain {\it (ii)} and the fact that $v_{k_n}(t) \wto u_\e(t)$ weakly in $H^1(\Omega)$ for every $t\geq 0$. By lower semicontinuity we also deduce from \eqref{bdgradl2vk} that 
\begin{equation}\label{1005}
\sup_{t\geq 0}\|u_\e(t)\|_{H^1(\Om)}\leq C_\e\,.
\end{equation}

\vskip3pt

We next show the compactness of the sequence $\{u_{k_n}\}$. Let us now consider an arbitrary $t>0$. For each $n\in\NN$ there is a unique $i\in \NN$ such that $t\in(t^{i-1}_{k_n}, t^i_{k_n}]$. We then have $u_{k_n}(t)= u_{k_n}(t^i_{k_n})=v_{k_n}(t^i_{k_n})$. Consequently, by \eqref{holdcontrvk}, 
\begin{align}
\nonumber\|u_{k_n}(t)-u_\e(t)\|_{L^2(\Om)} & = \|v_{k_n}(t^i_{k_n})-u_\e(t)\|_{L^2(\Om)} \\
\nonumber& \leq \|v_{k_n}(t^i_{k_n})-v_{k_n}(t)\|_{L^2(\Om)} + \|v_{k_n}(t)-u_\e(t)\|_{L^2(\Om)} \\
\label{coucou3}& \leq C\sqrt{|\bdel_{k_n}|} +  \|v_{k_n}(t)-u_\e(t)\|_{L^2(\Om)} \mathop{\longrightarrow}\limits_{n\to\infty} 0\,.
\end{align}
Hence $u_{k_n}(t)\to u_\e(t)$ strongly in $L^2(\Om)$ for every $t\geq 0$, and in view of \eqref{bdgradl2vk} we infer  that 
$u_{k_n}(t)\wto u_\e(t)$ weakly in $H^1(\Om)$ for every $t\geq 0$.
The mappings $t \mapsto u_{k_n}(t)$ being (locally) simple and measurable, we conclude as in the proof of Lemma~\ref{convrok} that $u_\e:[0,+\infty)\to H^1(\Om)$ is strongly measurable. Moreover $u_\eÊ\in L^\infty(0,+\infty;H^1(\Om))$ by \eqref{1005}.  In view of \eqref{L2H2bound},  $u_{k_n} \wto u_\e$ weakly in $L^2_{\rm loc}(0,+\infty;H^2(\Om))$ which shows that $u_\e\in L^2_{\rm loc}(0,+\infty;H^2(\Om))$. Item {\it (iii)} is thus proved.

\vskip3pt

We next show that the Neumann boundary condition  $\frac{\partial u_\e}{\partial \nu}= 0$ in $L^2(0,+\infty;H^{1/2}(\partial\Om))$ holds. To this purpose, let us fix $T>0$ and select an arbitrary test function $\varphi\in L^2(0,T;H^1(\Om))$.  By Lemma \ref{inegenergdiscr} we have  $\frac{\partial u_k(t)}{\partial \nu}= 0$ in $H^{1/2}(\partial \Om)$ for every $t>0$, and consequently
\begin{multline*}
\int_0^T \int_\Om (-\Delta u_\e) \varphi\,dx\,dt =\lim_{n\to\infty} \int_0^T \int_\Om (-\Delta u_{k_n}) \varphi\,dx\,dt\\
= \lim_{n\to\infty} \int_0^T \int_\Om \nabla u_{k_n} \cdot \nabla \varphi\,dx\,dt
=  \int_0^T \int_\Om \nabla u_{\e} \cdot \nabla \varphi\,dx\,dt\,.
\end{multline*}
From the arbitrariness of $\varphi$ and $T$, we conclude that  $\frac{\partial u_\e}{\partial \nu}= 0$ in $L^2(0,T;H^{1/2}(\partial \Om))$ for every $T>0$ which completes the proof of item {\it (iv)}.

\vskip3pt

We next show the absolute continuity in time of $u_\e$. We note that \eqref{bdtimedervk} tells us that the functions $A_{k_n}:t\in(0,+\infty)\mapsto \|v^\prime_{k_n}(t)\|_{L^2(\Om)}$ are bounded in $L^2(0,+\infty)$. Hence we can find a further subsequence (not relabeled) such that $A_{k_n}\wto A$ weakly in $L^2(0,+\infty)$, for a non-negative function $A\in L^2(0,+\infty)$ satisfying 
$$\int_0^{+\infty} A^2(t)\,dt \leq   \|\nabla u_0\|^2_{L^2(\Om;\RR^N)}+\frac{\beta}{2}\|u_0-g\|^2_{L^2(\Om)}\,. $$
Letting $n\to \infty$ in \eqref{abscontvk}, we conclude that for every $t\geq s\geq 0$, 
$$\|u_{\e}(t)-u_{\e}(s)\|_{L^2(\Om)}\leq \int_s^t A(r) \,dr\,,$$
which shows that $u_\e\in AC^2([0,+\infty);L^2(\Om))$, whence {\it (v)}. 

\vskip3pt

Now, since $\{v^\prime_{k_n}\}$ is bounded in $L^2(0,+\infty;L^2(\Om))$, up to a subsequence,  
$\{v^\prime_{k_n}\}$ converges weakly in  $L^2(0,+\infty;L^2(\Om))$ to some element in $L^2(0,+\infty;L^2(\Om))$ which has to agree with $u^\prime_\e$ by \eqref{convhold}. This implies that {\it (vi)} holds. 
\end{proof}

\begin{remark}\label{weakcontueps}
As a consequence of {\it (iii)} and {\it (v)} in the previous lemma, $u_\e:[0,+\infty)\to H^1(\Om)$ is weakly continuous. 
\end{remark}

As an immediate consequence of Lemmas  \ref{convrok} \& \ref{convuk}, we obtain that unilateral minimizing movements starting from $(u_0,\rho_0^\e)$ do exist.

\begin{corollary}\label{coro}
The collection $GUMM(u_0,\rho_0^\e)$ is not empty. 
\end{corollary}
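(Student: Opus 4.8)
The statement to prove is Corollary~\ref{coro}: the collection $GUMM(u_0,\rho_0^\e)$ is nonempty. The plan is essentially to assemble the compactness lemmas that precede it. Here is how I would organize the argument.

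\textbf{Step 1: Fix a diagonal sequence of partitions and extract trajectories.} I would start from an arbitrary sequence $\{\bdel_j\}_{j\in\NN}$ of partitions of $[0,+\infty)$ with $|\bdel_j|\to 0$ (for instance uniform partitions $\delta^i_j := 1/j$, which trivially satisfy $\sum_i \delta^i_j=+\infty$ and $\sup_i \delta^i_j<+\infty$). For each $j$, the discrete Euler scheme \eqref{minSchem1} is well-posed by Lemma~\ref{existsch1} (lower semicontinuity plus closedness of the constraint $\rho\leq\rho^{i-1}$, together with coercivity of $\E_\e$), so one obtains iterates $\{(u_j^i,\rho_j^i)\}_{i\in\NN}$ and the associated left-continuous piecewise constant discrete trajectory $(u_{\bdel_j},\rho_{\bdel_j})$ via \eqref{discrTraj}.

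\textbf{Step 2: Pass to the limit using the compactness lemmas.} Apply Lemma~\ref{convrok} to this sequence of discrete trajectories: it produces a subsequence $\{k_n\}$ and a strongly measurable $\rho_\e:[0,+\infty)\to W^{1,p}(\Om)$ with $\rho_{k_n}(t)\rightharpoonup\rho_\e(t)$ weakly in $W^{1,p}(\Om)$, hence, by the Sobolev Imbedding Theorem ($p>N$), $\rho_{k_n}(t)\to\rho_\e(t)$ in $\C^0(\overline\Om)$ and in particular strongly in $L^p(\Om)$, for every $t\geq 0$. Then apply Lemma~\ref{convuk} along this subsequence (and the intermediate Lemma~\ref{convrok-}): it yields a further subsequence and a strongly measurable $u_\e:[0,+\infty)\to H^1(\Om)$ with $u_{k_n}(t)\rightharpoonup u_\e(t)$ weakly in $H^1(\Om)$; moreover the argument in \eqref{coucou3} gives $u_{k_n}(t)\to u_\e(t)$ strongly in $L^2(\Om)$ for every $t\geq 0$. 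Combining the two strong convergences, $(u_{k_n}(t),\rho_{k_n}(t))\to (u_\e(t),\rho_\e(t))$ strongly in $L^2(\Om)\times L^p(\Om)$ for every $t\geq 0$.

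\textbf{Step 3: Conclude.} The pair $(u_\e,\rho_\e)$ is therefore, by Definition~\ref{defminimov}, a generalized unilateral minimizing movement for $\E_\e$ starting from $(u_0,\rho_0^\e)$: the required sequence of partitions is $\{\bdel_{k_n}\}_{n\in\NN}$ (a subsequence of $\{\bdel_j\}$, still with mesh tending to $0$), and the associated discrete trajectories converge pointwise-in-time in the stated topology. Hence $GUMM(u_0,\rho_0^\e)\neq\emptyset$.

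There is essentially no obstacle here beyond bookkeeping: the entire content is packaged in Lemmas~\ref{existsch1}, \ref{convrok}, \ref{convrok-} and \ref{convuk}, which are proved before the corollary. The only mild subtlety worth a sentence is checking that $\rho_0^\e$ itself is well-defined (unique minimizer of the strictly convex, coercive functional $\E_\e(u_0,\cdot)$ on $W^{1,p}(\Om)$) so that the scheme has a legitimate starting point, and that the passage from weak $W^{1,p}$-convergence to \emph{strong} $L^p$-convergence of the $\rho$-component — needed to match the topology in Definition~\ref{defminimov} — is supplied for free by the compact Sobolev embedding $W^{1,p}(\Om)\hookrightarrow\C^0(\overline\Om)$ since $p>N$.
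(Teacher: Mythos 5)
Your proposal is correct and matches the paper's argument: the corollary is stated there precisely as an immediate consequence of Lemmas~\ref{convrok} and \ref{convuk}, whose proofs already supply the strong $L^p(\Om)$-convergence of $\rho_{k_n}(t)$ (via the compact embedding $W^{1,p}(\Om)\hookrightarrow\C^0(\overline\Om)$) and the strong $L^2(\Om)$-convergence of $u_{k_n}(t)$ (via \eqref{coucou3}) needed to match the topology of Definition~\ref{defminimov}. No further comment is needed.
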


\subsection{Time continuity for the phase field variable and the surface energy}\label{subapost}

For the rest of this section, we consider an arbitrary element $(u_\e,\rho_\e)\in GUMM(u_0,\rho_0^\e)$ and discrete trajectories  $\{(u_{k},\rho_{k})\}_{k\in\NN}$  associated to partitions $\{\bdel_k\}_{k\in\NN}$ satisfying~\eqref{condGUAMM}. Without loss of generality, we also assume that all the results of the previous subsection do hold.
\vskip5pt

We first establish several properties of the phase field $\rho_\e$, starting from a (weak) minimality principle with respect to the diffuse surface energy.

\begin{proposition}\label{decenergysurf}
For every $t\geq 0$,  
$$\int_{\Om} \left(\frac{\e^{p-1}}{p}|\nabla\rho_\varepsilon(t)|^p +\frac{\alpha}{p'\e}(1-\rho_\varepsilon(t))^p\right)\,dx \leq  \int_{\Om} \left(\frac{\e^{p-1}}{p}|\nabla\rho|^p +\frac{\alpha}{p'\e}(1-\rho)^p\right)\,dx$$
for all $\rho\in W^{1,p}(\Om)$ such that $\rho\leq \rho_\e(t)$  in $\Om$. 
In  particular, the surface energy $\mathfrak S_\e$ defined in \eqref{se} is non-decreasing on $[0,+\infty)$, and thus 
continuous outside an (at most) countable set $\mathcal{S}_{\e}\subset[0,+\infty)$.
\end{proposition}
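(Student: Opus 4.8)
The plan is to derive the weak minimality property directly by passing to the limit in the corresponding minimality property of the discrete iterates, using the uniform (i.e. $\C^0(\overline\Om)$) convergence of the phase fields that is already available from Lemma~\ref{convrok}. First I would fix $t\geq 0$ and a competitor $\rho\in W^{1,p}(\Om)$ with $\rho\leq\rho_\e(t)$ in $\Om$. The obstacle for the discrete problems at time level $t^i_{k_n}$ (where $t\in(t^{i-1}_{k_n},t^i_{k_n}]$) is $\rho_{k_n}^{i-1}=\rho^-_{k_n}(t)$, not $\rho_{k_n}(t)$; so $\rho$ need not be admissible in the $n$-th discrete problem. The fix is to truncate: set $\rho_n:=\rho\wedge\rho^-_{k_n}(t)\in W^{1,p}(\Om)$, which satisfies $\rho_n\leq\rho^-_{k_n}(t)$ and is therefore an admissible competitor against $\rho_{k_n}(t)$ in the minimization \eqref{minSchem1}. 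Testing minimality with $(u_{k_n}(t),\rho_n)$ and cancelling the $u$-dependent terms (which are unchanged) as well as the quadratic penalty, one gets
$$\int_{\Om}\left(\frac{\e^{p-1}}{p}|\nabla\rho_{k_n}(t)|^p+\frac{\alpha}{p'\e}(1-\rho_{k_n}(t))^p\right)dx\leq\int_{\Om}\left(\frac{\e^{p-1}}{p}|\nabla\rho_n|^p+\frac{\alpha}{p'\e}(1-\rho_n)^p\right)dx.$$

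Next I would pass to the limit $n\to\infty$ on both sides. On the left, since $\rho_{k_n}(t)\rightharpoonup\rho_\e(t)$ weakly in $W^{1,p}(\Om)$ and $\rho_{k_n}(t)\to\rho_\e(t)$ uniformly, lower semicontinuity of the $L^p$-norm of the gradient and strong convergence of the potential term give
$$\int_{\Om}\left(\frac{\e^{p-1}}{p}|\nabla\rho_\e(t)|^p+\frac{\alpha}{p'\e}(1-\rho_\e(t))^p\right)dx\leq\liminf_{n\to\infty}\int_{\Om}\left(\frac{\e^{p-1}}{p}|\nabla\rho_{k_n}(t)|^p+\frac{\alpha}{p'\e}(1-\rho_{k_n}(t))^p\right)dx.$$
On the right I need $\rho_n\to\rho$ \emph{strongly} in $W^{1,p}(\Om)$, so that the surface-energy functional converges. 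This is the key point: since $\rho^-_{k_n}(t)\to\rho^-_\e(t)$ uniformly (Lemma~\ref{convrok-}) and $\rho^-_\e(t)\geq\rho_\e(t)\geq\rho$, we have $\rho\wedge\rho^-_{k_n}(t)\to\rho\wedge\rho^-_\e(t)=\rho$ uniformly; combined with the weak $W^{1,p}$-bound (the $\rho^-_{k_n}(t)$ are bounded in $W^{1,p}$, hence so are the truncations, since $\nabla(\rho\wedge\rho^-_{k_n}(t))$ equals $\nabla\rho$ or $\nabla\rho^-_{k_n}(t)$ a.e.), one gets $\rho_n\rightharpoonup\rho$ weakly in $W^{1,p}(\Om)$. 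Upgrading this to strong convergence requires an argument: either invoke that $\min\{\rho,\rho^-_{k_n}(t)\}\to\rho$ in $W^{1,p}$ because on $\{\rho<\rho^-_\e(t)\}$ eventually $\rho_n=\rho$ while the contribution of $\{\rho\geq\rho^-_{k_n}(t)\}$ shrinks (its measure tends to that of $\{\rho\geq\rho^-_\e(t)\}\subset\{\rho=\rho^-_\e(t)\}$, on which $\nabla\rho=\nabla\rho^-_\e(t)$ a.e. and the integrands of $\rho_n$ and $\rho$ agree in the limit), or — more cleanly — just pass to the limit using weak lower semicontinuity in the reverse direction is not enough, so one should note that $\limsup_n\|\nabla\rho_n\|_{L^p}\le\|\nabla\rho\|_{L^p}$ by the splitting above, which together with weak convergence yields strong convergence in $W^{1,p}(\Om)$ by uniform convexity of $L^p$. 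Then the right-hand side converges to the surface energy of $\rho$, and chaining the two inequalities gives the claimed minimality.

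\textbf{Consequence: monotonicity and continuity of $\mathfrak S_\e$.} For the ``in particular'' part, fix $0\leq s\leq t$. Since $\rho_\e(t)\leq\rho_\e(s)$ in $\Om$ (Lemma~\ref{convrok}), the function $\rho_\e(t)$ is an admissible competitor in the minimality property just proved \emph{at time $s$}, i.e. with obstacle $\rho_\e(s)$; hence $\mathfrak S_\e(s)\leq\mathfrak S_\e(t)$, so $\mathfrak S_\e$ is non-decreasing on $[0,+\infty)$. A non-decreasing real function has at most countably many discontinuity points, which defines $\mathcal S_\e$ and finishes the proof.

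\textbf{Main obstacle.} The delicate step is the strong $W^{1,p}$-convergence $\rho\wedge\rho^-_{k_n}(t)\to\rho$, because weak convergence of the truncations is immediate but does not suffice to pass to the limit in the (non-quadratic, $p$-homogeneous) surface term on the right-hand side; establishing the matching $\limsup$ bound on $\|\nabla(\rho\wedge\rho^-_{k_n}(t))\|_{L^p}$ and then invoking uniform convexity is where the real work lies. Everything else is a routine combination of lower semicontinuity, the $\C^0(\overline\Om)$-convergence already recorded, and the pointwise monotonicity of the limit phase field.
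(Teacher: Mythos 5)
Your overall strategy (truncate the competitor so that it becomes admissible in the discrete problem, then pass to the limit in the discrete minimality) is the paper's, but two steps do not go through as written. First, to reduce the full minimality $\E_\e(u_{k_n}(t),\rho_{k_n}(t))\leq\E_\e(u_{k_n}(t),\rho_n)$ to an inequality between surface energies alone, you must discard the terms $\frac12\int_\Om(\eta_\e+\rho^2)|\nabla u_{k_n}(t)|^2\,dx$, and these are \emph{not} unchanged: they depend on the phase field. They can be dropped only if $\rho_n\leq\rho_{k_n}(t)$ pointwise, so that the bulk term of the competitor is no larger. Your competitor $\rho_n=\rho\wedge\rho^-_{k_n}(t)$ satisfies $\rho_n\leq\rho^-_{k_n}(t)=\rho_{k_n}^{i-1}$, which lies \emph{above} $\rho_{k_n}(t)$, so the required comparison fails in general (e.g.\ wherever $\rho>\rho_{k_n}(t)$, which uniform convergence does not exclude). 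The fix is to truncate with $\rho_{k_n}(t)$ itself, i.e.\ $\hat\rho_n:=\rho\wedge\rho_{k_n}(t)$: this is still admissible since $\rho_{k_n}(t)\leq\rho_{k_n}^{i-1}$, and it now lies below $\rho_{k_n}(t)$.

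Second, and more seriously, the limit passage on the right-hand side is where the argument breaks. You need $\limsup_n\|\nabla\rho_n\|_{L^p(\Om;\RR^N)}\leq\|\nabla\rho\|_{L^p(\Om;\RR^N)}$, and the splitting gives $\int_\Om|\nabla\rho_n|^p\,dx=\int_{\{\rho<\rho^-_{k_n}(t)\}}|\nabla\rho|^p\,dx+\int_{\{\rho\geq\rho^-_{k_n}(t)\}}|\nabla\rho^-_{k_n}(t)|^p\,dx$. The second integral need not vanish: the sets $\{\rho\geq\rho^-_{k_n}(t)\}$ only shrink towards $\{\rho=\rho^-_\e(t)\}$, which may have positive measure, and on that set you control $\nabla\rho^-_{k_n}(t)$ only \emph{weakly} in $L^p$, so its $p$-energy can concentrate and strictly exceed that of $\nabla\rho$ there (strong $W^{1,p}$-convergence of $\rho^-_{k_n}(t)$ is not available at this stage — it is only obtained later, and only at continuity times). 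Hence the matching $\limsup$ bound, and with it uniform convexity, cannot be invoked. The paper's proof sidesteps this entirely by a cancellation: writing $A_k=\{\rho\leq\rho_k(t)\}$, the quantity $\int_{\Om\setminus A_k}|\nabla\rho_k(t)|^p\,dx$ appears on \emph{both} sides of the discrete surface inequality (inside $|\nabla\rho_k(t)|^p$ on the left and inside $|\nabla\hat\rho_k|^p$ on the right) and is cancelled \emph{before} passing to the limit; what survives is handled by weak lower semicontinuity on the left (via $\chi_{A_k}\nabla\rho_k(t)\rightharpoonup\nabla\rho_\e(t)$) and by convergence of $\int_{A_k}|\nabla\rho|^p\,dx$ on the right. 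No strong convergence of the truncated competitors is ever needed; this cancellation is the key idea missing from your proposal. Your deduction of the monotonicity and of the countability of the discontinuity set of $\mathfrak S_\e$ from the minimality property is correct.
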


\begin{proof}
Fix $t >0$ and let $i \in\mathbb{N}$ be such that $t \in (t_k^{i-1},t_k^i]$. Consider a function $\rho \in W^{1,p}(\Om)$ such that $\rho \leq \rho_\e(t)$  in~$\Om$, and define $\hat \rho_k:= \rho \wedge \rho_k(t)$. Then $\hat \rho_k \in W^{1,p}(\Om)$ and $\hat \rho_k \leq \rho_k(t) \leq \rho_k^{i-1}$. By the minimality properties of the pair $(u_k(t),\rho_k(t))$,
$$\E_\e(u_k(t),\rho_k(t)) \leq \E_\e(u_k(t),\hat \rho_k)\,,$$
and  since $\hat \rho_k \leq \rho_k(t)$,
\begin{equation}\label{surfenergy}
\int_{\Om} \left(\frac{\e^{p-1}}{p}|\nabla\rho_k(t)|^p +\frac{\alpha}{p'\e}(1-\rho_k(t))^p\right)\,dx
\leq \int_{\Om} \left(\frac{\e^{p-1}}{p}|\nabla \hat \rho_k|^p +\frac{\alpha}{p'\e}(1-\hat \rho_k)^p\right)\,dx\,.
\end{equation}
Let us now define the measurable sets $A_k:=\{ \rho \leq \rho_k(t)\}$. By definition of $\hat \rho_k$, we have
$$\int_\Om |\nabla \hat \rho_k|^p\, dx = \int_{A_k} |\nabla \rho|^p\, dx + \int_{\Om \setminus A_k}|\nabla \rho_k(t)|^p\, dx\,,$$
and thanks to \eqref{surfenergy}, we infer that 
\begin{equation}\label{surfenergy2}
 \frac{\e^{p-1}}{p}\int_{A_k}|\nabla\rho_k(t)|^p\, dx  + \frac{\alpha}{p'\e}\int_\Om (1-\rho_k(t))^p\,dx
\leq \frac{\e^{p-1}}{p}\int_{A_k} |\nabla \rho|^p\, dx  + \frac{\alpha}{p'\e}\int_\Om(1-\hat \rho_k)^p\,dx\,.
\end{equation}
Since $\rho_k(t) \to \rho_\e(t)$ strongly in $L^p(\Om)$ and $\rho \leq \rho_\e(t)$ in $\Om$, we deduce that $\mathscr L^N(\Om \setminus A_k) \to 0$. As a consequence,
$$ \int_{A_k} |\nabla \rho|^p\, dx \to  \int_{\Om} |\nabla \rho|^p\, dx\,,$$
and $\chi_{A_k} \nabla \rho_k(t) \wto \nabla \rho_\e(t)$ weakly in $L^p(\Om;\RR^N)$ which in turn leads to
$$ \liminf_{k \to \infty}\int_{A_k} |\nabla\rho_k(t)|^p\, dx \geq \int_{\Om} |\nabla\rho_\e(t)|^p\, dx\,.$$
Passing to the limit in \eqref{surfenergy2} as $k \to \infty$ yields
$$
\int_{\Om} \left( \frac{\e^{p-1}}{p}|\nabla\rho_\e(t)|^p +\frac{\alpha}{p'\e}(1-\rho_\e(t))^p\right)\,dx
\leq \int_{\Om} \left(\frac{\e^{p-1}}{p}|\nabla \rho|^p\, + \frac{\alpha}{p'\e}(1-\rho)^p\right)\,dx\,.
$$
In particular, taking $\rho=\rho_\e(s)$ with $s \geq t$ leads to the announced monotonicity of the function $\mathfrak S_{\e}$.
\end{proof}

At this stage we  do not have any {\it a priori} time-regularity for  $t \mapsto \rho_\e(t)$ except that it is non-increasing, and thus it has finite pointwise  variation (with values in $L^1(\Om)$). In the following result we show that this mapping is actually strongly continuous in $W^{1,p}(\Om)$ outside a countable subset of $(0,+\infty)$ containing the discontinuity points of the surface energy $\mathfrak S_\e$.

\begin{lemma}\label{contH1}
There exists an (at most) countable set $\mathcal R_\e \subset (0,+\infty)$ containing $\mathcal{S}_\e$ such that the mapping $t \mapsto \rho_\e(t)$ is strongly continuous in $W^{1,p}(\Om)$ on $[0,+\infty) \setminus \mathcal R_\e$. In particular, $\rho_\e$ is strongly continuous at time $t=0$.  
\end{lemma}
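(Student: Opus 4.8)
The plan is to extract the strong $W^{1,p}(\Om)$-continuity from three facts that are already at hand: the pointwise monotonicity of $t\mapsto\rho_\e(t)$, the monotonicity (hence a.e.\ continuity) of the diffuse surface energy $\mathfrak S_\e$ furnished by Proposition~\ref{decenergysurf}, and the uniform convexity of $L^p(\Om;\RR^N)$. Continuity at $t=0$ will be treated separately, using the global minimality characterization \eqref{rho0} of $\rho_0^\e$.

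\emph{Step 1 (weak $W^{1,p}$-continuity outside a countable set).} Since $t\mapsto\rho_\e(t)$ is non-increasing with $0\le\rho_\e(t)\le\rho_\e(s)\le1$ in $\Om$, the real function $g(t):=\int_\Om(\rho_0^\e-\rho_\e(t))\,dx=\|\rho_0^\e-\rho_\e(t)\|_{L^1(\Om)}$ is non-decreasing on $[0,+\infty)$, hence continuous outside an at most countable set $\mathcal J_\e$. For $t\notin\mathcal J_\e$ one has $\|\rho_\e(s)-\rho_\e(t)\|_{L^1(\Om)}=|g(s)-g(t)|\to0$ as $s\to t$; since $\{\rho_\e(s)\}$ is bounded in $L^\infty(\Om)\cap W^{1,p}(\Om)$ (Lemma~\ref{convrok}), this upgrades to $\rho_\e(s)\to\rho_\e(t)$ strongly in $L^p(\Om)$ and, the $L^1$-limit identifying the weak limit along every subsequence, to $\rho_\e(s)\wto\rho_\e(t)$ weakly in $W^{1,p}(\Om)$ (and even strongly in $\C^0(\overline\Om)$ by the compact Sobolev embedding, $p>N$).

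\emph{Step 2 (upgrade to strong $W^{1,p}$-continuity).} Set $\mathcal R_\e:=\mathcal J_\e\cup\mathcal S_\e$, which is at most countable and contains $\mathcal S_\e$ (and, thanks to Step~3 below, may be taken inside $(0,+\infty)$). Fix $t\in(0,+\infty)\setminus\mathcal R_\e$ and $s_n\to t$. From \eqref{se},
$$\frac{\e^{p-1}}{p}\int_\Om|\nabla\rho_\e(s_n)|^p\,dx=\mathfrak S_\e(s_n)-\frac{\alpha}{p'\e}\int_\Om(1-\rho_\e(s_n))^p\,dx\,.$$
Since $t\notin\mathcal S_\e$, Proposition~\ref{decenergysurf} gives $\mathfrak S_\e(s_n)\to\mathfrak S_\e(t)$, while $t\notin\mathcal J_\e$ gives $\int_\Om(1-\rho_\e(s_n))^p\,dx\to\int_\Om(1-\rho_\e(t))^p\,dx$ by Step~1. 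Hence $\|\nabla\rho_\e(s_n)\|_{L^p(\Om)}\to\|\nabla\rho_\e(t)\|_{L^p(\Om)}$, whereas $\nabla\rho_\e(s_n)\wto\nabla\rho_\e(t)$ weakly in $L^p(\Om;\RR^N)$. As $L^p(\Om;\RR^N)$ is uniformly convex ($1<p<\infty$), the Radon--Riesz property forces $\nabla\rho_\e(s_n)\to\nabla\rho_\e(t)$ strongly in $L^p$, and combined with the $L^p$-convergence of $\rho_\e(s_n)$ this yields $\rho_\e(s_n)\to\rho_\e(t)$ strongly in $W^{1,p}(\Om)$. Since $s_n\to t$ was arbitrary, $t\mapsto\rho_\e(t)$ is strongly continuous on $[0,+\infty)\setminus\mathcal R_\e$.

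\emph{Step 3 (continuity at $t=0$).} Here the argument above degenerates (the monotonicity carries no information at the left endpoint), so I argue directly. By monotonicity the limit $\rho_\e(0^+):=\lim_{s\downarrow0}\rho_\e(s)$ exists pointwise (hence weakly in $W^{1,p}(\Om)$ and uniformly) and satisfies $\rho_\e(0^+)\le\rho_0^\e$. Passing to the limit in the discrete inequality \eqref{1046} and using lower semicontinuity (Lemma~\ref{existsch1}) gives $\E_\e(u_\e(t),\rho_\e(t))\le\E_\e(u_0,\rho_0^\e)$ for every $t\ge0$, so $\limsup_{s\downarrow0}\E_\e(u_\e(s),\rho_\e(s))\le\E_\e(u_0,\rho_0^\e)$. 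On the other hand $u_\e(s)\to u_0$ strongly in $L^2(\Om)$ and $u_\e(s)\wto u_0$ weakly in $H^1(\Om)$ (Lemma~\ref{convuk}, Remark~\ref{weakcontueps}), so lower semicontinuity yields $\E_\e(u_0,\rho_\e(0^+))\le\liminf_{s\downarrow0}\E_\e(u_\e(s),\rho_\e(s))$. Hence $\E_\e(u_0,\rho_\e(0^+))\le\E_\e(u_0,\rho_0^\e)$, and since $\rho_0^\e$ is the unique minimizer of $\E_\e(u_0,\cdot)$ by \eqref{rho0}, $\rho_\e(0^+)=\rho_0^\e$ and $\E_\e(u_\e(s),\rho_\e(s))\to\E_\e(u_0,\rho_0^\e)$. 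The strong-convergence clause of Lemma~\ref{existsch1} then gives $\rho_\e(s)\to\rho_0^\e$ strongly in $W^{1,p}(\Om)$ (and $u_\e(s)\to u_0$ strongly in $H^1(\Om)$).

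The technical core is Step~2: the nontrivial point is that weak $W^{1,p}$-convergence together with convergence of the surface energy $\mathfrak S_\e$ is exactly enough to force convergence of $\|\nabla\rho_\e\|_{L^p}$, after which uniform convexity promotes weak to strong convergence; everything else reduces to the monotonicity of scalar functions and to lower semicontinuity already established in Lemma~\ref{existsch1}. The only other delicate spot is $t=0$, where one must replace the monotonicity argument by the variational identification \eqref{rho0} of the initial phase field.
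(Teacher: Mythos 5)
Your proof is correct and follows essentially the same route as the paper's: the exceptional set is the union of $\mathcal S_\e$ with the discontinuity points of the monotone scalar function $t\mapsto\int_\Om\rho_\e(t)\,dx$, the weak $W^{1,p}(\Om)$-limit is identified through the pointwise monotonicity of $\rho_\e$, the upgrade to strong convergence uses continuity of $\mathfrak S_\e$ plus norm convergence in the uniformly convex space $L^p$, and continuity at $t=0$ is obtained from the discrete energy inequality \eqref{1046} together with the unique minimality \eqref{rho0} of $\rho_0^\e$ and the strong-convergence clause of Lemma~\ref{existsch1}. The only (cosmetic) differences are that you extract the $L^1$-continuity directly from the identity $\|\rho_\e(s)-\rho_\e(t)\|_{L^1(\Om)}=|g(s)-g(t)|$ rather than by a subsequence argument, and that you spell out the Radon--Riesz step which the paper leaves implicit.
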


\begin{proof}
Let $\mathcal R_\e$ be the union of the set $\mathcal{S}_{\e}$ given by Proposition \ref{decenergysurf} and the set of all discontinuity points of 
\begin{equation}\label{f}
t \mapsto \int_\Om \rho_\e(t)\, dx\,.
\end{equation}
Note that $\mathcal R_\e$ is at most countable by the decreasing property of the latter function. Let $t \in[0,+\infty)\setminus \mathcal R_\e$, we claim that $\rho_\e$ is strongly continuous in $W^{1,p}(\Om)$ at $t$. Consider a sequence $t_n \to t$ and extract a subsequence $\{t_{n_j}\} \subset \{t_n\}$ such that $\rho_\e(t_{n_j}) \wto \rho_\star$ weakly in $W^{1,p}(\Om)$ for some $\rho_\star \in W^{1,p}(\Om)$. Upon extracting a further subsequence, we may assume without loss of generality that $t_{n_j} >t$ for each $j \in \NN$ (the other case $t_{n_j} <t$ can be treated in a similar way). Then $\rho_\e(t_{n_j}) \leq \rho_\e(t)$  in $\Om$, and passing to the limit yields  $\rho_\star \leq \rho_\e(t)$  in $\Om$. On the other hand, by our choice of $t$ as a continuity point of  the mapping \eqref{f}, we have
$$\int_\Om \rho_\e(t)\, dx = \lim_{j \to \infty} \int_\Om \rho_\e(t_{n_j})\, dx = \int_\Om \rho_\star \, dx\,,$$
and thus $\rho_\star=\rho_\e(t)$. As a consequence, the limit is independent of the choice of the subsequence, and the full sequence $\{\rho_\e(t_n)\}$ weakly converges to $\rho_\e(t)$ in $W^{1,p}(\Om)$. Finally, using the fact that $t$ is a continuity point of $\mathfrak S_{\e}$, we get that $\mathfrak S_{\e}(t_n) \to \mathfrak S_{\e}(t)$, and thus $\|\rho_\e(t_n)\|_{W^{1,p}(\Om)} \to \|\rho_\e(t)\|_{W^{1,p}(\Om)}$. We then deduce that $\rho_\e(t_n) \to \rho_\e(t)$ strongly in $W^{1,p}(\Om)$.

It now remains to show that $\rho_\e$ is continuous at $t=0$. Let $t_n \downarrow 0$ be an arbitrary sequence. By Remark~\ref{weakcontueps} 
we have $u_\e(t_n) \wto u_0$ weakly in $H^1(\Om)$. By Lemma \ref{convrok}, $\rho_\e \in L^\infty(0,+\infty;W^{1,p}(\Om))$, and we can extract a (not relabeled) subsequence  such that $\rho_\e(t_n) \wto \rho_*$ weakly in $W^{1,p}(\Om)$ for some $\rho_* \in W^{1,p}(\Om)$.
According to the energy inequality \eqref{1046} proved in Lemma \ref{inegenergdiscr}, we have 
$\E_\e(u_k(t_n),\rho_k(t_n)) \leq \E_\e(u_0,\rho_0^\e)$ for all $n \in \NN$ and all $k \in \NN$. 
Now we apply Lemma~\ref{existsch1} to pass to the limit first as $k \to \infty$ and then as $n \to \infty$, which yields  $\E_\e(u_0,\rho_*) \leq \E_\e(u_0,\rho_0^\e)$. From the minimality property \eqref{rho0} satisfied by $\rho_0^\e$, we deduce that $\E_\e(u_0,\rho_*) = \E_\e(u_0,\rho_0^\e)$.
By uniqueness of the solution of the minimization problem \eqref{rho0}, we have $\rho_*=\rho_0^\e$. Moreover, we infer from the discussion above that 
$\lim_{n} \E_\e(u_\e(t_n),\rho_\e(t_n))=  \E_\e(u_0,\rho_0^\e)$, 
which implies that 
$\rho_\e(t_n)\to \rho_0^\e$ strongly in $W^{1,p}(\Om)$ by Lemma \ref{existsch1}. 
This convergence holds for the full sequence $\{t_n\}$ by uniqueness of the limit.
\end{proof}

Thanks to the just established continuity of $t \mapsto \rho_\e(t)$, we deduce that $\rho_\e^-$ and $\rho_\e$ actually coincide almost everywhere in time whenever \eqref{condGUAMM} holds.

\begin{corollary}\label{rho-}
There exists an $\mathscr{L}^1$-negligible set $\mathcal{M}_\e\subset[0,+\infty)$ such that  
$\rho^-_\e(t)= \rho_\e(t)$ for every  $t\in[0,+\infty)\setminus~\mathcal{M}_\e$. 
\end{corollary}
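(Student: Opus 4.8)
The plan is to show that $\rho_\e^-(t)=\rho_\e(t)$ for $\mathscr L^1$-a.e. $t\geq 0$, where we recall from \eqref{interp} that $\rho_\e^-$ is the pointwise weak $W^{1,p}$-limit of the shifted interpolants $\rho_{k_n}^-$, with $\rho_{k_n}^-(t)=\rho_{k_n}^{i-1}$ for $t\in(t_{k_n}^{i-1},t_{k_n}^i]$, while $\rho_\e$ is the limit of the unshifted interpolants $\rho_{k_n}(t)=\rho_{k_n}^i$ on the same interval. The two interpolants differ only by a ``one-step shift'', so heuristically the difference should vanish in the limit $|\bdel_{k_n}|\to 0$ at every time where $\rho_\e$ is continuous. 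First I would fix a time $t\in(0,+\infty)\setminus\mathcal R_\e$, where $\mathcal R_\e$ is the at most countable set from Lemma~\ref{contH1} outside of which $t\mapsto\rho_\e(t)$ is strongly $W^{1,p}$-continuous, and set $\mathcal M_\e:=\mathcal R_\e$, which is $\mathscr L^1$-negligible (indeed countable).

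Next I would compare, for fixed $t\in(0,+\infty)\setminus\mathcal R_\e$, the quantities $\rho_{k_n}(t)=\rho_{k_n}^{i_n}$ and $\rho_{k_n}^-(t)=\rho_{k_n}^{i_n-1}$, where $i_n$ is the index with $t\in(t_{k_n}^{i_n-1},t_{k_n}^{i_n}]$. Observe that $\rho_{k_n}^{i_n-1}=\rho_{k_n}(t_{k_n}^{i_n-1})$ and that $t_{k_n}^{i_n-1}\leq t$ with $t-t_{k_n}^{i_n-1}\leq\delta_{k_n}^{i_n}\leq|\bdel_{k_n}|\to 0$, so $t_{k_n}^{i_n-1}\to t$ as $n\to\infty$. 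Now I would use the weak $W^{1,p}$-convergence $\rho_{k_n}(s)\wto\rho_\e(s)$ available for \emph{every} $s\geq 0$ (Lemma~\ref{convrok}), combined with a diagonal/monotonicity argument: since $t\mapsto\int_\Om\rho_\e(t)\,dx$ is continuous at $t$ (as $t\notin\mathcal R_\e$) and $\int_\Om\rho_\e(s)\,dx$ is non-increasing, for any sequence $s_n\to t$ with $s_n\leq t$ one has $\int_\Om\rho_{k_n}(s_n)\,dx$ trapped between $\int_\Om\rho_{k_n}(t)\,dx$ and $\int_\Om\rho_{k_n}(s)\,dx$ for fixed $s<t$; a careful passage to the limit (first $n\to\infty$ using pointwise weak convergence and monotonicity of each interpolant, then $s\uparrow t$) forces $\int_\Om\rho_{k_n}(s_n)\,dx\to\int_\Om\rho_\e(t)\,dx$. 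Applying this with $s_n=t_{k_n}^{i_n-1}$ gives $\int_\Om\rho_{k_n}^-(t)\,dx\to\int_\Om\rho_\e(t)\,dx$. Together with the uniform bound $\sup_n\|\rho_{k_n}^-(t)\|_{W^{1,p}(\Om)}\leq C_\e$ from Lemma~\ref{inegenergdiscr}, this pins down the weak limit: any weakly convergent subsequence $\rho_{k_n}^-(t)\wto\rho_\star$ satisfies $\rho_\star\leq\rho_\e(t)$ in $\Om$ (from $\rho_{k_n}^{i_n-1}\geq\rho_{k_n}^{i_n}$ and weak limits) \emph{and} $\int_\Om\rho_\star\,dx=\int_\Om\rho_\e(t)\,dx$, hence $\rho_\star=\rho_\e(t)$. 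Since the weak limit is independent of the subsequence, the full sequence satisfies $\rho_{k_n}^-(t)\wto\rho_\e(t)$, i.e.\ $\rho_\e^-(t)=\rho_\e(t)$.

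A cleaner alternative I would actually favor: avoid the shifted interpolant comparison entirely by exploiting hypothesis \eqref{condGUAMM}. That bound gives $\delta_k^{i}\leq C\,\delta_k^{i-1}$, so for $t\in(t_k^{i-1},t_k^i]$ one has $t_k^{i-1}-t_k^{i-2}=\delta_k^{i-1}\geq \delta_k^i/C\geq(t-t_k^{i-1})/C$, which allows one to locate a \emph{fixed} time $t-h$ (for small fixed $h>0$) inside an interval with index $\leq i_n-1$ for all large $n$, yielding $\rho_{k_n}^-(t)\geq\rho_{k_n}(t-h)$ and hence $\rho_\e^-(t)\geq\rho_\e(t-h)$; letting $h\downarrow 0$ and using left-continuity of $\rho_\e$ at $t$ (which holds outside $\mathcal R_\e$) gives $\rho_\e^-(t)\geq\rho_\e(t)$, while the opposite inequality $\rho_\e^-(t)\leq\rho_\e(t^-)$ is immediate. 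I expect the main obstacle to be the bookkeeping in passing to the limit in the ``trapped integral'' argument — one must carefully interchange the $n\to\infty$ and $s\uparrow t$ limits while only having weak (not strong) $W^{1,p}$-convergence of the interpolants at generic times — but this is handled entirely by the monotonicity of $s\mapsto\int_\Om\rho_{k_n}(s)\,dx$ together with the continuity of $s\mapsto\int_\Om\rho_\e(s)\,dx$ at $t$, exactly as in the proof of Lemma~\ref{contH1}. Everything else (the uniform $W^{1,p}$-bounds, the sign of the limit, the at most countability of $\mathcal R_\e$) has already been established above.
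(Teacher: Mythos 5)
Your argument is correct, but it follows a genuinely different route from the paper's. The paper works in $L^1(0,T;L^1(\Om))$: it introduces the time-reparametrization $\ell_k$ of \eqref{1117} (so that $\rho_k^-(t)=\rho_k(\ell_k(t))$), shows $\rho_k\to\rho_\e$, $\rho_k^-\to\rho_\e^-$ and $\rho_\e\circ\ell_k\to\rho_\e$ in $L^1(0,T;L^1(\Om))$, and controls the middle term $\|\rho_k^--\rho_\e\circ\ell_k\|_{L^1(0,T;L^1)}$ by a change of variables in time whose Jacobian is exactly $\sup_i\delta_k^{i+1}/\delta_k^i$ --- this is where hypothesis \eqref{condGUAMM} is used --- yielding $\rho_\e^-=\rho_\e$ for a.e.\ $t$. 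Your first argument is instead pointwise in $t$: at each $t\notin\mathcal R_\e$ you identify $\rho_{k_n}^-(t)=\rho_{k_n}(t_{k_n}^{i_n-1})$ with $t_{k_n}^{i_n-1}\uparrow t$, squeeze $\int_\Om\rho_{k_n}(t_{k_n}^{i_n-1})\,dx$ between $\int_\Om\rho_{k_n}(t)\,dx$ and $\int_\Om\rho_{k_n}(s)\,dx$ ($s<t$ fixed) via monotonicity, and combine the resulting identity $\int_\Om\rho_\e^-(t)\,dx=\int_\Om\rho_\e(t)\,dx$ with the pointwise ordering of Lemma~\ref{convrok-} to conclude. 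This buys two things: it does not use \eqref{condGUAMM} at all, and it gives the stronger conclusion that the exceptional set can be taken to be the \emph{countable} set $\mathcal R_\e$ rather than merely $\LL^1$-negligible. (Your second, alternative argument is also essentially sound, and again the only substantive input is the continuity of $\rho_\e$ at $t$, not \eqref{condGUAMM}.)

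Two sign slips should be fixed, though neither affects the conclusion. Since $\rho_{k_n}^-(t)=\rho_{k_n}^{i_n-1}\geq\rho_{k_n}^{i_n}=\rho_{k_n}(t)$, any weak limit $\rho_\star$ satisfies $\rho_\star\geq\rho_\e(t)$, not $\leq$; the squeeze then reads ``$\rho_\star\geq\rho_\e(t)$ and $\int_\Om\rho_\star\,dx=\int_\Om\rho_\e(t)\,dx$, hence $\rho_\star=\rho_\e(t)$.'' Likewise, in the alternative argument one gets $\rho_{k_n}^-(t)\leq\rho_{k_n}(t-h)$ (an index $\leq i_n-1$ gives a \emph{larger} iterate), so the limit yields $\rho_\e^-(t)\leq\rho_\e(t-h)\to\rho_\e(t)$, while the ``immediate'' inequality is $\rho_\e^-(t)\geq\rho_\e(t)$ from Lemma~\ref{convrok-}; you have the two directions interchanged.
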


\begin{proof}
Let us consider the function $\ell_k:[0,+\infty)\to[0,+\infty)$ defined by 
\begin{equation}\label{1117}\ell_k(t):=\begin{cases} 
0 & \text{if $t\in[0,t_k^1]$}\,,\\
\displaystyle t_k^{i-1} + \frac{\delta_k^i}{\delta_k^{i+1}}(t-t_k^i) & \text{if $t\in (t_k^i,t_k^{i+1}]$ with $i\geq 1$}\,.
\end{cases}
\end{equation}
Notice that
$$\sup_{t\geq 0} |\ell_k(t)-t| \leq 3|\bdel_k| \mathop{\longrightarrow}\limits_{k\to\infty} 0 \,.$$
Setting 
$$\rho_\e^k(t):=\rho_\e(\ell_k(t))\,,$$
we infer from Lemma \ref{contH1} that $\rho_\e^k(t)\to \rho_\e(t)$ strongly in $L^1(\Om)$ for every $t\in[0,+\infty)
\setminus\mathcal{R}_\e$.
Since $0\leq \rho_\e\leq 1$, by dominated convergence we have $\rho^k_\e\to\rho_\e$ strongly in $L^1(0,T;L^1(\Om))$ for every $T>0$. 
Similarly,  by \eqref{ptwbdmonotdisceq} we have  that $\rho_k\to\rho_\e$ and $\rho_k^-\to \rho^-_\e$ strongly in $L^1(0,T;L^1(\Om))$ for every $T>0$.  
Given $T>0$ arbitrary,  we estimate 
$$\|\rho^-_\e-\rho_\e\|_{L^1(0,T;L^1(\Om))}\leq \|\rho^-_\e-\rho^-_k\|_{L^1(0,T;L^1(\Om))} +\|\rho^-_k-\rho^k_\e\|_{L^1(0,T;L^1(\Om))}  + \|\rho^k_\e-\rho_\e\|_{L^1(0,T;L^1(\Om))} \mathop{\longrightarrow}\limits_{k\to\infty} 0.  $$
Indeed, observing that $\rho_k^-(t)=\rho_k(\ell_k(t))$ and $\ell_k(t)\leq t$, we deduce from  \eqref{condGUAMM} that
\begin{multline*}
\int_0^T\|\rho_k^-(t)-\rho^k_\e(t)\|_{L^1(\Om)}\,dt = \int_{\delta^1_k}^T\|\rho_k(\ell_k(t))-\rho_\e(\ell_k(t))\|_{L^1(\Om)}\,dt \\
\leq \left(\sup_{i\geq 1}\,\frac{\delta_k^{i+1}}{\delta_k^i}\right)\int_0^{T}\|\rho_k(t)-\rho_\e(t)\|_{L^1(\Om)}\,dt\mathop{\longrightarrow}\limits_{k\to\infty} 0 \,. 
\end{multline*}
Hence $\|\rho^-_\e-\rho_\e\|_{L^1(0,T;L^1(\Om))}=0$ for every $T>0$, whence $\rho^-_\e(t)=\rho_\e(t)$ for all $t\in[0,+\infty)\setminus\mathcal{M}_\e$ for some  
$\mathscr{L}^1$-negligible set $\mathcal{M}_\e\subset[0,+\infty)$. 
\end{proof}

\subsection{Time continuity for $u_\e$ and  the bulk energy}\label{subbe}

We start  proving that $u_\e$ solves the inhomogeneous heat equation.

\begin{proposition}\label{convdiv}
The function $u_\e\in AC^2([0,+\infty);L^2(\Om))$ solves
\begin{equation}\label{parabolic}
\begin{cases}
\ds u^\prime_\e = {\rm div}\big((\eta_\e+ \rho_\e^2)\nabla u_\e\big)-\beta(u_\e-g)  & \text{in $L^2(0,+\infty;L^2(\Om))$}\,,\\
\ds \frac{\partial u_\e}{\partial \nu}=0 & \text{in }L^2(0,+\infty;H^{1/2}(\partial\Om))\,.\\
u_\e(0)= u_0\,.
\end{cases}
\end{equation}
\end{proposition}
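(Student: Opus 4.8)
The strategy is to pass to the limit in the discrete Euler--Lagrange equation \eqref{1101} satisfied by the iterates, using the compactness properties already established for $u_k$, $v_k$ and $\rho_k$ in Lemmas~\ref{convrok}, \ref{convrok-} and~\ref{convuk}. First I would recall that, by minimality in \eqref{minSchem1}, each $u_k(t)$ solves for $t\in(t_k^{i-1},t_k^i)$
\begin{equation*}
v_k'(t) - {\rm div}\big((\eta_\e+\rho_k^2(t))\nabla u_k(t)\big) + \beta(u_k(t)-g)=0
\end{equation*}
in the weak $H^{-1}(\Om)$ sense with homogeneous Neumann boundary condition, where $v_k'(t)=(u_k^i-u_k^{i-1})/\delta_k^i$ is the discrete time derivative. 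Testing against $\varphi\in L^2(0,T;H^1(\Om))$ and integrating in time over $(0,T)$ (the finitely many nodes $t_k^i$ being $\mathscr L^1$-negligible), one gets an integral identity
\begin{equation*}
\int_0^T\!\!\int_\Om v_k' \varphi\,dx\,dt + \int_0^T\!\!\int_\Om (\eta_\e+\rho_k^2)\nabla u_k\cdot\nabla\varphi\,dx\,dt + \beta\int_0^T\!\!\int_\Om (u_k-g)\varphi\,dx\,dt =0\,.
\end{equation*}

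The main task is then to pass to the limit $k=k_n\to\infty$ term by term. The first term converges to $\int_0^T\!\int_\Om u_\e'\varphi$ by item~(vi) of Lemma~\ref{convuk} (weak $L^2(0,+\infty;L^2(\Om))$-convergence of $v_{k_n}'$ to $u_\e'$). The third term converges to $\beta\int_0^T\!\int_\Om(u_\e-g)\varphi$ since $u_{k_n}(t)\to u_\e(t)$ strongly in $L^2(\Om)$ for every $t$, with the uniform $L^\infty$ bound of Lemma~\ref{ptwbdmonotdisc}(ii) allowing dominated convergence in time. For the flux term, the key point is that $\rho_{k_n}(t)\to\rho_\e(t)$ uniformly on $\overline\Om$ for every $t\geq 0$ (Sobolev imbedding, as in Lemma~\ref{convrok}), so $\rho_{k_n}^2(t)\nabla\varphi(t)\to\rho_\e^2(t)\nabla\varphi(t)$ strongly in $L^2(\Om;\RR^N)$ pointwise in $t$ with a uniform-in-$t$ bound, hence strongly in $L^2(0,T;L^2(\Om;\RR^N))$ by dominated convergence; combined with $\nabla u_{k_n}\wto\nabla u_\e$ weakly in $L^2(0,T;L^2(\Om;\RR^N))$ (from the uniform $H^1$ bound \eqref{bdgradl2vk}), the product passes to the limit. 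This yields the weak formulation
\begin{equation*}
\int_0^T\!\!\int_\Om u_\e'\varphi\,dx\,dt + \int_0^T\!\!\int_\Om (\eta_\e+\rho_\e^2)\nabla u_\e\cdot\nabla\varphi\,dx\,dt + \beta\int_0^T\!\!\int_\Om (u_\e-g)\varphi\,dx\,dt =0
\end{equation*}
for all $\varphi\in L^2(0,T;H^1(\Om))$ and all $T>0$.

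Finally, since $u_\e\in L^2_{\rm loc}(0,+\infty;H^2(\Om))$ with $\frac{\partial u_\e}{\partial\nu}=0$ in $L^2(0,+\infty;H^{1/2}(\partial\Om))$ by items~(iii)--(iv) of Lemma~\ref{convuk}, an integration by parts in space shows that the weak identity is equivalent to $u_\e'={\rm div}((\eta_\e+\rho_\e^2)\nabla u_\e)-\beta(u_\e-g)$ as an equality in $L^2(0,T;L^2(\Om))$ (note the right-hand side is in $L^2_{\rm loc}$ in time thanks to the $L^2_{\rm loc}(H^2)$ and $L^\infty(H^1)$ bounds, and $\rho_\e$ being bounded), together with the Neumann condition; the initial condition $u_\e(0)=u_0$ is item~(i) of Lemma~\ref{convuk}. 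I expect the only delicate point to be the justification that the product $(\eta_\e+\rho_{k_n}^2)\nabla u_{k_n}$ passes to the limit, which as explained rests crucially on the \emph{uniform} (not merely weak $W^{1,p}$) convergence of the phase fields --- precisely the reason for the $p>N$ choice in the definition of $\E_\e$ --- and this is why the argument would fail for the original $AT_\e$ functional.
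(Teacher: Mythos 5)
Your proposal is correct and follows essentially the same route as the paper: pass to the limit in the time-integrated weak formulation of the discrete equation \eqref{1101} using the convergences of Lemmas \ref{convrok} and \ref{convuk} (in particular the uniform convergence of $\rho_{k_n}(t)$ paired with the weak $L^2$ convergence of $\nabla u_{k_n}$ for the flux term), then recover the strong form via the $H^2$ regularity and Neumann condition of Lemma \ref{convuk}. Your write-up merely fills in the term-by-term details that the paper leaves implicit.
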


\begin{proof}
In view of \eqref{1101}, $u_k$ satisfies 
$$\int_0^T \int_\Om \Big(v'_k \varphi + (\eta_\e+\rho_k^2)\nabla u_k\cdot \nabla \varphi + \beta (u_k-g) \varphi \Big) \, dx \, dt =0$$
for every $\varphi \in L^2(0,T;H^1(\Om))$ and $T>0$. 
Using the convergences established in Lemmas \ref{convrok} and \ref{convuk}, we can pass to the limit $k\to\infty$ in the previous formula to derive  
$$\int_0^T \int_\Om \Big(u'_\e \varphi + (\eta_\e+\rho_\e^2)\nabla u_\e\cdot \nabla \varphi + \beta (u_\e-g) \varphi \Big) \, dx \, dt =0\,.$$
The proof of \eqref{parabolic} is now an immediate consequence of the previous variational formulation together with Lemma \ref{convuk}, items {\it (i)} and {\it (iv)}. 
 \end{proof}

We are now in position to prove the decrease of the bulk energy $\mathfrak B_\e$. 

\begin{proposition}\label{semigroup}
Let $t_0>0$ and set $\rho^{t_0}_\e(t):=\rho_\e(t+t_0)$.  
For any  $w_0 \in H^1(\Om) \cap L^\infty(\Om)$ there exists a unique solution $w_\e \in AC^2([0,+\infty);L^2(\Om)) \cap L^\infty(0,+\infty; H^1(\Om))$ of 
\begin{equation}\label{parabolic2}
\begin{cases}
\displaystyle w^\prime_\e ={\rm div}\big(\big(\eta_\e+(\rho_\e^{t_0})^2\big)\nabla w_\e\big)-\beta(w_\e-g)  & \text{in $L^2_{\rm loc}([0,+\infty);H^{-1}(\Om))$}\,,\\
\displaystyle \big(\eta_\e+(\rho_\e^{t_0})^2\big) \nabla w_\e\cdot\nu= 0 & \text{in $L^2_{\rm loc}([0,+\infty);H^{-1/2}(\partial\Om))$}\,,\\
w_\e(0)= w_0\,, 
\end{cases}
\end{equation}
and $w_\e$ satisfies the following energy inequality for every $t\geq 0$, 
\begin{multline}\label{energineqwe}
\frac{1}{2}\int_\Om \big(\eta_\e + (\rho_\e^{t_0}(t))^2\big) |\nabla w_\e(t)|^2\, dx+\frac{\beta}{2}\int_\Om (w_\e(t)-g)^2\,dx\\
\leq  \frac{1}{2}\int_\Om \big(\eta_\e + \rho_\e^2(t_0)) |\nabla w_0|^2\, dx+\frac{\beta}{2}\int_\Om (w_0-g)^2\,dx\,.
\end{multline}
In particular, for any $t_0>0$ the function $u_\e(\,\cdot \,+t_0)$ is the unique solution of \eqref{parabolic2} with initial datum $w_0:=u_\e(t_0)$. 
As a consequence, the bulk energy $\mathfrak B_\e$ defined in \eqref{be} is non increasing on $[0,+\infty)$, and thus continuous outside an (at most) countable subset $\mathcal{B}_\e$ of $[0,+\infty)$.
\end{proposition}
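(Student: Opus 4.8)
The plan is to treat \eqref{parabolic2} as a linear parabolic equation with a time-dependent but merely measurable coefficient, to derive the energy inequality \eqref{energineqwe} from a time-discretization of that coefficient exploiting the monotonicity of $\rho_\e$, and finally to recognize $u_\e(\,\cdot\,+t_0)$ as the unique solution and read off the monotonicity of $\mathfrak B_\e$.

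First I would set $a_\e(t,x):=\eta_\e+(\rho_\e^{t_0}(t,x))^2$ and record its two crucial features, both coming from Lemma \ref{convrok}: since $0\le\rho_\e\le1$ one has $\eta_\e\le a_\e\le 1+\eta_\e$ on $(0,+\infty)\times\Om$, and since $t\mapsto\rho_\e(t)$ is non-increasing, so is $t\mapsto a_\e(t,x)$ for a.e.\ $x$. Then, for each fixed $T>0$, the bilinear form $\mathfrak a(t;u,v):=\int_\Om a_\e(t)\nabla u\cdot\nabla v\,dx+\beta\int_\Om uv\,dx$ on $H^1(\Om)$ is measurable in $t$, bounded, and coercive ($\mathfrak a(t;u,u)\ge\eta_\e\|\nabla u\|_{L^2}^2+\beta\|u\|_{L^2}^2$), so the standard theory of linear parabolic equations (a Galerkin scheme, say) yields a unique $w_\e\in L^2(0,T;H^1(\Om))$ with $w_\e'\in L^2(0,T;H^{-1}(\Om))\subset C([0,T];L^2(\Om))$ solving \eqref{parabolic2} with datum $w_0$; uniqueness (difference of two solutions tested against itself, plus Gr\"onwall) makes these solutions compatible as $T\to+\infty$, hence a solution on $[0,+\infty)$.

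The heart of the matter, and the step I expect to be the main obstacle, is \eqref{energineqwe}. Formally it follows by testing \eqref{parabolic2} against $w_\e'$ and dropping the term $\tfrac12\int_\Om\partial_t a_\e|\nabla w_\e|^2\le0$, but this differentiation is not licit because $a_\e$ is only non-increasing (of bounded variation) in time. I would circumvent this by discretizing the coefficient: for $m\in\NN$ replace $a_\e$ by the piecewise constant interpolation $a_\e^m$ equal to $a_\e(jT/m,\cdot)$ on $((j-1)T/m,jT/m]$, which is again non-increasing in time, satisfies the same bounds, and converges to $a_\e$ a.e.\ in $(0,T)\times\Om$; let $w_\e^m$ be the solution of \eqref{parabolic2} with $a_\e^m$ in place of $a_\e$ and the same datum $w_0\in H^1(\Om)\cap L^\infty(\Om)$. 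On each subinterval the coefficient is time-independent, so there the classical energy identity obtained from a Galerkin approximation is valid (the datum $w_0\in H^1(\Om)$ propagates the $H^1$-bound through the grid points, at which $w_\e^m$ is weakly $H^1$-continuous); since crossing a grid point can only decrease the coefficient, summing (telescoping) over the subintervals gives
\[
\int_0^T\|(w_\e^m)'\|_{L^2}^2\,dt+\tfrac12\!\int_\Om a_\e(T)|\nabla w_\e^m(T)|^2\,dx+\tfrac\beta2\!\int_\Om(w_\e^m(T)-g)^2\,dx\le\tfrac12\!\int_\Om a_\e(T/m)|\nabla w_0|^2\,dx+\tfrac\beta2\!\int_\Om(w_0-g)^2\,dx\,,
\]
using that $a_\e^m$ equals $a_\e(T,\cdot)$ on the last subinterval and $a_\e(T/m,\cdot)$ on the first. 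These bounds are uniform in $m$, so $\{w_\e^m\}$ is bounded in $L^\infty(0,T;H^1(\Om))\cap H^1(0,T;L^2(\Om))$; by Aubin--Lions a subsequence converges in $C([0,T];L^2(\Om))$ with $\nabla w_\e^m\wto\nabla w_\e$ and $(w_\e^m)'\wto w_\e'$ in $L^2((0,T)\times\Om)$, and since $a_\e^m\to a_\e$ a.e.\ and is bounded one gets $a_\e^m\nabla w_\e^m\wto a_\e\nabla w_\e$ weakly in $L^2$, so the limit solves \eqref{parabolic2} and must be $w_\e$ by uniqueness. Taking $\liminf_m$ in the displayed inequality — lower semicontinuity of the left side from $w_\e^m(T)\wto w_\e(T)$ in $H^1(\Om)$, and $a_\e(T/m,\cdot)\to\eta_\e+\rho_\e(t_0^+)^2\le\eta_\e+\rho_\e^2(t_0)$ a.e.\ with dominated convergence on the right side — yields \eqref{energineqwe} at $t=T$; as $T>0$ is arbitrary (and $t=0$ is trivial), \eqref{energineqwe} holds for every $t\ge0$, and it forces $w_\e'\in L^2(0,+\infty;L^2(\Om))$ and $\sup_{t\ge0}\|w_\e(t)\|_{H^1(\Om)}<\infty$, i.e.\ $w_\e\in AC^2([0,+\infty);L^2(\Om))\cap L^\infty(0,+\infty;H^1(\Om))$.

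Finally I would conclude as follows. By Proposition \ref{convdiv}, $u_\e$ solves \eqref{parabolic}, so $t\mapsto u_\e(t+t_0)$ solves \eqref{parabolic2}, and its initial value $u_\e(t_0)$ lies in $H^1(\Om)\cap L^\infty(\Om)$ by Lemma \ref{convuk}\,(ii)--(iii) together with the weak $H^1$-continuity of Remark \ref{weakcontueps} (which makes $u_\e(t_0)\in H^1(\Om)$ for \emph{every} $t_0$). By the uniqueness just established, $u_\e(\,\cdot\,+t_0)=w_\e$, and \eqref{energineqwe} reads $\mathfrak B_\e(t_0+t)\le\mathfrak B_\e(t_0)$ for all $t_0>0$ and $t\ge0$. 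Combined with the bound $\mathfrak B_\e(t)=\E_\e(u_\e(t),\rho_\e(t))-\mathfrak S_\e(t)\le\E_\e(u_0,\rho_0^\e)-\mathfrak S_\e(0)=\mathfrak B_\e(0)$ — which uses lower semicontinuity of $\E_\e$ along the discrete trajectories through \eqref{1046} and Lemma \ref{existsch1}, plus the monotonicity of $\mathfrak S_\e$ from Proposition \ref{decenergysurf} — this shows that $\mathfrak B_\e$ is non-increasing on $[0,+\infty)$; being monotone it is continuous outside an at most countable set $\mathcal B_\e\subset[0,+\infty)$.
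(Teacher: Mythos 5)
Your proof is correct, and it reaches the energy inequality by a genuinely different route than the paper. The paper's Step 2 constructs $w_\e$ by a minimizing-movement (implicit Euler) scheme: on a uniform grid it freezes the coefficient at the \emph{left} endpoint of each step and defines $w_k^i$ as the minimizer of $v\mapsto \tfrac12\int_\Om(\eta_\e+(\rho_\e^{t_0}(\tau_k^{i-1}))^2)|\nabla v|^2+\tfrac\beta2\int_\Om(v-g)^2+\tfrac1{2\tau_k}\int_\Om(v-w_k^{i-1})^2$; the discrete energy inequality \eqref{energineqwk} then falls out of minimality combined with the monotonicity of $\rho_\e$, and a single limit passage yields both a solution of \eqref{parabolic2} and the bound \eqref{energineqwe}. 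You instead obtain existence and uniqueness at once from the Lions/Galerkin theory for non-autonomous coercive forms (legitimate here, since $a_\e$ is measurable in $t$ and bounded above and below uniformly), and then derive the energy inequality by discretizing \emph{only the coefficient}: on each subinterval the equation is autonomous, so the classical identity obtained by testing against $w'$ is available for $H^1$ data, and the decrease of $\rho_\e$ makes the boundary contributions telescope. Both arguments exploit exactly the same structural fact --- the coefficient can only decrease across time steps --- so they are close cousins; yours separates well-posedness from the improved regularity/energy bound more cleanly, at the price of invoking the autonomous $H^1$-regularity theory on each subinterval, whereas the paper's scheme is self-contained and mirrors the construction of $u_\e$ itself (hence its appeal to ``arguing exactly as in the proof of Lemma \ref{convuk}''). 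Two points you handle explicitly are indeed necessary and correctly resolved: the right-hand side of your telescoped inequality carries $a_\e(T/m,\cdot)\le \eta_\e+\rho_\e^2(t_0)$, so the stated form of \eqref{energineqwe} survives the limit $m\to\infty$; and the monotonicity of $\mathfrak B_\e$ down to $s=0$ does not follow from the semigroup identification alone, so your separate argument via \eqref{1046} and the monotonicity of $\mathfrak S_\e$ from Proposition \ref{decenergysurf} is needed (the paper addresses continuity at $t=0$ only in the remark following the proposition).
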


\begin{proof}
{\it Step 1, Uniqueness.} Let $w_{\e,1}$ and $w_{\e,2}$ be two solutions of \eqref{parabolic2}, and set $z_\e:=w_{\e,1}-w_{\e,2}$. Then $z_\e(0)=0$.   
The variational formulation of \eqref{parabolic2} implies that for any $T>0$ and any test function $\phi \in L^2(0,T;H^1(\Om))$,
$$
\int_0^T \int_\Om \left( z'_\e\phi + \big(\eta_\e +(\rho^{t_0}_\e)^2\big) \nabla z_\e \cdot \nabla \phi +\beta z_\e\phi\right) \,dx \, dt =0\,.
$$
Choosing $\phi(t):=z_\e(t)\chi_{[0,T]}(t)$ as test function above yields
$$ \int_0^T \int_\Om  z'_\e z_\e\, dx \, dt \leq 0\quad\text{for every $T>0$}\,.$$
On the other hand, since $z_\e \in AC^2([0,+\infty);L^2(\Om))$, we have $\|z_\e(\cdot)\|^2_{L^2(\Om)}\in AC([0,+\infty))$ and 
$$\frac{d}{dt}\|z_\e(t)\|^2_{L^2(\Om)} = 2\int_\Om z'_\e(t) z_\e(t)\,dx \quad\text{for a.e. $t\in(0,+\infty)$}\,.$$
Therefore,
$$0\geq \int_0^T \int_\Om  z'_\e z_\e\, dx \, dt =\frac{1}{2}\|z_\e(T)\|^2_{L^2(\Om)}\quad\text{for every $T>0$}\,, $$
which shows that $z_\e\equiv 0$, {\it i.e.}, $w_{\e,1}=w_{\e,2}$. 

\vskip5pt

\noindent{\it Step 2, Existence.} 
For what concerns existence, we reproduce a minimizing movement scheme as before. More precisely, given a sequence $\tau_k\downarrow0$, we set  $\tau_k^i:=i\tau_k$ for $i \in \mathbb{N}$. Taking $w_k^0:=w_0$, we define recursively for all integer $i \geq 1$, $w_k^i \in H^1(\Om)$ as the unique solution of the minimization problem
$$
\min_{v \in H^1(\Om)} \left\{\frac{1}{2}\int_\Om \big(\eta_\e +(\rho^{t_0}_\e(t_k^{i-1}))^2\big) |\nabla v|^2\, dx + \frac{\beta}{2}\int_\Om (v-g)^2\,dx+ \frac{1}{2\tau_k} \int_\Om (v-w_k^{i-1})^2\, dx\right\}\,.
$$
Using the minimality of $w_k^i$ at each step and the fact that $0\leq \rho^{t_0}_\e(\tau_k^i)\leq \rho^{t_0}_\e(\tau_k^{i-1})$, we obtain that for every integer $i\geq 1$,  
\begin{multline}\label{energineqwk}
\frac{1}{2}\int_\Om \big(\eta_\e + (\rho_\e^{t_0}(\tau_k^{i-1}))^2\big) |\nabla w_k^i|^2\, dx+\frac{\beta}{2}\int_\Om (w_k^i-g)^2\,dx + \sum_{j=1}^i \frac{1}{2\tau_k}\int_\Om (w_k^j - w_k^{j-1})^2\, dx \\
\leq \frac{1}{2}\int_\Om (\eta_\e + \rho_\e^2(t_0)) |\nabla w_0|^2\, dx+\frac{\beta}{2}\int_\Om(w_0-g)^2\,dx\,.
\end{multline}
Let us now define the following piecewise constant and piecewise affine interpolations. Set $w_k(0)=\hat w_k(0)=w_0$, and for $t \in (\tau_k^{i-1},\tau_k^{i}]$, 
$$\begin{cases}
w_k(t):=w_k^i\,,\\
\varrho^{t_0}_k(t):=\rho^{t_0}_\e(\tau_k^{i-1})\,,\\
\hat w_k(t):=w_k^{i-1} +\tau_ k^{-1}(t-\tau_k^{i-1})(w_k^{i} - w_k^{i-1})\,.
\end{cases}$$
By Lemma \ref{contH1}, we have $\varrho^{t_0}_k(t) \to \rho^{t_0}_\e(t)$ strongly in $W^{1,p}(\Om)$ for all $t \in [0,+\infty) \setminus (-t_0+\mathcal R_\e)$. 
Arguing exactly as in the proof of Lemma \ref{convuk} we prove that (for a suitable subsequence) $w_k(t)\rightharpoonup w_\e(t)$ weakly in $H^1(\Om)$ for every $t\geq 0$ and $\hat w'_k \rightharpoonup w_\e'$ weakly in $L^2(0,+\infty;L^2(\Om))$, for 
some $w_\e\in AC^2([0,+\infty);L^2(\Om))\cap L^\infty(0,+\infty;H^1(\Om)$. Then we can reproduce with minor modifications the proof of Proposition~\ref{convdiv} to show that $w_\e$ is a solution of \eqref{parabolic2}. 

Since $0\leq \rho^{t_0}_\e(t) \leq \varrho^{t_0}_k(t)$ and $w_k(t)\to w_\e(t)$ strongly in $L^2(\Om)$ for every $t\geq 0$, we infer from \eqref{energineqwk} that for every $t \geq 0$, 
\begin{align*}
\frac{1}{2}\int_\Om (\eta_\e + \rho_\e^2(t_0)) |\nabla w_0|^2\, dx & +\frac{\beta}{2}\int_\Om(w_0-g)^2\,dx \\
&  \geq \liminf_{k\to\infty} \left(\frac{1}{2} \int_\Om \big(\eta_\e + (\rho^{t_0}_k(t))^2\big) |\nabla w_k(t)|^2\, dx+\frac{\beta}{2}
\int_\Om(w_k(t)-g)^2\,dx\right) \\
& \geq  \frac{1}{2} \int_\Om \big(\eta_\e + (\rho^{t_0}_\e(t))^2\big) |\nabla w_\e(t)|^2\, dx+\frac{\beta}{2}
\int_\Om(w_\e(t)-g)^2\,dx\,,
\end{align*}
and \eqref{energineqwe} is proved. 
\end{proof}

\begin{remark}
We notice that the proof of Lemma \ref{contH1} together with Remark \ref{weakcontueps} show that the function $\mathfrak B_\e$ is actually continuous at time $t=0$, {\it i.e.}, $0\not\in\mathcal B_\e$. 
\end{remark}

As a consequence of Lemma \ref{contH1} and Proposition \ref{semigroup}, we obtain the strong continuity in $H^1(\Om)$ of the mapping $t \mapsto u_\e(t)$ outside a countable subset of $(0,+\infty)$ containing the discontinuity points of $\mathfrak S_\e$ and~$\mathfrak B_\e$.

\begin{corollary}\label{contH1ue}
The mapping $u_\e:[0,+\infty)\to H^1(\Om)$ is strongly continuous on $[0,+\infty) \setminus (\mathcal R_\e \cup \mathcal{B}_\e)$.
\end{corollary}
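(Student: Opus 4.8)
The plan is to fix a time $t\in[0,+\infty)\setminus(\mathcal R_\e\cup\mathcal B_\e)$ and an arbitrary sequence $t_n\to t$, and to show that $u_\e(t_n)\to u_\e(t)$ strongly in $H^1(\Om)$. Since $u_\e\in AC^2([0,+\infty);L^2(\Om))$ we already have $u_\e(t_n)\to u_\e(t)$ strongly in $L^2(\Om)$, and since $u_\e\in L^\infty(0,+\infty;H^1(\Om))$ the sequence $\{u_\e(t_n)\}$ is bounded in $H^1(\Om)$; combined with Remark~\ref{weakcontueps} this gives $u_\e(t_n)\wto u_\e(t)$ weakly in $H^1(\Om)$. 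Hence the only point left is to upgrade the weak convergence of the gradients to a strong one, for which it suffices to establish the convergence of an appropriate quadratic quantity.

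First I would use that $t\notin\mathcal B_\e$: by Proposition~\ref{semigroup} the bulk energy $\mathfrak B_\e$ is continuous at $t$, so $\mathfrak B_\e(t_n)\to\mathfrak B_\e(t)$. Since the fidelity term $\frac{\beta}{2}\int_\Om(u_\e(t_n)-g)^2\,dx$ converges to $\frac{\beta}{2}\int_\Om(u_\e(t)-g)^2\,dx$ by the strong $L^2$ convergence, this yields
$$\int_\Om(\eta_\e+\rho_\e^2(t_n))|\nabla u_\e(t_n)|^2\,dx\longrightarrow\int_\Om(\eta_\e+\rho_\e^2(t))|\nabla u_\e(t)|^2\,dx\,.$$
Next I would replace $\rho_\e(t_n)$ by $\rho_\e(t)$ inside the integral on the left: since $t\notin\mathcal R_\e$, Lemma~\ref{contH1} gives $\rho_\e(t_n)\to\rho_\e(t)$ strongly in $W^{1,p}(\Om)$, hence uniformly on $\overline\Om$ because $p>N$, so that $\|\rho_\e^2(t_n)-\rho_\e^2(t)\|_{L^\infty(\Om)}\to0$; together with the uniform bound $\sup_n\|\nabla u_\e(t_n)\|_{L^2(\Om)}<\infty$ this shows $\int_\Om(\eta_\e+\rho_\e^2(t))|\nabla u_\e(t_n)|^2\,dx\to\int_\Om(\eta_\e+\rho_\e^2(t))|\nabla u_\e(t)|^2\,dx$.

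Now set $w:=\eta_\e+\rho_\e^2(t)$, a fixed function with $\eta_\e\le w\in L^\infty(\Om)$. Multiplication by the fixed bounded function $\sqrt w$ is continuous for the weak $L^2$ topology, hence $\sqrt w\,\nabla u_\e(t_n)\wto\sqrt w\,\nabla u_\e(t)$ weakly in $L^2(\Om;\RR^N)$, while the previous step gives $\|\sqrt w\,\nabla u_\e(t_n)\|_{L^2(\Om)}\to\|\sqrt w\,\nabla u_\e(t)\|_{L^2(\Om)}$. In the Hilbert space $L^2(\Om;\RR^N)$, weak convergence together with convergence of norms implies strong convergence, so $\sqrt w\,\nabla u_\e(t_n)\to\sqrt w\,\nabla u_\e(t)$ strongly; dividing by $\sqrt w\ge\sqrt{\eta_\e}>0$, which is a bounded operation, gives $\nabla u_\e(t_n)\to\nabla u_\e(t)$ strongly in $L^2(\Om;\RR^N)$. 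Combined with the $L^2$ convergence of $u_\e(t_n)$, this is exactly $u_\e(t_n)\to u_\e(t)$ in $H^1(\Om)$; since the sequence $t_n\to t$ was arbitrary, $u_\e$ is strongly continuous at $t$.

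The one delicate point — and the reason the constant $\eta_\e$ and the choice $p>N$ matter — is the passage from the weighted quantity $\sqrt w\,\nabla u_\e$ back to $\nabla u_\e$: without the strict lower bound $\eta_\e+\rho_\e^2\ge\eta_\e$ one could not invert $\sqrt w$, and without the uniform ($L^\infty$) convergence of $\rho_\e(t_n)$, secured here by $W^{1,p}$-continuity with $p>N$, the substitution of $\rho_\e(t_n)$ by $\rho_\e(t)$ in the bulk term would fail — precisely the instability of the obstacle problem in $\nabla\rho$ mentioned in the introduction. Everything else is routine.
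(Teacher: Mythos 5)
Your argument is correct and follows the paper's own route: the paper likewise combines the continuity of $\mathfrak B_\e$ at $t\notin\mathcal B_\e$ with the strong $W^{1,p}(\Om)$-continuity of $\rho_\e$ at $t\notin\mathcal R_\e$ (Lemma~\ref{contH1}) to get convergence of the total energy, and then upgrades the weak $H^1(\Om)$-convergence from Remark~\ref{weakcontueps} to strong convergence. The only difference is that the paper delegates the final weak-plus-energy-convergence step to Lemma~\ref{existsch1}, whereas you re-derive it via the weighted gradient $\sqrt{\eta_\e+\rho_\e^2(t)}\,\nabla u_\e(t_n)$ — which is exactly the mechanism inside that lemma's proof.
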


\begin{proof}
Let us consider $t_0\in [0,+\infty) \setminus (\mathcal R_\e \cup \mathcal{B}_\e)$ and $\{t_n\}\subset [0,+\infty)$ an arbitrary sequence such that $t_n\to~t_0$. 
Since $t_0\not\in \mathcal R_\e \cup\mathcal{B}_\e$ we have  $\mathfrak B_\e(t_n)\to \mathfrak B_\e(t_0)$ and $\rho_\e(t_n)\to \rho_\e(t_0)$ strongly in $W^{1,p}(\Om)$. 
Therefore $\E_\e(u_\e(t_n),\rho_\e(t_n))\to \E_\e(u_\e(t_0),\rho_\e(t_0))$. On the other hand $u_\e(t_n)\rightharpoonup u_\e(t_0)$ weakly in $H^1(\Om)$ by Remark~\ref{weakcontueps}, and 
the conclusion follows from Lemma \ref{existsch1}. 
\end{proof}

\subsection{Strong convergences and limiting minimality}\label{subsc}

Thanks to the equation solved by $u_\e$, we are now able to improve the weak $H^1(\Om)$-convergence of the sequence $\{u_k(t)\}_{k\in\NN}$ into a strong convergence. We start proving that the bulk energy converges in time averages. 

\begin{lemma}\label{strongconv}
For every $t>s\geq 0$, 
\begin{equation}\label{convbulkenerg}
\lim_{k\to\infty}\int_s^t \int_\Om (\eta_\e+\rho^2_k(r))|\nabla u_k(r)|^2\,dx\,dr= \int_s^t \int_\Om (\eta_\e+\rho^2_\e(r))|\nabla u_\e(r)|^2\,dx\,dr\,.
\end{equation}
\end{lemma}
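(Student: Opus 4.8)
The plan is to test the weak formulation of the heat equation solved by $u_k$ against $u_k$ itself (integrated on a time interval $[s,t]$), pass to the limit using the already-established convergences, and then compare with the analogous identity obtained from the limit equation \eqref{parabolic} tested against $u_\e$. First I would recall from \eqref{1101} that, for a.e. $r$, the discrete solution $u_k(r)$ satisfies
\begin{equation*}
\int_\Om \big( v_k'(r)\,\varphi + (\eta_\e+\rho_k^2(r))\nabla u_k(r)\cdot\nabla\varphi + \beta(u_k(r)-g)\varphi\big)\,dx = 0
\end{equation*}
for all $\varphi\in H^1(\Om)$. Choosing $\varphi = u_k(r)$ and integrating over $(s,t)$ gives
\begin{equation*}
\int_s^t\!\!\int_\Om (\eta_\e+\rho_k^2(r))|\nabla u_k(r)|^2\,dx\,dr = -\int_s^t\!\!\int_\Om v_k'(r)\,u_k(r)\,dx\,dr - \beta\int_s^t\!\!\int_\Om (u_k(r)-g)u_k(r)\,dx\,dr\,.
\end{equation*}
The second term on the right passes to the limit because $u_k(r)\to u_\e(r)$ strongly in $L^2(\Om)$ for every $r$ (Lemma~\ref{convuk}) and is uniformly bounded in $L^\infty$, so dominated convergence applies on $[s,t]\times\Om$. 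The delicate term is $\int_s^t\int_\Om v_k'\,u_k$: here $v_k'\rightharpoonup u_\e'$ weakly in $L^2(0,+\infty;L^2(\Om))$ by Lemma~\ref{convuk}(vi), but $u_k$ only converges weakly in $H^1$ pointwise in time. However $u_k\to u_\e$ strongly in $L^2(0,T;L^2(\Om))$ (from pointwise strong $L^2$ convergence, uniform $L^\infty$ bound and dominated convergence), and the pairing of a weakly convergent sequence in $L^2(0,T;L^2(\Om))$ against a strongly convergent one passes to the limit; this is the main point to get right. One should also note that $\int_s^t\int_\Om v_k'\,u_k$ and $\int_s^t\int_\Om v_k'\,v_k$ differ by a term controlled by $\|u_k-v_k\|_{L^2}$ times $\|v_k'\|_{L^2}$, and $u_k(r)-v_k(r)\to 0$ in $L^2(\Om)$ uniformly thanks to the H\"older estimate \eqref{holdcontrvk} and $|\bdel_k|\to0$, so one may freely replace $u_k$ by $v_k$ in the difficult term if convenient; in any case the limit is $\int_s^t\int_\Om u_\e'\,u_\e\,dx\,dr$.

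Having passed to the limit in the right-hand side, I would then run the same computation on the limit equation: test \eqref{parabolic} with $\varphi=u_\e(r)$ (legitimate since $u_\e(r)\in H^1(\Om)$ for every $r$ and $u_\e'\in L^2(0,+\infty;L^2(\Om))$), integrate over $(s,t)$, to obtain
\begin{equation*}
\int_s^t\!\!\int_\Om (\eta_\e+\rho_\e^2(r))|\nabla u_\e(r)|^2\,dx\,dr = -\int_s^t\!\!\int_\Om u_\e'(r)\,u_\e(r)\,dx\,dr - \beta\int_s^t\!\!\int_\Om (u_\e(r)-g)u_\e(r)\,dx\,dr\,.
\end{equation*}
Comparing the two displays, the right-hand sides agree in the limit, which yields exactly \eqref{convbulkenerg}. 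A minor point to check is that the integration-by-parts against $u_\e$ requires no boundary term because the Neumann condition holds in $L^2(0,+\infty;H^{1/2}(\partial\Om))$ (Lemma~\ref{convuk}(iv)); for the discrete level the corresponding Neumann condition is in Lemma~\ref{inegenergdiscr}, so the elliptic identity $\int_\Om(-\Delta u_k)\varphi = \int_\Om\nabla u_k\cdot\nabla\varphi$ holds without boundary contribution.

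The main obstacle, as indicated, is justifying $\int_s^t\int_\Om v_k'\,u_k \to \int_s^t\int_\Om u_\e'\,u_\e$: it is not a product of two weakly convergent sequences (which would fail), so one genuinely needs the strong $L^2(0,T;L^2(\Om))$ convergence of $\{u_k\}$ (or equivalently of $\{v_k\}$, after discarding the $O(\sqrt{|\bdel_k|})$ discrepancy) paired against the weak $L^2$ convergence of $\{v_k'\}$. Everything else is a routine combination of the a priori bounds \eqref{enineqdiscr}, the maximum principle \eqref{ptwbdmonotdisceq}, the pointwise convergences of Lemmas~\ref{convrok}--\ref{convuk}, and dominated convergence in the time variable.
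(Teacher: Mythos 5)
Your proposal is correct and follows essentially the same route as the paper: test \eqref{1101} with $u_k(r)$, integrate over $(s,t)$, pass to the limit using the strong $L^2_{\rm loc}([0,+\infty);L^2(\Om))$ convergence of $u_k$ against the weak convergence $v_k'\rightharpoonup u_\e'$, and identify the limit via equation \eqref{parabolic} tested with $u_\e$. The extra remarks on swapping $u_k$ for $v_k$ and on the absence of boundary terms are consistent with, though not needed beyond, the paper's argument.
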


\begin{proof}
Taking $u_k(r)$ as test function in the variational formulation of \eqref{1101} and integrating in time between $s$ and $t$ leads to
$$\int_{s}^t \int_\Om (\eta_\e + \rho^2_k(r)) |\nabla u_k(r)|^2\, dx\, dr =- \int_s^{t}\int_\Om v'_k(r) u_k(r)\, dx\, dr-\beta\int_s^t\int_\Om(u_k(r)-g)u_k(r)\,dx\,dr\,.$$
From Lemma \ref{convuk}  we have  $u_k \to u_\e$ strongly in $L^2_{\rm loc}([0,+\infty);L^2(\Om))$  and $v'_k\rightharpoonup u'_\e$ weakly in $L^2(0,+\infty;L^2(\Om))$.  
Therefore, 
$$\lim_{k \to \infty} \int_s^t \int_\Om (\eta_\e + \rho_k^2(r)) |\nabla u_k(r)|^2\, dx \, dt =
 -\int_s^{t}\int_\Om u'_\e(r) u_\e(r)\, dx\, dr-\beta\int_s^t\int_\Om(u_\e(r)-g)u_\e(r)\,dx\,dr\,.$$
On the other hand, according to equation \eqref{parabolic} solved by $u_\e$,  
we have 
$$-\int_s^{t}\int_\Om u'_\e(r) u_\e(r)\, dx\, dr-\beta\int_s^t\int_\Om(u_\e(r)-g)u_\e(r)\,dx\,dr= \int_s^t \int_\Om (\eta_\e + \rho_\e^2(r)) |\nabla u_\e(r)|^2\, dx \, dr\,,$$
which leads  to \eqref{convbulkenerg}.
\end{proof}

Starting from Lemma \ref{strongconv}, we now localize in time the convergence of the sequence $\{\nabla u_k\}_{k \in \NN}$ by showing that  $\nabla u_k(t) \to \nabla u_\e(t)$ strongly in $L^2(\Om)$ at every continuity times $t$ of the bulk energy $\mathcal B_\e$. The proof is inspired from \cite[Lemma 5]{CD}.

\begin{lemma}\label{convforte}
For every $t \in [0,+\infty) \setminus \mathcal{B}_\e$,  $u_k(t) \to u_\e(t)$ strongly in $H^1(\Om)$.
\end{lemma}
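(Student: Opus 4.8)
The plan is to combine the weak $H^1(\Om)$-convergence $u_k(t)\wto u_\e(t)$ (already available from Lemma \ref{convuk}) with a convergence of the full $H^1$-norm at the point $t$, which by uniform convexity of $H^1(\Om)$ upgrades weak convergence to strong convergence. Since $u_k(t)\to u_\e(t)$ strongly in $L^2(\Om)$ for every $t$, it suffices to show $\|\nabla u_k(t)\|_{L^2(\Om)}\to\|\nabla u_\e(t)\|_{L^2(\Om)}$; equivalently, using the uniform convergence $\rho_k(t)\to\rho_\e(t)$ in $\C^0(\overline\Om)$ (Sobolev imbedding), it suffices to prove the convergence of the bulk energy
$$
\int_\Om(\eta_\e+\rho_k^2(t))|\nabla u_k(t)|^2\,dx\;\longrightarrow\;\int_\Om(\eta_\e+\rho_\e^2(t))|\nabla u_\e(t)|^2\,dx
$$
at each continuity point $t$ of $\mathfrak B_\e$. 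Note that weak lower semicontinuity (as in Lemma \ref{existsch1}, Step 1) already gives the inequality $\liminf_k\int_\Om(\eta_\e+\rho_k^2(t))|\nabla u_k(t)|^2\ge\int_\Om(\eta_\e+\rho_\e^2(t))|\nabla u_\e(t)|^2$, so only the matching $\limsup$ inequality must be established.

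For the $\limsup$, the idea — following \cite[Lemma 5]{CD} — is to pass from the time-averaged convergence of Lemma \ref{strongconv} to a pointwise statement using the \emph{monotonicity} of the bulk energy $\mathfrak B_\e$ (Proposition \ref{semigroup}) together with a corresponding monotonicity of its discrete counterpart. First I would observe that for each fixed $k$ the discrete bulk energy $t\mapsto\mathfrak B_{\e,k}(t):=\frac12\int_\Om(\eta_\e+\rho_k^2(t))|\nabla u_k(t)|^2\,dx+\frac\beta2\int_\Om(u_k(t)-g)^2\,dx$ is non-increasing in $t$: indeed, at consecutive discrete times, taking $(u_k^{i},\rho_k^{i+1})$ — admissible since $\rho_k^{i+1}\le\rho_k^{i}$ — as a competitor in the minimization problem \eqref{minSchem1} defining $(u_k^{i+1},\rho_k^{i+1})$ shows that $\E_\e(u_k^{i+1},\rho_k^{i+1})\le\E_\e(u_k^{i},\rho_k^{i+1})\le\E_\e(u_k^i,\rho_k^i)$, where the last inequality uses $\rho_k^{i+1}\le\rho_k^i$ (only the surface part changes, and it decreases with a smaller phase field). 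Then, for $t\notin\mathcal B_\e$, and for $h>0$ small, I integrate the monotone function $\mathfrak B_{\e,k}$ and use Lemma \ref{strongconv} on the interval $(t,t+h)$: for every $k$ large, $h\,\mathfrak B_{\e,k}(t)\ge\int_t^{t+h}\mathfrak B_{\e,k}(r)\,dr$, and letting $k\to\infty$ (the $L^2$-terms converge by dominated convergence, the gradient term by Lemma \ref{strongconv}) gives $\limsup_k h\,\mathfrak B_{\e,k}(t)\ge\int_t^{t+h}\mathfrak B_\e(r)\,dr$; dividing by $h$ and letting $h\downarrow0$ using continuity of $\mathfrak B_\e$ at $t$ yields $\limsup_k\mathfrak B_{\e,k}(t)\ge\mathfrak B_\e(t)$ — wait, this is the wrong direction, so instead I integrate on $(t-h,t)$ and use monotonicity in the form $\mathfrak B_{\e,k}(t)\le\frac1h\int_{t-h}^t\mathfrak B_{\e,k}(r)\,dr$, giving $\limsup_k\mathfrak B_{\e,k}(t)\le\frac1h\int_{t-h}^t\mathfrak B_\e(r)\,dr\to\mathfrak B_\e(t)$ as $h\downarrow0$. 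Combined with the lower-semicontinuity $\liminf$ inequality and the $L^2$-convergence, this forces $\|\nabla u_k(t)\|_{L^2(\Om)}\to\|\nabla u_\e(t)\|_{L^2(\Om)}$, hence strong $H^1(\Om)$-convergence.

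The main obstacle I anticipate is the interplay between the averaged convergence of Lemma \ref{strongconv} and the pointwise monotonicity: one must ensure that both the discrete bulk energy and its limit are genuinely monotone (so that Lebesgue points and one-sided limits are controlled), and that the passage to the limit $k\to\infty$ inside the time integral is justified term by term — the gradient term only converges in the averaged sense of Lemma \ref{strongconv}, so it is essential that the argument uses only such averages and never a pointwise gradient convergence before the final conclusion. A secondary technical point is the behaviour at $t=0$ (where $0\notin\mathcal B_\e$ by the preceding remark, and the one-sided averaging must be taken on the right, $(0,h)$, using the monotone non-increasing character together with $\mathfrak B_{\e,k}(0)=\frac12\int_\Om(\eta_\e+(\rho_0^\e)^2)|\nabla u_0|^2+\frac\beta2\int_\Om(u_0-g)^2$ being the exact initial value), but this is handled by the same computation. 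Once strong $H^1$-convergence of $u_k(t)$ is in hand, it will feed directly into the forthcoming proof of the limiting minimality \eqref{equa2} and the Lyapunov inequality \eqref{equa3}.
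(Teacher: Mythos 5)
Your overall strategy is exactly the paper's: show that the discrete bulk energy $\mathfrak B_{\e,k}$ is non-increasing in time, average it over $(t-h,t)$, pass to the limit in the average using Lemma \ref{strongconv} and the local $L^2$-convergence of $u_k$, invoke the continuity of $\mathfrak B_\e$ at $t$ to get the $\limsup$ inequality, and combine with the lower semicontinuity inequality and the uniform convergence of $\rho_k(t)$ to upgrade weak to strong $H^1(\Om)$-convergence. All of that part is sound. However, your justification of the key monotonicity claim is broken. You write the chain $\E_\e(u_k^{i+1},\rho_k^{i+1})\le\E_\e(u_k^{i},\rho_k^{i+1})\le\E_\e(u_k^i,\rho_k^i)$ and infer from it that $\mathfrak B_{\e,k}$ is non-increasing. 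Two things go wrong. First, your second inequality is false as justified: lowering the phase field with $u$ fixed decreases the \emph{bulk} term $\frac12\int_\Om(\eta_\e+\rho^2)|\nabla u|^2\,dx$, not the surface term, whose contribution $\frac{\alpha}{p'\e}(1-\rho)^p$ \emph{increases} when $\rho$ decreases; in fact, since $\rho_k^i$ minimizes $\E_\e(u_k^i,\cdot)$ over $\{\rho\le\rho_k^{i-1}\}$ and $\rho_k^{i+1}$ is admissible in that problem, the \emph{opposite} inequality $\E_\e(u_k^i,\rho_k^i)\le\E_\e(u_k^i,\rho_k^{i+1})$ holds. Second, even if the chain were valid it would only give monotonicity of the \emph{total} energy, which does not by itself imply monotonicity of its bulk part (the surface part is non-decreasing, cf.\ Proposition \ref{decenergysurf}, so the two pieces move in opposite directions).

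The repair is immediate and is what the paper does: keep only the first inequality of your chain, $\E_\e(u_k^{i+1},\rho_k^{i+1})\le\E_\e(u_k^{i},\rho_k^{i+1})$, obtained from the competitor $(u_k^i,\rho_k^{i+1})$ in \eqref{minSchem1}, and cancel the surface energy of $\rho_k^{i+1}$, which appears identically on both sides. This yields the comparison of bulk energies
$$\frac12\int_\Om(\eta_\e+(\rho_k^{i+1})^2)|\nabla u_k^{i+1}|^2\,dx+\frac\beta2\int_\Om(u_k^{i+1}-g)^2\,dx\le \frac12\int_\Om(\eta_\e+(\rho_k^{i+1})^2)|\nabla u_k^{i}|^2\,dx+\frac\beta2\int_\Om(u_k^{i}-g)^2\,dx\,,$$
and the right-hand side is bounded by $\mathfrak B_{\e,k}$ at step $i$ because $\rho\mapsto\int_\Om(\eta_\e+\rho^2)|\nabla u_k^i|^2\,dx$ is non-decreasing in $\rho$ and $\rho_k^{i+1}\le\rho_k^i$. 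With this correction the remainder of your argument goes through as written (and the case $t=0$ is in fact trivial, since $u_k(0)=u_0=u_\e(0)$ for every $k$).
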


\begin{proof} 
Let $t_0 \in[0,+\infty) \setminus \mathcal{B}_\e$. Since $\mathfrak B_\e$ is continuous at $t_0$, for every $\alpha>0$ there exists $\delta_\alpha>0$ such that 
$\mathfrak B_\e(t) \leq \mathfrak B_\e(t_0) +\alpha$ for all $t \in [t_0-\delta_\alpha,t_0]$. 

Let us fix $\alpha>0$ arbitrary. Since  $\E_\e(u_k^i,\rho_k^i) \leq \E_\e(u_k^{i-1},\rho_k^i)$ and $\rho_k^i \leq \rho_k^{i-1}$ in $\Om$ 
for each integers $k$ and $i\geq 1$, we infer that the function
$$t \mapsto \frac{1}{2}\int_\Om (\eta_\e + \rho_k^2(t)) |\nabla u_k(t)|^2\, dx+\frac{\beta}{2}\int_\Om(u_k(t)-g)^2\,dx$$
is non-increasing on $[0,+\infty)$, and thus 
\begin{multline*}
\delta_\alpha\left( \frac{1}{2}\int_\Om (\eta_\e + \rho^2_k(t_0))|\nabla u_k(t_0)|^2\, dx +\frac{\beta}{2}\int_\Om(u_k(t_0)-g)^2\,dx\right)\\
\leq \int_{t_0-\delta_\alpha}^{t_0}\left( \frac{1}{2}\int_\Om (\eta_\e + \rho^2_k(t))|\nabla u_k(t)|^2\, dx +\frac{\beta}{2}\int_\Om(u_k(t)-g)^2\,dx\right)\, dt\,.
\end{multline*}
By Lemma \ref{strongconv} and  the strong convergence of $u_k$ to $u_\e$ in $L^2_{\rm loc}([0,+\infty);L^2(\Om))$, we infer that
\begin{multline*}
\delta_\alpha \limsup_{k \to \infty}\left( \frac{1}{2}\int_\Om (\eta_\e + \rho^2_k(t_0))|\nabla u_k(t_0)|^2\, dx +\frac{\beta}{2}\int_\Om(u_k(t_0)-g)^2\,dx\right)\\ 
\leq \lim_{k \to \infty}\int_{t_0-\delta_\alpha}^{t_0}\left( \frac{1}{2}\int_\Om (\eta_\e + \rho^2_k(t))|\nabla u_k(t)|^2\, dx +\frac{\beta}{2}\int_\Om(u_k(t)-g)^2\,dx\right)\, dt\\
=\int_{t_0-\delta_\alpha}^{t_0}\mathfrak B_\e(t)\, dt \leq (\mathfrak B_\e(t_0)+\alpha)\delta_\alpha\,.
\end{multline*}
Dividing the previous inequality by $\delta_\alpha$ and using the  strong convergence of $u_k(t_0)$ in $L^2(\Om)$, we derive in view of the arbitrariness of $\alpha$ that 
$$ \limsup_{k \to \infty} \int_\Om (\eta_\e + \rho^2_k(t_0))|\nabla u_k(t_0)|^2\, dx\leq \int_\Om (\eta_\e + \rho^2_\e(t_0))|\nabla u_\e(t_0)|^2\, dx\,.$$
As in the proof of Lemma \ref{existsch1}  we obtain 
$$\int_\Om (\eta_\e + \rho^2_\e(t_0))|\nabla u_\e(t_0)|^2\, dx\leq   \liminf_{k \to \infty} \int_\Om (\eta_\e + \rho^2_k(t_0))|\nabla u_k(t_0)|^2\, dx\,.$$
Combining the last two inequalities we conclude 
\begin{equation}\label{convpttimeenerguk}
  \lim_{k \to \infty} \int_\Om (\eta_\e + \rho^2_k(t_0))|\nabla u_k(t_0)|^2\, dx= \int_\Om (\eta_\e + \rho^2_\e(t_0))|\nabla u_\e(t_0)|^2\, dx\,.
  \end{equation}
Finally, using \eqref{convpttimeenerguk}, the weak convergence of $\rho_k(t_0)$ to $\rho_\e(t_0)$ in $W^{1,p}(\Om)$, and the weak convergence of $u_k(t_0)$ to $u_\e(t_0)$ in $H^{1}(\Om)$, 
we can argue as in the proof of Lemma  \ref{existsch1} to show that $u_k(t_0) \to  u_\e(t_0)$ strongly in $H^1(\Om)$.
\end{proof}

We now derive the (strong) minimality property for $\rho_\e(t)$ at all  times, and as a byproduct, the $W^{1,p}(\Om)$-convergence of $\{\rho_k(t)\}_{k\in\NN}$ at all continuity points of the bulk energy.  We would like to stress that the proof of the minimality property of $\rho_\e$ strongly relies on the fact that the surface energy in the Ambrosio-Tortorelli functional has $p$-growth with $p>N$ (which ensures the convergence of the underlying obstacle problems).

\begin{proposition}\label{minrhoeps}
For every $t\geq 0$ the function $\rho_\e(t)$ satisfies 
\begin{equation}\label{eqlocminro}
\E_\e(u_\e(t),\rho_\e(t))  \leq \E_\e(u_\varepsilon(t),\rho) \quad \text{for all $\rho\in W^{1,p}(\Om)$ such that $\rho\leq \rho_\e(t)$     in $\Om$}\, .
\end{equation}
In addition, if $t\in [0,+\infty)\setminus \mathcal B_\e$ then 
\begin{equation}\label{superminro}
\rho_\e(t)={\rm argmin} \left\{\E_\e(u_\e(t),\rho) : \rho\in W^{1,p}(\Om) \text{ such that } \rho\leq \rho^-_\e(t) \text{ in }\Om\right\}\,,
\end{equation}
and $\rho_k(t)\to \rho_\e(t)$ strongly in $W^{1,p}(\Om)$. 
\end{proposition}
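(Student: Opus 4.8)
The plan is to derive both \eqref{eqlocminro} and \eqref{superminro} by passing to the limit $k\to\infty$ in the discrete minimality properties of $\rho_k(t)$, using the uniform (hence $\mathscr C^0$) convergence of the obstacles that $p>N$ guarantees. First I would establish \eqref{eqlocminro}, which is in fact essentially the statement of Proposition~\ref{decenergysurf} upgraded to include the bulk and fidelity terms. The key point is that the full functional $\E_\e(u_\e(t),\cdot)$ differs from the pure surface part only by the term $\frac12\int_\Om(\eta_\e+\rho^2)|\nabla u_\e(t)|^2\,dx$; but since we compare $\rho_\e(t)$ against competitors $\rho\leq\rho_\e(t)$, and since $u_\e(t)$ is \emph{fixed}, one cannot directly reuse Proposition~\ref{decenergysurf}. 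Instead I would redo that argument at the level of the full energy: fix $t>0$ with $t\in(t_k^{i-1},t_k^i]$, fix $\rho\leq\rho_\e(t)$ in $\Om$, and set $\hat\rho_k:=\rho\wedge\rho_k(t)$, which is admissible in the $i$-th minimization problem \eqref{minSchem1} since $\hat\rho_k\leq\rho_k(t)\leq\rho_k^{i-1}$. Minimality of $(u_k(t),\rho_k(t))$ then gives $\E_\e(u_k(t),\rho_k(t))\leq\E_\e(u_k(t),\hat\rho_k)$. Now let $k\to\infty$: the left side converges to $\E_\e(u_\e(t),\rho_\e(t))$ only at continuity points of $\mathfrak B_\e$, but for arbitrary $t$ one still has $\liminf_k \E_\e(u_k(t),\rho_k(t))\geq\E_\e(u_\e(t),\rho_\e(t))$ by Lemma~\ref{convrok}, Lemma~\ref{convuk} and lower semicontinuity (Lemma~\ref{existsch1}). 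For the right side, $\hat\rho_k\to\rho$ in $\mathscr C^0(\overline\Om)$ (since $\rho_k(t)\to\rho_\e(t)$ uniformly and $\rho\leq\rho_\e(t)$), $u_k(t)\wto u_\e(t)$ weakly in $H^1(\Om)$, and one decomposes the bulk term as in Proposition~\ref{decenergysurf} using $A_k:=\{\rho\leq\rho_k(t)\}$ with $\mathscr L^N(\Om\setminus A_k)\to0$, exploiting $\chi_{A_k}\nabla\rho_k(t)\wto\nabla\rho_\e(t)$ weakly in $L^p$ and $\int_{A_k}|\nabla\rho|^p\to\int_\Om|\nabla\rho|^p$ for the surface part, together with weak $L^2$ lower semicontinuity for $\sqrt{\eta_\e+\hat\rho_k^2}\,\nabla u_k(t)$. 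Taking limsup on the right and liminf on the left yields \eqref{eqlocminro}.

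For \eqref{superminro} and the strong $W^{1,p}$-convergence of $\rho_k(t)$, I would now restrict to $t\in[0,+\infty)\setminus\mathcal B_\e$, so that by Lemma~\ref{convforte} we additionally have $u_k(t)\to u_\e(t)$ strongly in $H^1(\Om)$. The discrete minimality of $\rho_k(t)=\rho_k^i$ reads: $\E_\e(u_k(t),\rho_k(t))\leq\E_\e(u_k(t),\sigma)$ for every $\sigma\in W^{1,p}(\Om)$ with $\sigma\leq\rho_k^{i-1}$; and note $\rho_k^{i-1}=\rho_k^-(t)$. Given any competitor $\rho\leq\rho_\e^-(t)$ in $\Om$ for problem \eqref{superminro}, I would truncate $\rho_k:=\rho\wedge\rho_k^-(t)$, which satisfies $\rho_k\leq\rho_k^-(t)=\rho_k^{i-1}$, hence is admissible, and $\rho_k\to\rho$ uniformly since $\rho_k^-(t)\to\rho_\e^-(t)$ uniformly (this uses the analogue of Lemma~\ref{convrok} for $\rho_k^-$, i.e. Lemma~\ref{convrok-}, or rather the pointwise weak $W^{1,p}$ convergence $\rho_k^-(t)\wto\rho_\e^-(t)$ upgraded to $\mathscr C^0$) and $\rho\leq\rho_\e^-(t)$. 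Passing to the limit: the strong $H^1$ convergence of $u_k(t)$ now gives $\lim_k \int_\Om(\eta_\e+\rho_k^2(t))|\nabla u_k(t)|^2 = \int_\Om(\eta_\e+\rho_\e^2(t))|\nabla u_\e(t)|^2$ (combining \eqref{convpttimeenerguk} with the uniform bound on $\rho_k(t)$), and more generally $\E_\e(u_k(t),\rho_k(t))\to\E_\e(u_\e(t),\rho_\e(t))$ using that $\rho_k(t)\wto\rho_\e(t)$ weakly in $W^{1,p}$ together with the convergence of the bulk energy $\mathfrak B_\e(t_n)$... — actually more carefully: from \eqref{enineqdiscr} and lower semicontinuity one has $\liminf_k\E_\e(u_k(t),\rho_k(t))\geq\E_\e(u_\e(t),\rho_\e(t))$, while the chain of truncation inequalities run with the particular competitor $\rho=\rho_\e(t)$ (admissible since $\rho_\e(t)\leq\rho_\e^-(t)$ by Lemma~\ref{convrok-}) forces $\limsup_k\E_\e(u_k(t),\rho_k(t))\leq\E_\e(u_\e(t),\rho_\e(t))$; hence equality of energies, and then Lemma~\ref{existsch1} (the strong-convergence part, applied to the pair $(u_k(t),\rho_k(t))$) gives $\rho_k(t)\to\rho_\e(t)$ strongly in $W^{1,p}(\Om)$ (and re-confirms $u_k(t)\to u_\e(t)$ in $H^1$). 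Finally, with this strong convergence in hand, passing to the limit in the discrete minimality against $\rho_k=\rho\wedge\rho_k^-(t)$ for an arbitrary $\rho\leq\rho_\e^-(t)$ yields $\E_\e(u_\e(t),\rho_\e(t))\leq\E_\e(u_\e(t),\rho)$, i.e. \eqref{superminro}, upon the same $A_k$-decomposition of the $\nabla\rho$ term as above (now much easier since everything converges strongly).

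\textbf{Main obstacle.} The delicate point is the asymptotics of the obstacle term $\int_\Om|\nabla\hat\rho_k|^p\,dx$ where $\hat\rho_k=\rho\wedge\rho_k^{(-)}(t)$: a priori $\nabla\hat\rho_k$ jumps between $\nabla\rho$ and $\nabla\rho_k^{(-)}(t)$ across $\partial A_k$, and weak $W^{1,p}$ convergence of $\rho_k$ is \emph{not} enough to identify the limit of this energy (capacitary ``strange terms'' of the type discussed in \cite{CM,DML} could appear). This is precisely why $p>N$ is essential: the Sobolev imbedding upgrades weak $W^{1,p}$ to uniform convergence of the obstacles $\rho_k^{(-)}(t)$, which together with $\mathscr L^N(\Om\setminus A_k)\to0$ lets one split $\int_\Om|\nabla\hat\rho_k|^p=\int_{A_k}|\nabla\rho|^p+\int_{\Om\setminus A_k}|\nabla\rho_k^{(-)}(t)|^p$ cleanly and estimate each piece — the first by dominated convergence, the second, on the vanishing-measure set $\Om\setminus A_k$, is simply dropped in the $\limsup$ while $\int_{A_k}|\nabla\rho_k(t)|^p$ is bounded below by weak lower semicontinuity of $\rho\mapsto\int_\Om|\nabla\rho|^p$ against $\chi_{A_k}\nabla\rho_k(t)\wto\nabla\rho_\e(t)$. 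Everything else is a bookkeeping exercise in combining the already-established pointwise-in-time convergences of Lemmas~\ref{convrok}, \ref{convrok-}, \ref{convuk}, \ref{convforte} with the semicontinuity/strong-convergence dichotomy of Lemma~\ref{existsch1}.
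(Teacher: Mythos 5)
Your treatment of \eqref{superminro} and of the strong $W^{1,p}(\Om)$-convergence of $\rho_k(t)$ at $t\notin\mathcal B_\e$ is sound and close in spirit to the paper's, but your proof of \eqref{eqlocminro} for \emph{arbitrary} $t\geq 0$ has a genuine gap. You pass to the limit in $\E_\e(u_k(t),\rho_k(t))\leq\E_\e(u_k(t),\hat\rho_k)$ by ``taking limsup on the right'', but the bulk term of the competitor, $\frac12\int_\Om(\eta_\e+\hat\rho_k^2)|\nabla u_k(t)|^2\,dx$, need not converge to $\frac12\int_\Om(\eta_\e+\rho^2)|\nabla u_\e(t)|^2\,dx$: at a time $t\in\mathcal B_\e$ one only has $u_k(t)\wto u_\e(t)$ weakly in $H^1(\Om)$, so the energy densities may concentrate, i.e. $|\nabla u_k(t)|^2\LL^N\res\Om\wto|\nabla u_\e(t)|^2\LL^N\res\Om+\mu$ with a possibly nontrivial defect measure $\mu\geq 0$. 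The ``weak $L^2$ lower semicontinuity of $\sqrt{\eta_\e+\hat\rho_k^2}\,\nabla u_k(t)$'' you invoke gives a \emph{lower} bound for the right-hand side, which is the wrong direction; no upper bound is available from weak convergence alone. The real subtlety of this proposition is therefore not the obstacle term in $\nabla\rho$ (which you treat correctly, and which was already handled in Proposition \ref{decenergysurf}), but this concentration of $|\nabla u_k(t)|^2$, which your argument never addresses.

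The gap is repairable, and the repair is exactly the paper's argument: keep the defect measure and exploit its sign. The paper shows that the constrained functionals $\mathcal F_k(\hat\rho)=\E_\e(u_k(t),\hat\rho)$ for $\hat\rho\leq\rho_k^-(t)$ $\Gamma$-converge (weak $W^{1,p}$) to $\mathcal F(\hat\rho)=\E_\e(u_\e(t),\hat\rho)+\frac12\int_{\overline\Om}(\eta_\e+\hat\rho^2)\,d\mu$ for $\hat\rho\leq\rho_\e^-(t)$, so that $\rho_\e(t)$ minimizes $\mathcal F$; then, for a competitor $\rho\leq\rho_\e(t)$, after replacing $\rho$ by $\rho\vee 0$ one has $\int\rho^2\,d\mu\leq\int\rho_\e^2(t)\,d\mu$, and the unknown $\mu$-contributions cancel with a favorable sign, yielding \eqref{eqlocminro}. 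Equivalently, in your direct approach you would have to subtract the common bulk contributions on $\Om\setminus A_k$ and observe that in the limit the $\mu$-terms enter both sides as $\frac12\int(\eta_\e+\rho_\e^2(t))\,d\mu$ versus $\frac12\int(\eta_\e+\rho^2)\,d\mu$, the first dominating the second because competitors lie below $\rho_\e(t)$. Only at $t\notin\mathcal B_\e$ does Lemma \ref{convforte} give $\mu=0$; this is precisely why the stronger minimality \eqref{superminro}, over the larger class $\rho\leq\rho_\e^-(t)$, is asserted only there, while \eqref{eqlocminro}, over the smaller class $\rho\leq\rho_\e(t)$, holds for every $t$.
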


\begin{proof} Let us fix an arbitrary $t\geq 0$. Since $u_k(t)\rightharpoonup u_\e(t)$ weakly in $H^1(\Om)$, we can find a (not relabeled) subsequence and a nonnegative Radon measure $\mu\in\mathscr{M}(\mathbb{R}^N)$ supported in $\overline \Om$ such that 
$$  |\nabla u_k(t)|^2\mathscr L^N\res \, \Om \wto |\nabla u_\e(t)|^2\mathscr L^N \res \, \Om+ \mu$$
weakly* in $\M(\RR^N)$. Then we consider the functionals $\mathcal F_k$ and $\mathcal F$ defined on $W^{1,p}(\Om)$ by 
$$\mathcal F_k(\rho):=\begin{cases}
\E_\e(u_k(t),\rho) & \text{if $\rho\leq \rho_k^-(t)$}\,,\\
+\infty & \text{otherwise}\,,
\end{cases}\;\text{ and }\;
\mathcal F(\rho):=\begin{cases}
\displaystyle \E_\e(u_\e(t),\rho)+\frac{1}{2}\int_{\overline\Om} (\eta_\e+\rho^2)\,d\mu & \text{if $\rho\leq \rho_\e^-(t)$}\,,\\[8pt]
+\infty & \text{otherwise}\,.
\end{cases}$$
Note that by the Sobolev Imbedding $W^{1,p}(\Om) \hookrightarrow \mathscr C^0(\overline \Om)$, the functional $\mathcal F$ is well defined on the space $W^{1,p}(\Om)$. 
\vskip5pt

\noindent{\it Step 1.} We claim that $\mathcal F_k$ $\Gamma$-converges to $\mathcal F$ for the sequential weak $W^{1,p}(\Om)$-topology. For what concerns the lower bound, if $\{\hat \rho_k\} \subset W^{1,p}(\Om)$ is such that $\liminf_{k}Ê\mathcal F_k(\hat \rho_k)<\infty$ and $\hat \rho_{k} \wto \hat \rho$ weakly in $W^{1,p}(\Om)$,  then for a subsequence $\{k_j\}$ we have $\lim_{j}Ê\mathcal F_{k_j}(\hat \rho_{k_j})=\liminf_{k}Ê\mathcal F_k(\hat \rho_k)$, and $\hat \rho_{k_j} \to \hat \rho$  in $\mathscr{C}^0(\overline \Om)$ by the compact imbedding $W^{1,p}(\Om) \hookrightarrow \mathscr C^0(\overline \Om)$. 
Consequently $\hat \rho \leq \rho_\e^-(t)$  in~$\Om$, and 
$$\int_\Om \hat \rho_{k_j}^2 |\nabla u_{k_j}(t)|^2\, dx \to \int_\Om \hat \rho^2 |\nabla u_\e(t)|^2\, dx + \int_{\overline \Om} \hat \rho^2 \, d\mu\,.$$
Since the remaining terms in the energy $\mathcal F_{k}$ are independent of $k$ and lower semicontinuous for the weak $W^{1,p}(\Om)$-convergence, we deduce that 
$$\mathcal F(\hat \rho) \leq \liminf_{k \to \infty}Ê\mathcal F_k(\hat \rho_k)\,.$$
To show the upper bound, it is enough to consider $\hat \rho\in W^{1,p}(\Om)$ satisfying $\hat \rho \leq \rho^-_\e(t)$ in $\Om$. Let us set $\hat\rho_\delta:=\hat \rho - \delta$ where $\delta>0$ is small. Since $\rho_{k}^-(t)Ê\to \rho_\e^-(t)$ uniformly in $\overline \Om$, we have $\hat \rho_\delta \leq \rho_{k}^-(t)$ in $\Om$ whenever $k$ large enough (depending only on $\delta$). Hence, 
$$\lim_{\delta\downarrow 0}\, \limsup_{k\to \infty} \mathcal F_k(\hat \rho_\delta) \leq \mathcal F(\hat \rho)\,,$$ 
and we obtain from $\{\hat \rho_\delta\}_{\delta>0}$ a suitable recovery sequence for $\hat\rho$ through a diagonalization argument, which completes the proof of the $\Gamma$-convergence.
\vskip5pt

\noindent{\it Step 2.} Since
$$\rho_k(t)=\mathop{{\rm argmin}}\limits_{\rho\in W^{1,p}(\Om)} \mathcal{F}_k(\rho) \,,$$
and $\rho_k(t)\rightharpoonup \rho_\e(t)$ weakly in $W^{1,p}(\Om)$, we infer from Step 1  that 
\begin{equation}\label{presupermin}
\rho_\e(t)=\mathop{{\rm argmin}}\limits_{\rho\in W^{1,p}(\Om)} \mathcal{F}(\rho) \,. 
\end{equation}
Let us now fix an arbitrary $\rho\in W^{1,p}(\Om)$  such that $\rho\leq \rho_\e(t)$ in $\Om$, and set $\rho^+:=\rho\wedge 0$. Then $ \rho^+\in W^{1,p}(\Om)$, 
$0\leq\rho^+\leq \rho_\e(t)$ in $\Om$, and $\E_\e(u_\e(t),\rho^+)\leq \E_\e(u_\e(t),\rho)$. 
Since $\rho^+\leq \rho_\e(t)\leq \rho^-_\e(t)$ in $\Om$,   we have $\mathcal F(\rho_\e(t))\leq \mathcal F(\rho^+)$ which leads to  
$$\E_\e(u_\e(t),\rho_\e(t)) \leq\E_\e(u_\e(t),\rho_\e(t))+\frac{1}{2}\int_\Om (\rho^2_\e(t)-(\rho^+)^2)\,d\mu\leq  \E_\e(u_\e(t),\rho^+)\leq  \E_\e(u_\e(t),\rho)\,,$$
and \eqref{eqlocminro} is proved.

Next we observe that if $t\in [0,+\infty)\setminus \mathcal{B}_\e$, then $\mu=0$ by Lemma~\ref{convforte}. Hence $\mathcal F(\rho)=\E_\e(u_\e(t),\rho)$ for every 
$\rho\in W^{1,p}(\Om)$ such that $\rho\leq \rho_\e^-(t)$ in $\Om$, and \eqref{superminro} is a consequence of  \eqref{presupermin}. 
From the $\Gamma$-convergence of $\mathcal{F}_k$ to $\mathcal{F}$ we also have 
$\min \mathcal F_k\to \min \mathcal F$, and thus 
$$\E_\e(u_k(t),\rho_k(t))\mathop{\longrightarrow}\limits_{k\to\infty} \E_\e(u_\e(t),\rho_\e(t))\,, $$
and the strong convergence in $W^{1,p}(\Om)$ of $\rho_k(t)$ follows from Lemma \ref{existsch1}.
\end{proof}

\subsection{Energy inequality}\label{subee}

We are now in position to establish the Lyapunov type inequality between almost every two arbitrary times. The argument below is inspired from  \cite[Proposition 3]{CD}. It formally consists in taking $u'_\e$ as test function in the variational formulation of \eqref{parabolic}. Since we do not have enough time regularity, we will make this argument rigorous by working at the time-discrete level and approximating $u'_\e$ by a sequence of smooth functions.

\begin{proposition}\label{energyineq}
For every $s \in [0,+\infty)\setminus \mathcal{B}_\e$ and every $t\geq s$, 
$$\E_\e(u_\e(t),\rho_\e(t))+\int_{s}^{t} \|u^\prime_\e(r)\|^2_{L^2(\Om)}\,dr \leq \E_\e(u_\e(s),\rho_\e(s))\,. $$
\end{proposition}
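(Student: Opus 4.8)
The plan is to prove the energy inequality first at the discrete level, where the incremental minimality gives a clean inequality, and then pass to the limit $k\to\infty$ using the strong convergences established in the previous subsections. I would start from a fixed $s\in[0,+\infty)\setminus\mathcal{B}_\e$ and $t\geq s$. For each $k$, choose indices $i_k$ and $j_k$ with $t_k^{i_k-1}<s\leq t_k^{i_k}$ (or, more conveniently, indices so that the discrete times approximate $s$ and $t$ from above), and use that for each step the pair $(u_k^i,\rho_k^i)$ minimizes \eqref{minSchem1}. Testing the minimality at step $i$ with the competitor $(u_k^{i-1},\rho_k^i)$ — note $\rho_k^i\leq\rho_k^{i-1}$ so this is admissible — yields
$$
\E_\e(u_k^i,\rho_k^i)+\frac{1}{2\delta_k^i}\|u_k^i-u_k^{i-1}\|^2_{L^2(\Om)}\leq\E_\e(u_k^{i-1},\rho_k^i)\leq\E_\e(u_k^{i-1},\rho_k^{i-1})\,,
$$
the last step because $\rho_k^i\leq\rho_k^{i-1}$ and $\rho_k^i$ is itself obtained by a constrained minimization (or rather, because of the surface-energy monotonicity / minimality of $\rho_k^{i-1}$ at the previous step; in fact one should test carefully). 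Summing over $i$ from $i_k$ to $j_k$ gives the telescoped inequality
$$
\E_\e(u_k(t),\rho_k(t))+\frac12\int_{\sigma_k}^{\tau_k}\|v_k'(r)\|^2_{L^2(\Om)}\,dr\leq\E_\e(u_k(\sigma_k),\rho_k(\sigma_k))\,,
$$
where $\sigma_k\to s$ and $\tau_k\to t$ are consecutive discrete times. The factor $\tfrac12$ here is the source of non-optimality in Lemma~\ref{inegenergdiscr}; to get the optimal constant $1$ in the limit one has to be more careful, using the piecewise-affine interpolant $v_k$ and the identity $\tfrac{1}{2\delta_k^i}\|u_k^i-u_k^{i-1}\|^2 = \tfrac12\int_{t_k^{i-1}}^{t_k^i}\|v_k'\|^2\,dr$ together with a second argument. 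Actually, the right route — following \cite[Proposition 3]{CD} — is to test the \emph{equation} \eqref{parabolic} with $u_\e'$; since $u_\e'\in L^2$ but we lack the regularity to justify $\tfrac{d}{dt}\E_\e(u_\e(t),\rho_\e(t))=\langle\partial_u\E_\e,u_\e'\rangle$, one approximates $u_\e'$ by smooth functions and works at the discrete level.

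Concretely, I would proceed as follows. From equation \eqref{1101} satisfied by $u_k(t)$ — namely $-v_k'(t)=\beta(u_k(t)-g)-\div((\eta_\e+\rho_k^2(t))\nabla u_k(t))$ — and the fact that $\rho_k$ is piecewise constant in time while $v_k$ is piecewise affine, one computes for $t$ in a single subinterval $(t_k^{i-1},t_k^i)$:
$$
\frac{d}{dt}\Big(\tfrac12\!\int_\Om(\eta_\e+\rho_k^2(t))|\nabla v_k(t)|^2 +\tfrac\beta2\!\int_\Om(v_k(t)-g)^2\Big) = \int_\Om\big((\eta_\e+\rho_k^2(t))\nabla v_k(t)\cdot\nabla v_k'(t)+\beta(v_k(t)-g)v_k'(t)\big)\,dx\,,
$$
and the right-hand side, after integrating by parts and using \eqref{1101} with $u_k(t)=v_k(t_k^i)$ rather than $v_k(t)$, is \emph{not} exactly $-\|v_k'(t)\|^2$, but differs from it by terms controlled by $\|v_k(t)-u_k(t)\|_{L^2}\leq C\sqrt{|\bdel_k|}$ (cf.\ the estimate \eqref{coucou3}) and by the jumps of $\rho_k$, which are summable and monotone. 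Integrating over $[s',t']$ with $s',t'$ discrete times, summing the jump contributions of the surface energy (which by Proposition~\ref{decenergysurf} only \emph{helps}, i.e.\ has the right sign), one obtains
$$
\E_\e(u_k(t'),\rho_k(t'))+\int_{s'}^{t'}\|v_k'(r)\|^2_{L^2(\Om)}\,dr\leq\E_\e(u_k(s'),\rho_k(s'))+\omega(|\bdel_k|)\,,
$$
with $\omega(|\bdel_k|)\to0$. Here the monotonicity of $\mathfrak S_\e$ (equivalently, the fact that at each step replacing $\rho_k^{i-1}$ by $\rho_k^i$ decreases the surface energy) is used to discard the surface-energy increments with the correct sign — this is the key place where $p>N$ and Proposition~\ref{decenergysurf} enter.

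Finally I would pass to the limit $k\to\infty$. For the left-hand side: at $t$ I would like $\E_\e(u_k(t),\rho_k(t))\to\E_\e(u_\e(t),\rho_\e(t))$, which holds at every $t\notin\mathcal{B}_\e$ by Lemma~\ref{convforte} and Proposition~\ref{minrhoeps} (strong $H^1$ and $W^{1,p}$ convergence), and for general $t\geq s$ one uses lower semicontinuity (Lemma~\ref{existsch1}) to get $\E_\e(u_\e(t),\rho_\e(t))\leq\liminf_k\E_\e(u_k(t),\rho_k(t))$ — good enough since it sits on the correct side. For the dissipation term, $v_k'\rightharpoonup u_\e'$ weakly in $L^2(0,+\infty;L^2(\Om))$ by Lemma~\ref{convuk}(vi), so $\int_s^t\|u_\e'\|^2\leq\liminf_k\int_s^t\|v_k'\|^2$ by weak lower semicontinuity of the norm; one should replace the discrete endpoints $s',t'$ by $s,t$ using that these integrals over vanishing intervals tend to zero. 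For the right-hand side: since $s\notin\mathcal{B}_\e$, we have $\E_\e(u_k(s'),\rho_k(s'))\to\E_\e(u_\e(s),\rho_\e(s))$ by taking $s'\to s$ and invoking Lemma~\ref{convforte}/Proposition~\ref{minrhoeps} again together with continuity of $u_\e$ and $\rho_\e$ at $s$ (Corollary~\ref{contH1ue}, Lemma~\ref{contH1}). Combining these three limits yields the claimed inequality. The main obstacle is the second paragraph: making rigorous the "test the equation with $u_\e'$" computation at the discrete level and showing that all error terms — coming from the discrepancy between $v_k$ and $u_k$, the piecewise-constant-in-time nature of $\rho_k$, and the discrete jumps of the energy — either vanish as $|\bdel_k|\to0$ or have a favorable sign via the surface-energy monotonicity of Proposition~\ref{decenergysurf}.
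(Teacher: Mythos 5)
Your overall architecture (discrete inequality, then limit passage using Lemmas \ref{existsch1}, \ref{convforte}, Proposition \ref{minrhoeps} and weak lower semicontinuity of the dissipation) matches the paper, and your identification that one must somehow ``test with $u_\e'$'' to upgrade the factor $\tfrac12$ is correct. But the concrete discrete argument in your second paragraph has a genuine gap at the jump times. Differentiating the bulk energy along $v_k$ on a single subinterval $(t_k^{i-1},t_k^i)$ does give $\E_\e(u_k^i,\rho_k^i)-\E_\e(u_k^{i-1},\rho_k^i)\leq -\int_{t_k^{i-1}}^{t_k^i}\|v_k'\|^2\,dr$ (the error terms are proportional to $(t-t_k^i)\leq 0$, so they have a favorable sign). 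To telescope these, however, you must control the jump $J_i:=\E_\e(u_k^{i-1},\rho_k^i)-\E_\e(u_k^{i-1},\rho_k^{i-1})$ at each node, and this quantity has \emph{no} sign: the scheme \eqref{minSchem1} only guarantees $\E_\e(u_k^i,\rho_k^i)\leq\E_\e(u_k^i,\rho_k^{i-1})$, i.e.\ minimality of $\rho_k^i$ at the \emph{new} displacement $u_k^i$, not at $u_k^{i-1}$. Worse, the monotonicity you invoke (Proposition \ref{decenergysurf}, or its discrete counterpart) says the surface energy \emph{increases} along the evolution, which is the unfavorable sign for an upper bound on $J_i$; the compensating decrease of $\tfrac12\int(\eta_\e+\rho^2)|\nabla u_k^{i-1}|^2$ when $\rho_k^{i-1}$ is replaced by $\rho_k^i$ is not quantified by anything in the scheme. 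The same unjustified inequality already appears in your first paragraph ($\E_\e(u_k^{i-1},\rho_k^i)\leq\E_\e(u_k^{i-1},\rho_k^{i-1})$); there it is harmless because the standard competitor $(u_k^{i-1},\rho_k^{i-1})$ gives the chain directly, but here it is the crux.

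The paper circumvents this entirely by never splitting the per-step energy increment into a $u$-part and a $\rho$-part. It fixes an arbitrary $\phi\in\C^\infty_c(\Om\times(0,T))$, sets $\phi_k^i$ to be its time average on $(t_k^i,t_k^{i+1})$, and tests the \emph{joint} minimality of $(u_k^{i+1},\rho_k^{i+1})$ against the single admissible competitor $(u_k^i+\delta_k^{i+1}\phi_k^i,\rho_k^i)$. Expanding the energy of the competitor around $u_k^i$ produces, after Jensen's inequality, a telescoping chain of the form $\E_\e(u_k^{i+1},\rho_k^{i+1})+\tfrac12\int_{t_k^i}^{t_k^{i+1}}\|v_k'\|^2\leq\E_\e(u_k^i,\rho_k^i)+\int_{t_k^i}^{t_k^{i+1}}\int_\Om[(\eta_\e+(\rho_k^-)^2)\nabla u_k^-\cdot\nabla\phi+\beta(u_k^--g)\phi+\tfrac{\phi^2}{2}]+O(|\bdel_k|)$, in which the $\rho$-jump is absorbed into the minimality inequality and the sign question never arises. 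Summing, passing to the limit with the convergences you cite, using equation \eqref{parabolic} to rewrite the $\phi$-term as $\int_s^t\int_\Om(-u_\e'\phi+\tfrac{\phi^2}{2})$, and finally choosing $\phi=u_\e'$ by density converts $\tfrac12\int_s^t\|u_\e'\|^2$ into the optimal $\int_s^t\|u_\e'\|^2$. You should replace your differential computation by this variational perturbation argument; as it stands, the step ``sum the jump contributions\dots which only helps'' is false.
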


\begin{proof} 
Let us fix $\phi \in \C^\infty_c(\Om \times (0,T))$ arbitrary. We define for each integers $k\geq 0$ and $i \geq 0$,
$$\phi_k^i(x):=\frac{1}{\delta_k^{i+1}}\int_{t_k^i}^{t_k^{i+1}}\phi(x,r)\, dr\,.$$
At a step $i+1$, we can test the minimality of the pair $(u_k^{i+1},\rho_k^{i+1})$ against the competitor $(u_k^i+\delta_k^{i+1} \phi_k^i,\rho_k^i)$. It yields 
\begin{multline*}
\E_\e(u_k^{i+1},\rho_k^{i+1}) + \frac{1}{2\delta_k^{i+1}}\|u_k^{i+1} - u_k^i\|^2_{L^2(\Om)} \leq \E_\e(u_k^i,\rho_k^i)\\
 + \delta_k^{i+1} \int_\Om \left[(\eta_\e +(\rho_k^i)^2)\nabla u_k^i \cdot \nabla \phi_k^i+\beta(u_k^i-g)\phi_k^i + \frac{(\phi_k^i)^2}{2}\right] dx\\ 
 + (\delta_k^{i+1})^2 \int_\Om \big(|\nabla \phi_k^i|^2+\beta(\phi_k^i)^2\big)\, dx\,.
\end{multline*}
According to Jensen's inequality, we get that
\begin{multline*}
\E_\e(u_k^{i+1},\rho_k^{i+1}) + \frac{1}{2\delta_k^{i+1}}\|u_k^{i+1} - u_k^i\|^2_{L^2(\Om)} \leq \E_\e(u_k^i,\rho_k^i)\\
 + \int_{t_k^i}^{t_k^{i+1}} \int_\Om \left[(\eta_\e +(\rho_k^i)^2)\nabla u_k^i \cdot \nabla \phi + \beta (u_k^i-g)\phi+ \frac{\phi^2}{2}\right] dx\, dr \\
 + |\bdel_k|\int_{t_k^i}^{t_k^{i+1}} \int_\Om \big( |\nabla \phi|^2+\beta\phi^2\big)\, dx\, dr\,.
\end{multline*}
Let us consider the time-shift function $\ell_k:[0,+\infty)\to[0,+\infty)$ defined in \eqref{1117}. Setting  $u^-_k(t):=u_k(\ell_k(t))$ we rewrite the previous inequality as 
\begin{multline}\label{coucou}
\E_\e(u_k^{i+1},\rho_k^{i+1}) +
  \frac12\int_{t_k^i}^{t_k^{i+1}}\|v'_k(r)\|^2_{L^2(\Om)}\, dr \leq \E_\e(u_k^i,\rho_k^i)\\
 + \int_{t_k^i}^{t_k^{i+1}} \int_\Om \left[(\eta_\e +(\rho^-_k)^2)\nabla u^-_k \cdot \nabla \phi +\beta(u_k^--g)\phi+ \frac{\phi^2}{2}\right] dx\, dr\\ 
 + |\bdel_k|\int_{t_k^i}^{t_k^{i+1}} \int_\Om\big( |\nabla \phi|^2+\beta\phi^2\big)\, dx\, dr\,.
\end{multline}
\vskip3pt

Let us now fix $s \in (0,+\infty)\setminus \mathcal{B}_\e$ and $t\geq s$. Given $k$, we consider the two integers $j \geq i\geq 1$ such that $s \in (t_k^{i-1},t_k^i]$ and $t \in (t_k^{j-1},t_k^j]$. Iterating estimate \eqref{coucou}, we are led to 
\begin{multline}\label{coucou2}
\E_\e(u_k(t),\rho_k(t)) +  \frac12\int_{t_k^i}^{t_k^j}\|v'_k(r)\|^2_{L^2(\Om)}\, dr  \leq \E_\e(u_k(s),\rho_k(s))\\
 + \int_{t_k^i}^{t_k^j} \int_\Om \left[(\eta_\e +(\rho^-_k)^2)\nabla u^-_k \cdot \nabla \phi +\beta(u_k^--g)\phi + \frac{\phi^2}{2}\right] dx\, dr\\
  + |\bdel_k|\int_0^T\int_\Om \big(|\nabla \phi|^2+\beta \phi^2\big)\, dx\, dr\,.
\end{multline}
As in estimate \eqref{coucou3}, we have 
$$\|u^-_{k}(r)-u_\e(r)\|_{L^2(\Om)} 
 \leq C\sqrt{|\bdel_{k}|} +  \|v_{k}(r)-u_\e(r)\|_{L^2(\Om)} \mathop{\longrightarrow}\limits_{k\to\infty} 0$$
for every $r\geq0$.  On the other hand, $\sup_{r\geq 0}\|u_k^-(r)\|_{H^1(\Om)}<\infty$ by Lemma \ref{inegenergdiscr}. Therefore $u_k^-(r)\rightharpoonup u_\e(r)$ weakly in $H^1(\Om)$ for every $r\geq 0$. Since  $\rho_\e^-(r)=\rho_\e(r)$ for a.e. $r \geq 0$ by Corollary \ref{rho-}, we infer that $\rho^-_k(r) \wto \rho_\e(r)$ weakly in
$W^{1,p}(\Om)$ for a.e. $r\geq 0$. In particular, $\rho^-_k(r) \to \rho_\e(r)$ uniformly in $\Om$ for a.e. $r \geq 0$. Using those convergences, the uniform bound $0\leq \rho^-_k\leq 1$, Lemma \ref{existsch1}, Lemma \ref{convforte}, and Proposition \ref{minrhoeps}, we can pass to the limit in $k$ in inequality~\eqref{coucou2} (invoking Lebesgue's dominated convergence theorem)  to get 
\begin{multline*}
\E_\e(u_\e(t),\rho_\e(t)) +  \frac12\int_{s}^{t}\|u'_\e(r)\|^2_{L^2(\Om)}\, dr  \leq \E_\e(u_\e(s),\rho_\e(s))\\
 + \int_{s}^{t} \int_\Om \left[(\eta_\e +\rho_\e^2)\nabla u_\e \cdot \nabla \phi +\beta(u_\e-g)\phi + \frac{\phi^2}{2}\right] dx\, dr\,.
\end{multline*}
 Using  equation \eqref{parabolic}, we now infer that 
$$\E_\e(u_\e(t),\rho_\e(t)) +  \frac12 \int_{s}^{t}\|u'_\e(r)\|^2_{L^2(\Om)}\, dr  \leq \E_\e(u_\e(s),\rho_\e(s))
 + \int_{s}^{t} \int_\Om \left[-u'_\e \phi + \frac{\phi^2}{2}\right] dx\, dr\,.$$
By density, the previous inequality actually holds for any $\phi \in L^2(0,T;L^2(\Om))$. Choosing $\phi=u'_\e$ yields the announced energy inequality.
\end{proof}

\section{Asymptotics for unilateral minimizing movements in the Mumford-Shah limit}\label{sec5}   

The main goal of this section is to analyse the behavior of a unilateral minimizing movement as $\e$ tends to zero. We prove that in the limit $\e\to 0$, we recover a parabolic type evolution for the Mumford-Shah functional under the irreversible growth constraint on the crack set  similar to \cite{CD}. The result rests on the approximation of the Mumford-Shah functional by the Ambrosio-Tortorelli functional by means of $\Gamma$-convergence proved in \cite{AT,AT2,Foc}. The main result of this section is the following theorem.

\begin{theorem}\label{BabMil}
Let $\e_n\downarrow 0$ be an arbitrary sequence, 
$u_0\in H^1(\Om)\cap L^\infty(\Om)$, and $\rho_0^{\e_n}$ determined by \eqref{rho0}. Let $\{(u_{\e_n},\rho_{\e_n})\}_{n\in\NN}$ be a sequence in  $GUMM(u_0,\rho_0^{\e_n})$. Then there exist a (not relabeled) subsequence  and $u \in AC^2([0,+\infty);L^2(\Om))$ such that
\begin{equation}\label{preconvueps}
\begin{cases}
\rho_{\e_n}(t)\to 1 \text{ strongly in } L^p(\Om) \text{ for every $t\geq 0$}\,,\\
u_{\e_n}(t) \to u(t) \text{ strongly in } L^2(\Om) \text{ Êfor every }t\geq 0\,,\\
u'_{\e_n} \wto u'  \text{ weakly in } L^2(0,+\infty;L^2(\Om))\,.
\end{cases}
\end{equation}
For every $t\geq 0$  the function $u(t)$ belongs to $SBV^2(\Om) \cap L^\infty(\Om)$ with  
\begin{equation}\label{ptwbdu*}
\|u(t)\|_{L^\infty(\Om)}
\leq \max\{\|u_0\|_{L^\infty(\Om)}, \|g\|_{L^\infty(\Om)}\}\,,
\end{equation}
and $\nabla u \in L^\infty(0,+\infty;L^2(\Om;\RR^N))$. Moreover $u$ solves 
$$
\begin{cases}
u' = {\rm div}(\nabla u)-\beta(u-g) & \text{ in }ÊL^2(0,+\infty;L^2(\Om))\,,\\
\nabla u \cdot \nu =0 & \text{ in }L^2(0,+\infty;H^{-1/2}(\partial \Om))\, ,\\
u(0)=u_0\,, 
\end{cases}
$$
and there exists a family of countably $\HH^{N-1}$-rectifiable subsets $\{\Gamma(t)\}_{t \geq 0}$ of $\Omega$ such that 
\begin{enumerate}
\item[(i)] $\Gamma(s)  \subset \Gamma(t)$ for every $0 \leq s \leq t$;
\vskip3pt
\item[(ii)] $J_{u(t)}  \; \widetilde \subset \; \Gamma(t)$ for every $t \geq 0$;
\vskip3pt
\item[(iii)] for every $t \geq 0$,
\begin{multline*}
 \frac12 \int_\Om|\nabla u(t)|^2\, dx + \HH^{N-1}(\Gamma(t)) +\frac{\beta}{2} \int_\Om (u(t)-g)^2\, dx+ \int_0^t \|u'(s)\|_{L^2(\Om)}^2\, ds\\
\leq \frac12 \int_\Om |\nabla u_0|^2\, dx + \frac{\beta}{2} \int_\Om (u_0-g)^2\, dx\,.
\end{multline*}
\end{enumerate}
\end{theorem}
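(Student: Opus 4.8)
The plan is to pass to the limit $\e_n\to 0$ in the results of Theorem \ref{thmGUMM}, using uniform bounds, a $\Gamma$-liminf inequality for the diffuse surface energy, Helly's theorem for the crack sets, and a lower semicontinuity argument to identify the limit equation. First I would collect the uniform-in-$n$ estimates: since $\rho_0^{\e_n}$ minimizes $\E_{\e_n}(u_0,\cdot)$ we have $\E_{\e_n}(u_0,\rho_0^{\e_n})\le \E_{\e_n}(u_0,1)=\frac12\|\nabla u_0\|^2_{L^2}+\frac\beta2\|u_0-g\|^2_{L^2}=:E_0$, uniformly in $n$. The discrete energy inequality \eqref{1046}, which survives in the limit $\delta\to 0$ via \eqref{equa3} (applied from $s$ close to $0$, together with the continuity of $\mathfrak B_{\e_n}$ and $\rho_{\e_n}$ at $t=0$ noted in the remarks), gives for every $t\ge 0$
\begin{equation*}
\E_{\e_n}(u_{\e_n}(t),\rho_{\e_n}(t))+\int_0^t\|u'_{\e_n}(r)\|^2_{L^2(\Om)}\,dr\le E_0\,.
\end{equation*}
Hence $\sup_t\|\nabla u_{\e_n}(t)\|_{L^2}$, $\sup_t\mathfrak S_{\e_n}(t)$ and $\int_0^\infty\|u'_{\e_n}\|^2_{L^2}$ are bounded uniformly in $n$; Lemma \ref{ptwbdmonotdisc} (passed to the limit) gives the uniform $L^\infty$ bound \eqref{ptwbdu*}. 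From these bounds and the uniform Hölder estimate of $u_{\e_n}$ in $t$ (as in \eqref{holdcontrvk}), an Arzelà--Ascoli / Aubin--Lions argument yields a subsequence and a limit $u\in AC^2([0,+\infty);L^2(\Om))$ with $u_{\e_n}(t)\to u(t)$ strongly in $L^2(\Om)$ for every $t$, $u'_{\e_n}\wto u'$ weakly in $L^2(0,\infty;L^2(\Om))$, and $\nabla u\in L^\infty(0,\infty;L^2(\Om;\RR^N))$ by lower semicontinuity. Since $\mathfrak S_{\e_n}(t)$ is bounded, $\frac{\alpha}{p'\e_n}\int_\Om|1-\rho_{\e_n}(t)|^p\le E_0$ forces $\rho_{\e_n}(t)\to 1$ in $L^p(\Om)$ for every $t$.

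Next I would identify the limit equation and the jump-set structure at each fixed $t$. Fix $t$; applying Proposition \ref{lower-bound-AT} with $E_n$ the superlevel set $\{\rho_{\e_n}(t)>1/2\}^c$ (more precisely, the standard coarea/Modica--Mortola-type construction used in the proof of the Ambrosio--Tortorelli $\Gamma$-liminf, see \cite{AT,Foc}), together with the uniform $L^\infty$ and bulk-energy bounds, shows $u(t)\in SBV^2(\Om)\cap L^\infty(\Om)$, that $\nabla u_{\e_n}(t)$ (suitably modified) converges weakly in $L^2$ to $\nabla u(t)$, and that
\begin{equation*}
\frac12\int_\Om|\nabla u(t)|^2\,dx+\HH^{N-1}(J_{u(t)})\le\liminf_{n\to\infty}\Big(\frac12\int_\Om(\eta_{\e_n}+\rho_{\e_n}(t)^2)|\nabla u_{\e_n}(t)|^2\,dx+\mathfrak S_{\e_n}(t)\Big)\,.
\end{equation*}
To pass to the limit in the equation $u'_{\e_n}={\rm div}((\eta_{\e_n}+\rho_{\e_n}^2)\nabla u_{\e_n})-\beta(u_{\e_n}-g)$, I test against $\varphi\in L^2(0,T;H^1(\Om))$, use $\rho_{\e_n}\to 1$ in $L^p$, the uniform bound on $\nabla u_{\e_n}$, and $u'_{\e_n}\wto u'$ to obtain $\int_0^T\!\!\int_\Om(u'\varphi+\nabla u\cdot\nabla\varphi+\beta(u-g)\varphi)=0$, which gives the stated heat equation with homogeneous Neumann condition and $u(0)=u_0$.

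The construction of the nondecreasing family $\{\Gamma(t)\}$ is where the monotonicity of the diffuse surface energy is essential. For each $n$ the measures $\mu_n^t:=\big(\frac{\e_n^{p-1}}{p}|\nabla\rho_{\e_n}(t)|^p+\frac{\alpha}{p'\e_n}(1-\rho_{\e_n}(t))^p\big)\LL^N$ have total mass $\le E_0$ and are monotone nondecreasing in $t$ (Proposition \ref{decenergysurf}). Applying the generalized Helly selection principle (as in \cite{MM,DMT1}) to the $\M^+(\overline\Om)$-valued nondecreasing maps $t\mapsto\mu_n^t$, I extract a further subsequence and a nondecreasing family $t\mapsto\mu^t\in\M^+(\overline\Om)$ with $\mu_n^t\wto^*\mu^t$ for every $t$. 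The $\Gamma$-liminf inequality for Ambrosio--Tortorelli localized on open sets shows $\mu^t\ge \HH^{N-1}\res\Gamma(t)$ for a countably $\HH^{N-1}$-rectifiable set $\Gamma(t)$ with $J_{u(t)}\,\widetilde\subset\,\Gamma(t)$; monotonicity of $\mu^t$ transfers to $\Gamma(s)\subset\Gamma(t)$ for $s\le t$ (taking $\Gamma(t):=\bigcup_{s\le t,\ s\in\QQ}\Gamma(s)$ up to $\HH^{N-1}$-null sets to ensure genuine monotonicity, and using $J_{u(t)}\subset\Gamma(t)$ after noting the jump set is monotone in time as well). Finally, combining $\HH^{N-1}(\Gamma(t))\le\mu^t(\overline\Om)\le\liminf_n\mu_n^t(\overline\Om)=\liminf_n\mathfrak S_{\e_n}(t)$, the bulk-energy liminf inequality above, weak lower semicontinuity of $\int_0^t\|\cdot\|^2$, and the uniform energy inequality $\E_{\e_n}(u_{\e_n}(t),\rho_{\e_n}(t))+\int_0^t\|u'_{\e_n}\|^2\le E_0$, I obtain inequality (iii). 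The main obstacle is the simultaneous passage to the limit in the surface energies: one must choose a single subsequence along which both the diffuse surface energies converge (as monotone measure-valued maps, via Helly) and the bulk energies behave well, and then verify that the rectifiable sets extracted from the $\Gamma$-liminf at different times can be arranged into a genuinely nondecreasing family containing the (also nondecreasing) jump sets $J_{u(t)}$ — this requires care because the $\Gamma$-liminf only controls $\HH^{N-1}\res\Gamma(t)$ from below by $\mu^t$, not an equality, so the monotonicity of $\Gamma(t)$ has to be built by hand from the monotonicity of $\mu^t$ and of $t\mapsto\rho_{\e_n}(t)$.
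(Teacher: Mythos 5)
The first part of your plan (uniform energy bounds, Arzel\`a--Ascoli in time, the coarea/truncation construction feeding into Proposition \ref{lower-bound-AT} and Ambrosio's compactness theorem, and the passage to the limit in the heat equation) is essentially the paper's argument. One imprecision there: you invoke ``the uniform bound on $\nabla u_{\e_n}$'', but no such bound exists --- only the weighted gradient $(\eta_{\e_n}+\rho_{\e_n}^2)^{1/2}\nabla u_{\e_n}(t)$ is bounded in $L^2$, and since $\eta_{\e_n}\to 0$ one must show that $(\eta_{\e_n}+\rho_{\e_n}^2)\nabla u_{\e_n}(t)\wto\nabla u(t)$ weakly in $L^2(\Om;\RR^N)$ by splitting $\Om$ into $E_n$ and its complement and killing the contribution on $E_n$ by Cauchy--Schwarz and $\LL^N(E_n)\to0$; your phrase ``suitably modified'' suggests you see this, so I count it as a presentational issue only.

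The genuine gap is in the construction of $\Gamma(t)$. You assert that the measures $\mu_n^t$ are ``monotone nondecreasing in $t$ (Proposition \ref{decenergysurf})'' and propose to apply a Helly selection principle for monotone measure-valued maps. But Proposition \ref{decenergysurf} only gives monotonicity of the \emph{total mass} $\mathfrak S_{\e_n}(t)=\mu_n^t(\RR^N)$; the measures themselves are not setwise monotone in $t$ (the density $\frac{\e_n^{p-1}}{p}|\nabla\rho_{\e_n}(t)|^p+\frac{\alpha}{p'\e_n}(1-\rho_{\e_n}(t))^p$ at a fixed point has no reason to increase in $t$), so the monotone-Helly step is based on a false premise. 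The extraction of a single subsequence with $\mu_n^t\wto^*\mu^t$ for all $t$ in a countable dense set is still available by diagonalization, but the monotonicity $\mu^s\le\mu^t$ of the \emph{limit} measures --- which is the whole point, since it encodes irreversibility --- must then be \emph{proved}, and this is the key idea you are missing: one tests the unilateral minimality of $\rho_{\e_n}(s)$ (Proposition \ref{decenergysurf}) against the localized competitor $\hat\rho_n=\zeta\rho_{\e_n}(t)+(1-\zeta)\rho_{\e_n}(s)$ for a cut-off $\zeta$, estimates the cross terms, and passes to the limit to compare $\mu(s)$ and $\mu(t)$ on arbitrary Borel sets. Relatedly, your fallback $\Gamma(t):=\bigcup_{s\le t,\,s\in\QQ}\Gamma(s)$ does not obviously preserve the bound $\HH^{N-1}(\Gamma(t))\le\mu^t(\overline\Om)$: the $\HH^{N-1}$-measure of a countable union of sets each dominated by $\mu^t$ is not dominated by $\mu^t$ without further structure. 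The device that resolves this is to define $K(t):=\{x:\Theta^*(t,x)\ge1\}$ via the upper $(N-1)$-density of $\mu(t)$; the standard density estimate gives $\HH^{N-1}\res K(t)\le\mu(t)$, the Besicovitch--Marstrand--Mattila theorem gives $J_{u(s)}\;\widetilde\subset\;K(t)$ for all $s\le t$ (note that $J_{u(t)}$ itself need not be monotone in $t$, contrary to your parenthetical), and monotonicity of $K(t)$ is inherited from that of $\mu(t)$; one then takes the rectifiable part of $K(t)$ and intersects over $\tau>t$, $\tau\in D$, to define $\Gamma(t)$ for arbitrary times.
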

\vskip5pt

This section is thus essentially devoted to the proof of this theorem. To this purpose, we consider for the rest of the section a sequence $\e_n\downarrow0$, and an arbitrary sequence $\{(u_{\e_n},\rho_{\e_n})\}_{n\in\NN}$ in $GUMM(u_0,\rho_0^\e)$.  

\subsection{Compactness and the limiting heat equation}

We start by proving compactness properties for the sequence $\{(u_{\e_n},\rho_{\e_n})\}_{n\in\NN}$.

\begin{proposition}\label{compaciteeps}
There exist a (not relabeled) subsequence $\{u_{\e_n}\}_{n\in\NN}$ and a function $u \in AC^2([0,+\infty);L^2(\Om))$ such that
\eqref{preconvueps} holds. 
In addition, 
$u(t) \in SBV^2(\Om)\cap L^\infty(\Om)$ with \eqref{ptwbdu*} for every $t \geq 0$, and  the mapping $t \mapsto \nabla u(t) \in L^2(\Om;\RR^N)$ is strongly measurable with $\nabla u \in L^\infty(0,+\infty;L^2(\Om;\RR^N))$. 
Moreover, for every $t\geq 0$ and any $0<\delta_1<\delta_2<1$, there exists $s_n=s_n(t,\delta_1,\delta_2) \in (\delta_1, \delta_2)$ such that the set $E_n:= \{\rho_{\e_n}(t)<s_n\}$ has finite perimeter in~$\Om$, $\tilde u_{\e_n}(t):=(1-\chi_{E_n})u_{\e_n}(t) \in SBV^2(\Om)\cap L^\infty(\Om)$, and 
$$
\begin{cases}
\tilde u_{\e_n}(t) \to u(t) \text{ strongly in }L^2(\Om)\,,\\
\tilde u_{\e_n}(t) \wto u(t) \text{ weakly* in }L^\infty(\Om)\,,\\
\nabla \tilde u_{\e_n}(t) \wto \nabla u(t) \text{ weakly } L^2(\Om;\RR^N)\,.
\end{cases}
$$
Finally, for any open subset $A \subset \Om$, 
\begin{equation}\label{SBVconv2}
\begin{cases}
\ds \HH^{N-1}(J_{u(t)} \cap A) \leq \liminf_{n \to \infty}\, \frac{p}{2}\int_A (1-\rho_{\e_n}(t))^{p-1}|\nabla \rho_{\e_n}(t)|\, dx\,,\\[10pt]
\ds \int_A |\nabla u(t)|^2\, dx \leq \liminf_{n \to \infty} \int_A(\eta_{\e_n} + \rho^2_{\e_n}(t))|\nabla u_{\e_n}(t)|^2\, dx\,.
\end{cases}
\end{equation}
\end{proposition}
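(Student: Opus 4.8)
The plan is to rerun, uniformly in $n$, the machinery developed for fixed $\e$, the new difficulty being that the uniform bulk bound $\int_\Om(\eta_{\e_n}+\rho_{\e_n}^2(t))|\nabla u_{\e_n}(t)|^2\,dx\le C$ does \emph{not} control $\|\nabla u_{\e_n}(t)\|_{L^2}$; this is dealt with by the truncation device of Proposition~\ref{lower-bound-AT}. First I would record the uniform estimates. By minimality of $\rho_0^{\e_n}$ and $\eta_{\e_n}<1$ one has $\E_{\e_n}(u_0,\rho_0^{\e_n})\le\E_{\e_n}(u_0,1)\le C_0:=\|\nabla u_0\|_{L^2}^2+\tfrac\beta2\|u_0-g\|_{L^2}^2$, so by Theorem~\ref{thmGUMM} (equivalently, by passing to the $\liminf$ in \eqref{1046}) $\sup_{t\ge0}\E_{\e_n}(u_{\e_n}(t),\rho_{\e_n}(t))\le C_0$ and $\int_0^{+\infty}\|u'_{\e_n}(r)\|_{L^2}^2\,dr\le C_0$, both independent of $n$. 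Consequently $\int_\Om|1-\rho_{\e_n}(t)|^p\,dx\le\tfrac{p'\e_n}{\alpha}C_0\to0$ (the first line of \eqref{preconvueps}), $\|u_{\e_n}(t)\|_{L^\infty}\le\max\{\|u_0\|_{L^\infty},\|g\|_{L^\infty}\}$, and $\|u_{\e_n}(t)-u_{\e_n}(s)\|_{L^2}^2\le C_0(t-s)$, all uniform in $n$.

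Next I would produce, at each time $t$, the good truncations. Young's inequality with exponents $p,p'$ and the choice $\alpha=(p/2)^{p'}$ give $\tfrac p2\int_\Om(1-\rho_{\e_n}(t))^{p-1}|\nabla\rho_{\e_n}(t)|\,dx\le\E_{\e_n}(u_{\e_n}(t),\rho_{\e_n}(t))\le C_0$; combining this with the coarea formula $\int_\Om(1-\rho_{\e_n}(t))^{p-1}|\nabla\rho_{\e_n}(t)|\,dx=\int_0^1(1-s)^{p-1}\HH^{N-1}(\partial^*\{\rho_{\e_n}(t)>s\}\cap\Om)\,ds$ and an averaging argument over $(\delta_1,\delta_2)$, one selects for each $n$ a level $s_n\in(\delta_1,\delta_2)$, with $\LL^N(\{\rho_{\e_n}(t)=s_n\})=0$, such that $E_n:=\{\rho_{\e_n}(t)<s_n\}$ has finite perimeter in $\Om$, $\HH^{N-1}(\partial^*E_n\cap\Om)\le C(\delta_1,\delta_2,C_0)$, and $\LL^N(E_n)\le(1-\delta_2)^{-p}\int_\Om(1-\rho_{\e_n}(t))^p\,dx\to0$. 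Since $\rho_{\e_n}(t)\ge s_n>\delta_1$ on $\Om\setminus E_n$, the function $\tilde u_{\e_n}(t):=(1-\chi_{E_n})u_{\e_n}(t)$ belongs to $SBV^2(\Om)\cap L^\infty(\Om)$ with $\|\nabla\tilde u_{\e_n}(t)\|_{L^2}\le\sqrt{2C_0}/\delta_1$, $\HH^{N-1}(J_{\tilde u_{\e_n}(t)})\le\HH^{N-1}(\partial^*E_n\cap\Om)$, $\|\tilde u_{\e_n}(t)\|_{L^\infty}\le\|u_{\e_n}(t)\|_{L^\infty}$, and $\|\tilde u_{\e_n}(t)-u_{\e_n}(t)\|_{L^2}\to0$. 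Fixing $\delta_1,\delta_2$ momentarily, all these bounds are uniform in $t$ and $n$, so the closure and compactness theorems for $SBV$ (see \cite{AFP}) make $\{\tilde u_{\e_n}(t)\}_n$, hence $\{u_{\e_n}(t)\}_n$, relatively compact in $L^2(\Om)$ for every $t$; together with the equi-$\tfrac12$-Hölder bound in $L^2$ and Arzelà–Ascoli, a diagonal extraction over a dense countable subset of $[0,+\infty)$ gives, up to a subsequence, $u_{\e_n}(t)\to u(t)$ strongly in $L^2(\Om)$ for every $t$, with $u(0)=u_0$. Passing to the limit in $u_{\e_n}(t)-u_{\e_n}(s)=\int_s^tu'_{\e_n}(r)\,dr$ and using the $L^2$ bound on $u'_{\e_n}$ identifies $u\in AC^2([0,+\infty);L^2(\Om))$ with $u'_{\e_n}\wto u'$ weakly in $L^2(0,+\infty;L^2(\Om))$, completing \eqref{preconvueps}.

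I would then fix $t\ge0$ and arbitrary $\delta_1<\delta_2$ and apply Proposition~\ref{lower-bound-AT} to the pair $(\{u_{\e_n}(t)\}_n,\{E_n\}_n)$: its conclusion gives $u(t)\in SBV^2(\Om)\cap L^\infty(\Om)$ with \eqref{ptwbdu*}, and the three convergences $\tilde u_{\e_n}(t)\to u(t)$ in $L^2(\Om)$, $\tilde u_{\e_n}(t)\wto u(t)$ weakly$^*$ in $L^\infty(\Om)$, $\nabla\tilde u_{\e_n}(t)\wto\nabla u(t)$ weakly in $L^2(\Om;\RR^N)$. To identify the degenerate flux I would write $(\eta_{\e_n}+\rho_{\e_n}^2(t))\nabla u_{\e_n}(t)=(\eta_{\e_n}+\rho_{\e_n}^2(t))\nabla\tilde u_{\e_n}(t)+(\eta_{\e_n}+\rho_{\e_n}^2(t))\chi_{E_n}\nabla u_{\e_n}(t)$, the last term tending to $0$ weakly in $L^2$ by Cauchy–Schwarz and $\LL^N(E_n)\to0$, while $\eta_{\e_n}+\rho_{\e_n}^2(t)\to1$ a.e.\ along a subsequence (from $\rho_{\e_n}(t)\to1$ in $L^p$) and is bounded by $2$, so a strong$\times$weak product argument yields $(\eta_{\e_n}+\rho_{\e_n}^2(t))\nabla u_{\e_n}(t)\wto\nabla u(t)$ and $(\eta_{\e_n}+\rho_{\e_n}^2(t))^{1/2}\nabla u_{\e_n}(t)\wto\nabla u(t)$ weakly in $L^2(\Om;\RR^N)$, for every $t$. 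The first of these, uniformly bounded in $t$ and $n$, identifies the weak$^*$ limit of $(\eta_{\e_n}+\rho_{\e_n}^2)\nabla u_{\e_n}$ in $L^\infty(0,+\infty;L^2(\Om;\RR^N))$ with $t\mapsto\nabla u(t)$, which is therefore strongly measurable and in $L^\infty(0,+\infty;L^2(\Om;\RR^N))$ (with $\|\nabla u(t)\|_{L^2}^2\le2C_0$); the second, combined with weak lower semicontinuity of $w\mapsto\int_A|w|^2$, gives the second line of \eqref{SBVconv2} for every open $A\subset\Om$.

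Finally, for the surface estimate (the first line of \eqref{SBVconv2}) I would \emph{not} use a single truncation — one checks that this produces a constant strictly worse than $p/2$ — but argue level by level: fixing open $A\subset\Om$ and using $\int_A(1-\rho_{\e_n}(t))^{p-1}|\nabla\rho_{\e_n}(t)|\,dx=\int_0^1(1-s)^{p-1}\HH^{N-1}(\partial^*\{\rho_{\e_n}(t)>s\}\cap A)\,ds$, Fatou shows $\liminf_n\HH^{N-1}(\partial^*\{\rho_{\e_n}(t)>s\}\cap A)<\infty$ for a.e.\ $s\in(0,1)$; for such $s$, along a subsequence realizing that $\liminf$ (with $\LL^N(\{\rho_{\e_n}(t)\le s\})\to0$) Proposition~\ref{lower-bound-AT} applied on $A$ with $E_n=\{\rho_{\e_n}(t)\le s\}$ gives $2\,\HH^{N-1}(J_{u(t)}\cap A)\le\liminf_n\HH^{N-1}(\partial^*\{\rho_{\e_n}(t)>s\}\cap A)$; multiplying by $(1-s)^{p-1}$, integrating over $(0,1)$, applying Fatou on the right and $\int_0^1(1-s)^{p-1}\,ds=1/p$ on the left yields $\HH^{N-1}(J_{u(t)}\cap A)\le\tfrac p2\liminf_n\int_A(1-\rho_{\e_n}(t))^{p-1}|\nabla\rho_{\e_n}(t)|\,dx$. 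The main obstacles are precisely these last two points: obtaining the \emph{sharp} constant $p/2$ in \eqref{SBVconv2}, which forces the level-by-level coarea-plus-Fatou scheme and leans on the normalization $\alpha=(p/2)^{p'}$ and on Proposition~\ref{lower-bound-AT} (the genuine analytic input, from \cite{B,BCS}); and passing to the limit in the degenerate product $(\eta_{\e_n}+\rho_{\e_n}^2)\nabla u_{\e_n}$, where the bulk bound is useless on $\{\rho_{\e_n}\text{ small}\}$ and one must instead exploit $p>N$ (uniform convergence of the $\rho$'s and well-behaved $SBV$ truncations) together with the $L^p$-smallness of $1-\rho_{\e_n}$.
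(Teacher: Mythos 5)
Your proof is correct and follows essentially the same strategy as the paper: uniform energy bounds from Proposition~\ref{energyineq}, Young's inequality with the normalization \eqref{alpha}, the coarea/mean-value selection of a level $s_n\in(\delta_1,\delta_2)$, truncation to $\tilde u_{\e_n}(t)\in SBV^2(\Om)$, Ambrosio compactness and Proposition~\ref{lower-bound-AT}, Arzel\`a--Ascoli in time, and the splitting of $(\eta_{\e_n}+\rho_{\e_n}^2)\nabla u_{\e_n}$ into a part supported on $E_n$ (killed by Cauchy--Schwarz and $\LL^N(E_n)\to0$) and a strong-times-weak product. The one place where you deliberately depart from the single-truncation scheme rests on a misconception: the single truncation does \emph{not} lose the constant $p/2$, because the inequality it yields, namely $(\delta_2^p-\delta_1^p)\,\HH^{N-1}(J_{u(t)}\cap A)\leq\liminf_n\frac{p}{2}\int_A(1-\rho_{\e_n}(t))^{p-1}|\nabla\rho_{\e_n}(t)|\,dx$, has a right-hand side independent of $\delta_1,\delta_2$, so letting $\delta_1\to0$ and $\delta_2\to1$ at the very end recovers the sharp constant (this is exactly what the paper does; the same trick with $\delta_1\to1$ handles the bulk term). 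Your level-by-level coarea-plus-Fatou argument is nevertheless a valid alternative and reaches the same bound; likewise your route to the second line of \eqref{SBVconv2} via weak convergence of $(\eta_{\e_n}+\rho_{\e_n}^2)^{1/2}\nabla u_{\e_n}(t)$ and lower semicontinuity of the $L^2(A)$-norm is a clean equivalent of the paper's $s_n^2\geq\delta_1^2$ estimate. The remaining steps (strong measurability via Pettis-type arguments, identification of $u'$) match the paper's proof.
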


\begin{proof}{\it Step 1.}
We first derive {\it a priori} estimates from the energy inequality obtained in Proposition \ref{energyineq}.  Indeed according to that result together with the minimality property \eqref{rho0} of $\rho_0^{\e_n}$, we infer that for every $t  \geq 0$,
\begin{multline}\label{nrjineq}
\frac12 \int_\Om (\eta_{\e_n} +\rho^2_{\e_n}(t)) |\nabla u_{\e_n}(t)|^2\, dx+ \frac{\e_n^{p-1}}{p} \int_\Om |\nabla \rho_{\e_n}(t)|^p\, dx+\frac{\alpha}{p'{\e_n}} \int_\Om (1-\rho_{\e_n}(t))^p\, dx \\
 +\int_0^t \|u'_{\e_n}(s)\|_{L^2(\Om)}^2\, ds 
 \leq \E_{\e_n}(u_0,\rho_0^{\e_n}) \leq \E_{\e_n}(u_0,1)\leq \int_\Om |\nabla u_0|^2\, dx+\frac{\beta}{2} \int_\Om (u_0-g)^2\, dx\,.
\end{multline}
Then, applying Young's inequality and using \eqref{alpha}, we obtain 
\begin{equation}\label{Young}
\frac{\e_n ^{p-1}}{p} \int_\Om |\nabla \rho_{\e_n} (t)|^p\, dx +\frac{\alpha}{p'{\e_n} } \int_\Om (1-\rho_{\e_n} (t))^p\, dx \geq \frac{p}{2} \int_\Om (1-\rho_{\e_n} (t))^{p-1}|\nabla \rho_{\e_n} (t)|\, dx\,,
\end{equation}
from which we deduce the following uniform bound
\begin{equation}\label{estimation-energy}
\|u'_{\e_n}\|^2_{L^2(0,+\infty;L^2(\Om))} + \int_\Om (\eta_{\e_n} +\rho^2_{\e_n}(t)) |\nabla u_{\e_n}(t)|^2\, dx + \int_\Om (1-\rho_{\e_n}(t))^{p-1}|\nabla \rho_{\e_n}(t)|\, dx \leq C_0\,,
\end{equation}
for some constant $C_0>0$ independent of ${\e_n}$ and $t$. 
\vskip5pt

\noindent{\it Step 2.} We now establish the weak convergence of $\{u_{\e_n}\}$ and the bound  \eqref{ptwbdu*}. Recalling that $0\leq \rho_{\e_n}\leq 1$, the fact that 
$$\rho_\e(t) \to 1 \quad \text{ strongly in }ÊL^p(\Om) \text{ for every $t\geq 0$}\,,$$
is a direct consequence of \eqref{nrjineq}. According to items {\it (v)} and {\it (ii)} in Lemma~\ref{convuk}, the sequence $\{u_{\e_n}\}$ in uniformly equi-continuous in $L^2(\Om)$, and for each $t \in [0,+\infty)$, the sequence $\{u_{\e_n}(t)\}$ is sequentially weakly relatively compact in $L^2(\Om)$. Therefore, according to Ascoli-Arzela Theorem, we can find a (not relabeled) subsequence and $u \in AC^2([0,+\infty);L^2(\Om))$ such that $u_{\e_n}(t) \wto u(t)$ weakly in $L^2(\Om)$ (and also weakly* in $L^\infty(\Om)$) for every $tÊ\geq 0$, and $u'_{\e_n} \wto u'$ weakly in $L^2(0,+\infty;L^2(\Om))$. In particular, \eqref{ptwbdu*} follows from item {\it (ii)} in Lemma~\ref{convuk}.  

\vskip5pt

\noindent{\it Step 3.} We now examine more accurately the asymptotic behavior of the sequence $\{u_{\e_n}\}$ as in \cite{AT,AT2,Foc}, 
and prove~\eqref{SBVconv2}. 
Let us fix $t\geq 0$, $0<\delta_1<\delta_2<1$ and an arbitrary open subset $A$ of $\Om$. According to the $BV$-coarea formula (see \cite[Theorem 3.40]{AFP}), 
\begin{multline}\label{ee1}
\int_{A} (1-\rho_{\e_n}(t))^{p-1}|\nabla \rho_{\e_n}(t)|\, dx = \int_0^1 (1-s)^{p-1}\HH^{N-1}(\partial^* \{\rho_{\e_n}(t) <s\}\cap A)\, ds\\ \geq \int_{\delta_1}^{\delta_2}(1-s)^{p-1}\HH^{N-1}(\partial^* \{\rho_{\e_n}(t) <s\} Ê\cap A)\, ds\,.
\end{multline}
Consequently, by the mean value theorem there exists some $s_n=s_n(t,\delta_1,\delta_2,A) \in (\delta_1,\delta_2)$ such that
\begin{equation}\label{ee2}
\int_{ A} (1-\rho_{\e_n}(t))^{p-1}|\nabla \rho_{\e_n}(t)|\, dx \geq \frac{\delta_2^p-\delta_1^p}{p} \HH^{N-1}(\partial^* E_n \cap A)\,,
\end{equation}
where $ E_n:=\{ \rho_{\e_n}(t) <s_n \} \cap A$. Note that from \eqref{nrjineq} we have
\begin{equation}\label{measure}
\LL^N(E_n) \leq \frac{1}{(1-s_n)^p} \int_\Om (1-\rho_{\e_n}(t))^p\, dx\leq \frac{C\e_n}{(1-\delta_2)^p} \to 0 \text{ as } n \to \infty\,,
\end{equation}
for some constant $C>0$ independent of $n$. 

Let us define the new sequence 
\begin{equation}\label{utilde}
\tilde u_{\e_n}(t):=(1-\chi_{E_n})u_ {\e_n}(t)\,.
\end{equation}
By \eqref{measure} we have
\begin{equation}\label{ueps-tildeueps}
\|u_{\e_n}(t) - \tilde u_{\e_n}(t)\|_{L^2(A)} \leq \|u_{\e_n}(t)\|_{L^\infty(\Om)}\sqrt{ \LL^N(E_n)} \to 0
\end{equation}
as $n \to \infty$, from which we deduce that $\tilde u_{\e_n}(t) \wto u(t)$ weakly in $L^2(A)$. On the other hand, according to \cite[Theorem 3.84]{AFP} we have $\tilde u_{\e_n}(t) \in SBV^2(A)\cap L^\infty(A)$ with
$$
\begin{cases}
J_{\tilde u_{\e_n}(t)} \; \widetilde \subset \; \partial^*E_n \,,\\[3pt]
\nabla \tilde u_{\e_n}(t) =(1-\chi_{E_n}) \nabla u_{\e_n}(t) \,,\\[3pt]
\|\tilde u_{\e_n}(t)\|_{L^\infty(A)} \leq \max\{\|u_0\|_{L^\infty(\Om)},\|g\|_{L^\infty(\Om)}\} \,.
\end{cases}
$$
By the energy estimate \eqref{estimation-energy} together with \eqref{ee1} and \eqref{ee2},
$$
\|\nabla \tilde u_{\e_n}(t)\|^2_{L^2(A;\RR^N)} \leq \frac{1}{s_n^2}\int_\Om \rho^2_{\e_n}(t)|\nabla u_{\e_n}(t)|^2\, dx \leq \frac{C_0}{\delta_1^2} \,,
$$
and
$$
\HH^{N-1}(J_{\tilde u_{\e_n}(t)}\cap A) \leq \HH^{N-1}(\partial^* E_n \cap A) \leq \frac{C_0p }{\delta_2^p-\delta_1^p}\,.
$$
We are now in position to apply Ambrosio's compactness Theorem in 
$SBV$ (see Theorems 4.7 and 4.8 in~\cite{AFP}) to deduce that $u(t) \in SBV^2(\Om)$ (by arbitrariness of $A$), and that 
$$
\begin{cases}
\tilde u_{\e_n}(t) \to u(t) \text{ strongly in }L^2(A)\,,\\
\tilde u_{\e_n}(t) \wto u(t) \text{ weakly* in }L^\infty(A)\,,\\
\nabla \tilde u_{\e_n}(t) \wto \nabla u(t) \text{ weakly } L^2(A;\RR^N)\,.
\end{cases}
$$
In view of \eqref{ueps-tildeueps} we deduce that $u_{\e_n}(t) \to u(t)$ strongly in $L^2(\Om)$ for each $t\geq 0$ (again by arbitrariness of $A$). Next Proposition \ref{lower-bound-AT} yields
$$2 \HH^{N-1}(J_{u(t)}\cap A) \leq \liminf_{n \to \infty} \HH^{N-1}(\partial^* E_n\cap A)\,.$$
Combining this inequality with \eqref{ee2} we get that
$$(\delta_2^p-\delta_1^p)\HH^{N-1}(J_{u(t)} \cap A) \leq \liminf_{n \to \infty}\, \frac{p}{2}\int_A (1-\rho_{\e_n}(t))^{p-1}|\nabla \rho_{\e_n}(t)|\, dx\,,$$
and the first inequality of \eqref{SBVconv2} follows by letting $\delta_1 \to 0$ and $\delta_2\to 1$.

For what concerns the bulk energy, we have
$$ \int_A(\eta_{\e_n} + \rho^2_{\e_n}(t))|\nabla u_{\e_n}(t)|^2\, dx \geq s_n^2 \int_{A \setminus E_n} |\nabla u_{\e_n}(t)|^2\, dx \geq \delta_1^2\int_{A \setminus E_n} |\nabla u_{\e_n}(t)|^2\, dx\,.$$
Since $\tilde u_{\e_n}(t)=(1-\chi_{E_n})u_{\e_n}(t)$, we have $\nabla \tilde u_{\e_n}(t)=(1-\chi_{E_n})\nabla u_{\e_n}(t) \wto \nabla u(t)$ weakly in $L^2(A;\RR^N)$, and thus
$$\liminf_{n \to \infty} \int_A (\eta_{\e_n} + \rho^2_{\e_n}(t))|\nabla u_{\e_n}(t)|^2\, dx \geq \delta_1^2\liminf_{n \to \infty} \int_A |\nabla \tilde u_{\e_n}(t)|^2\, dx \geq \delta_1^2 \int_A |\nabla u(t)|^2\, dx\,,$$
and the second inequality of \eqref{SBVconv2} follows by letting $\delta_1 \to 1$. 

\vskip5pt

\noindent{\it Step 4.} It now remains to prove the  strong measurability in $L^2(\Om;\RR^N)$ of $t \mapsto \nabla u(t)$, and that $\nabla u \in L^\infty(0,+\infty;L^2(\Om;\RR^N))$. Given $t\geq 0$ and $0<\delta_1<\delta_2<1$ arbitrary, let us consider as in Step 3 the set $E_n$ and the function $\tilde u_{\e_n}(t) \in SBV^2(\Om)$ given by \eqref{utilde} with $A=\Om$. Then,  
$$(\eta_{\e_n}+\rho^2_{\e_n}(t)) (1-\chi_{E_n}) \nabla u_{\e_n}(t) = (\eta_{\e_n}+\rho^2_{\e_n}(t))\nabla \tilde u_{\e_n}(t)\,.$$ 
Note that this last sequence is bounded in $L^2(\Om;\RR^N)$. Since $\rho_{\e_n}(t) \to 1$ strongly in $L^p(\Om)$ with 
$0\leq \rho_{\e_n}\leq 1$, and $\nabla \tilde u_{\e_n}(t) \wto \nabla u(t)$ weakly in $L^2(\Om;\RR^N)$, we deduce that 
\begin{equation}\label{eq2}
(\eta_{\e_n}+\rho^2_{\e_n}(t)) (1-\chi_{E_n}) \nabla u_{\e_n}(t) \wto  \nabla u(t) \text{ weakly in }L^2(\Om;\RR^N)\,.
\end{equation}
On the other hand, from the {\it a priori} estimate \eqref{estimation-energy}, the Cauchy-Schwarz inequality, and \eqref{measure}, we infer that for every $\Phi \in \C^\infty_c (\Om;\RR^N)$,
\begin{multline}\label{eq1}
\int_{E_n}(\eta_{\e_n} +\rho^2_{\e_n}(t)) \nabla u_{\e_n}(t) \cdot  \Phi\, dx\\
\leq \|\Phi \|_{L^\infty(\Om;\RR^N)} \left(\int_\Om (\eta_{\e_n} + \rho^2_{\e_n}(t))|\nabla u_{\e_n}(t)|^2\, dx \right)^{1/2}\left( \int_{E_n}(\eta_{\e_n}+\rho^2_{\e_n}(t))\, dx\right)^{1/2}\\
\leq    \|\Phi\|_{L^\infty(\Om;\RR^N)} \sqrt{C_0(1+\eta_{\e_n}) \LL^N(E_n)} \to 0\,.
\end{multline}
By \eqref{estimation-energy} and the boundedness of $\rho_{\e_n}$, the sequence $\{ (\eta_{\e_n}+\rho_{\e_n}(t)^2)  \nabla u_{\e_n}(t)\}$ is thus bounded in $L^2(\Om;\RR^N)$, so that \eqref{eq2} and \eqref{eq1} yield
\begin{equation}\label{weaknablaue}
(\eta_{\e_n}+\rho^2_{\e_n}(t))  \nabla u_{\e_n}(t) \wto  \nabla u(t) \text{ weakly in }L^2(\Om;\RR^N)\,.
\end{equation}

Finally, Lemmas \ref{convrok} and \ref{convuk} ensure that, for each $nÊ\in \NN$, the mappings $t \mapsto (\eta_{\e_n}+\rho_{\e_n}(t)^2)  \nabla u_{\e_n}(t)$ are strongly measurable in $L^2(\Om;\RR^N)$. Hence $t \mapsto  \nabla u(t)$ is weakly measurable in $L^2(\Om;\RR^N)$, and thus strongly measurable owing to Pettis Theorem. The fact that $\nabla u \in L^\infty(0,+\infty;L^2(\Om;\RR^N))$ is a consequence of the second relation in \eqref{SBVconv2} together with the uniform bound \eqref{estimation-energy}.
\end{proof}

Our next goal is to pass to the limit as $\e_n\to 0$ in the inhomogeneous heat equation solved by $u_{\e_n}$. 

\begin{proposition}\label{limitheatequation}
The function $u$ solves the generalized heat equation
$$
\begin{cases}
u' ={\rm div}(\nabla u)-\beta(u-g) & \text{ in }ÊL^2(0,+\infty;L^2(\Om)),\\
\nabla u \cdot \nu =0 & \text{ in }L^2(0,+\infty;H^{-1/2}(\partial \Om)),\\
u(0)=u_0. &
\end{cases}
$$
\end{proposition}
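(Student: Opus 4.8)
The strategy is to pass to the limit $\e_n \to 0$ in the variational formulation of the inhomogeneous heat equation \eqref{parabolic} satisfied by each $u_{\e_n}$, which we can rewrite (for a fixed $T>0$) as
\begin{equation*}
\int_0^T \int_\Om \Big( u'_{\e_n}\, \varphi + (\eta_{\e_n}+\rho^2_{\e_n})\nabla u_{\e_n}\cdot \nabla \varphi + \beta(u_{\e_n}-g)\varphi \Big)\, dx\, dt = 0
\end{equation*}
for every $\varphi \in L^2(0,T;H^1(\Om))$. The term $u'_{\e_n}$ converges weakly to $u'$ in $L^2(0,+\infty;L^2(\Om))$ by \eqref{preconvueps}; the term $\beta(u_{\e_n}-g)\varphi$ converges by the strong $L^2(\Om)$-convergence of $u_{\e_n}(t)$ at each $t$, combined with the uniform $L^\infty$ bound \eqref{ptwbdu*} and dominated convergence in time. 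The only delicate term is the elliptic one $(\eta_{\e_n}+\rho^2_{\e_n})\nabla u_{\e_n}$.

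First I would pass to the limit pointwise in time. From \eqref{weaknablaue} in Proposition~\ref{compaciteeps}, for every fixed $t\geq 0$ we have $(\eta_{\e_n}+\rho^2_{\e_n}(t))\nabla u_{\e_n}(t)\wto \nabla u(t)$ weakly in $L^2(\Om;\RR^N)$, so that for a test function of separated form (or more generally for fixed $\varphi\in C^\infty_c(\Om)$ independent of $t$) the integrand converges for a.e.\ $t$. To integrate this against time I would invoke a dominated convergence argument: by the {\it a priori} estimate \eqref{estimation-energy}, the sequence $\{(\eta_{\e_n}+\rho^2_{\e_n}(t))\nabla u_{\e_n}(t)\}_n$ is bounded in $L^2(\Om;\RR^N)$ uniformly in $t$ and $n$, hence $\{(\eta_{\e_n}+\rho^2_{\e_n})\nabla u_{\e_n}\}_n$ is bounded in $L^\infty(0,+\infty;L^2(\Om;\RR^N))$; combined with the pointwise-in-time weak $L^2$ convergence, this gives $(\eta_{\e_n}+\rho^2_{\e_n})\nabla u_{\e_n} \wto \nabla u$ weakly in $L^2(0,T;L^2(\Om;\RR^N))$ for every $T>0$. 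This lets me pass to the limit in the elliptic term for every $\varphi\in L^2(0,T;H^1(\Om))$. The resulting identity
\begin{equation*}
\int_0^T \int_\Om \Big( u'\,\varphi + \nabla u\cdot \nabla \varphi + \beta(u-g)\varphi \Big)\, dx\, dt = 0
\end{equation*}
is exactly the weak formulation of $u' = \div(\nabla u) - \beta(u-g)$ in $L^2(0,+\infty;L^2(\Om))$ together with the natural Neumann condition $\nabla u\cdot\nu = 0$ in $L^2(0,+\infty;H^{-1/2}(\partial\Om))$; note here that $\div(\nabla u)\in L^2(0,+\infty;L^2(\Om))$ a posteriori, since it must equal $u'-\beta(u-g)$. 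The initial condition $u(0)=u_0$ follows from $u\in AC^2([0,+\infty);L^2(\Om))$ (Proposition~\ref{compaciteeps}) together with $u_{\e_n}(0)=u_0$ for all $n$ and the pointwise $L^2$-convergence $u_{\e_n}(t)\to u(t)$ at $t=0$.

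The main obstacle is precisely justifying the weak $L^2(0,T;L^2)$ convergence of the flux $(\eta_{\e_n}+\rho^2_{\e_n})\nabla u_{\e_n}$ from the pointwise-in-time statement \eqref{weaknablaue}: one must check that the uniform bound in $L^\infty(0,+\infty;L^2(\Om;\RR^N))$ plus pointwise weak convergence upgrades to weak convergence in the Bochner space, which is standard but requires the strong measurability of the limit flux $t\mapsto\nabla u(t)$ (already established in Step~4 of Proposition~\ref{compaciteeps}) and a dominated convergence argument applied to $\int_0^T\langle (\eta_{\e_n}+\rho^2_{\e_n}(t))\nabla u_{\e_n}(t)-\nabla u(t),\Phi(t)\rangle\, dt$ for $\Phi$ in a dense subset. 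Everything else is a routine limit passage.
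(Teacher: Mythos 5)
Your proposal is correct and follows essentially the same route as the paper: pass to the limit in the variational formulation of \eqref{parabolic}, using the weak convergence of $u'_{\e_n}$, the pointwise-in-time weak $L^2$ convergence of the flux from \eqref{weaknablaue}, and a dominated convergence argument in time (justified by the uniform bound \eqref{estimation-energy}) to handle the elliptic term, with the initial condition recovered from $u_{\e_n}(0)=u_0$ and the pointwise strong $L^2$ convergence. The only difference is that you spell out the domination step slightly more explicitly than the paper does; the substance is identical.
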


\begin{proof}
By Proposition \ref{convdiv}, $u_{\e_n}$ is the solution of the following variational formulation: for every $T>0$
$$\int_0^{T}\int_\Om \big(u'_{\e_n} \, \phi +(\eta_{\e_n}Ê+\rho^2_{\e_n}) \nabla u_{\e_n} \cdot \nabla \phi+\beta(u_{\e_n}-g)\phi\big)\,dx\, dt=0 \quad \text{ for all }\phi \in L^2(0,T;H^1(\Om))\,.$$
According to Proposition \ref{compaciteeps},  $u'_{\e_n}Ê+\beta(u_{\e_n} -g) \wto u'+\beta(u-g)$ weakly in $L^2(0,T;L^2(\Om))$,  
so that it remains to pass to the limit in the divergence term. Thanks to \eqref{weaknablaue},  for a.e. $t \geq 0$ we have 
$$\int_\Om(\eta_{\e_n} +\rho^2_{\e_n}(t)) \nabla u_{\e_n}(t) \cdot \nabla \phi(t)\, dx  \mathop{\longrightarrow}\limits_{n\to\infty} \int_\Om  \nabla u(t) \cdot \nabla \phi(t)\, dx\,,$$
and by the dominated convergence theorem, we deduce that 
$$\int_0^T \int_\Om(\eta_{\e_n} +\rho_{\e_n}^2) \nabla u_{\e_n} \cdot \nabla \phi\, dx\, dt  \mathop{\longrightarrow}\limits_{n\to\infty} \int_0^T \int_\Om  \nabla u \cdot \nabla \phi\, dx\, dt\,.$$
Hence, passing to the limit as $\e_n\to0$ in the variational formulation yields
$$\int_0^{T}\int_\Om \big(u' \, \phi +\nabla u \cdot \nabla \phi+\beta(u-g)\phi\big)\,dx\, dt=0\quad \text{ for all }\phi \in L^2(0,T;H^1(\Om))\,.$$
Finally, the initial condition $u(0)=u_0$ is a consequence of the fact that $u_{\e_n}(0)=u_0$ together with the strong convergence in $L^2(\Om)$ of $u_{\e_n}(0)$ to $u(0)$.
\end{proof}

\subsection{Limiting crack set and the energy inequality}

Our main goal is now to pass to the limit as $\e_n\to 0$ in the energy inequality established in Proposition~\ref{energyineq}. 
We first notice that Theorem \ref{GconvAT} and Proposition~\ref{compaciteeps} (with $A=\Om$) immediately imply that for every $t \geq 0$,
$$
\E(u(t))=\frac{1}{2}\int_{\Om}|\nabla u(t)|^2\,dx + \HH^{N-1}(J_{u(t)})+\frac{\beta}{2}\int_\Om (u(t)-g)^2\,dx \leq \liminf_{n \to \infty} \E_{\e_n}(u_{\e_n}(t),\rho_{\e_n}(t))\,.
$$
We emphasize that this lower bound only involves the measure of the jump set of $u(t)$.  
It will be later improved in Proposition \ref{finalenergyest} by replacing $J_{u(t)}$ by a countably $\HH^{N-1}$-rectifiable set $\Gamma(t)$ containing $J_{u(t)}$ and increasing with respect to $t$. This monotonicity property of the crack acts as a memory of the irreversibility of the process characterized by the non-increasing property of $t \mapsto \rho_{\e_n}(t)$ together with the non-decreasing property of the diffuse surface energy $\mathfrak S_{\e_n}$ established in Proposition \ref{decenergysurf}.
\vskip5pt

To prove the assertion above, {\it we fix an arbitrary countable dense subset $D$ of $[0,+\infty)$}, and we consider 
for each $t\in D$ and $n\in\NN$ the bounded Radon measure
$$\mu_n(t):= \left( \frac{{\e_n}^{p-1}}{p}|\nabla\rho_{\e_n}(t)|^p +\frac{\alpha}{p'{\e_n}}(1-\rho_{\e_n}(t))^p \right) \LL^N \res\, \Om\,.$$
By the energy inequality \eqref{nrjineq},  we infer that the sequences $\{\mu_n(t)\}_{n\in\NN}$ are uniformly bounded with respect to $t\in D$. 
Then, a standard diagonalization procedure together with the metrizability of bounded subsets of~$\M(\RR^N)$ yields the existence of a subsequence (not relabeled) and a family of bounded non negative Radon measures $\{\mu(t)\}_{t\in D}$ (supported in $\overline\Om$)
such that 
$$\mu_n(t) \wto \mu(t)\quad \text{weakly* in $\M(\RR^N)$ for every $t \in D$}\,.$$
\vskip5pt

We first claim that the mapping $t \in D\mapsto \mu(t)$ inherits the increase of the diffuse surface energy. 

\begin{lemma}\label{mono}
For every $s$ and $t \in D$ with $0 \leq s \leq t$ we have
$$\mu(s) \leq \mu(t)\,.$$
\end{lemma}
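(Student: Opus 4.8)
The statement $\mu(s)\le\mu(t)$ for $s\le t$ in $D$ should follow from the monotonicity of the diffuse surface energy proved in Proposition~\ref{decenergysurf}, which says $\mathfrak S_{\e_n}$ is non-decreasing along each unilateral minimizing movement. The point is to promote this \emph{scalar} monotonicity to a monotonicity of the limiting \emph{measures}. First I would recall that $\mu_n(t)=\mathfrak s_{\e_n}(t)\,\LL^N\res\Om$ where $\mathfrak s_{\e_n}(t):=\frac{\e_n^{p-1}}{p}|\nabla\rho_{\e_n}(t)|^p+\frac{\alpha}{p'\e_n}(1-\rho_{\e_n}(t))^p$, so that in particular $\mu_n(s)(A)\le\mu_n(t)(A)$ is \emph{not} directly available for a general Borel set $A$, since Proposition~\ref{decenergysurf} only gives the global inequality $\mu_n(s)(\Om)\le\mu_n(t)(\Om)$. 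So a localized version of the argument of Proposition~\ref{decenergysurf} is what is needed.

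\textbf{Key steps.} Fix $s\le t$ in $D$. The natural move is to re-run the competitor argument of Proposition~\ref{decenergysurf} on a fixed open set. Given an open set $A\subset\Om$, for each $k$ let $i=i(k)$ be such that $s\in(t_k^{i-1},t_k^i]$ and define, for the discrete trajectory at time $t$ corresponding to the minimizing movement $(u_{\e_n},\rho_{\e_n})$, the competitor $\hat\rho:=\rho_{\e_n}(t)$ on $A$ and $\hat\rho:=\rho_{\e_n}(s)\wedge\rho_{\e_n}(t)$ off $A$ --- but more cleanly, since $\rho_{\e_n}(t)\le\rho_{\e_n}(s)$ everywhere already (irreversibility, Lemma~\ref{convrok}), $\rho_{\e_n}(t)$ is itself an admissible competitor for the minimality of $\rho_{\e_n}(s)$ in the obstacle problem at time $s$. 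Testing the minimality \eqref{eqlocminro} of $\rho_{\e_n}(s)$ against the competitor equal to $\rho_{\e_n}(t)$ on $A$ and equal to $\rho_{\e_n}(s)$ on $\Om\setminus A$ (this competitor is $\le\rho_{\e_n}(s)$, hence admissible, and lies in $W^{1,p}$ provided we localize with a cutoff, or we simply localize the inequality a posteriori), and cancelling the contributions over $\Om\setminus A$, one obtains the localized monotonicity
\begin{equation*}
\mathfrak s_{\e_n}(s)\res A \ \text{has total mass}\ \le\ \mathfrak s_{\e_n}(t)\res A\ \text{total mass} + (\text{error from the cutoff}).
\end{equation*}
Passing $n\to\infty$ and using $\mu_n(s)\wto\mu(s)$, $\mu_n(t)\wto\mu(t)$ (lower semicontinuity on open sets for the $\liminf$ of masses, upper semicontinuity on compact sets) then gives $\mu(s)(A)\le\mu(t)(\overline A)$ for open $A$, and a routine outer-regularity argument upgrades this to $\mu(s)\le\mu(t)$ as measures. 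Alternatively --- and this is probably the cleanest route --- one avoids cutoffs entirely: take the competitor $\hat\rho_k:=\rho \wedge\rho_k^{i-1}$ exactly as in the proof of Proposition~\ref{decenergysurf} but with $\rho$ a function agreeing with $\rho_{\e_n}(t)$ on a prescribed open set and with $1$ elsewhere, reproduce inequality~\eqref{surfenergy2} localized to $A_k\cap A$, and pass to the limit.

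\textbf{Main obstacle.} The genuinely delicate point is the interface matching: the naive competitor ``$\rho_{\e_n}(t)$ on $A$, $\rho_{\e_n}(s)$ off $A$'' is generally \emph{not} in $W^{1,p}(\Om)$ because it jumps across $\partial A$, so one cannot use it directly; and since the surface term has a genuine gradient ($p$-growth, not just an $L^1$ total-variation term), one must control the transition region. The resolution is either to work only with the \emph{already proven} monotonicity $\mathfrak S_{\e_n}(s)\le\mathfrak S_{\e_n}(t)$ (a single scalar inequality) together with the \emph{lower-semicontinuity-on-open-sets} half of weak-$*$ convergence, which gives $\mu(s)(\Om)\le\liminf\mu_n(s)(\Om)\le\limsup\mu_n(t)(\Om)$ --- but this only controls total masses, not the measures --- or, correctly, to exploit that the minimality \eqref{eqlocminro} for $\rho_{\e_n}(s)$ is actually \emph{local}: since $\rho_{\e_n}(s)$ minimizes $\mathfrak s_{\e_n}$ among all $W^{1,p}$ functions below the obstacle $\rho_{\e_n}(s)$ (vacuously, it is its own obstacle) --- the real content is the minimality of $\rho_{\e_n}(s)$ as produced by the scheme against \emph{earlier} obstacles, which by \eqref{superminro} and an inductive comparison against $\rho_{\e_n}(t)$ as obstacle gives the pointwise comparison of energy densities via the coarea/truncation trick. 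I expect the authors carry this out by the truncation argument of Proposition~\ref{decenergysurf} applied on $A$ (with $A_k:=\{\rho_{\e_n}(t)\le\rho_{\e_n}(s)\}$, which is all of $\Om$), concluding $\int_A \mathfrak s_{\e_n}(s)\,dx\le\int_A\mathfrak s_{\e_n}(t)\,dx$ directly, and then taking weak-$*$ limits on open sets followed by regularity of Radon measures to conclude $\mu(s)\le\mu(t)$ on all Borel sets.
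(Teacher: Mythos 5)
Your outline coincides with the paper's strategy: test the minimality of Proposition~\ref{decenergysurf} at time $s$ against a competitor equal to $\rho_{\e_n}(t)$ on a prescribed set, pass to the weak-$*$ limit, and upgrade to an inequality between measures by inner/outer regularity of Radon measures. You also correctly spot the obstacle (the piecewise-defined competitor jumps across $\partial A$ and is not in $W^{1,p}(\Om)$). But the proposal stops exactly where the proof becomes non-trivial, and one of your fallback claims is false.

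The concrete resolution you never write down is the convex combination $\hat\rho_n:=\zeta\,\rho_{\e_n}(t)+(1-\zeta)\,\rho_{\e_n}(s)$, with $\zeta\in\C^\infty_c(\RR^N;[0,1])$, $\zeta=1$ on a compact $K$ and $\zeta=0$ off an open $A\supset K$; it is admissible precisely because $\rho_{\e_n}(t)\le\rho_{\e_n}(s)$ forces $\hat\rho_n\le\rho_{\e_n}(s)$. The genuine gap is that you label the terms produced by $\nabla\zeta$ as ``error from the cutoff'' without showing they are negligible. They are \emph{not} small for fixed $n$; they vanish only as $n\to\infty$, and only because the gradient part of the surface density carries the weight $\e_n^{p-1}$: expanding $|\nabla\hat\rho_n|^p$, using $0\le\rho_{\e_n}\le1$, convexity of $r\mapsto r^p$, and H\"older, the extra terms are bounded by $C\e_n^{p-1}+C\e_n^{(p-1)/p}\big(\|\e_n^{(p-1)/p}\nabla\rho_{\e_n}(t)\|_{L^p(\Om;\RR^N)}^{p-1}+\|\e_n^{(p-1)/p}\nabla\rho_{\e_n}(s)\|_{L^p(\Om;\RR^N)}^{p-1}\big)$, and the bracket is uniformly bounded by the energy estimate \eqref{nrjineq}, so the whole error is $O(\e_n^{(p-1)/p})\to0$. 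Without this computation the argument is incomplete. Moreover, your suggestion that one can conclude $\int_A\mathfrak s_{\e_n}(s)\,dx\le\int_A\mathfrak s_{\e_n}(t)\,dx$ ``directly'' for a fixed open $A$ and fixed $n$ is wrong: the minimality of $\rho_{\e_n}(s)$ is a global statement on $\Om$ and does not localize without the transition-layer error just described. What one actually gets in the limit is $\int\zeta\,d\mu(s)\le\int\zeta\,d\mu(t)$, hence $\mu(s)(K)\le\mu(t)(A)$ for all $K\subset B\subset A$, and only then does regularity of $\mu(s)$ and $\mu(t)$ give $\mu(s)(B)\le\mu(t)(B)$ for every Borel set $B$.
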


\begin{proof}
Let us fix $s$ and $t \in D$ with $0 \leq s \leq t$. Let $B \subset \RR^N$ be an arbitrary Borel set, and $K \subset B \subset A$ where $A$ is open and $K$ is compact. Let us consider a cut-off function $\zeta \in \C^\infty_c(\RR^N;[0,1])$ such that $\zeta=1$ on $K$ and $\zeta=0$ on $\RR^N\setminus A$, and let us define
$$\hat \rho_n:= \zeta \rho_{\e_n}(t) + (1-\zeta) \rho_{\e_n}(s).$$
Note that $\hat \rho_n \in W^{1,p}(\Om)$, and since $t \geq s$, we have $\hat \rho_n \leq \rho_{\e_n}(s)$ in $\Om$. As a consequence of the minimality property established in Proposition \ref{decenergysurf}, we have
 $$
\int_{\Om} \left( \frac{\e_n^{p-1}}{p}|\nabla\rho_{\e_n}(s)|^p +\frac{\alpha}{p'\e_n}(1-\rho_{\e_n}(s))^p\right)\,dx
\leq \int_{\Om} \left(\frac{\e_n^{p-1}}{p}|\nabla \hat\rho_n|^p\, + \frac{\alpha}{p'{\e_n}}(1-\hat \rho_n)^p\right)\,dx.
$$
Since $\nabla \hat \rho_n=\zeta \nabla \rho_{\e_n}(t) + (1-\zeta)\nabla \rho_{\e_n}(s) +  (\rho_{\e_n}(t) - \rho_{\e_n}(s))\nabla \zeta$, there exists a constant $C>0$ (independent of~$n$) such that
\begin{multline*}
\int_\Om |\nabla \hat\rho_n|^p\, dx \leq \int_\Om |\zeta \nabla \rho_{\e_n}(t) + (1-\zeta)\nabla \rho_{\e_n}(s) |^p \\
+ C \int_\Om |\nabla \zeta| (\rho_{\e_n}(s) - \rho_{\e_n}(t))\left(1+ |\nabla \rho_{\e_n}(t)|^{p-1} + |\nabla \rho_{\e_n}(s)|^{p-1} +  |\nabla \zeta|^{p-1} |\rho_{\e_n}(t) - \rho_{\e_n}(s) |^{p-1}\right)\, dx\\
\leq\int_\Om \big(\zeta |\nabla \rho_{\e_n}(t)|^p + (1-\zeta) |\nabla \rho_{\e_n}(s)|^p \big) \, dx + C\big(1+\|\nabla \rho_{\e_n}(t)\|^{p-1}_{L^p(\Om;\RR^N)}+\|\nabla \rho_{\e_n}(s)\|^{p-1}_{L^p(\Om;\RR^N)}\big)\,,
\end{multline*}
where we used H\"older's inequality and the fact that $0\leq \rho_{\e_n}\leq 1$.  Hence, 
\begin{multline}\label{muest}
\mu_n(s)(\RR^N)\leq \int_{\RR^N} \zeta \, d\mu_n(t) + \int_{\RR^N} (1-\zeta) \, d\mu_n(s) + C\e_n^{p-1}\\
+ C\e_n^{(p-1)/p} \big(\| \e_n^{(p-1)/p}\nabla \rho_{\e_n}(t)\|^{p-1}_{L^p(\Om;\RR^N)} +\| \e_n^{(p-1)/p}\nabla \rho_{\e_n}(s)\|^{p-1}_{L^p(\Om;\RR^N)} \big),
\end{multline}
and passing to the limit as $n \to \infty$ yields
$$\mu(s)(\RR^N) \leq  \int_{\RR^N} \zeta \, d\mu(t) - \int_{\RR^N} \zeta\, d\mu(s) + \mu(s)(\RR^N)\,.$$
From this inequality we deduce that 
$$\mu(s)(K) \leq \int_{\RR^N} \zeta\, d\mu(s) \leq \int_{\RR^N} \zeta\, d\mu(t) \leq \mu(t)(A)\,.$$
Taking the supremum among all compact sets $K \subset B$, the infimum among all open sets $A \supset B$, and using the outer-inner regularity of the measures $\mu(s)$ and $\mu(t)$ leads to $\mu(s)(B) \leq \mu(t)(B)$.
\end{proof}

We can now define a family of increasing cracks for times in the countable dense set $D$.

\begin{lemma}\label{hatmu}
There exists a family of countably $\HH^{N-1}$-rectifiable subsets $\{\hat \Gamma(t)\}_{t \in D}$ of $\overline\Omega$ such that
\begin{itemize}
\item $\hat \Gamma(s) \; \widetilde \subset \; \hat \Gamma(t)$ for every $ s \leq t$ with $s$, $t \in D$;
\vskip3pt
\item $J_{u(s)} \; \widetilde \subset \; \hat \Gamma(t)$ for every $t \in D$ and $0\leq s \leq t$;
\vskip3pt
\item $\mu(t) \geq \HH^{N-1} \res \hat \Gamma(t)$ for every $t \in D$.
\end{itemize}
\end{lemma}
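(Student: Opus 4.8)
\emph{The plan.} I would simply \emph{define} $\hat\Gamma(t):=\bigcup_{s\in D,\,s\le t}J_{u(s)}$ for $t\in D$. As a countable union of countably $\HH^{N-1}$-rectifiable sets, $\hat\Gamma(t)$ is countably $\HH^{N-1}$-rectifiable; it sits inside $\Om$, is monotone in $t$ (first bullet), and obviously contains $J_{u(s)}$ for every $s\in D\cap[0,t]$. So the whole content is the mass bound $\mu(t)\ge\HH^{N-1}\res\hat\Gamma(t)$ (third bullet), plus the upgrade of the second bullet to arbitrary $s\in[0,t]$. Enumerating $D\cap[0,t]=\{s_1,s_2,\dots\}$ and setting $\Sigma_k:=J_{u(s_1)}\cup\dots\cup J_{u(s_k)}\uparrow\hat\Gamma(t)$, continuity of measures from below reduces the third bullet to the finite-union estimate
$$\HH^{N-1}\res\Sigma_k\le\mu(\tau_k),\qquad\text{where }\tau_k:=\max\{s_1,\dots,s_k\}\in D\cap[0,t],$$
since $\mu(\tau_k)\le\mu(t)$ by Lemma~\ref{mono}.

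To prove this estimate I would mimic Step~3 of the proof of Proposition~\ref{compaciteeps}, but cut off \emph{all} of $u_{\e_n}(s_1),\dots,u_{\e_n}(s_k)$ by the \emph{same} set attached to the \emph{latest} phase field. Fix $0<\delta_1<\delta_2<1$ and an open set $A\subset\Om$, and (as in \eqref{ee1}--\eqref{ee2}) choose $s_n\in(\delta_1,\delta_2)$ so that, with $E_n:=\{\rho_{\e_n}(\tau_k)<s_n\}$, one has $\HH^{N-1}(\partial^*E_n\cap A)\le c_{\delta_1,\delta_2}\int_A(1-\rho_{\e_n}(\tau_k))^{p-1}|\nabla\rho_{\e_n}(\tau_k)|\,dx$ with $c_{\delta_1,\delta_2}\to p$ as $\delta_1\to0$, $\delta_2\to1$. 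Put $\tilde u_{\e_n}(s_i):=(1-\chi_{E_n})u_{\e_n}(s_i)$. The crucial remark is that on $\Om\setminus E_n$ one has $\rho_{\e_n}(s_i)\ge\rho_{\e_n}(\tau_k)\ge s_n>\delta_1$ \emph{for every} $i\le k$ (because $s_i\le\tau_k$ and $t\mapsto\rho_{\e_n}(t)$ is non-increasing), so $\|\nabla\tilde u_{\e_n}(s_i)\|_{L^2(\Om;\RR^N)}\le\sqrt{C_0}/\delta_1$ by \eqref{estimation-energy}; moreover $\LL^N(E_n)\le C\e_n/(1-\delta_2)^p\to0$, $\|u_{\e_n}(s_i)\|_{L^\infty}\le\max\{\|u_0\|_\infty,\|g\|_\infty\}$ by \eqref{ptwbdu*}, and $u_{\e_n}(s_i)\to u(s_i)$ strongly in $L^2(\Om)$ by Proposition~\ref{compaciteeps}. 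Each $\tilde u_{\e_n}(s_i)$ lies in $SBV^2(\Om)\cap L^\infty(\Om)$ with jump set $\widetilde\subset\partial^*E_n$ (cf. Step~3 of the proof of Proposition~\ref{compaciteeps}), hence the same holds for the $\RR^k$-valued map $U_{\e_n}:=(\tilde u_{\e_n}(s_1),\dots,\tilde u_{\e_n}(s_k))$, whose $L^2$-limit is $(u(s_1),\dots,u(s_k))$ and whose jump set is $\HH^{N-1}$-equivalent to $\bigcup_i J_{u(s_i)}=\Sigma_k$ (no cancellation can occur in a tuple of components). Applying the vector-valued analogue of Proposition~\ref{lower-bound-AT} (whose proof is unchanged) therefore gives, for every open $A\subset\Om$,
$$2\,\HH^{N-1}(\Sigma_k\cap A)\le\liminf_{n\to\infty}\HH^{N-1}(\partial^*E_n\cap A),$$
and combining this with Young's inequality \eqref{Young} and the choice of $s_n$ yields
$$\HH^{N-1}(\Sigma_k\cap A)\ \le\ \frac{c_{\delta_1,\delta_2}}{p}\,\liminf_{n\to\infty}\mu_n(\tau_k)(A)\qquad\text{for every open }A\subset\Om.$$

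At this point I would invoke the elementary fact that if $\lambda_n\wto\lambda$ weakly* in $\M(\RR^N)$ with $\lambda_n\ge0$, and a finite nonnegative measure $\nu$ satisfies $\nu(A)\le\liminf_n\lambda_n(A)$ for every open $A$, then $\nu\le\lambda$ (test on balls whose boundary is $\lambda$-negligible, where $\lambda_n(A)\to\lambda(A)$, then pass to open sets by a Vitali covering and conclude by outer regularity). Applied with $\lambda_n=\mu_n(\tau_k)\wto\mu(\tau_k)$ and $\nu=\frac{p}{c_{\delta_1,\delta_2}}\HH^{N-1}\res\Sigma_k$, it gives $\HH^{N-1}\res\Sigma_k\le\frac{c_{\delta_1,\delta_2}}{p}\mu(\tau_k)$; letting $\delta_1\to0$, $\delta_2\to1$ yields $\HH^{N-1}\res\Sigma_k\le\mu(\tau_k)\le\mu(t)$, and letting $k\to\infty$ proves $\mu(t)\ge\HH^{N-1}\res\hat\Gamma(t)$. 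Finally, for the second bullet with a general $s\in[0,t]$ (the case $s=t\in D$ being trivial, so assume $s<t$) I would pick $s_j\in D\cap(s,t)$ with $s_j\downarrow s$: then $u(s_j)\to u(s)$ strongly in $L^2(\Om)$ because $u\in AC^2([0,+\infty);L^2(\Om))$, while $\sup_j\big(\|u(s_j)\|_{L^\infty}+\|\nabla u(s_j)\|_{L^2}+\HH^{N-1}(J_{u(s_j)})\big)<\infty$ and $J_{u(s_j)}\,\widetilde\subset\,\hat\Gamma(t)$; Ambrosio's compactness theorem in $SBV$ and lower semicontinuity of the jump measure then give $\HH^{N-1}\res J_{u(s)}\le\HH^{N-1}\res\hat\Gamma(t)$, which forces $J_{u(s)}\,\widetilde\subset\,\hat\Gamma(t)$ since both sets are $\HH^{N-1}$-rectifiable and $\HH^{N-1}(\hat\Gamma(t))<\infty$.

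The main obstacle is the finite-union step: applying the scalar Proposition~\ref{lower-bound-AT} to each $u_{\e_n}(s_i)$ separately only controls $\max_i\HH^{N-1}(J_{u(s_i)}\cap A)$, not $\HH^{N-1}\big(\bigcup_i J_{u(s_i)}\cap A\big)$, and a scalar linear combination of the $\tilde u_{\e_n}(s_i)$'s would lose jump mass through cancellations. The two ingredients that make it work are the vector-valued packaging (so that the limiting jump set is exactly $\Sigma_k$) and the ``latest phase field'' cut-off, which is legitimate precisely because $t\mapsto\rho_{\e_n}(t)$ is non-increasing, so that $\rho_{\e_n}(\tau_k)$ still bounds $|\nabla u_{\e_n}(s_i)|$ from below away from $E_n$ for all $i\le k$ at once.
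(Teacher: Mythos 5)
Your proof is correct in its essentials, but it follows a genuinely different route from the paper's. The paper never forms the union of jump sets: it defines $K(t):=\{x:\Theta^*(t,x)\ge 1\}$, where $\Theta^*(t,\cdot)$ is the upper $(N-1)$-density of $\mu(t)$, gets $\HH^{N-1}\res K(t)\le\mu(t)$ and the monotonicity of $t\mapsto K(t)$ for free from standard density theorems and Lemma~\ref{mono}, proves $\mu(t)(\overline A)\ge\HH^{N-1}(J_{u(s)}\cap A)$ for \emph{each individual} $s\in[0,t]$ by the same cut-off construction as in Lemma~\ref{mono} combined with \eqref{SBVconv2}, and deduces $J_{u(s)}\;\widetilde\subset\;K(t)$ from the Besicovitch--Marstrand--Mattila theorem; $\hat\Gamma(t)$ is then the rectifiable part of $K(t)$. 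Because the density criterion is checked pointwise on each $J_{u(s)}$ separately, no estimate on unions of jump sets is ever needed — this is exactly the difficulty your approach must confront head-on. What your route buys is a concrete, minimal $\hat\Gamma(t)$ (the paper's may be strictly larger) and no detour through the unrectifiable decomposition; what it costs is the finite-union mass bound, which you resolve via the vector-valued packaging and the ``latest phase field'' cut-off — a correct and rather elegant idea, legitimate precisely for the monotonicity reason you state.

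One caveat: the assertion that the vector-valued analogue of Proposition~\ref{lower-bound-AT} holds ``with unchanged proof'' is the load-bearing step of your argument and is stated without justification; since that proposition is imported from \cite{BCS}, you should either verify the extension or, better, avoid it. It can in fact be \emph{derived} from the scalar, localized version: your cut-off gives, for each $i\le k$ and every open $A$, the inequality $2\,\HH^{N-1}(J_{u(s_i)}\cap A)\le\liminf_n\HH^{N-1}(\partial^*E_n\cap A)$ with the same sets $E_n$; passing to a subsequence so that $\HH^{N-1}\res(\partial^*E_n\cap\Om)\wto\nu$ weakly* in $\M(\RR^N)$, this yields $\nu\ge 2\,\HH^{N-1}\res J_{u(s_i)}$ for every $i$, and since the lattice supremum of the measures $\HH^{N-1}\res J_{u(s_i)}$ is exactly $\HH^{N-1}\res\Sigma_k$, you get $\nu\ge 2\,\HH^{N-1}\res\Sigma_k$ without any vector-valued compactness. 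The remaining ingredients — the measure-comparison lemma from open-set inequalities, the continuity from below in $k$, and the $SBV$ lower-semicontinuity argument upgrading the second bullet from $s\in D$ to all $s\in[0,t]$ — are all sound.
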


\begin{proof}
For $t \in D$, let us define the upper density of $\mu(t)$ at $x$ by
$$\Theta^*(t,x):=\limsup_{r \to 0} \frac{\mu(t)(B_r(x))}{\omega_{N-1}r^{N-1}}=\limsup_{r \to 0} \frac{\mu(t)(\overline B_r(x))}{\omega_{N-1}r^{N-1}}$$
for all $x \in \RR^N$, and the Borel set
$$K(t):=\left\{x \in \RR^N : \Theta^*(t,x) \geq 1\right\}\subset\overline\Om\,.$$
Note that the monotonicity property established in Lemma \ref{mono} ensures that $D \ni t \mapsto \Theta^*(t,x)$ is non-decreasing for every $x \in \overline\Om$. Consequently, 
$$K(s) \subset K(t)\text{ for every }0 \leq s \leq t \text{ with $s,t\in D$}\,.$$
Moreover, from standard properties of densities (see \cite[Theorem 2.56]{AFP}) we infer that for every $t\in D$,  
\begin{equation}\label{compHKmu}
\HH^{N-1}\res K(t) \leq \mu(t)\,.
\end{equation}

Let us now fix $t\in D$ and $s\in[0,t]$ (not necessarily in $D$), and let $A$ and $A' \subset \RR^N$ be open sets such that $\overline A\subset A' $. We consider a cut-off function $\zeta \in \C^\infty_c(\RR^N;[0,1])$ such that $\zeta=1$ on $A\cap\Omega$ and $\zeta=0$ on $\RR^N\setminus A'$.  Arguing exactly as in the proof of Lemma \ref{mono}, we obtain inequality \eqref{muest} from which we deduce that
\begin{equation}\label{1706}
\mu_n(s)(A)\leq \int_{\RR^N} \zeta \, d\mu_n(s) \leq \int_{\RR^N} \zeta \, d\mu_n(t) + C\e_n^{(p-1)/p}\, ,
\end{equation}
for some constant $C>0$ independent of $n$. By \eqref{Young} and \eqref{SBVconv2}, we infer that
$$\liminf_{n \to \infty} \mu_n(s)(A) \geq \HH^{N-1}(J_{u(s)} \cap A)\,.$$
Passing to the limit in \eqref{1706} then leads to
$$ \HH^{N-1}(J_{u(s)} \cap A) \leq \int_{\RR^N} \zeta \, d\mu(t)\leq \mu(t)(A')\,.$$
Taking the infimum with respect to all open sets $A' $ containing $\overline A$ yields 
$$\mu(t)(\overline A) \geq \HH^{N-1}(J_{u(s)}\cap A) \quad\text{ for every open set $A$}\, .$$ 
In particular, since $J_{u(s)}$ is countably $\HH^{N-1}$-rectifiable, we infer from the Besicovitch-Mastrand-Mattila Theorem (see \cite[Theorem 2.63]{AFP}) that $\Theta^*(t,x) \geq 1$ for $\HH^{N-1}$-a.e. $x \in J_{u(s)}$, and hence 
\begin{equation}\label{inclJK}
J_{u(s)} \; \widetilde \subset \; K(t)\quad\text{for every $s\in[0,t]$}\,.
\end{equation}

The Borel sets $\{K(t)\}_{t\in D}$ have all the required properties, except that they might not be countably $\HH^{N-1}$-rectifiable. However, since $\HH^{N-1}(K(t))<+\infty$ by \eqref{compHKmu}, it is possible to decompose each $K(t)$ into the union of a countably $\HH^{N-1}$-rectifiable set $\hat \Gamma(t)$, and a purely $\HH^{N-1}$-unrectifiable set $K(t) \setminus \hat \Gamma(t)$ (see {\it e.g.} \cite[page~83]{AFP}). This decomposition being unique up to $\HH^{N-1}$-negligible sets, and $J_{u(s)}$ being countably $\HH^{N-1}$-rectifiable, we deduce from \eqref{inclJK} that 
$$J_{u(s)} \; \widetilde \subset \; \hat \Gamma(t)\quad\text{for every $t\in D$ and $s\in[0,t]$}\,.$$
Moreover, for $s,t \in D$ with $s\leq t$ we have $\hat \Gamma(s) \subset K(s) \subset K(t)$, and since $\hat \Gamma(s)$ is countably $\HH^{N-1}$-rectifiable we finally conclude that $\hat \Gamma(s) \; \widetilde \subset \; \hat \Gamma(t)$. 
\end{proof}

We now extend our definition of crack set for arbitrary times. We set for each $t \geq 0$,
$$\Gamma(t):= \bigcap_{\tau>t, \, \tau\in D} \hat \Gamma(\tau)\cap\Om\,.$$

\begin{lemma}\label{mu}
For every $t \geq 0$, the set $ \Gamma(t)$ is countably $\HH^{N-1}$-rectifiable, and it satisfies
\begin{itemize}
\item $\Gamma(s)  \subset \Gamma(t)$ for every $0 \leq s \leq t$;
\vskip3pt 
\item $J_{u(t)} \; \widetilde \subset \;  \Gamma(t)$ for every $t \geq 0$.
\end{itemize}
\end{lemma}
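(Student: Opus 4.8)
The plan is to verify the three properties of $\Gamma(t)$ one at a time, relying on the corresponding properties of the family $\{\hat\Gamma(\tau)\}_{\tau\in D}$ from Lemma~\ref{hatmu} and the countable density of $D$ in $[0,+\infty)$.

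\textbf{Countable rectifiability.} First I would observe that $\Gamma(t)=\bigcap_{\tau>t,\,\tau\in D}\hat\Gamma(\tau)\cap\Om$ is a countable intersection (since $D$ is countable) of countably $\HH^{N-1}$-rectifiable sets, and a subset of a countably rectifiable set is countably rectifiable; hence $\Gamma(t)$ is countably $\HH^{N-1}$-rectifiable. In fact, because $\{\hat\Gamma(\tau)\}_{\tau\in D}$ is $\HH^{N-1}$-essentially non-decreasing, for any decreasing sequence $\tau_j\downarrow t$ with $\tau_j\in D$ one has $\hat\Gamma(\tau_{j+1})\,\widetilde\subset\,\hat\Gamma(\tau_j)$, so $\Gamma(t)\,\widetilde=\,\bigcap_j\hat\Gamma(\tau_j)\cap\Om$ and moreover $\Gamma(t)\,\widetilde=\,\hat\Gamma(\tau)\cap\Om$ up to an $\HH^{N-1}$-null set for $\tau$ slightly larger than $t$ in $D$; this remark will be convenient below.

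\textbf{Monotonicity.} For the inclusion $\Gamma(s)\subset\Gamma(t)$ when $0\le s\le t$, I would argue directly from the definition: if $\tau>t$ with $\tau\in D$, then also $\tau>s$, so $\Gamma(s)=\bigcap_{\sigma>s,\,\sigma\in D}\hat\Gamma(\sigma)\cap\Om\subset\hat\Gamma(\tau)\cap\Om$. Intersecting over all such $\tau$ gives $\Gamma(s)\subset\Gamma(t)$. (This is a genuine set inclusion, not merely an $\HH^{N-1}$-essential one, since we are intersecting over a smaller index set.)

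\textbf{Inclusion of the jump set.} This is the main point. Fix $t\ge0$ and a decreasing sequence $\tau_j\downarrow t$ with $\tau_j\in D$. By the second bullet of Lemma~\ref{hatmu}, $J_{u(\tau_j)}\,\widetilde\subset\,\hat\Gamma(\tau_k)$ for every $k$ with $\tau_k\ge \tau_j$, hence in particular $J_{u(t)}$ would be controlled if we knew $J_{u(t)}\,\widetilde\subset\,\hat\Gamma(\tau_j)$ for all $j$. But Lemma~\ref{hatmu} actually gives more: it states $J_{u(s)}\,\widetilde\subset\,\hat\Gamma(\tau)$ for \emph{every} $\tau\in D$ and every $s\in[0,\tau]$, so taking $s=t\le\tau_j$ we get $J_{u(t)}\,\widetilde\subset\,\hat\Gamma(\tau_j)$ for each $j$. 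Intersecting over $j$, and using that $\hat\Gamma(\tau_j)\cap\Om$ decreases ($\HH^{N-1}$-essentially) to $\Gamma(t)$, yields $J_{u(t)}\,\widetilde\subset\,\bigcap_j\hat\Gamma(\tau_j)\cap\Om\,\widetilde=\,\Gamma(t)$; note $J_{u(t)}\subset\Om$ so the intersection with $\Om$ is harmless. The only subtlety I anticipate — and the step I would treat most carefully — is the commutation of the countable intersection with the ``$\widetilde\subset$'' relation: since each containment $J_{u(t)}\,\widetilde\subset\,\hat\Gamma(\tau_j)$ fails only on an $\HH^{N-1}$-null set $N_j$, the union $\bigcup_j N_j$ is still $\HH^{N-1}$-null, so $J_{u(t)}\setminus\bigcup_j N_j\subset\bigcap_j\hat\Gamma(\tau_j)$, which gives $J_{u(t)}\,\widetilde\subset\,\Gamma(t)$ as desired. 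This completes the proof.
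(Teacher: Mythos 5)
Your proof is correct and follows essentially the same route as the paper's: rectifiability and monotonicity are read off directly from the definition of $\Gamma(t)$ as a countable intersection, and the jump-set inclusion comes from writing $J_{u(t)}\setminus\Gamma(t)$ as a countable union of $\HH^{N-1}$-null sets via Lemma~\ref{hatmu}. One parenthetical remark is unjustified — a decreasing essential intersection need not stabilize, so $\Gamma(t)$ need not coincide up to $\HH^{N-1}$-null sets with a single $\hat\Gamma(\tau)\cap\Om$ for $\tau\in D$ slightly larger than $t$ — but you never use this claim, so the argument is unaffected.
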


\begin{proof} 
Clearly $\{\Gamma(t)\}_{t \geq 0}$ is a family of countably $\HH^{N-1}$-rectifiable sets satisfying $\Gamma(s) \subset \Gamma(t)$ for every $0 \leq s \leq t$. Moreover, for $t \geq 0$ we have
\begin{multline*}
\HH^{N-1}(J_{u(t)} \setminus \Gamma(t)) = \HH^{N-1} \left(J_{u(t)} \setminus \bigcap_{\tau>t, \, \tau \in D} \hat \Gamma(\tau)\right)\\
=  \HH^{N-1} \left( \bigcup_{\tau>t, \, \tau\in D} \big(J_{u(t)} \setminus \hat \Gamma(\tau)\big)\right) \leq \sum_{\tau>t,\; \tau\in D} \HH^{N-1} 
\left( J_{u(t)} \setminus \hat \Gamma(\tau)\right)=0\,,
\end{multline*}
since $J_{u(t)}Ê\; \widetilde\subset\; \hat \Gamma(\tau)$ for all $\tau \in D$ such that $\tau>t$ by Lemma \ref{hatmu}. Consequently, $J_{u(t)}Ê\; \widetilde \subset \; \Gamma(t)$.
\end{proof}

We are now in position to improve the energy inequality by replacing the jump set of $u(t)$  by the increasing family of cracks $\Gamma(t)$ constructed before.

\begin{proposition}\label{finalenergyest}
For every $t \geq 0$,
\begin{multline*}
 \frac12 \int_\Om|\nabla u(t)|^2\, dx + \HH^{N-1}(\Gamma(t)) +\frac{\beta}{2} \int_\Om (u(t)-g)^2\, dx+ \int_0^t \|u'(s)\|_{L^2(\Om)}^2\, ds\\
\leq \frac12 \int_\Om |\nabla u_0|^2\, dx + \frac{\beta}{2} \int_\Om (u_0-g)^2\, dx\,.
\end{multline*}
\end{proposition}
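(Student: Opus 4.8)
The plan is to pass to the limit as $n\to\infty$ in the energy inequality of Proposition~\ref{energyineq}, taking care to replace the diffuse surface energy by the Hausdorff measure of $\Gamma(t)$. First I would fix $t\geq 0$ and pick a decreasing sequence $\{t_j\}\subset D$ with $t_j\downarrow t$. Applying Proposition~\ref{energyineq} at $s=0$ (note $0\notin\mathcal B_{\e_n}$ by the remark following Proposition~\ref{semigroup}) and recalling $\E_{\e_n}(u_0,\rho_0^{\e_n})\leq\E_{\e_n}(u_0,1)\to\frac12\int_\Om|\nabla u_0|^2\,dx+\frac{\beta}{2}\int_\Om(u_0-g)^2\,dx$, I get for every $n$ and every $t_j$,
\begin{multline*}
\frac12\int_\Om(\eta_{\e_n}+\rho^2_{\e_n}(t_j))|\nabla u_{\e_n}(t_j)|^2\,dx+\mu_n(t_j)(\RR^N)+\frac{\beta}{2}\int_\Om(u_{\e_n}(t_j)-g)^2\,dx\\
+\int_0^{t_j}\|u'_{\e_n}(s)\|^2_{L^2(\Om)}\,ds\leq \E_{\e_n}(u_0,1)\,.
\end{multline*}

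Then I would pass to the liminf in $n$, term by term. For the bulk term I use the second inequality in \eqref{SBVconv2} (with $A=\Om$); for the fidelity term, the strong $L^2$-convergence $u_{\e_n}(t_j)\to u(t_j)$ from Proposition~\ref{compaciteeps}; for the dissipation term, the weak $L^2(0,+\infty;L^2(\Om))$-convergence $u'_{\e_n}\wto u'$ together with lower semicontinuity of the norm on $L^2(0,t_j;L^2(\Om))$; and for the surface term I invoke Lemma~\ref{hatmu}, which gives $\mu(t_j)(\RR^N)\geq\mu(t_j)(\overline\Om)\geq\HH^{N-1}(\hat\Gamma(t_j))\geq\HH^{N-1}(\hat\Gamma(t_j)\cap\Om)$, combined with $\mu_n(t_j)\wto\mu(t_j)$ weakly* in $\M(\RR^N)$ (so $\liminf_n\mu_n(t_j)(\RR^N)\geq\mu(t_j)(\RR^N)$ since $\RR^N$ is open — actually since the total masses converge along the chosen subsequence one has at least $\geq$). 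This yields, for every $j$,
\begin{multline*}
\frac12\int_\Om|\nabla u(t_j)|^2\,dx+\HH^{N-1}(\hat\Gamma(t_j)\cap\Om)+\frac{\beta}{2}\int_\Om(u(t_j)-g)^2\,dx\\
+\int_0^{t_j}\|u'(s)\|^2_{L^2(\Om)}\,ds\leq \frac12\int_\Om|\nabla u_0|^2\,dx+\frac{\beta}{2}\int_\Om(u_0-g)^2\,dx\,.
\end{multline*}

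Finally I would let $j\to\infty$. Since $u\in AC^2([0,+\infty);L^2(\Om))$ and $\nabla u\in L^\infty(0,+\infty;L^2(\Om;\RR^N))$ with $u:[0,+\infty)\to H^1(\Om)$ weakly continuous (which follows because $u_{\e_n}(\cdot)\to u(\cdot)$ in $L^2$ pointwise and the $H^1$-bound passes to the limit; more carefully one argues weak $L^2$ lower semicontinuity of $v\mapsto\int_\Om|\nabla v|^2$ along $t_j\to t$, as $u(t_j)\to u(t)$ strongly in $L^2$ by continuity of the $AC^2$ curve and $\nabla u(t_j)$ is bounded in $L^2$), I obtain $\frac12\int_\Om|\nabla u(t)|^2\,dx\leq\liminf_j\frac12\int_\Om|\nabla u(t_j)|^2\,dx$ and similarly $\int_0^t\|u'\|^2\leq\lim_j\int_0^{t_j}\|u'\|^2$; the fidelity term converges by $L^2$-continuity. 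For the surface term, $\HH^{N-1}(\Gamma(t))=\HH^{N-1}\big(\bigcap_{\tau>t,\tau\in D}\hat\Gamma(\tau)\cap\Om\big)=\lim_{j}\HH^{N-1}(\hat\Gamma(t_j)\cap\Om)$ by monotone convergence of measures of a decreasing sequence of sets of finite $\HH^{N-1}$-measure (using $\hat\Gamma(t_{j+1})\,\widetilde\subset\,\hat\Gamma(t_j)$ from Lemma~\ref{hatmu} and continuity from above). Combining these gives the claimed inequality. The main obstacle I expect is justifying the lower semicontinuity of the bulk term as $t_j\downarrow t$ together with the surface-set continuity from above — i.e., controlling simultaneously the limit in $j$ of $\int_\Om|\nabla u(t_j)|^2$ and of $\HH^{N-1}(\hat\Gamma(t_j)\cap\Om)$; the density of $D$ and the monotonicity in Lemmas~\ref{mono}–\ref{mu} are exactly what make this work, but the interplay of the two limits (first $n$, then $j$) requires the diagonalization to be set up carefully.
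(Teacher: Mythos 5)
Your overall route is the same as the paper's: first establish the inequality for $t\in D$ with $\hat\Gamma(t)$ by passing to the liminf in $n$ in Proposition~\ref{energyineq} (using \eqref{SBVconv2}, the weak* convergence $\mu_n(t)\wto\mu(t)$ and $\mu(t)\geq\HH^{N-1}\res\hat\Gamma(t)$ from Lemma~\ref{hatmu}), then approximate an arbitrary $t$ by $t_j\in D$, $t_j\downarrow t$. Two remarks on the second step. First, the justification you give for $\int_\Om|\nabla u(t)|^2\,dx\leq\liminf_j\int_\Om|\nabla u(t_j)|^2\,dx$ is not correct as stated: $u(t)$ and the $u(t_j)$ belong to $SBV^2(\Om)$, not to $H^1(\Om)$, so there is no ``weak $H^1$-continuity'' to invoke, and an $L^2$-bound on the approximate gradients together with $u(t_j)\to u(t)$ in $L^2(\Om)$ does \emph{not} by itself imply $\nabla u(t_j)\wto\nabla u(t)$ (the jump parts could concentrate into an absolutely continuous part in the limit). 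The correct tool — and the one the paper uses — is Ambrosio's $SBV$ compactness theorem: the $t\in D$ inequality you have just proved gives uniform bounds on $\|u(t_j)\|_{L^\infty(\Om)}$, $\|\nabla u(t_j)\|_{L^2(\Om;\RR^N)}$ and, via $J_{u(t_j)}\,\widetilde\subset\,\hat\Gamma(t_j)$, on $\HH^{N-1}(J_{u(t_j)})$; combined with $u(t_j)\to u(t)$ in $L^2(\Om)$ (from $u\in AC^2$), this yields $\nabla u(t_j)\wto\nabla u(t)$ weakly in $L^2(\Om;\RR^N)$ and hence the desired lower semicontinuity. Second, you do not need continuity from above of $\HH^{N-1}$ along the decreasing sets $\hat\Gamma(t_j)$ (nor the equality $\HH^{N-1}(\Gamma(t))=\lim_j\HH^{N-1}(\hat\Gamma(t_j)\cap\Om)$): the inclusion $\Gamma(t)\subset\hat\Gamma(t_j)$ for every $j$ already gives $\HH^{N-1}(\Gamma(t))\leq\HH^{N-1}(\hat\Gamma(t_j))$, which is all that is required to pass to the liminf in $j$. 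With these two corrections your argument coincides with the paper's proof.
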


\begin{proof}
{\it Step 1.} We first consider the case $t \in D$. According to the energy inequality in Proposition \ref{energyineq}, we have 
$$\frac12\int_\Om (\eta_{\e_n} + \rho^2_{\e_n}(t))|\nabla u_{\e_n}(t)|^2\, dx + \mu_n(t)(\RR^N) +\frac{\beta}{2} \int_\Om (u_{\e_n}(t)-g)^2\, dx +
\int_0^t \|u'_{\e_n}(s)\|_{L^2(\Om)}^2 \, ds
\leq \E_{\e_n}(u_0,\rho_0^{\e_n})\,.$$
Since $\mu_n(t) \wto \mu(t)$ weakly* in $\M(\RR^N)$ and $\mu(t) \geq \HH^{N-1} \res\, \hat \Gamma(t)$ by Lemma \ref{hatmu}, we have
$$\liminf_{n \to \infty}Ê\mu_n(t)(\RR^N) \geq \mu(t)(\RR^N) \geq \HH^{N-1}(\hat \Gamma(t))\,.$$
On the other hand the second inequality in \eqref{SBVconv2}  with $A=\Om$ yields
$$\liminf_{n \to \infty}Ê\int_\Om (\eta_{\e_n} + \rho_{\e_n}(t)^2)|\nabla u_{\e_n}(t)|^2\, dx \geq \int_\Om |\nabla u(t)|^2\, dx\, .$$
Therefore, 
\begin{multline*}
\frac12 \int_\Om|\nabla u(t)|^2\, dx + \HH^{N-1}(\hat \Gamma(t))+\frac{\beta}{2} \int_\Om (u(t)-g)^2\, dx+\int_0^t \|u'(s)\|^2_{L^2(\Om)}\, ds\\
 \leq \liminf_{n \to \infty}\left\{ \frac12\int_\Om (\eta_{\e_n} + \rho_{\e_n}(t)^2)|\nabla u_{\e_n}(t)|^2\, dx + \mu_n(t)(\RR^N) +\frac{\beta}{2}  \int_\Om (u_{\e_n}(t)-g)^2\, dx+ \int_0^t \|u'_{\e_n}(s)\|^2_{L^2(\Om)} \, ds\right\}.
\end{multline*}
Then, by the minimality property \eqref{rho0} of $\rho_0^{\e_n}$, we have 
$$\E_{\e_n}(u_0, \rho_0^{\e_n}) \leq \E_{\e_n}(u_0,1) =\frac{1+\eta_{\e_n}}{2} \int_\Om |\nabla u_0|^2\, dx+\frac{\beta}{2} \int_\Om (u_0-g)^2\, dx \to \frac12 \int_\Om |\nabla u_0|^2\, dx+\frac{\beta}{2} \int_\Om (u_0-g)^2\, dx\,,$$
which leads to 
\begin{multline}\label{tinD}
\frac12 \int_\Om|\nabla u(t)|^2\, dx + \HH^{N-1}(\hat \Gamma(t)) +\frac{\beta}{2} \int_\Om (u(t)-g)^2\, dx+ \int_0^t \|u'(s)|^2_{L^2(\Om)}\, ds \\
\leq \frac12 \int_\Om |\nabla u_0|^2\, dx + \frac{\beta}{2} \int_\Om (u_0-g)^2\, dx\,.
\end{multline}
\vskip5pt

\noindent{\it Step 2.} We now extend the inequality above to the case where $t \geq 0$ is arbitrary. In that case, there exists a sequence $\{t_j\} \subset D$ such that $t_j \to t$ with $t_j>t$. By \eqref{tinD} we have 
$$
\sup_{j \in \NN} \big\{\|u(t_j)\|_{L^\infty(\Om)} + \|\nabla u(t_j)\|_{L^2(\Om;\RR^N)}+\HH^{N-1}(J_{u(t_j)})Ê\big\}<\infty\,,
$$
since  $J_{u(t_j)} \; \widetilde \subset\; \hat \Gamma(t_j)$ by Lemma \ref{hatmu}. On the other hand,  $u \in AC^2(0,+\infty;L^2(\Om))$ by Proposition \ref{compaciteeps}, and thus $u(t_j) \to u(t)$ strongly in $L^2(\Om)$. Applying Ambrosio's compactness Theorem (Theorems 4.7 and 4.8 in~\cite{AFP}), we deduce that $\nabla u(t_j) \wto \nabla u(t)$ weakly in $L^2(\Om;\RR^N)$. Since $\Gamma(t) \subset \hat \Gamma(t_j)$ for all $j \in \NN$,
we finally conclude that
\begin{multline*}
\frac12 \int_\Om|\nabla u(t)|^2\, dx + \HH^{N-1}(\Gamma(t)) +\frac{\beta}{2} \int_\Om (u(t)-g)^2\, dx+ \int_0^t \|u'(s)\|^2_{L^2(\Om)}\, ds \\
\leq \liminf_{j \to \infty}Ê\left\{\frac12 \int_\Om|\nabla u(t_j)|^2\, dx + \HH^{N-1}(\hat \Gamma(t_j)) +\frac{\beta}{2} \int_\Om (u(t_j)-g)^2\, dx + \int_0^{t_j} \|u'(s)\|_{L^2(\Om)}^2\, ds
\right\}\,,
\end{multline*}
which, in view of  \eqref{tinD}, completes the proof of the energy inequality.
\end{proof}

\begin{remark}
Note that the limiting bulk and surface energies are continuous at time $t=0$, {\it i.e.},  
$$\lim_{t\downarrow 0}\, \frac{1}{2}\int_\Om |\nabla u(t)|^2\,dx+\frac{\beta}{2} \int_\Om (u(t)-g)^2\, dx = \frac{1}{2}\int_\Om |\nabla u_0|^2\,dx+\frac{\beta}{2} \int_\Om (u_0-g)^2\, dx\,,$$
and 
$$\lim_{t\downarrow 0}\HH^{N-1}(\Gamma(t))=0\,. $$
Indeed, arguing as in the previous proof (Step 2), we obtain that   $\nabla u(t) \wto \nabla u_0$ weakly in $L^2(\Om;\RR^N)$ as $t\to0$. Since $u \in AC^2(0,+\infty;L^2(\Om))$, we then infer from the energy inequality that 
\begin{align*}
\frac12 \int_\Om|\nabla u_0|^2\, dx +\frac{\beta}{2} \int_\Om (u_0-g)^2\, dx & \geq  
  \limsup_{t \to 0}Ê\left\{\frac12 \int_\Om|\nabla u(t)|^2\, dx + \HH^{N-1}( \Gamma(t)) +\frac{\beta}{2} \int_\Om (u(t)-g)^2\, dx 
\right\} \\
& \geq  \limsup_{t \to 0}Ê\left\{\frac12 \int_\Om|\nabla u(t)|^2\, dx +\frac{\beta}{2} \int_\Om (u(t)-g)^2\, dx 
\right\} \\
& \geq \liminf_{t \to 0}Ê\left\{\frac12 \int_\Om|\nabla u(t)|^2\, dx +\frac{\beta}{2} \int_\Om (u(t)-g)^2\, dx 
\right\} \\
& \geq \frac12 \int_\Om|\nabla u_0|^2\, dx +\frac{\beta}{2} \int_\Om (u_0-g)^2\, dx  \,,
\end{align*}
and the conclusion follows. 
\end{remark}

\section{Curves of maximal unilateral slope for the Ambrosio-Tortorelli functional}\label{sec6}

In the spirit of \cite{AGS}, we introduce in this section the notion of $L^2(\Om)$-unilateral gradient flow for the Ambrosio-Tortorelli functional in terms of {\it curves of maximal unilateral slope}, accounting for the quasi-stationnarity and the decrease  constraint on the phase field variable $\rho$. To this aim, we first  define the unilateral slope of $\E_\e$ by analogy with  the unilateral slope of  the Mumford-Shah functional, see \cite{DMT}. Then we prove that (generalized) unilateral minimizing movements provide specific examples of curves of maximal unilateral slope. We conclude this section with a preliminary step toward the asymptotic behavior of the unilateral slope as $\e\to 0$, and 
a discussion on the related results of \cite{DMT}.  

\subsection{The maximal unilateral slope}

\begin{definition}\label{1739}
The {\it unilateral slope} of $\E_\e$ at $(u,\rho)\in H^1(\Om)\times W^{1,p}(\Om)$ is defined by
$$|\partial \E_\e|(u,\rho):=\limsup_{v\to u \text{ in }ÊL^2(\Om)} \; \sup_{\hat \rho}\left\{ \frac{\big(\E_\e(u,\rho)-\E_\e(v,\hat \rho)\big)^+}{\|v-u\|_{L^2(\Om)}} : \hat \rho \in W^{1,p}(\Om), \; \hat \rho \leq \rho \text{ in }Ê\Om\right\}\,.$$
The functional $|\partial \E_\e|$ is then extended to $L^2(\Om)\times L^p(\Om)$ by setting 
$|\partial \E_\e|(u,\rho):=+\infty$ for $(u,\rho)\not\in H^1(\Om)\times W^{1,p}(\Om)$. 
\end{definition}

The unilateral slope being defined, we can now define curves of maximal unilateral slope for the Ambrosio-Tortorelli functional. 

\begin{definition}\label{cms2}
We say that a pair $(u,\rho):(a,b)\to L^2(\Om)\times L^p(\Om)$  is a {\it curve of maximal unilateral slope} for $\E_\e$  if 
$u\in AC^2(a,b;L^2(\Om))$, $\rho$ is non-increasing, and if there exists a non-increasing function $\lambda:(a,b) \to [0,+\infty)$ such that for a.e. $t\in (a,b)$, $\E_\e(u(t),\rho(t))=\lambda(t)$, and
\begin{equation}\label{lambda}
\lambda'(t)\leq -\frac{1}{2}\|u'(t)\|^2_{L^2(\Om)} -\frac{1}{2}|\partial \E_\e|^2\big(u(t),\rho(t)\big)\,. 
\end{equation}
\end{definition}

This definition is motivated by the following proposition which parallels \cite[Theorem 1.2.5]{AGS}.

\begin{proposition}\label{prop:identmodvit}
If $(u,\rho):(a,b)\to L^2(\Om)\times L^p(\Om)$  is a curve of maximal unilateral slope for $\E_\e$, then 
\begin{equation}\label{identmodvit}
\|u'(t)\|_{L^2(\Om)}=|\partial \E_\e|(u(t),\rho(t)) \quad \text{for a.e. $t\in (a,b)$}\,.
\end{equation}
Moreover, if $t\mapsto \E_\e(u(t),\rho(t))$ is absolutely continuous on $(a,b)$, then 
$$\E_\e\big(u(t),\rho(t))+\int_s^t\|u'(r)\|^2_{L^2(\Om)}\,dr = \E_\e\big(u(s),\rho(s)\big)\quad\text{for every $s$ and $t\in (a,b)$ with $s\leq t$}\,. $$
\end{proposition}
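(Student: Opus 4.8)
\textbf{Proof plan for Proposition \ref{prop:identmodvit}.}

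The plan is to follow the abstract scheme of \cite[Theorem 1.2.5]{AGS}. First I would establish the lower bound for $\lambda'$. For a.e. $t\in(a,b)$ the function $u$ is differentiable in $L^2(\Om)$, and by the very definition of the unilateral slope, taking $v=u(s)$ and $\hat\rho=\rho(s)$ for $s>t$ close to $t$ (which is admissible since $\rho(s)\leq\rho(t)$ by monotonicity), one gets
$$
\limsup_{s\downarrow t}\frac{\big(\E_\e(u(t),\rho(t))-\E_\e(u(s),\rho(s))\big)^+}{\|u(s)-u(t)\|_{L^2(\Om)}}\leq |\partial\E_\e|(u(t),\rho(t))\,.
$$
Since $\E_\e(u(t),\rho(t))=\lambda(t)$ a.e. and $\lambda$ is non-increasing, $\lambda(t)-\lambda(s)\geq 0$, and the numerator above controls $(\lambda(t)-\lambda(s))^+=\lambda(t)-\lambda(s)$ from below at a.e. $t$ where the energy identity holds; dividing by $s-t$ and using $\|u(s)-u(t)\|_{L^2(\Om)}/(s-t)\to\|u'(t)\|_{L^2(\Om)}$ yields, at a.e. differentiability point,
$$
-\lambda'(t)=\lim_{s\downarrow t}\frac{\lambda(t)-\lambda(s)}{s-t}\leq |\partial\E_\e|(u(t),\rho(t))\,\|u'(t)\|_{L^2(\Om)}\,.
$$
Care is needed here because $\lambda$ and $t\mapsto\E_\e(u(t),\rho(t))$ agree only almost everywhere, and $\lambda'$ exists a.e. as $\lambda$ is monotone; I would argue along a full-measure set of $t$ that are simultaneously Lebesgue points of the coincidence set, differentiability points of $u$, and differentiability points of $\lambda$. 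The main obstacle is precisely this measure-theoretic bookkeeping: one must be sure the difference quotient of $\lambda$ can be compared with the difference quotient of the energy along the specific sequence $s_n\downarrow t$ realizing the derivative of $u$, which requires choosing $s_n$ in the coincidence set — possible since its complement is null.

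Next I would combine the two inequalities. From \eqref{lambda} and Young's inequality $ab\leq\frac12 a^2+\frac12 b^2$,
$$
|\partial\E_\e|(u(t),\rho(t))\,\|u'(t)\|_{L^2(\Om)}\leq -\lambda'(t)\leq \frac12\|u'(t)\|^2_{L^2(\Om)}+\frac12|\partial\E_\e|^2(u(t),\rho(t))\,,
$$
which forces equality throughout Young's inequality, hence $\|u'(t)\|_{L^2(\Om)}=|\partial\E_\e|(u(t),\rho(t))$ for a.e. $t\in(a,b)$, proving \eqref{identmodvit}. Moreover equality in \eqref{lambda} also holds a.e.: $\lambda'(t)=-\|u'(t)\|^2_{L^2(\Om)}$ for a.e. $t$.

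Finally, for the energy equality, assume $t\mapsto\E_\e(u(t),\rho(t))$ is absolutely continuous on $(a,b)$. Then it coincides with the monotone function $\lambda$ on a dense full-measure set, so by continuity $\E_\e(u(\cdot),\rho(\cdot))=\lambda(\cdot)$ everywhere on $(a,b)$; in particular $\lambda$ is itself absolutely continuous, and the Fundamental Theorem of Calculus applies:
$$
\E_\e(u(t),\rho(t))-\E_\e(u(s),\rho(s))=\lambda(t)-\lambda(s)=\int_s^t\lambda'(r)\,dr=-\int_s^t\|u'(r)\|^2_{L^2(\Om)}\,dr
$$
for every $s\leq t$ in $(a,b)$, using the a.e. identity $\lambda'=-\|u'\|^2_{L^2(\Om)}$ established above (note $u'\in L^2(a,b;L^2(\Om))$ since $u\in AC^2$, so the integrand is integrable). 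Rearranging gives the stated equality. The only delicate point beyond the first paragraph is upgrading the a.e. coincidence $\E_\e(u(\cdot),\rho(\cdot))=\lambda$ to an everywhere identity, which uses absolute continuity of the left side together with the fact that a monotone function is determined by its values on a dense set at its continuity points, and $\lambda$ can be taken right-continuous without loss of generality.
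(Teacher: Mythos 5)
Your proposal is correct and follows essentially the same route as the paper's proof: the paper also works on the full-measure set $A$ of times where $\E_\e(u(t),\rho(t))=\lambda(t)$ and where $\lambda$ and $u$ are differentiable, takes the difference quotient of $\lambda$ along $s\downarrow t$ with $s\in A$, bounds it by $|\partial\E_\e|(u(t),\rho(t))\,\|u'(t)\|_{L^2(\Om)}$ using $\rho(s)\leq\rho(t)$, and concludes by forcing equality in Young's inequality. The measure-theoretic point you flag as the main obstacle is exactly what the paper's restriction $s\in A$ handles, and your treatment of the energy identity via the fundamental theorem of calculus matches the paper's.
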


\begin{proof}
Let $\lambda$ be as in Definition \ref{cms2}. Since $\lambda$ is non-increasing, $\lambda$ has finite pointwise variation in $(a,b)$. Let us consider the set
$$A:=\big\{t \in (a,b) : \E_\e(u(t),\rho(t))=\lambda(t), \; \lambda \text{ and } u \text{ are derivable at } t\big\}\,,$$
and observe that $\LL^1((a,b) \setminus A)=0$. 

Let $t \in A$. Since $\lambda$ is non-increasing, we have $\lambda'(t)\leq 0$, and thus
$$|\lambda'(t)|=-\lambda'(t) = \lim_{s \downarrow t, \; s \in A}\frac{\lambda(t)-\lambda(s)}{s-t}
= \lim_{s \downarrow t, \; s \in A} \frac{\E_\e(u(t),\rho(t))-\E_\e(u(s),\rho(s))}{s-t}\,.
$$
Using the fact that $\rho(s)\leq \rho(t)$ when $s>t$ (by the non-increasing property of $t \mapsto \rho(t)$) and the strong $L^2(\Om)$-continuity of $u$, we infer that 
\begin{multline*}
|\lambda'(t)| \leq \limsup_{s \downarrow t\; s \in A} \sup_{\hat \rho \leq \rho(t)} \frac{\big(\E_\e(u(t),\rho(t))-\E_\e(u(s),\hat \rho)\big)^+}{\|u(s)-u(t)\|_{L^2(\Om)}} \frac{\|u(s)-u(t)\|_{L^2(\Om)}}{s-t}
\leq |\partial \E_\e|\big(u(t),\rho(t)\big) \|u'(t)\|_{L^2(\Om)}\,.
\end{multline*}
On the other hand, $|\lambda'(t)|\geq \frac{1}{2} \|u'(t)\|^2_{L^2(\Om)}+\frac{1}{2}|\partial \E_\e|^2\big(u(t),\rho(t)\big)$ by \eqref{lambda}, and \eqref{identmodvit} follows as well as the fact that  
$\lambda'(t)=-\|u'(t)\|^2_{L^2(\Om)}$. 
\vskip3pt
Finally, if $t\mapsto \E_\e(u(t),\rho(t))$ is absolutely continuous on $(a,b)$, then for every $s,t\in (a,b)$ with $s\leq t$, 
$$\E_\e(u(t),\rho(t))-\E_\e(u(s),\rho(s))=\int_{s}^t \lambda'(r)\,dr=-\int_s^t\|u'(r)\|^2_{L^2(\Om)}\,dr\,,$$
which completes the proof of the proposition.
\end{proof}

We state below necessary and sufficient conditions for the finiteness of the slope, as well as an explicit formula to represent it. 
\begin{proposition}\label{slope+domain}
Assume that  $\Om$ has a $\C^{1,1}$-boundary, and 
let  $ D(|\partial \E_\e|)$ be the proper domain of  $|\partial \E_\e|$. Then,
 \begin{multline}\label{domainslip}
 D(|\partial \E_\e|)=\Big\{(u,\rho)\in H^2(\Om)\times W^{1,p}(\Om): \frac{\partial u}{\partial \nu}=0 \text{ in }H^{1/2}(\partial\Om)\,,\text{ and}  \\
 \E_\e(u,\rho)\leq \E_\e(u,\hat \rho) \;\text{ for all $\hat \rho\in W^{1,p}(\Om)$ such that $\hat\rho\leq \rho$  in $\Om$}\Big\}\,.
 \end{multline} 
In addition, for $(u,\rho)\in  D(|\partial \E_\e|)$,
$$|\partial \E_\e|(u,\rho)=\big\|{\rm div}((\eta_\e+\rho^2)\nabla u)-\beta(u-g)\big\|_{L^2(\Om)} \,,$$
and 
$$
\|u\|_{H^2(\Om)}\leq C_\e (1+\|\nabla\rho\|_{L^p(\Om;\RR^N)})^{\gamma} \big(|\partial \E_\e|(u,\rho)+ \beta\|u-g\|_{L^2(\Om)}+\|u\|_{H^1(\Om)}\big)\,,
$$
where $\gamma\in\mathbb{N}$ is the smallest integer larger than or equal to $p/(p-N)$, and $C_\e$ only depends on $\eta_\e$, $p$, $N$, and~$\Om$.   
\end{proposition}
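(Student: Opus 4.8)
The statement has three parts: the characterization of the domain, the formula for the slope, and the $H^2$-bound. I would treat them together, since the key computation is common to all three. The plan is to fix $(u,\rho)\in H^1(\Om)\times W^{1,p}(\Om)$ and analyze the finiteness and value of $|\partial\E_\e|(u,\rho)$ by splitting the competitor $(v,\hat\rho)$ into its two variables. For the $\rho$-direction, the crucial observation is that for \emph{fixed} $u$, the map $\hat\rho\mapsto\E_\e(v,\hat\rho)$ does not involve the $L^2$-distance $\|v-u\|_{L^2(\Om)}$ in the numerator's denominator scaling; hence if there exists $\hat\rho\le\rho$ with $\E_\e(u,\hat\rho)<\E_\e(u,\rho)$, one may take $v=u$ and let the denominator be as small as one wishes (by approaching $u$ along, say, $u+t\varphi$ and comparing with $\hat\rho$), forcing $|\partial\E_\e|(u,\rho)=+\infty$. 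Thus finiteness of the slope requires exactly the unilateral minimality condition $\E_\e(u,\rho)\le\E_\e(u,\hat\rho)$ for all admissible $\hat\rho\le\rho$, which is the second defining condition of $D(|\partial\E_\e|)$. This reduces the slope to the pure $u$-direction:
\begin{equation*}
|\partial\E_\e|(u,\rho)=\limsup_{v\to u\text{ in }L^2(\Om)}\frac{\big(\E_\e(u,\rho)-\E_\e(v,\rho)\big)^+}{\|v-u\|_{L^2(\Om)}}\,,
\end{equation*}
because dropping the freedom in $\hat\rho$ (taking $\hat\rho=\rho$) can only decrease the sup, while the unilateral minimality shows $\E_\e(v,\hat\rho)\ge\E_\e(v,\rho\wedge\hat\rho)$ type manipulations recover $\rho$ as the optimal choice near $u$ — this last point needs the uniform ($\C^0$) convergence afforded by $p>N$, exactly as in the proof of Proposition~\ref{minrhoeps}, to control the obstacle.

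\textbf{Identifying the slope with the $L^2$-gradient.} Once reduced to the $u$-variable, $\rho$ being frozen, the functional $v\mapsto\frac12\int_\Om(\eta_\e+\rho^2)|\nabla v|^2\,dx+\frac\beta2\int_\Om(v-g)^2\,dx$ is a standard convex quadratic functional on $H^1(\Om)$, and its $L^2(\Om)$-slope is a textbook computation: by convexity and smoothness, $|\partial\E_\e|(u,\rho)<\infty$ iff the subdifferential at $u$ in $L^2(\Om)$ is nonempty, iff $-{\rm div}((\eta_\e+\rho^2)\nabla u)+\beta(u-g)\in L^2(\Om)$ with homogeneous Neumann condition, in which case the slope equals the $L^2$-norm of that element. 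Concretely, one expands $\E_\e(u+t\varphi,\rho)-\E_\e(u,\rho)$ for $\varphi\in H^1(\Om)$, $t\to0^+$, to get the linear functional $\varphi\mapsto\int_\Om\big((\eta_\e+\rho^2)\nabla u\cdot\nabla\varphi+\beta(u-g)\varphi\big)\,dx$; its continuity for the $L^2(\Om)$-norm is equivalent to the claimed regularity (Riesz representation), and the representative is precisely ${\rm div}((\eta_\e+\rho^2)\nabla u)-\beta(u-g)$ with $\partial u/\partial\nu=0$. The supremum over directions $\varphi$ recovers the norm; the quadratic remainder is $O(t^2)$ and does not affect the limsup. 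This yields both the domain statement (the $H^2$-part still pending) and the formula $|\partial\E_\e|(u,\rho)=\|{\rm div}((\eta_\e+\rho^2)\nabla u)-\beta(u-g)\|_{L^2(\Om)}$.

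\textbf{The elliptic regularity bound.} It remains to upgrade ``$-{\rm div}((\eta_\e+\rho^2)\nabla u)\in L^2(\Om)$ with Neumann data'' to $u\in H^2(\Om)$ with the quantitative estimate. This is exactly Lemma~\ref{ellipreg} from the appendix (already invoked in the proof of Lemma~\ref{inegenergdiscr} and yielding estimate \eqref{L2H2bound}): the coefficient $\eta_\e+\rho^2$ is bounded below by $\eta_\e>0$ and lies in $W^{1,p}(\Om)\hookrightarrow\C^0(\overline\Om)$ with $p>N$, the domain has $\C^{1,1}$-boundary, so $\C^{1,1}$-elliptic regularity with a $W^{1,p}$-coefficient applies and gives $u\in H^2(\Om)$ together with
\begin{equation*}
\|u\|_{H^2(\Om)}\le C_\e(1+\|\nabla\rho\|_{L^p(\Om;\RR^N)})^{\gamma}\big(\|f\|_{L^2(\Om)}+\|u\|_{H^1(\Om)}\big),
\end{equation*}
where $f={\rm div}((\eta_\e+\rho^2)\nabla u)-\beta(u-g)$ and $\gamma$ is the smallest integer $\ge p/(p-N)$ (the exponent arises from iterating a Meyers-type/interpolation argument finitely many times to absorb the $L^p$-coefficient, as in Lemma~\ref{ellipreg}). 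Writing $\|f\|_{L^2(\Om)}\le|\partial\E_\e|(u,\rho)+\beta\|u-g\|_{L^2(\Om)}$ via the triangle inequality on the already-established slope formula gives the stated estimate, and completes the identification of $D(|\partial\E_\e|)$ as the claimed set of pairs.

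\textbf{Main obstacle.} The delicate point is the reduction from the two-variable slope to the one-variable slope: showing that the freedom to lower $\rho$ contributes \emph{nothing} near a point of the domain, i.e.\ that along any sequence $v\to u$ in $L^2$ the optimal $\hat\rho\le\rho$ can be taken equal to $\rho$ up to an error $o(\|v-u\|_{L^2(\Om)})$. This rests on the stability of the obstacle problem $\min\{\E_\e(v,\cdot):\hat\rho\le\rho\}$ under $L^2$-perturbations of $v$, which is where $p>N$ is essential (uniform convergence of obstacles, no strange capacitary terms), mirroring the argument already carried out in Proposition~\ref{minrhoeps} and Proposition~\ref{decenergysurf}; one shows that $\min_{\hat\rho\le\rho}\E_\e(v,\hat\rho)\to\E_\e(u,\rho)$ as $v\to u$ and that the difference quotient is controlled, so the $\hat\rho$-supremum is asymptotically achieved at $\hat\rho=\rho$. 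Everything else is convex analysis and standard elliptic regularity.
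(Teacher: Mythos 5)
Your overall architecture matches the paper's: lower bound on the slope via directional variations $v=u-\delta\varphi$ plus Riesz representation and Lemma~\ref{ellipreg}, upper bound via convexity in $u$, and the $H^2$-estimate exactly as in your third step. One sub-argument is genuinely different and cleaner than the paper's: to show that finiteness of the slope forces the unilateral minimality of $\rho$, you fix a better competitor $\hat\rho_0\le\rho$ and let $v=u+t\varphi\to u$ along an affine path in $H^1$, so the numerator stays bounded below by $\E_\e(u,\rho)-\E_\e(u,\hat\rho_0)>0$ while the denominator vanishes. The paper instead extracts this from the $\Gamma$-convergence of the obstacle functionals $\mathcal F_n$ (with the concentration measure $\mu$) and the identity $\E_\e(u,\rho)=\min\mathcal F$; your route avoids the measure $\mu$ entirely for this implication, at no cost.

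The step you flag as the ``main obstacle'' is, however, more than a technicality, and your proposed reduction is not quite the one that works. First, the inequality you need is $\E_\e(u,\rho)-\E_\e(v,\hat\rho)\le\E_\e(u,\hat\rho)-\E_\e(v,\hat\rho)$ (replace $\rho$ by $\hat\rho$ in the \emph{first} term using the minimality of $\rho$), not a manipulation of the form $\E_\e(v,\hat\rho)\ge\E_\e(v,\rho\wedge\hat\rho)$. Second, after applying the convexity (gradient) inequality at frozen $\hat\rho$, the resulting bound is $\|{\rm div}((\eta_\e+\hat\rho^2)\nabla u)-\beta(u-g)\|_{L^2(\Om)}$, which is \emph{not} uniform over all $\hat\rho\le\rho$ (the factor $\nabla\hat\rho\cdot\nabla u$ is uncontrolled); so one cannot simply assert that the $\hat\rho$-supremum is asymptotically attained at $\hat\rho=\rho$. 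The paper's resolution is to restrict to $\hat\rho=\rho_n:={\rm argmin}\{\E_\e(v_n,\cdot):\hat\rho\le\rho\}$, which dominates all other choices in the numerator, and then to prove $\rho_n\to\rho$ \emph{strongly} in $W^{1,p}(\Om)$ — via the $\Gamma$-convergence of the obstacle problems, the minimality of $\rho$, and the uniqueness of the minimizer of the strictly convex $\E_\e(u,\cdot)$ — so that ${\rm div}((\eta_\e+\rho_n^2)\nabla u)\to{\rm div}((\eta_\e+\rho^2)\nabla u)$ in $L^2(\Om)$ (here $u\in H^2(\Om)$ and $p>N$ are both used). You correctly name the right tools (Proposition~\ref{minrhoeps}, uniform convergence of obstacles from $p>N$), but the chain ``argmin competitor $\to$ strong $W^{1,p}$-convergence $\to$ convergence of the divergence'' is the content of the proof and should be written out.
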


\begin{proof}
{\it Step 1.} Let us consider a pair $(u,\rho)$ such that $|\partial \E_\e|(u,\rho)<\infty$. For $\varphi\in H^1(\Om)$ with $\varphi\not=0$, we estimate 
\begin{equation}\label{firstlwbdslip}
 |\partial \E_\e|(u,\rho)\geq \limsup_{\delta\downarrow 0} \frac{\E_\e(u,\rho)-\E_\e(u-\delta\varphi,\rho)}{\delta \|\varphi\|_{L^2(\Om)}}\geq \frac{1}{\|\varphi\|_{L^2(\Om)}}\bigg( \int_\Om (\eta_\e+\rho^2)\nabla u\cdot \nabla\varphi\,dx+\beta\int_\Om (u-g)\varphi\,dx\bigg)\,.
 \end{equation}
By density of $H^1(\Om)$ in $L^2(\Om)$ and 
the Riesz representation Theorem in $L^2(\Om)$, we deduce that there exists $\tilde f\in L^2(\Om)$ such that 
$$
\int_\Om (\eta_\e+\rho^2)\nabla u\cdot \nabla\varphi\,dx+\beta\int_\Om (u-g)\varphi\,dx=\int_\Om \tilde f\varphi\,dx
$$
for all $\varphi\in H^1(\Om)$. Hence $u$ solves \eqref{eqellipt} with $f=\tilde f-\beta(u-g)$. We then infer from Lemma~\ref{ellipreg} that $u\in H^2(\Om)$, 
and that $\frac{\partial u}{\partial \nu}=0$ in $H^{1/2}(\partial\Om)$. Next, taking $\varphi \in H^1(\Om)$ such that $\|\varphi\|_{L^2(\Om)}=1$, integrating by parts in \eqref{firstlwbdslip}, and passing to the supremum over all such $\varphi$'s yields the  lower bound 
$$|\partial \E_\e|(u,\rho)\geq\big\|{\rm div}((\eta_\e+\rho^2)\nabla u)-\beta(u-g)\big\|_{L^2(\Om)}\,.$$

\vskip5pt

We now claim that the following minimality property for $\rho$ holds: 
\begin{equation}\label{minrhoseccurv}
\E_\e(u,\rho)\leq \E_\e(u,\hat \rho) \;\text{ for all $\hat \rho\in W^{1,p}(\Om)$ such that $\hat\rho\leq \rho$  in $\Om$}\,.
\end{equation}
Since $ |\partial \E_\e|(u,\rho)<+\infty$ we can find sequences $\{v_n\} \subset H^1(\Om)$ and $\{\rho_n\} \subset W^{1,p}(\Om)$ such that 
$v_n\to u$ strongly in $L^2(\Om)$, 
$$\rho_n ={\rm argmin}\left\{ \E_\e(v_n,\hat \rho) : \hat \rho \in W^{1,p}(\Om),\, \hat \rho \leq \rho \text{ in }Ê\Om \right\}\quad\text{for each $n \in \NN$}\,,$$
\begin{equation}\label{cond2}
\E_\e(v_n,\rho_n) \leq \E_\e(u,\rho)\,,
\end{equation}
and
\begin{equation}\label{cond3}
\limsup_{n \to \infty}Ê\frac{\E_\e(u,\rho) - \E_\e(v_n,\rho_n)}{\|v_n-u\|_{L^2(\Om)}} \leq |\partial \E_\e|(u,\rho)\,.
\end{equation}
By \eqref{cond2} the sequence $\{\nabla v_n\}$ is uniformly bounded in $L^2(\Om;\RR^N)$. Hence, for a  suitable subsequence (not relabeled), 
$$|\nabla v_n|^2 \mathscr L^N\res \, \Om \wto |\nabla u|^2\mathscr L^N \res \, \Om+ \mu$$ 
weakly* in $\M(\RR^N)$ for some nonnegative Radon measure $\mu \in \M(\RR^N)$ supported in $\overline\Om$. Let us now consider the following functionals on $W^{1,p}(\Om)$ defined by 
$$
\mathcal F_n(\hat \rho):=
\begin{cases}
\E_\e(v_n,\hat \rho) & \text{ if }\hat \rho \leq \rho\,,\\
+\infty & \text{ otherwise}\,,
\end{cases}
\;\text{ and }\;
\mathcal F(\hat \rho):=
\begin{cases}
\ds \E_\e(u,\hat \rho) +\frac12 \int_{\overline \Om} (\eta_\e + \hat \rho^2)\, d\mu& \text{ if }\hat \rho \leq \rho\,,\\
+\infty & \text{ otherwise}\,.
\end{cases}
$$
A similar argument to the one used in the proof of Proposition \ref{minrhoeps} (Step 1) ensures that $\mathcal F_n$ $\Gamma$-converges to $\mathcal F$ for the sequential weak $W^{1,p}(\Om)$-topology.
Since the sublevel sets of $\mathcal F_n$ are  relatively compact for the sequential weak $W^{1,p}(\Om)$-topology (uniformly in~$n$), we infer from the $\Gamma$-convergence of $\mathcal F_n$ 
towards $\mathcal F$ that
$$\E_\e(v_n,\rho_n) = \min_{W^{1,p}(\Om)} \mathcal F_n \to\min_{W^{1,p}(\Om)} \mathcal F\,.$$
On the other hand, by \eqref{cond2} and \eqref{cond3} we have $\E_\e(v_n,\rho_n) \to \E_\e(u,\rho)$ from which we deduce that 
$$
\E_\e(u,\rho)=\min_{W^{1,p}(\Om)} \mathcal F=\min\left\{ \mathcal F(\hat \rho) : \hat \rho \in W^{1,p}(\Om), \, \hat \rho \leq \rho \text{ in }Ê\Om\right\}\,.
$$
We conclude from this last relation that $\mu=0$ and that \eqref{minrhoseccurv} holds.

\vskip5pt

\noindent{\it Step 2.} Conversely, we show that if a pair $(u,\rho)$ belongs to the set in the right hand side of \eqref{domainslip}, then $|\partial \E_\e|(u,\rho)<\infty$ and $ |\partial \E_\e|(u,\rho)\leq\big\|{\rm div}((\eta_\e+\rho^2)\nabla u)-\beta(u-g)\big\|_{L^2(\Om)}$. 

Consider a pair $(u,\rho)\in H^2(\Om)\times W^{1,p}(\Om)$ satisfying $\frac{\partial u}{\partial \nu}=0$ in $H^{1/2}(\partial\Om)$ and
$$\E_\e(u,\rho)\leq \E_\e(u,\hat \rho)$$
for all $\hat \rho\in W^{1,p}(\Om)$ such that $\hat\rho\leq \rho$ in $\Om$. Note that  $u\in W^{1,r}(\Om)$ for every $r\leq 2^*$ by the Sobolev Imbedding, and since $p>N$, the product $\nabla u \cdot \nabla \rho$  belongs to $L^2(\Om)$ and $\rho\in L^\infty(\Om)$. Hence, 
$${\rm div}((\eta_\e+\rho^2)\nabla u) = (\eta_\e+\rho^2) \Delta u + 2\rho \nabla \rho \cdot \nabla u \in L^2(\Om),$$
and consequently, it is enough to check that
$$|\partial \E_\e|(u,\rho)\leq\big\|{\rm div}((\eta_\e+\rho^2)\nabla u)-\beta(u-g)\big\|_{L^2(\Om)}.$$
Consider a sequence $\{v_n\}Ê\subset H^1(\Om)$ converging strongly to $u$ in $L^2(\Om)$ such that
$$|\partial \E_\e|(u,\rho) = \lim_{n \to \infty}\,\sup\left\{ \frac{ \big(\E_\e(u,\rho) - \E_\e(v_n,\hat \rho) \big)^+}{\|v_n - u\|_{L^2(\Om)}} : \hat \rho \in W^{1,p}(\Om) ,\; \hat \rho \leq \rho \text{ in }Ê\Om\right\}\, ,$$
and let 
$$\rho_n={\rm argmin}Ê\left\{ \E_\e(v_n,\hat \rho) :  \hat \rho\in W^{1,p}(\Om) \text{ such that }\hat\rho\leq \rho\text{ in }\Om\right\}.$$
Then 
$$\sup_{\hat \rho \leq \rho} \big(\E_\e(u,\rho) - \E_\e(v_n,\hat\rho) \big)^+ \leq \big( \E_\e(u,\rho) - \E_\e(v_n,\rho_n) \big)^+\,,$$
so that 
\begin{equation}\label{1700}
|\partial \E_\e|(u,\rho) = \lim_{n \to \infty}   \frac{ \big(\E_\e(u,\rho) - \E_\e(v_n,\rho_n) \big)^+}{\|v_n - u\|_{L^2(\Om)}}\, .
\end{equation}
If for infinitely many $n$'s we have $\E_\e(v_n ,\rho_n) > \E_\e(u,\rho)$, then $|\partial \E_\e|(u,\rho) = 0$ and there is nothing to prove.  Hence we can assume without loss of generality that $\E_\e(v_n ,\rho_n) \leq \E_\e(u,\rho)$. In particular, $\{\rho_n\}$ is uniformly bounded in $W^{1,p}(\Om)$, and $\{v_n\}$ is uniformly bounded in $H^1(\Om)$. As a consequence, for a subsequence $v_n \wto u$ weakly in $H^1(\Om)$ and $\rho_n \wto  \rho_*$ weakly in $W^{1,p}(\Om)$. From Lemma \ref{existsch1} we infer that $ \rho_* \leq \rho$  in $\Om$, and
\begin{equation}\label{lsccaractslop}
\E_\e(u, \rho_*) \leq \liminf_{n \to \infty}\E_\e(v_n,\rho_n)  \leq \limsup_{n \to \infty}\E_\e(v_n,\rho_n) \leq \E_\e(u,\rho)\,.
\end{equation}
By the minimality property of $\rho$, we have that $\E_\e(u,\rho) \leq \E_\e(u, \rho_*)$ which leads to $\E_\e(u,\rho) = \E_\e(u, \rho_*)$. By uniqueness of the minimizer (due to the strict convexity of $\E_\e(u,\cdot)$), we deduce that $ \rho_*=\rho$. Then Lemma~\ref{existsch1} and \eqref{lsccaractslop} with $ \rho_*=\rho$ shows that $\rho_n \to \rho$ strongly in $W^{1,p}(\Om)$.

We now estimate
\begin{multline*}
\E_\e(u,\rho) - \E_\e(v_n,\rho_n) \leq \E_\e(u,\rho_n) - \E_\e(v_n,\rho_n) \\
\leq \int_\Om (\eta_\e+\rho_n^2) \nabla u \cdot (\nabla u - \nabla v_n)\, dx +\beta \int_\Om (u-g)(u-v_n)\, dx\\
= -\int_\Om (u-v_n)\big( {\rm div}((\eta_\e +\rho_n^2)\nabla u) -\beta(u-g)\big) \, dx\,.
\end{multline*}
Note that in the last equality, there is no boundary term since $\frac{\partial u}{\partial \nu}=0$ in $H^{1/2}(\partial \Om)$. Moreover, since $u \in H^2(\Om)$ and
$\rho_n \in W^{1,p}(\Om)$, we have ${\rm div}((\eta_\e +\rho_n^2)\nabla u) \in L^2(\Om)$. Applying Cauchy-Schwarz Inequality we obtain 
$$\frac{\E_\e(u,\rho) - \E_\e(v_n,\rho_n) }{\|v_n - u\|_{L^2(\Om)}} \leq \|{\rm div}((\eta_\e +\rho_n^2)\nabla u)-\beta(u-g) \|_{L^2(\Om)}\,.$$
Since $H^2(\Om)\hookrightarrow W^{1,r}(\Om)$ for every $r\leq 2^*$ and $\rho_n\to \rho$ strongly in $W^{1,p}(\Om)$, we get that 
$${\rm div}((\eta_\e+\rho_n^2)\nabla u) = (\eta_\e+\rho_n^2) \Delta u + 2\rho_n \nabla \rho_n \cdot \nabla u
\mathop{\longrightarrow}\limits_{n\to\infty}  (\eta_\e+\rho^2) \Delta u + 2\rho \nabla \rho \cdot \nabla u ={\rm div}((\eta_\e+\rho^2)\nabla u)
$$
strongly in $L^2(\Om)$. Hence
$$\lim_{n \to \infty}\frac{\E_\e(u,\rho) - \E_\e(v_n,\rho_n) }{\|v_n - u\|_{L^2(\Om)}} \leq \|{\rm div}((\eta_\e +\rho^2)\nabla u)-\beta(u-g) \|_{L^2(\Om)}.$$
Together with \eqref{1700}, this last estimate gives the desired upper bound for the slope $|\partial \E_\e|(u,\rho)$.

\vskip5pt

\noindent{\it Step 3.} Let $(u,\rho)\in D(|\partial \E_\e|)$. By the previous steps, $\tilde f:=-{\rm div}((\eta_\e +\rho^2)\nabla u)+\beta(u-g)\in L^{2}(\Om)$ and  
$u$ solves \eqref{eqellipt} with $f=\tilde f -\beta(u-g)$. Applying Lemma \ref{ellipreg} we find that 
\begin{align*}
\|u\|_{H^2(\Om)} & \leq  C_\e (1+\|\nabla\rho\|_{L^p(\Om;\RR^N)})^{\gamma} \big(\|f\|_{L^2(\Om)}+\|u\|_{H^1(\Om)}\big) \\
& \leq  C_\e (1+\|\nabla\rho\|_{L^p(\Om;\RR^N)})^{\gamma} \big(|\partial \E_\e|(u,\rho)+ \beta\|u-g\|_{L^2(\Om)}+\|u\|_{H^1(\Om)}\big)\,,
\end{align*}
and the proof is complete. 
\end{proof}

The expression of the slope and the characterization of its domain provided by Proposition \ref{slope+domain} enables one to show the lower semicontinuity  of $|\partial\E_\e|$ along sequences with uniformly bounded energy.

\begin{proposition}\label{scislope}
Assume that  $\Om$ has a $\C^{1,1}$-boundary. Let $\{(u_n,\rho_n)\}_{n\in\mathbb{N}}\subset L^2(\Om)\times L^p(\Om)$ be such that 
$\sup_{n\in \NN} \E_\e(u_n,\rho_n)<\infty$ and $(u_n,\rho_n)\to (u,\rho)$ strongly in $L^2(\Om)\times L^p(\Om)$. Then, 
$$|\partial \E_\e|(u,\rho)\leq \liminf_{n\to\infty} |\partial \E_\e|(u_n,\rho_n)\,.$$
\end{proposition}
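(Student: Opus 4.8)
The plan is to argue by lower semicontinuity after a preliminary compactness step resting on the characterisation of Proposition~\ref{slope+domain}. We may assume, passing to a subsequence realising $\ell:=\liminf_{n}|\partial\E_\e|(u_n,\rho_n)$, that $\ell<\infty$ (otherwise there is nothing to prove) and that $|\partial\E_\e|(u_n,\rho_n)<\infty$ for every $n$, with $|\partial\E_\e|(u_n,\rho_n)\to\ell$. By Proposition~\ref{slope+domain}, each $(u_n,\rho_n)$ then lies in $D(|\partial\E_\e|)$, so that $u_n\in H^2(\Om)$ with $\partial u_n/\partial\nu=0$ in $H^{1/2}(\partial\Om)$, the unilateral minimality $\E_\e(u_n,\rho_n)\leq\E_\e(u_n,\hat\rho)$ holds for every $\hat\rho\in W^{1,p}(\Om)$ with $\hat\rho\leq\rho_n$ in $\Om$, and $|\partial\E_\e|(u_n,\rho_n)=\|{\rm div}((\eta_\e+\rho_n^2)\nabla u_n)-\beta(u_n-g)\|_{L^2(\Om)}$. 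Since $\sup_n\E_\e(u_n,\rho_n)<\infty$, the sequences $\{\nabla u_n\}$ and $\{\rho_n\}$ are bounded in $L^2(\Om;\RR^N)$ and $W^{1,p}(\Om)$ respectively; as $u_n\to u$ in $L^2(\Om)$, $\{u_n\}$ is bounded in $H^1(\Om)$ and $\{u_n-g\}$ is bounded in $L^2(\Om)$. Feeding these bounds together with $|\partial\E_\e|(u_n,\rho_n)\leq\ell+1$ into the $H^2$-estimate of Proposition~\ref{slope+domain} yields $\sup_n\|u_n\|_{H^2(\Om)}<\infty$. Passing to a further subsequence (not relabelled), I then obtain $u_n\rightharpoonup u$ weakly in $H^2(\Om)$ and, by the compact embedding $H^2(\Om)\hookrightarrow H^1(\Om)$, $u_n\to u$ strongly in $H^1(\Om)$; likewise $\rho_n\rightharpoonup\rho$ weakly in $W^{1,p}(\Om)$ and, since $p>N$, $\rho_n\to\rho$ uniformly on $\overline\Om$ (the limits being the given $(u,\rho)$ by uniqueness of $L^2\times L^p$-limits).

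The second step is to check that $(u,\rho)\in D(|\partial\E_\e|)$. The membership $u\in H^2(\Om)$ and the Neumann condition $\partial u/\partial\nu=0$ in $H^{1/2}(\partial\Om)$ pass to the weak $H^2$-limit, the normal-trace operator being weakly continuous from $H^2(\Om)$ into $H^{1/2}(\partial\Om)$. For the unilateral minimality of $\rho$, I fix $\hat\rho\in W^{1,p}(\Om)$ with $\hat\rho\leq\rho$ in $\Om$ and set $\hat\rho_n:=\hat\rho-\|\rho-\rho_n\|_{\C^0(\overline\Om)}$, so that $\hat\rho_n\in W^{1,p}(\Om)$, $\hat\rho_n\leq\rho_n$ in $\Om$, and $\hat\rho_n\to\hat\rho$ strongly in $W^{1,p}(\Om)$. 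Starting from $\E_\e(u_n,\rho_n)\leq\E_\e(u_n,\hat\rho_n)$, I let $n\to\infty$ using the lower semicontinuity inequality \eqref{liminfineqenerg} of Lemma~\ref{existsch1} on the left-hand side (valid since $u_n\rightharpoonup u$ weakly in $H^1(\Om)$ and $\rho_n\rightharpoonup\rho$ weakly in $W^{1,p}(\Om)$) and the elementary continuity of $\E_\e$ along sequences converging strongly in $H^1(\Om)\times W^{1,p}(\Om)$ on the right-hand side (applicable since $(u_n,\hat\rho_n)\to(u,\hat\rho)$ strongly there), obtaining $\E_\e(u,\rho)\leq\E_\e(u,\hat\rho)$. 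By Proposition~\ref{slope+domain}, this gives $(u,\rho)\in D(|\partial\E_\e|)$ together with $|\partial\E_\e|(u,\rho)=\|{\rm div}((\eta_\e+\rho^2)\nabla u)-\beta(u-g)\|_{L^2(\Om)}$.

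It then remains to pass to the limit in the divergence term. Decomposing ${\rm div}((\eta_\e+\rho_n^2)\nabla u_n)=(\eta_\e+\rho_n^2)\Delta u_n+2\rho_n\nabla\rho_n\cdot\nabla u_n$, the first summand converges to $(\eta_\e+\rho^2)\Delta u$ weakly in $L^2(\Om)$, being the product of $\eta_\e+\rho_n^2\to\eta_\e+\rho^2$ in $\C^0(\overline\Om)$ with $\Delta u_n\rightharpoonup\Delta u$ weakly in $L^2(\Om)$. For the second summand I use crucially that $p>N$: this gives the embedding $H^2(\Om)\hookrightarrow W^{1,2p/(p-2)}(\Om)$, whence, by Hölder's inequality, $\{\rho_n\nabla\rho_n\cdot\nabla u_n\}$ is bounded in $L^2(\Om)$ (using $\sup_n\|\rho_n\|_{L^\infty(\Om)}<\infty$, $\{\nabla\rho_n\}$ bounded in $L^p(\Om;\RR^N)$ and $\{\nabla u_n\}$ bounded in $L^{2p/(p-2)}(\Om;\RR^N)$); extracting a further subsequence (not relabelled) it converges weakly in $L^2(\Om)$, and its limit is identified as $\rho\nabla\rho\cdot\nabla u$ by testing against $\varphi\in\C^\infty_c(\Om)$ and combining $\rho_n\nabla u_n\to\rho\nabla u$ strongly in $L^{p'}(\Om;\RR^N)$ (the product of the uniform convergence of $\rho_n$ with the strong $H^1$, hence strong $L^2\hookrightarrow L^{p'}$, convergence of $u_n$) with $\nabla\rho_n\rightharpoonup\nabla\rho$ weakly in $L^p(\Om;\RR^N)$. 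Consequently ${\rm div}((\eta_\e+\rho_n^2)\nabla u_n)-\beta(u_n-g)\rightharpoonup{\rm div}((\eta_\e+\rho^2)\nabla u)-\beta(u-g)$ weakly in $L^2(\Om)$, and weak lower semicontinuity of the $L^2(\Om)$-norm gives
\begin{multline*}
|\partial\E_\e|(u,\rho)=\|{\rm div}((\eta_\e+\rho^2)\nabla u)-\beta(u-g)\|_{L^2(\Om)}\\
\leq\liminf_{n\to\infty}\|{\rm div}((\eta_\e+\rho_n^2)\nabla u_n)-\beta(u_n-g)\|_{L^2(\Om)}=\ell\,,
\end{multline*}
which is the desired inequality. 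I expect the handling of the nonlinear term $2\rho_n\nabla\rho_n\cdot\nabla u_n$ to be the main obstacle: it is the only place where the $H^2$-regularity furnished by Proposition~\ref{slope+domain} and the restriction $p>N$ are genuinely needed — the former to make the product meaningful in $L^2(\Om)$, the latter both for the Sobolev embedding securing its $L^2$-bound and for the weak--strong pairing identifying its weak limit.
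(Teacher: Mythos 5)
Your proof is correct and follows the same overall route as the paper's: compactness via the characterisation and $H^2$-estimate of Proposition~\ref{slope+domain}, passage to the limit in the decomposition ${\rm div}((\eta_\e+\rho_n^2)\nabla u_n)=(\eta_\e+\rho_n^2)\Delta u_n+2\rho_n\nabla\rho_n\cdot\nabla u_n$, and weak lower semicontinuity of the $L^2(\Om)$-norm. The one place where you genuinely deviate is the verification that $\rho$ minimises $\E_\e(u,\cdot)$ among $\hat\rho\leq\rho$: the paper re-runs the $\Gamma$-convergence argument of Proposition~\ref{minrhoeps} for the constrained functionals $\mathcal F_k$ and identifies $\rho$ as the limit of the minimisers $\rho_{n_k}$, whereas you translate the obstacle, $\hat\rho_n:=\hat\rho-\|\rho-\rho_n\|_{\C^0(\overline\Om)}\leq\rho_n$, and pass to the limit in $\E_\e(u_n,\rho_n)\leq\E_\e(u_n,\hat\rho_n)$ using \eqref{liminfineqenerg} on the left and the continuity of $\E_\e$ under strong $H^1(\Om)\times W^{1,p}(\Om)$-convergence on the right. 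Your argument is more elementary and is legitimate here precisely because the uniform $H^2$-bound already yields strong $H^1(\Om)$-convergence of $u_n$, so no concentration measure $\mu$ can appear on the left-hand side; the $\Gamma$-convergence formulation is the one that survives when only weak $H^1(\Om)$-convergence is available, which is why the paper uses it systematically. Either way one gets $(u,\rho)\in D(|\partial\E_\e|)$ and the representation of the slope as $\|{\rm div}((\eta_\e+\rho^2)\nabla u)-\beta(u-g)\|_{L^2(\Om)}$, and your detailed treatment of the weak $L^2(\Om)$-limit of $2\rho_n\nabla\rho_n\cdot\nabla u_n$ (H\"older with $H^2(\Om)\hookrightarrow W^{1,2p/(p-2)}(\Om)$, valid since $p>N$, plus the weak--strong pairing) is exactly what the paper leaves implicit.
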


\begin{proof}
Let us assume without loss of generality that $\liminf_n |\partial \E_\e|(u_n,\rho_n)<\infty$, and extract a subsequence $\{n_k\}$ such that
$$\liminf_{n\to\infty} |\partial \E_\e|(u_n,\rho_n)=\lim_{k\to\infty} |\partial \E_\e|(u_{n_k},\rho_{n_k})\,.$$
Since $\E_\e(u_{n_k},\rho_{n_k})$ is uniformly bounded with respect to $k$, we deduce that the sequence $\{(u_{n_k},\rho_{n_k})\}$ is uniformly bounded in $H^1(\Om)\times W^{1,p}(\Om)$. Moreover $(u_{n_k},\rho_{n_k}) \in D(|\partial \E_\e|)$, and as a consequence of Proposition \ref{slope+domain}, we deduce that $\{u_{n_k}\}$ is uniformly bounded in $H^2(\Om)$, and that $\frac{\partial u_{n_k}}{\partial \nu}=0$ in $H^{1/2}(\partial\Om)$. Whence $\rho_{n_k}Ê\wto \rho$ weakly in $W^{1,p}(\Om)$, $u_{n_k} \wto u$ weakly in $H^2(\Om)$ for a (not relabeled) subsequence, and $\frac{\partial u}{\partial \nu}=0$ in $H^{1/2}(\partial\Om)$. By the Sobolev Imbedding we get that $\rho_{n_k} \to \rho$ in $\mathscr{C}^0(\overline \Om)$, while $u_{n_k}Ê\to u$ strongly in $H^1(\Om)$.  
Thanks to the uniform convergence of $\rho_{n_k}$ to $\rho$, we may  argue as in the proof of  Proposition \ref{minrhoeps} (Step 1) to show that the  sequence of functionals $\mathcal F_k : W^{1,p}(\Om) \to [0,+\infty]$ defined by
\begin{equation}\label{defF_k}
\mathcal F_k(\hat \rho):=
\begin{cases}
\E_\e(u_{n_k},\hat \rho) & \text{ if }\hat \rho \leq \rho_{n_k}\,,\\
+\infty & \text{ otherwise}\,,
\end{cases}
\end{equation}
$\Gamma$-converges (with respect to the sequential weak $W^{1,p}(\Om)$-topology) to the functional $\mathcal F : W^{1,p}(\Om) \to [0,+\infty]$ given by
\begin{equation}\label{calF}
\mathcal F(\hat \rho):=
\begin{cases}
\E_\e(u,\hat \rho) & \text{ if }\hat \rho \leq \rho\,,\\
+\infty & \text{ otherwise}\,.
\end{cases}
\end{equation}

Since 
$$\rho_{n_k} = \mathop{\rm argmin}\limits_{\hat \rho \in W^{1,p}(\Om)}\mathcal F_k(\hat \rho)\,,$$
and $\rho_{n_k} \wto \rho$ weakly in $W^{1,p}(\Om)$, we infer from the $\Gamma$-convergence of $\mathcal F_k$ toward $\mathcal F$ that 
$$\rho = \mathop{\rm argmin}\limits_{\hat \rho \in W^{1,p}(\Om)}\mathcal F(\hat \rho)\,.$$
By the expression of the domain of the slope provided by Proposition \ref{slope+domain}, we infer that $(u,\rho) \in D(|\partial \E_\e|)$.  From the established convergences of $(u_{n_k},\rho_{n_k})$ we deduce that
\begin{multline*}
{\rm div} ((\eta_\e+\rho_{n_k}^2)\nabla u_{n_k}) = (\eta_\e + \rho_{n_k}^2) \Delta u_{n_k} + 2\rho_{n_k}Ê\nabla \rho_{n_k}Ê\cdot \nabla u_{n_k} \\
\wto (\eta_\e + \rho^2) \Delta u + 2\rhoÊ\nabla \rhoÊ\cdot \nabla u ={\rm div} ((\eta_\e+\rho^2)\nabla u)
\end{multline*}
weakly in $L^2(\Om)$. Using now the expression of the slope given by Proposition \ref{slope+domain}, we conclude 
\begin{multline*}
\liminf_{n\to\infty} |\partial \E_\e|(u_n,\rho_n)=\lim_{k\to\infty} |\partial \E_\e|(u_{n_k},\rho_{n_k})\\
=\lim_{k\to\infty} \| {\rm div} ((\eta_\e+\rho_{n_k}^2)\nabla u_{n_k}) - \beta (u_{n_k} -g)\|_{L^2(\Om)}\\
\geq \| {\rm div} ((\eta_\e+\rho^2)\nabla u) - \beta (u -g)\|_{L^2(\Om)}=|\partial \E_\e|(u,\rho)\,,
\end{multline*}
which ends the proof. 
\end{proof}

For completeness (and possible future investigations), we finally prove that the energy is continuous along convergent sequences with uniformly bounded slope.

\begin{proposition}
Assume that  $\Om$ has a $\C^{1,1}$-boundary. Let $\{(u_n,\rho_n)\}_{n\in\mathbb{N}}\subset L^2(\Om)\times L^p(\Om)$ be such that 
$$\sup_{n\in \NN} \{\E_\e(u_n,\rho_n)+ |\partial \E_\e|(u_n,\rho_n)\}<\infty\,,$$
and $(u_n,\rho_n)\to (u,\rho)$ strongly in $L^2(\Om)\times L^p(\Om)$. Then $\E_\e(u_n,\rho_n)\to \E_\e(u,\rho)$ as $n\to\infty$. 
\end{proposition}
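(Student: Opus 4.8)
\begin{proof}
By Lemma \ref{existsch1}, the uniform energy bound $\sup_n\E_\e(u_n,\rho_n)<\infty$ implies that $\{(u_n,\rho_n)\}$ is bounded in $H^1(\Om)\times W^{1,p}(\Om)$; combined with the strong convergence $(u_n,\rho_n)\to(u,\rho)$ in $L^2(\Om)\times L^p(\Om)$, we deduce that $(u_n,\rho_n)\wto(u,\rho)$ weakly in $H^1(\Om)\times W^{1,p}(\Om)$ (any weakly convergent subsequence must have $(u,\rho)$ as limit). Lemma \ref{existsch1} then yields $\E_\e(u,\rho)\leq\liminf_{n\to\infty}\E_\e(u_n,\rho_n)$, so it only remains to prove that $\limsup_{n\to\infty}\E_\e(u_n,\rho_n)\leq\E_\e(u,\rho)$.

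By the uniform bound on the slope and Proposition \ref{slope+domain}, each pair $(u_n,\rho_n)$ belongs to $D(|\partial\E_\e|)$. In particular $\{u_n\}$ is bounded in $H^2(\Om)$, $\frac{\partial u_n}{\partial\nu}=0$ in $H^{1/2}(\partial\Om)$, and $\rho_n$ enjoys the minimality property $\E_\e(u_n,\rho_n)\leq\E_\e(u_n,\hat\rho)$ for every $\hat\rho\in W^{1,p}(\Om)$ with $\hat\rho\leq\rho_n$ in $\Om$; equivalently, $\rho_n=\mathop{\rm argmin}_{\hat\rho\in W^{1,p}(\Om)}\mathcal F_n(\hat\rho)$ and $\E_\e(u_n,\rho_n)=\min_{W^{1,p}(\Om)}\mathcal F_n$, where $\mathcal F_n$ is the functional defined as in \eqref{defF_k} with the obstacle $\rho_{n_k}$ replaced by $\rho_n$. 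Up to extracting a (not relabeled) subsequence, $u_n\wto u$ weakly in $H^2(\Om)$ and $\rho_n\wto\rho$ weakly in $W^{1,p}(\Om)$ (the limits being identified by the strong $L^2(\Om)\times L^p(\Om)$ convergence), hence $u_n\to u$ strongly in $H^1(\Om)$ by the Rellich--Kondrachov theorem and $\rho_n\to\rho$ uniformly in $\overline\Om$ by the compact imbedding $W^{1,p}(\Om)\hookrightarrow\mathscr{C}^0(\overline\Om)$. Moreover, $\liminf_n|\partial\E_\e|(u_n,\rho_n)<\infty$ together with Proposition \ref{scislope} gives $(u,\rho)\in D(|\partial\E_\e|)$, so that by Proposition \ref{slope+domain} (and strict convexity of $\E_\e(u,\cdot)$) we have $\rho=\mathop{\rm argmin}_{\hat\rho\in W^{1,p}(\Om)}\mathcal F(\hat\rho)$ and $\E_\e(u,\rho)=\min_{W^{1,p}(\Om)}\mathcal F$, with $\mathcal F$ as in \eqref{calF}.

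Arguing exactly as in the proof of Proposition \ref{scislope} (which only uses the uniform convergence of the obstacles and the strong $H^1(\Om)$-convergence of the first arguments), the functionals $\mathcal F_n$ $\Gamma$-converge, with respect to the sequential weak $W^{1,p}(\Om)$-topology, to $\mathcal F$. Since the sublevel sets of $\mathcal F_n$ are relatively compact for the sequential weak $W^{1,p}(\Om)$-topology uniformly in $n$, the fundamental theorem of $\Gamma$-convergence yields
$$
\E_\e(u_n,\rho_n)=\min_{W^{1,p}(\Om)}\mathcal F_n\mathop{\longrightarrow}\limits_{n\to\infty}\min_{W^{1,p}(\Om)}\mathcal F=\E_\e(u,\rho)\,.
$$
As every subsequence of the original sequence admits a further subsequence along which this convergence holds with the same limit $\E_\e(u,\rho)$, the whole sequence $\{\E_\e(u_n,\rho_n)\}$ converges to $\E_\e(u,\rho)$, which completes the proof.
\end{proof}

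The only delicate point is the term $\int_\Om|\nabla\rho_n|^p\,dx$: weak $W^{1,p}(\Om)$-convergence of $\rho_n$ alone gives lower semicontinuity in the wrong direction, and it is precisely the minimality property encoded in $(u_n,\rho_n)\in D(|\partial\E_\e|)$ — turned into convergence of minima through $\Gamma$-convergence of the obstacle functionals $\mathcal F_n$ — that upgrades this to convergence of the surface energies, and hence of the full energies.
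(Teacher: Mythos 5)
Your proof is correct and follows essentially the same route as the paper: both identify $\E_\e(u_n,\rho_n)$ as the minimum of the obstacle functionals $\mathcal F_n$, upgrade the convergence of $(u_n,\rho_n)$ via the $H^2$-bound from Proposition \ref{slope+domain}, and conclude by $\Gamma$-convergence of $\mathcal F_n$ to $\mathcal F$ together with convergence of the minimum values. The additional lower-semicontinuity step and the subsequence-extraction argument at the end are harmless elaborations of what the paper leaves implicit.
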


\begin{proof}
Arguing as in the proof of Proposition \ref{scislope}, we have $u_n \wto u$ weakly in $H^2(\Om)$ and $\rho_n \wto \rho$ weakly in $W^{1,p}(\Om)$ with $(u,\rho) \in D(|\partial \E_\e|)$. By the Sobolev Imbedding, $\rho_n \to \rho$  in $\mathscr{C}^0(\overline \Om)$ and $u_n \to u$ strongly in $H^1(\Om)$. Hence the functional $\mathcal F_n : W^{1,p}(\Om) \to [0,+\infty]$ defined by \eqref{defF_k} (with $n$ in place of $n_k$) $\Gamma$-converges (with respect to the sequential weak $W^{1,p}(\Om)$-topology) to the functional $\mathcal F : W^{1,p}(\Om) \to [0,+\infty]$ given by \eqref{calF}. By the convergence of the minimum values, we infer that
$$\E_\e(u_n,\rho_n) =\min_{W^{1,p}(\Om)}\mathcal F_n \,\mathop{\longrightarrow}\limits_{n\to\infty} \,\min_{W^{1,p}(\Om)}\mathcal F =\E_\e(u,\rho)\,,$$
and the proposition is proved.
\end{proof}

\subsection{Existence of curve of unilateral maximal slope}

The main result of this section asserts that (under a mild assumption on $\partial\Om$) any unilateral minimizing movement is actually a curve of maximal unilateral slope for the Ambrosio-Tortorelli functional. For simplicity, we shall use again assumption \eqref{condGUAMM} on discrete trajectories. We refer to \cite{BM} for the general case.

\begin{theorem}\label{cms}
Assume that $\Om$ has a $\C^{1,1}$ boundary, and let  $(u_\e,\rho_\e) \in GUMM(u_0,\rho_0^\e)$. If $(u_\e,\rho_\e)$ is a strong $L^2(\Om)\times L^p(\Om)$-limit of some discrete trajectories $\{(u_k,\rho_k)\}_{k\in\NN}$ obtained from a sequence of partitions $\{\bdel_k\}_{k\in\NN}$ of $[0,+\infty)$ satisfying $|\bdel_k|\to 0$ and \eqref{condGUAMM}, then the mapping $(u_\e,\rho_\e):[0,+\infty)\to L^2(\Om)\times L^p(\Om)$ is a curve of maximal unilateral slope for $\E_\e$.
\end{theorem}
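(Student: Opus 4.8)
The goal is to show that $(u_\e,\rho_\e)$, constructed in Section~\ref{sec4}, satisfies Definition~\ref{cms2}. From Theorem~\ref{thmGUMM} we already know $u_\e\in AC^2([0,+\infty);L^2(\Om))$, that $\rho_\e$ is non-increasing, and that the total energy $t\mapsto\E_\e(u_\e(t),\rho_\e(t))$ has finite pointwise variation, so it is a natural candidate for the function $\lambda$. The plan is to set $\lambda(t):=\E_\e(u_\e(t),\rho_\e(t))$ away from its (at most countable) discontinuity set $\mathcal N_\e$, and to verify the differential inequality \eqref{lambda} at a.e.\ $t$. Since $\lambda$ has finite pointwise variation, it is differentiable a.e.; at such a point $t$ I would write $\lambda'(t)$ as the limit of the difference quotients $(\lambda(t)-\lambda(s))/(s-t)$ as $s\downarrow t$, using only right-hand values to exploit the constraint $\rho_\e(s)\le\rho_\e(t)$.

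\textbf{Key steps.} First, reduce to differentiability points of $\lambda$ and $u_\e$ that lie outside $\mathcal N_\e$, a set of full measure, where additionally $(u_\e(t),\rho_\e(t))\in D(|\partial\E_\e|)$ by the minimality \eqref{equa2} and the $H^2$-regularity in Theorem~\ref{thmGUMM} combined with Proposition~\ref{slope+domain}. Second, establish the \emph{upper} bound $\lambda'(t)\le -\|u_\e'(t)\|^2_{L^2(\Om)}$: this is essentially the infinitesimal (differential) form of the Lyapunov inequality \eqref{equa3}; at a continuity point $s$ of the energy, \eqref{equa3} gives $\lambda(t)-\lambda(s)\ge\int_s^t\|u_\e'(r)\|^2\,dr$ for $s\le t$, and dividing by $t-s$ and letting $s\uparrow t$ along continuity points yields $-\lambda'(t)\ge\|u_\e'(t)\|^2$. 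Third — the crucial step — prove the \emph{lower} bound $-\lambda'(t)\le |\partial\E_\e|(u_\e(t),\rho_\e(t))\,\|u_\e'(t)\|_{L^2(\Om)}$: for $s>t$ use $\rho_\e(s)\le\rho_\e(t)$ to estimate
$$\lambda(t)-\lambda(s)=\E_\e(u_\e(t),\rho_\e(t))-\E_\e(u_\e(s),\rho_\e(s))\le\sup_{\hat\rho\le\rho_\e(t)}\big(\E_\e(u_\e(t),\rho_\e(t))-\E_\e(u_\e(s),\hat\rho)\big)^+,$$
then multiply and divide by $\|u_\e(s)-u_\e(t)\|_{L^2(\Om)}$, pass $s\downarrow t$, and recognize the first factor as (a quantity bounded by) $|\partial\E_\e|(u_\e(t),\rho_\e(t))$ via Definition~\ref{1739} and the second as $\|u_\e'(t)\|_{L^2(\Om)}$ by strong differentiability in $L^2$. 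Fourth, combine: using the explicit formula $|\partial\E_\e|(u_\e(t),\rho_\e(t))=\|\mathrm{div}((\eta_\e+\rho_\e^2)\nabla u_\e)-\beta(u_\e-g)\|_{L^2(\Om)}=\|u_\e'(t)\|_{L^2(\Om)}$ (the last equality from the heat equation \eqref{equa1}), the upper bound reads $-\lambda'(t)\ge\|u_\e'(t)\|^2=\tfrac12\|u_\e'(t)\|^2+\tfrac12|\partial\E_\e|^2(u_\e(t),\rho_\e(t))$, which is exactly \eqref{lambda}. Fifth, a minor point: ensure $\lambda$ can be taken non-increasing and defined on all of $(0,+\infty)$; since $t\mapsto\E_\e(u_\e(t),\rho_\e(t))$ is the sum of the non-increasing $\mathfrak B_\e$ and the non-decreasing $\mathfrak S_\e$, it need not be monotone a priori, but \eqref{equa3} shows it is non-increasing wherever the energy inequality applies, and one adjusts on the countable bad set.

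\textbf{Main obstacle.} The delicate point is the lower bound (third step): matching the difference quotient of the energy against the \emph{unilateral} slope requires that the competitor $\rho$ in the definition of $|\partial\E_\e|$ be allowed to vary below $\rho_\e(t)$, and the direction of monotonicity ($\rho_\e(s)\le\rho_\e(t)$ for $s>t$) must be used carefully — one can only test with right-hand increments $s>t$, so strictly speaking one controls only the right upper Dini derivative, but since $\lambda$ is of bounded variation and differentiable a.e., the right derivative agrees with $\lambda'$ a.e., which suffices. A second, more technical annoyance is confirming $(u_\e(t),\rho_\e(t))\in D(|\partial\E_\e|)$ for a.e.\ $t$, i.e.\ that the Neumann condition and the obstacle-minimality \eqref{equa2} hold pointwise together with $u_\e(t)\in H^2$; all three are supplied by Theorem~\ref{thmGUMM}, Corollary~\ref{contH1ue}, Proposition~\ref{minrhoeps}, and Proposition~\ref{slope+domain}, so this is a matter of assembling the already-proven facts on a common full-measure set. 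Once these are in place the argument closes exactly as in Proposition~\ref{prop:identmodvit} read in reverse.
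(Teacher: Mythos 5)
Your proposal is correct and follows essentially the same route as the paper: take $\lambda$ to be (a non-increasing version of) $t\mapsto\E_\e(u_\e(t),\rho_\e(t))$, differentiate the Lyapunov inequality \eqref{equa3} to get $-\lambda'(t)\ge\|u'_\e(t)\|^2_{L^2(\Om)}$, and identify $|\partial\E_\e|(u_\e(t),\rho_\e(t))=\|u'_\e(t)\|_{L^2(\Om)}$ for a.e.\ $t$ via Proposition~\ref{slope+domain} together with the heat equation \eqref{equa1}, so that $\|u'_\e\|^2=\tfrac12\|u'_\e\|^2+\tfrac12|\partial\E_\e|^2$ gives \eqref{lambda}. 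The paper obtains the non-increasing $\lambda$ slightly more directly as the Helly limit of the discrete energies $\E_\e(u_k(t),\rho_k(t))$; your Step 3 is redundant once the explicit slope formula is invoked, and there is a sign slip in the displayed inequality of Step 2 (\eqref{equa3} gives $\lambda(s)-\lambda(t)\ge\int_s^t\|u'_\e\|^2\,dr$), though the conclusion you draw from it is the correct one.
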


\begin{proof}
Let us define for each $k\in\NN$ and $t\geq 0$, $\lambda_k(t):=\E_\e(u_k(t),\rho_k(t))$. By Lemma \ref{inegenergdiscr} the function $\lambda_k:[0,+\infty)\to [0,+\infty)$ is non-increasing and bounded  uniformly with respect to $k$. By Helly's Theorem for monotone functions we can find a (not relabeled) subsequence of $\{k_n\}$ such that 
$$\lambda_{k_n}(t)\mathop{\longrightarrow}\limits_{n\to\infty} \lambda(t)\quad \text{for every $t\geq 0$}\,, $$
for some non-increasing function $\lambda:[0,+\infty)\to [0,+\infty)$. Then we infer from Lemma \ref{convforte} and Proposition~\ref{minrhoeps} that 
$$\lambda(t)=\E_\e(u_\e(t),\rho_\e(t))\quad\text{for every $t\in [0,+\infty)\setminus \mathcal B_\e$}\,. $$
Repeating the proof of Proposition \ref{energyineq} yields for any $0 \leq s \leq t$,
$$\lambda(t) + \int_{s}^{t}\|u'_\e(r)\|^2_{L^2(\Om)}\, dr \leq \lambda(s)\,.$$
According to Propositions  \ref{convdiv}, \ref{minrhoeps} and \ref{slope+domain}, we have $|\partial \E_\e|(u_\e(r),\rho_\e(r))=\|u'_\e(r)\|_{L^2(\Om)}$ for a.e. $r\geq 0$. Consequently,
if $t$ is a point of derivability of $\lambda$,
$$\lambda(t) -\lambda(s) \leq - \frac12 \int_{s}^{t}\|u'_\e(r)\|^2_{L^2(\Om)}\, dr- \frac12  \int_{s}^{t}|\partial \E_\e|^2(u_\e(r),\rho_\e(r))\, dr\, ,$$
and the conclusion follows by dividing the previous inequality by $t-s>0$ and sending $s \to t$.
\end{proof}

\begin{remark}\label{cms3}
It turns out that any curve $(u_\e,\rho_\e) :[0,+\infty) \to L^2(\Om) \times L^p(\Om)$ such that $u_\e \in L^\infty(0,+\infty;H^1(\Om)) \cap AC^2(0,+\infty;L^2(\Om))$,  $\rho_\e \in L^\infty(0,+\infty;W^{1,p}(\Om))$, $t \mapsto \rho_\e(t)$ non-increasing, and  satisfying \eqref{equa1}-\eqref{equa2}-\eqref{equa3} is a curve of unilateral maximal slope.
\end{remark}

\begin{remark}[\bf Non-uniqueness for the system of PDE's]
Let us consider a given curve of unilateral maximal slope $(u_\e,\rho_\e)$. We are going to construct a different solution  $(\tilde u_\e,\tilde \rho_\e)$ from $(u_\e,\rho_\e)$ of system \eqref{equa1}-\eqref{equa2}. 
To this aim, let us assume without loss of generality that $t=1$ is a point of continuity of $t \mapsto \E_\e(u_\e(t),\rho_\e(t))$, and that $\rho_\e(1)\not\equiv 0$. Choose $\hat \rho_1^\e\in W^{1,p}(\Omega)$ such that $0\leq \hat \rho_1^\e\leq \rho_\e(1)$ and different from $\rho_\e(1)$. 
Denote by $\rho_1^\e$ the unique solution of
$$\min\left\{ \E_\e(u_\e(1),\rho) : \rho \in W^{1,p}(\Om), \quad\rho \leq \hat \rho_1^\e \text{ in }Ê\Om\right\},$$
and let $(v_\e,\sigma_\e) \in GUMM(u_\e(1),\rho_1^\e)$. Considering the new curve $(\tilde u_\e,\tilde \rho_\e)$ defined by  
$$(\tilde u_\e(t),\tilde \rho_\e(t))=
\begin{cases}
(u_\e(t),\rho_\e(t)) & \text{ if }  0 \leq t <1\,,\\
(v_\e(t-1),\sigma_\e(t-1)) & \text{ if }Êt \geq 1\,,
\end{cases}
$$
we have
$$\tilde u_\e\in AC^2([0,+\infty);L^2(\Om))\cap L^\infty(0,+\infty;H^1(\Om))\, ,$$
$$\tilde \rho_\e\in L^\infty(0,+\infty;W^{1,p}(\Om))\,,\;\text{  $0\leq \tilde \rho_\e(t)\leq \tilde \rho_\e(s)\leq 1$ for every $t\geq s\geq 0$}\,,$$
and $(\tilde u_\e,\tilde \rho_\e)$ solves the system \eqref{equa1}-\eqref{equa2}. However, one can check that energy inequality \eqref{equa3} fails. In particular it is not a curve of maximal unilateral slope.
\end{remark}

\subsection{Relation with the unilateral slope of the Mumford-Shah functional}\label{connDalM}

  In \cite{DMT}, a notion of unilateral slope of the Mumford-Shah functional has been introduced. In that paper, the Mumford-Shah energy is slightly different from the one we consider here (see~\eqref{defMSfunct}). It is rather given on pairs $(u,K)$ by 
$$\mathcal{E}_*(u,K):=\frac{1}{2}\int_{\Om}  |\nabla u|^2\,dx+\mathscr{H}^{N-1}(K)+\frac{\beta}{2}\int_\Om (u-g)^2\,dx\,, $$
where $u\in SBV^2(\Om)$ and $K$ is a subset of $\Om$ 
satisfying $\mathscr{H}^{N-1}(K)<\infty$ and $J_u\;\widetilde\subset\; K$. The related unilateral slope of $\E_*$ is then given by  
$$|\partial \E_*|(u,K):= \limsup_{v \to u \text{ in }L^2(\Om)}Ê\frac{(\E_* (u,K) - \E_*(v,K \cup J_v))^+}{\|v-u\|_{L^2(\Om)}}\,.$$
In \cite{DMT}, the authors proved that if $|\partial \E_*|(u,K)<\infty$, then ${\rm div} (\nabla u) \in L^2(\Om)$, and that a weak form of
$$
\frac{\partial u}{\partial \nu}=0 \quad \text{on }K
$$
holds, where $\nu$ denotes a unit normal vector field on $K$. They also obtained the inequality $|\partial \E_*|(u,K) \geq \|Ê{\rm div} (\nabla u)-\beta(u-g)\|_{L^2(\Om)}$, and that equality holds if $u$ and $K$ are smooth enough. By means of an explicit counterexample, they have shown that $|\partial \E_*|$ is not lower semicontinuous for any reasonable  notion of convergence. In view of this result, they have introduced a notion of relaxed slope 
corresponding to a lower semicontinuous envelope of $|\partial\E_*|$ with respect to a suitable sequential topology. More precisely, the relaxed slope $|\overline{\partial \E}_*|$ is defined for a pair $(u,K)$ in the domain of $\E_*$ by 
$$|\overline{\partial \E_*}|(u,K):= \inf\big\{Ê\liminf_{n \to \infty}Ê|\partial \E_*|(u_n,K_n)\big\}\,,$$
where the infimum is taken over all sequences $\{(u_n,K_n)\}_{n\in\NN}$ such that $u_n \to u$ strongly in $L^2(\Om)$, $\nabla u_n \wto \nabla u$ weakly in $L^2(\Om;\RR^N)$, and $K_n$ $\sigma^2$-converges to $K$ (see \cite[Definition 4.1]{DMFT} for a precise definition). They established that if $|\overline{\partial \E}_*|(u,K)<\infty$, then there exists $f \in L^2(\Om)$ such that
\begin{equation}\label{conclDalMToad}
\begin{cases}
-{\rm div}(\nabla u)=f & \text{ in } L^2(\Om)\,,\\
|\nabla u|^2  -{\rm div}(u \nabla u) \leq fu& \text{ in } \D'(\Om)\,,\\
\nabla u \cdot \nu =0 & \text{ in } H^{-1/2}(\partial \Om)\,.
\end{cases}
\end{equation}
Again, there is an inequality $|\overline{\partial \E}_*|(u,K)\geq \|{\rm div}(\nabla u)-\beta(u-g)\|_{L^2(\Om)}$, and equality holds in some particular cases. Note that, in the case where $u$ and $K$ are smooth enough, the first line in \eqref{conclDalMToad} implies the continuity of $\frac{\partial u}{\partial \nu}$ across $K$, and the second one is then a weak reformulation of 
$$(u^+-u^-)\frac{\partial u}{\partial \nu} \geq 0 \quad \text{on $K$}\,, $$
where $u^\pm$ are the one-sided traces of $u$ on $K$ according to the orientation $\nu$. 

\vskip3pt

In our context, the analogy between the definitions of the unilateral slopes $|\partial \E_\e|$ and $|\partial \E_*|$ is quite clear, and it was actually one of the motivations to introduce $|\partial \E_\e|$. In view of the relation between the Ambrosio-Tortorelli functional and the Mumford-Shah functional in terms of $\Gamma$-convergence, a very interesting issue would be to find a precise relation between $|\overline{\partial \E}_*|$ and the asymptotic behavior as $\e\downarrow 0$ of  $|\partial \E_\e|$. Even if we do not pursue this issue here, we prove for completeness that a conclusion similar to \cite[Proposition~1.3]{DMT} holds for $|\partial \E_\e|$. For simplicity we only state the result in terms of the asymptotic limit obtained in Theorem~\ref{BabMil}. 

\begin{proposition}
Assume that $\Om$ has a $\C^{1,1}$ boundary. Let $u\in AC^2([0,+\infty);L^2(\Om))$ be the limiting curve obtained in Theorem  \ref{BabMil}. Then, for a.e. $t\geq 0$, we have
\begin{equation}\label{lwdasymptslop}
 \| {\rm div}(\nabla u(t)) -\beta (u(t)-g)\|_{L^2(\Om)}\leq \liminf_{n\to\infty}|\partial \E_{\e_n}|\big(u_{\e_n}(t),\rho_{\e_n}(t)\big)<\infty\,,
 \end{equation}
and
$$|\nabla u(t)|^2-{\rm div}\big(u(t)\nabla u(t)\big) \leq -u(t)\,{\rm div}\big(\nabla u(t)\big)\quad\text{in $\mathscr{D}'(\Om)$}\,.$$
\end{proposition}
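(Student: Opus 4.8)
The strategy is to transfer the finite-slope estimates from the discrete level (where they are available through Proposition~\ref{slope+domain}) to the limit $\e_n\to 0$, using the already established convergences of Proposition~\ref{compaciteeps} together with the energy inequality of Theorem~\ref{BabMil}. First I would recall from Theorem~\ref{cms} (applied to each $(u_{\e_n},\rho_{\e_n})$, which is a curve of maximal unilateral slope) that for a.e. $t\geq 0$ one has the identity
$$|\partial \E_{\e_n}|\big(u_{\e_n}(t),\rho_{\e_n}(t)\big)=\|u_{\e_n}'(t)\|_{L^2(\Om)}\,.$$
By a diagonal argument over a countable dense set of times, combined with the weak $L^2(0,+\infty;L^2(\Om))$-convergence $u_{\e_n}'\wto u'$ and the uniform $L^2$-bound from \eqref{estimation-energy}, one gets, for a.e. $t\geq 0$,
$$\liminf_{n\to\infty}|\partial \E_{\e_n}|\big(u_{\e_n}(t),\rho_{\e_n}(t)\big)=\liminf_{n\to\infty}\|u_{\e_n}'(t)\|_{L^2(\Om)}\geq \|u'(t)\|_{L^2(\Om)}\,,$$
using weak lower semicontinuity of the $L^2(\Om)$-norm at a.e. fixed time (after passing to a subsequence for which $\|u_{\e_n}'(t)\|_{L^2(\Om)}$ is bounded). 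Since by Proposition~\ref{limitheatequation} $u'=\operatorname{div}(\nabla u)-\beta(u-g)$ in $L^2(0,+\infty;L^2(\Om))$, this identifies the right-hand side and yields the first inequality \eqref{lwdasymptslop}; finiteness follows because $u'(t)\in L^2(\Om)$ for a.e. $t$.

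For the second (differential) inequality, I would work at the discrete Ambrosio-Tortorelli level. Fix a time $t$ (outside a negligible set) for which $(u_{\e_n}(t),\rho_{\e_n}(t))\in D(|\partial \E_{\e_n}|)$ and the slope is bounded along a subsequence; then $u_{\e_n}(t)\in H^2(\Om)$, $\partial u_{\e_n}(t)/\partial\nu=0$, and $f_{\e_n}:=-\operatorname{div}((\eta_{\e_n}+\rho_{\e_n}^2(t))\nabla u_{\e_n}(t))+\beta(u_{\e_n}(t)-g)$ is bounded in $L^2(\Om)$. Testing the elliptic equation with $u_{\e_n}(t)$ itself gives the pointwise-in-$\Om$ identity in $\mathscr D'(\Om)$
$$(\eta_{\e_n}+\rho_{\e_n}^2(t))|\nabla u_{\e_n}(t)|^2 - \operatorname{div}\big(u_{\e_n}(t)(\eta_{\e_n}+\rho_{\e_n}^2(t))\nabla u_{\e_n}(t)\big) = -u_{\e_n}(t) f_{\e_n} +\beta u_{\e_n}(t)(u_{\e_n}(t)-g)\,.$$
Now I would pass to the limit $n\to\infty$ distributionally: the flux $(\eta_{\e_n}+\rho_{\e_n}^2(t))\nabla u_{\e_n}(t)\wto\nabla u(t)$ weakly in $L^2(\Om;\RR^N)$ by \eqref{weaknablaue}, so the divergence term converges in $\mathscr D'(\Om)$ once one controls the product with $u_{\e_n}(t)\to u(t)$ strongly in $L^2(\Om)$ (here one uses that $u_{\e_n}(t)$ is bounded in $L^\infty$ and converges strongly in $L^2$). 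For the quadratic term, the crucial point is the lower-semicontinuity-type inequality: using $\eta_{\e_n}+\rho_{\e_n}^2(t)\to 1$ a.e. together with $\nabla\tilde u_{\e_n}(t)\wto\nabla u(t)$ weakly in $L^2$ on every open $A\subset\Om$ (from Proposition~\ref{compaciteeps}), one gets, against any nonnegative $\varphi\in\C^\infty_c(\Om)$,
$$\int_\Om |\nabla u(t)|^2\varphi\, dx \leq \liminf_{n\to\infty}\int_\Om (\eta_{\e_n}+\rho_{\e_n}^2(t))|\nabla u_{\e_n}(t)|^2\varphi\, dx\,.$$
Combining this with the distributional convergence of the other terms, and identifying the limit $f$ of $f_{\e_n}$ (which, after testing against $u(t)$, must agree with $-\operatorname{div}(\nabla u(t))+\beta(u(t)-g)$ by the first part), yields
$$|\nabla u(t)|^2 - \operatorname{div}\big(u(t)\nabla u(t)\big) \leq -u(t)\operatorname{div}\big(\nabla u(t)\big)\quad\text{in }\mathscr D'(\Om)\,,$$
after the $\beta$-terms cancel.

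\textbf{Main obstacle.} The delicate point is the passage to the limit in the quadratic gradient term tested against a \emph{signed} test function: the lower-semicontinuity inequality above holds only for $\varphi\geq 0$, which is exactly why the statement is an inequality in $\mathscr D'(\Om)$ rather than an identity, and care is needed to ensure that the weak limit of the fluxes is genuinely $\nabla u(t)$ on all of $\Om$ (not just on $\Om\setminus E_n$) — this is guaranteed by \eqref{weaknablaue}, whose proof already absorbed the set $E_n$ where $\rho_{\e_n}$ is small. A secondary technical issue is selecting, by a diagonal extraction over a countable dense set of times and over the null sets coming from Theorem~\ref{cms}, Proposition~\ref{slope+domain} and Proposition~\ref{compaciteeps}, a single subsequence along which all these a.e.-in-time statements hold simultaneously; this is routine but must be stated carefully. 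Everything else is a direct combination of the cited results.
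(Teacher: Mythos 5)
Your second (distributional) inequality is handled essentially as in the paper — test the elliptic equation against $u_{\e_n}(t)\varphi$ with $\varphi\geq0$, use weak--strong convergence for the cross terms and lower semicontinuity for the quadratic term — but your first part contains a genuine gap. You deduce $\liminf_n\|u_{\e_n}'(t)\|_{L^2(\Om)}\geq\|u'(t)\|_{L^2(\Om)}$ for a.e.\ $t$ from the weak convergence $u_{\e_n}'\wto u'$ in $L^2(0,+\infty;L^2(\Om))$. This implication is false: weak convergence in the Bochner space carries no pointwise-in-time information (take $f_n=\chi_{A_n}h$ with $\chi_{A_n}\wto\tfrac12$ weakly*, so that $f_n\wto h/2$ while $\liminf_n\|f_n(t)\|=0$ for a.e.\ $t$). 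There is no fixed-time weak $L^2(\Om)$-convergence of $u_{\e_n}'(t)$ from which to invoke lower semicontinuity of the norm, and a ``diagonal argument over a countable dense set of times'' cannot repair this, since $t\mapsto\|u_{\e_n}'(t)\|_{L^2(\Om)}$ has no continuity in $t$. The identification at fixed $t$ must instead go through the elliptic structure: by Proposition~\ref{slope+domain}, $|\partial\E_{\e_n}|(u_{\e_n}(t),\rho_{\e_n}(t))=\|{\rm div}((\eta_{\e_n}+\rho_{\e_n}^2(t))\nabla u_{\e_n}(t))-\beta(u_{\e_n}(t)-g)\|_{L^2(\Om)}$, and once these divergences are bounded in $L^2(\Om)$ along a subsequence, the flux convergence \eqref{weaknablaue} (which holds for \emph{every} $t$) forces them to converge weakly in $L^2(\Om)$ to ${\rm div}(\nabla u(t))$; the inequality then follows from \eqref{preconvueps} and weak lower semicontinuity of the $L^2(\Om)$-norm.

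A second, related omission is the finiteness of $\liminf_n|\partial\E_{\e_n}|(u_{\e_n}(t),\rho_{\e_n}(t))$ for a.e.\ $t$ — equivalently, the existence of the bounded subsequence that you simply assume in both parts. Your remark that ``finiteness follows because $u'(t)\in L^2(\Om)$'' only bounds the left-hand side of \eqref{lwdasymptslop}, not the right. The correct source is the uniform energy bound: by Proposition~\ref{slope+domain} and the equation, $\int_0^{+\infty}|\partial\E_{\e_n}|^2(u_{\e_n}(t),\rho_{\e_n}(t))\,dt=\int_0^{+\infty}\|u_{\e_n}'(t)\|^2_{L^2(\Om)}\,dt\leq C$ by \eqref{nrjineq}, and Fatou's lemma gives $\int_0^{+\infty}\liminf_n|\partial\E_{\e_n}|^2(u_{\e_n}(t),\rho_{\e_n}(t))\,dt\leq C$, hence a.e.\ finiteness of the liminf and the desired $L^2(\Om)$-bound on the divergences along a $t$-dependent subsequence. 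With these two points in place, your treatment of the second inequality goes through as written.
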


\begin{proof}
From Proposition \ref{slope+domain} and \eqref{parabolic} together with \eqref{nrjineq} and Fatou's lemma, we first deduce that 
$$\int_0^{+\infty} \liminf_{n\to\infty} |\partial \E_{\e_n}|^2(u_{\e_n}(t),\rho_{\e_n}(t)\big)\,dt\leq \liminf_{n\to\infty} \int_0^{+\infty}  |\partial \E_{\e_n}|^2(u_{\e_n}(t),\rho_{\e_n}(t)\big)\,dt \leq C\,,  $$
for a constant $C>0$ independent of $n$. Hence there exists an $\mathscr{L}^1$-negligible set $\mathcal{L}\subset (0,+\infty)$ such that 
$$  \liminf_{n\to\infty} |\partial \E_{\e_n}|(u_{\e_n}(t),\rho_{\e_n}(t)\big)<\infty\quad \text{for $t\in (0,+\infty)\setminus\mathcal{L}$}\,.$$
Let us now fix $t\in (0,+\infty)\setminus\mathcal{L}$ and extract a subsequence (depending on $t$) such that
$$\lim_{j\to\infty} |\partial \E_{\e_{n_j}}|(u_{\e_{n_j}}(t),\rho_{\e_{n_j}}(t)\big)=  \liminf_{n\to\infty} |\partial \E_{\e_n}|(u_{\e_n}(t),\rho_{\e_n}(t)\big)\,.$$
By Proposition  \ref{slope+domain}, the sequence $\big\{{\rm div}\big((\eta_{\e_{n_j}}+\rho^2_{\e_{n_j}}(t))\nabla u_{\e_{n_j}}(t)\big)\big\}$ is thus bounded in $L^2(\Om)$, and in view of \eqref{weaknablaue} we deduce that 
\begin{equation}\label{convdivgradu*}
{\rm div}\big((\eta_{\e_{n_j}}+\rho^2_{\e_{n_j}}(t))\nabla u_{\e_{n_j}}(t)\big) \wto {\rm div}\big(\nabla u(t)\big) \quad\text{weakly in~$L^2(\Om)$}\,.
\end{equation}
Then \eqref{lwdasymptslop} follows from the convergences in \eqref{preconvueps}, and the lower semicontinuity of the $L^2(\Om)$-norm.

Using again Proposition  \ref{slope+domain}, we next notice that 
\begin{multline*}
\int_\Om \big(\eta_{\e_n}+\rho^2_{\e_n}(t)\big)|\nabla u_{\e_n}(t)|^2\varphi\,dx 
+ \int_\Om u_{\e_n}(t)\big(\eta_{\e_n}+\rho^2_{\e_n}(t)\big)\nabla u_{\e_n}(t)\cdot\nabla\varphi\,dx\\
=-\int_\Om u_{\e_n}(t)\,{\rm div}\big((\eta_{\e_n}+\rho^2_{\e_n}(t))\nabla u_{\e_n}(t)\big)\,\varphi\,dx
\end{multline*}
for any nonnegative function $\varphi\in \mathscr{D}(\Om)$, and the conclusion follows from \eqref{weaknablaue} and \eqref{convdivgradu*}. 
\end{proof}

\begin{appendices}

\section{Appendix}

The goal of this appendix is to prove the auxiliary elliptic regularity result used in the proof of Lemma \ref{inegenergdiscr} and Proposition \ref{slope+domain}. We would like to stress that the following result strongly relies on the fact that the dissipation energy has $p$-growth with $p>N$.

\begin{lemma}\label{ellipreg}
Assume that  $\Om$ has a $\C^{1,1}$-boundary. For  $f\in L^2(\Om)$ and $\rho\in W^{1,p}(\Om)$, let $u\in H^1(\Om)$ be a solution of
\begin{equation}\label{eqellipt}
\begin{cases}
-{\rm div}\big((\eta_\e+\rho^2)\nabla u\big)=f & \text{in $H^{-1}(\Om)$}\,,\\
(\eta_\e+\rho^2)\nabla u\cdot\nu =0 & \text{in $H^{-1/2}(\partial\Om)$}\,.
\end{cases}
\end{equation}
Then $u\in H^2(\Om)$, $\displaystyle\frac{\partial u}{\partial \nu}=0$ in $H^{1/2}(\partial\Om)$, and 
$$\|u\|_{H^2(\Om)}\leq C_\e(1+\|\nabla\rho\|_{L^p(\Om;\RR^N)})^{\gamma} \big(\|f\|_{L^2(\Om)}+\|u\|_{H^1(\Om)}\big)\,,$$
where $\gamma$ is the smallest integer larger than or equal to $p/(p-N)$, and $C_\e$ only depends on $\eta_\e$, $p$, $N$, and $\Om$.   
\end{lemma}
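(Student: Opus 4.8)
The plan is to convert the divergence-form equation \eqref{eqellipt} into a Poisson-type equation with a lower-order right-hand side, and then to run a finite bootstrap on the integrability of $\nabla u$, the whole scheme closing precisely because $p>N$.

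First, since $p>N$, Morrey's embedding gives $\rho\in\C^{0,\alpha}(\overline\Om)$ with $\alpha:=1-N/p$, so the principal coefficient $a:=\eta_\e+\rho^2$ is uniformly continuous on $\overline\Om$, satisfies $\eta_\e\le a\le\eta_\e+\|\rho\|^2_{L^\infty(\Om)}$, and obeys $\nabla a=2\rho\nabla\rho\in L^p(\Om;\RR^N)$ with $\|\nabla a\|_{L^p(\Om;\RR^N)}\le 2\|\nabla\rho\|_{L^p(\Om;\RR^N)}$ (using $\|\rho\|_{L^\infty(\Om)}\le1$, which holds in every application of the lemma). Whenever the solution is regular enough to be differentiated twice, \eqref{eqellipt} amounts to $-a\Delta u=f+\nabla a\cdot\nabla u$ in $\Om$ together with $\partial u/\partial\nu=0$ on $\partial\Om$. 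The obstruction to a one-shot argument is that the a priori information $\nabla u\in L^2$ and $\nabla a\in L^p$ only yields $\nabla a\cdot\nabla u\in L^{\sigma_0}$ with $1/\sigma_0=1/2+1/p>1/2$, so the right-hand side need not be in $L^2$ and $H^2$-regularity is not immediately available.

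Next I would set up the bootstrap. Assume inductively that $\nabla u\in L^{q_k}(\Om;\RR^N)$, with $q_0=2$. If $1/q_k\le1/2-1/p$ then $\nabla a\cdot\nabla u\in L^2(\Om)$, hence $-a\Delta u\in L^2(\Om)$, and the $W^{2,2}$ Calder\'on--Zygmund estimate for the Neumann problem of a second-order elliptic operator with continuous coefficients on the $\C^{1,1}$-domain $\Om$ (after a routine smoothing of $\rho$, legitimate since $\rho\in W^{1,p}$, $p>N$, with estimates uniform in the regularization) yields $u\in H^2(\Om)$ and the stated bound; I stop. Otherwise set $1/\sigma_k:=1/q_k+1/p\in(1/2,1)$, so $1<\sigma_k<2$, and the same elliptic estimate in $L^{\sigma_k}$ gives $u\in W^{2,\sigma_k}(\Om)$ with
$$\|u\|_{W^{2,\sigma_k}(\Om)}\le C_\e\big(1+\|\nabla a\|_{L^p(\Om;\RR^N)}\big)\big(\|f\|_{L^2(\Om)}+\|\nabla u\|_{L^{q_k}(\Om;\RR^N)}+\|u\|_{H^1(\Om)}\big).$$
Then Sobolev's embedding $W^{1,\sigma_k}(\Om)\hookrightarrow L^{q_{k+1}}(\Om)$, with $1/q_{k+1}=1/\sigma_k-1/N=1/q_k-(1/N-1/p)$, improves the integrability of $\nabla u$ by the fixed positive amount $1/N-1/p$, which is exactly where $p>N$ is used. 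Since $1/q_k=1/2-k(1/N-1/p)$, the stopping condition is reached after at most $\lceil N/(p-N)\rceil$, hence a fortiori after $\gamma:=\lceil p/(p-N)\rceil$, steps. Chaining the $\gamma$ successive estimates, and absorbing the intermediate $W^{1,q_k}$-norms into $\|u\|_{H^2(\Om)}$ and $\|u\|_{H^1(\Om)}$ via interpolation and Young's inequality, produces the factor $(1+\|\nabla\rho\|_{L^p(\Om;\RR^N)})^\gamma$ and the asserted inequality.

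It remains to record the boundary condition. Once $u\in H^2(\Om)$, $\nabla u$ has a trace in $H^{1/2}(\partial\Om;\RR^N)$, and integrating by parts in the weak formulation of \eqref{eqellipt} gives $\int_{\partial\Om}(\eta_\e+\rho^2)(\nabla u\cdot\nu)\,\varphi\,d\HH^{N-1}=0$ for every $\varphi\in H^1(\Om)$; since $\eta_\e+\rho^2\ge\eta_\e>0$, this forces $\nabla u\cdot\nu=0$ a.e.\ on $\partial\Om$, hence $\partial u/\partial\nu=0$ in $H^{1/2}(\partial\Om)$. I expect the main obstacle to be the bootstrap itself: checking that the scheme closes---the role of the hypothesis $p>N$, without which the gain $1/N-1/p$ would be non-positive---and that the constants from the $\gamma$ iterations assemble into a single power of $1+\|\nabla\rho\|_{L^p}$. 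The two external ingredients are $W^{2,\sigma}$ Calder\'on--Zygmund regularity for elliptic Neumann problems with merely continuous (hence VMO) principal part, and the $\C^{1,1}$-regularity of $\partial\Om$, which is what makes those estimates valid up to the boundary.
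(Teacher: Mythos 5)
Your proof is correct and follows essentially the same route as the paper's appendix proof: rewrite the equation as $-\Delta u=\frac{2\rho}{\eta_\e+\rho^2}\nabla\rho\cdot\nabla u+\frac{f}{\eta_\e+\rho^2}$ and bootstrap the integrability of $\nabla u$ through Neumann elliptic regularity and Sobolev embedding, gaining $1/N-1/p$ at each of the $\gamma$ steps (your exponents $\sigma_k$ are exactly the paper's $q_k$). The only substantive differences are organizational: the paper first proves $\frac{\partial u}{\partial\nu}=0$ in $H^{-1/2}(\partial\Om)$ (by testing with $\varphi/(\eta_\e+\rho^2)$) so that Grisvard's estimates for the Laplacian apply directly at every stage of the bootstrap, and it avoids your side assumption $\|\rho\|_{L^\infty(\Om)}\leq 1$ by using the elementary bound $2\rho/(\eta_\e+\rho^2)\leq\eta_\e^{-1/2}$ instead of $\nabla a=2\rho\nabla\rho$.
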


\begin{proof}
{\it Step 1.} We claim that 
\begin{equation}\label{H-1/2}
\frac{\partial u}{\partial\nu}=0 \quad\text{in $H^{-1/2}(\partial \Om)$}\,.
\end{equation}
To prove this claim, we first rewrite the equation as 
\begin{equation}\label{preestdeltav}
-\Delta u =\frac{2\rho}{\eta_\e+\rho^2}\nabla\rho\cdot \nabla u +\frac{f}{\eta_\e+\rho^2} \quad\text{in $\mathscr{D}'(\Om)$}\,. 
\end{equation}
Hence $\Delta u\in L^q(\Om)$ with $q:=2p/(p+2)$ by H\"older's inequality. Then we observe that $q':=q/(q-1)<2^*$ since $p>N$, so that 
$H^1(\Om)\hookrightarrow L^{q'}(\Om)$ by the Sobolev Imbedding. Hence the linear mapping 
$$\varphi\in H^1(\Om) \mapsto \int_\Om  \big(\nabla u \cdot \nabla  \varphi+(\Delta u) \varphi\big) \,dx $$
is well defined and continuous.  Consequently, $u$ admits a (weak) normal derivative $\frac{\partial u}{\partial\nu}$ on $\partial \Om$ which 
belongs to the dual space $H^{-1/2}(\partial \Om)$, and for any $\varphi\in H^1(\Om)$, 
\begin{align*}
\left\langle \frac{\partial u}{\partial\nu},\varphi_{|\partial\Om}\right\rangle_{\big(H^{-1/2}(\partial \Om),H^{1/2}(\partial \Om)\big)} &= \int_\Om  \big(\nabla u \cdot \nabla \varphi + (\Delta u) \varphi\big) \,dx \\
& = \int_\Om  (\eta_\e+\rho^2)\nabla u \cdot \nabla \left(\frac{\varphi}{\eta_\e+\rho^2}\right)\,dx +\int_\Om\left(\Delta u +\frac{2\rho}{\eta_\e+\rho^2}\nabla\rho\cdot \nabla u\right)\varphi \,dx\,.
\end{align*}
We observe that in the second equality above, we have used the fact that $\frac{\varphi}{\eta_\e+\rho^2}\in H^1(\Om)$ whenever $\varphi\in H^1(\Om)$. Indeed,
$$ \nabla \left(\frac{\varphi}{\eta_\e+\rho^2}\right)= \frac{\nabla \varphi}{\eta_\e+\rho^2} - \frac{2\rho\varphi \nabla \rho}{(\eta_\e+\rho^2)^2}\in L^2(\Om)\,,$$
since $\varphi\in L^{2^*}(\Om)$,  $\rho\in L^\infty(\Om)$, and $\nabla\rho\in L^p(\Om)$ with $p>N$. In view of \eqref{eqellipt}  we have 
$$\int_\Om  (\eta_\e+\rho^2)\nabla u \cdot \nabla \left(\frac{\varphi}{\eta_\e+\rho^2}\right)\,dx= \int_\Om \frac{f\varphi}{\eta_\e+\rho^2}\,dx \,,$$
and by  \eqref{preestdeltav}, 
$$\int_\Om\left(\Delta u +\frac{2\rho}{\eta_\e+\rho^2}\nabla\rho\cdot \nabla u\right)\varphi \,dx=-\int_\Om \frac{f\varphi}{\eta_\e+\rho^2}\,dx \,,$$
from which \eqref{H-1/2} follows. 

\vskip5pt

\noindent{\it Step 2.} We now prove that $u \in H^2(\Om)$. By the previous step, $u \in H^1(\Om)$ satisfies
$$
\left\{
\begin{array}{l}
\Delta u \in L^q(\Om)\,,\\
\ds \frac{\partial u}{\partial \nu}=0 \text{ in } H^{-1/2}(\partial \Om)\,.
\end{array}
\right.
$$
By elliptic regularity (see {\it e.g.} \cite[Proposition 2.5.2.3 \& Theorem 2.3.3.6]{Grisvard}), we deduce that $u \in W^{2,q_0}(\Om)$ with $q_0:=q=\frac{2p}{p+2}$, and 
$$\|u\|_{W^{2,q_0}(\Om)}\leq C\big(\|\Delta u\|_{L^{q_0}(\Om)}+\|u\|_{L^{q_0}(\Om)}\big) \,,$$
for some constant $C>0$ only depending on $N$, $p$, and $\Om$. Observing that the function $t\mapsto t/(\eta_\e+t^2)$ is bounded, we derive from \eqref{preestdeltav} and H\"older's inequality that
\begin{multline*}
\|u\|_{W^{2,q_0}(\Om)}\leq C_\e \big(\|f\|_{L^2(\Om)}+ \|\nabla\rho\|_{L^p(\Om;\RR^N)}\|\nabla u\|_{L^2(\Om;\RR^N)}+\|u\|_{L^2(\Om)}\big) \\
\leq C_\e(1+\|\nabla\rho\|_{L^p(\Om;\RR^N)})\big(\|f\|_{L^2(\Om)}+\|u\|_{H^1(\Om)}\big)\,,
\end{multline*}
where we used the fact that $q_0<2$. By the Sobolev Imbedding, we have $u \in W^{1,q^*_0}(\Om)$, and thus $\nabla u \cdot \nabla \rho \in L^{q_1}( \Om)$ with 
$$
\frac{1}{q_1}=\frac{1}{p} + \frac{1}{q_0^*}\,, \quad \textit{i.e., }\quad q_1:=\frac{2Np}{(N-2)p+4N}\,.
$$
Note that $q_1\geq 2$ if and only if $p \geq 2N$, so we have to distinguish the case $p\geq 2N$ from the case $p<2N$.
\vskip3pt

\noindent{\it Case 1).} Let us first assume that $p\geq 2N$. Then $\nabla u \cdot \nabla \rho \in L^{2}( \Om)$ with 
$$\|\nabla u \cdot \nabla \rho\|_{L^2(\Om)} \leq C\|\nabla u \cdot \nabla \rho\|_{L^{q_1}(\Om)} \leq C \|\nabla \rho\|_{L^p(\Om;\RR^N)}\|\nabla u\|_{L^{q_0^*}(\Om;\RR^N)} \leq C \|\nabla \rho\|_{L^p(\Om;\RR^N)}\|u\|_{W^{2,q_0}(\Om)}\,.$$
Using again \eqref{H-1/2}-\eqref{preestdeltav} and the elliptic regularity, we infer that  $u \in H^2(\Om)$  with the estimate
\begin{align*}
\|u\|_{H^2(\Om)}& \leq  C\big(\|\Delta u\|_{L^{2}(\Om)}+\|u\|_{L^{2}(\Om)}\big)  \\
& \leq  C_\e \big(\|f\|_{L^2(\Om)}+ \|\nabla \rho\|_{L^p(\Om;\RR^N)}\|u\|_{W^{2,q_0}(\Om)}+\|u\|_{L^2(\Om)}\big)\\
& \leq C_\e (1+\|\nabla \rho\|_{L^p(\Om;\RR^N)})^2\big(\|f\|_{L^2(\Om)}+\|u\|_{H^1(\Om)}\big)\,.
\end{align*}
\vskip3pt

\noindent{\it Case 2).} If $p<2N$ then $q_1<2$, and we have $u \in W^{2,q_1}(\Om)$ by \eqref{H-1/2}-\eqref{preestdeltav} and elliptic regularity, with the estimate
\begin{align}
\nonumber\|u\|_{W^{2,q_1}(\Om)}&\leq C_\e \big(\|f\|_{L^2(\Om)}+ \|\nabla\rho\|_{L^p(\Om)}\|u\|_{W^{2,q_0}(\Om)}+\|u\|_{L^2(\Om)}\big)\\
\label{estimq1} & \leq C_\e (1+\|\nabla \rho\|_{L^p(\Om)})^2\big(\|f\|_{L^2(\Om)}+\|u\|_{H^1(\Om)}\big)\,.
\end{align}
In particular, $\nabla u\in L^{q_1^*}(\Om)$ by the Sobolev Imbedding since $q_1<2\leq N$. We then continue the process by setting 
$$
\frac{1}{q_{i}}:=\frac{1}{p} + \frac{1}{q^*_{i-1}}, \quad \textit{i.e., }\quad q_i:=\frac{2Np}{(N-2i)p+2(i+1)N}
$$
as long as $q_{i-1}<2$, that is $i< \gamma$. Since $q_{\gamma-1}\geq 2$, iterating estimates of the form \eqref{estimq1} we obtain 
\begin{align*}
\|u\|_{H^{2}(\Om)}&\leq C_\e \big(\|f\|_{L^2(\Om)}+ \|\nabla\rho\|_{L^p(\Om;\RR^N)}\|u\|_{W^{2,q_{\gamma-2}}(\Om)}+\|u\|_{L^2(\Om)}\big)\\
& \leq C_\e (1+\|\nabla \rho\|_{L^p(\Om;\RR^N)})^{\gamma}\big(\|f\|_{L^2(\Om)}+\|u\|_{H^1(\Om)}\big)\,,
\end{align*}
and the proof is complete. 
\end{proof}

\end{appendices}


\noindent {\bf {\it Acknowledgements.}} The authors wish to thank Fran\c{c}ois Murat for many helpful discussions  at different stages of the preparation of this work. They  are also indebted to Gilles Francfort for many suggestions which have led to a significant improvement of the paper.Ê Both authors have been supported by the {\sl Agence Nationale de la Recherche} under Grant No.\ ANR 10-JCJC 0106.



\end{document}